\newcommand{\displaybump}{\hbox to \@totalleftmargin{\hfil}}
\setlist[enumerate]{leftmargin=1cm}
\newcommand{\bigast}{\operatorname{\scalebox{1.7}{\raisebox{-0.1ex}{$\ast$}}}}%
\newcommand{\lhdsim}{\ooalign{\kern0.5ex\raisebox{0.5ex}{$\lhd$}\cr\kern0.5ex\scalebox{1}{\raisebox{-0.65ex}{$\sim$}}\kern0.5ex}}%
\DeclareMathOperator\Aut{Aut}
\DeclareMathOperator\Sym{Sym}
\DeclareMathOperator\PGL{PGL}
\DeclareMathOperator\GL{GL}
\DeclareMathOperator\AGL{AGL}
\DeclareMathOperator\SL{SL}
\DeclareMathOperator\Id{Id}
\DeclareMathOperator\id{id}
\DeclareMathOperator\image{im}
\DeclareMathOperator\pr{pr}
\DeclareMathOperator\rank{rank}
\DeclareMathOperator\Alt{Alt}
\DeclareMathOperator\bbF{\mathbb{F}}
\DeclareMathOperator\bbN{\mathbb{N}}
\DeclareMathOperator\bbQ{\mathbb{Q}}
\DeclareMathOperator\bbZ{\mathbb{Z}}
\DeclareMathOperator\frakg{\mathfrak{g}}
\DeclareMathOperator\calH{\mathcal{H}}
\DeclareMathOperator\calL{\mathcal{L}}
\DeclareMathOperator\calM{\mathcal{M}}
\DeclareMathOperator\calN{\mathcal{N}}
\DeclareMathOperator\calO{\mathcal{O}}
\DeclareMathOperator\calP{\mathcal{P}}
\DeclareMathOperator\calU{\mathcal{U}}
\theoremstyle{definition}
\newtheorem{theorem}{Theorem}[section]
\newtheorem*{theorem*}{Theorem}
\newtheorem{lemma}[theorem]{Lemma}
\newtheorem*{lemma*}{Lemma}
\newtheorem{corollary}[theorem]{Corollary}
\newtheorem{definition}[theorem]{Definition}
\newtheorem*{definition*}{Definition}
\newtheorem{remark}[theorem]{Remark}
\newtheorem*{remark*}{Remark}
\newtheorem{proposition}[theorem]{Proposition}
\newtheorem*{proposition*}{Proposition}
\newtheorem{example}[theorem]{Example}
\newtheorem*{example*}{Example}
\newtheorem*{sketch of proof}{Sketch of Proof}
\newtheorem*{idea of proof}{Idea of Proof}
\newtheorem{question}[theorem]{Question}
\newtheorem{conjecture}[theorem]{Conjecture}
\newtheorem*{rep@theorem}{\rep@title}
\newcommand{\newreptheorem}[2]{%
\newenvironment{rep#1}[1]{%
 \def\rep@title{#2 \scshape \ref{##1}}%
 \begin{rep@theorem}}%
 {\end{rep@theorem}}}
\title[Groups Acting On Trees With Prescribed Local Action]{Groups Acting on Trees \\ With Prescribed Local Action}
\author{Stephan Tornier}
\date{\today}
\begin{document}

\begin{abstract}
We extend Burger--Mozes theory of closed, non-discrete, locally quasiprimitive automorphism groups of locally finite, connected graphs to the semiprimitive case, and develop a generalization of Burger--Mozes universal groups acting on the regular tree $T_{d}$ of degree $d\in\mathbb{N}_{\ge 3}$. Three applications are given: First, we characterize the automorphism types which the quasi-center of a non-discrete subgroup of $\Aut(T_{d})$ may feature in terms of the group's local~action. In doing so, we explicitly construct closed, non-discrete, compactly generated subgroups of $\Aut(T_{d})$ with non-trivial quasi-center, and see that Burger--Mozes theory does not extend %beyond the semiprimitive case
further to the transitive case. We then characterize the $(P_{k})$-closures of locally transitive subgroups of $\Aut(T_{d})$ containing an involutive inversion, and thereby partially answer two questions by Banks--Elder--Willis. Finally, we offer a new view on the Weiss conjecture.
\end{abstract}

\maketitle

\section*{Introduction}

In the structure theory of locally compact (l.c.) groups, totally disconnected (t.d.) ones are in the focus because any locally compact group $G$ is an extension of its connected component $G_{0}$ by the totally disconnected quotient $G/G_{0}$,
\begin{displaymath}
 \xymatrix{
  1 \ar[r] & G_{0} \ar[r] & G \ar[r] & G/G_{0} \ar[r] & 1,
 }
\end{displaymath}
and connected l.c. groups have been identified as inverse limits of Lie groups in seminal work by Gleason \cite{Gle52}, Montgomery-Zippin \cite{MZ52} and Yamabe \cite{Yam53}.

Every t.d.l.c. group can be viewed as a directed union of compactly generated open subgroups. Among the latter, groups acting on regular graphs and trees stand out due to the Cayley-Abels graph construction: Every compactly generated t.d.l.c. group $G$ acts vertex-transitively on a connected regular graph $\Gamma$ of finite degree $d$ with compact kernel~$K$. In particular, the universal cover of $\Gamma$ is the $d$-regular tree $T_{d}$ and we obtain a cocompact subgroup $\smash{\widetilde{G}}$ of its automorphism group $\Aut(T_{d})$,
\begin{displaymath}
 \xymatrix{
  1 \ar[r] & \pi_{1}(\Gamma) \ar[r] & \text{$\widetilde{G}$} \ar[r] & G/K \ar[r] & 1,
 }
\end{displaymath}
as an extension of $\pi_{1}(\Gamma)$ by $G/K$, see \cite[Section 11.3]{Mon01} and \cite{KM08} for details.

In studying the automorphism group $\Aut(\Gamma)$ of a locally finite, connected graph $\Gamma=(V,E)$, we follow the notation of Serre \cite{Ser03}. The group $\Aut(\Gamma)$ is t.d.l.c. when equipped with the permutation topology for its action on $V\cup E$, see Section~\ref{sec:permutation_groups}. Given a subgroup $H\le\Aut(\Gamma)$ and a vertex $x\in V$, the stabilizer $H_{x}$ of $x$ in $H$ induces a permutation group on the set $E(x):=\{e\in E\mid o(e)=x\}$ of edges issuing from $x$. We say that $H$ is locally ``X'' if for every $x\in V$ said permutation group satisfies property ``X'', e.g. being transitive, semiprimitive or quasiprimitive. %See Section \ref{sec:permutation_groups} for preliminaries on permutation groups.

\vspace{0.2cm}
In \cite{BM00a}, Burger--Mozes develop a remarkable structure theory of closed, non-discrete, locally quasiprimitive subgroups of $\Aut(\Gamma)$, which resembles the theory of semisimple Lie groups, see Theorem~\ref{thm:burger_mozes_structure_quasiprimitive}. In Section \ref{sec:bm_theory_semiprimitive}, specifically Theorem \ref{thm:burger_mozes_structure_semiprimitive}, we show that this theory readily carries over to the semiprimitive case.

\vspace{0.2cm}
Let $\Omega$ be a set of cardinality $d\!\in\!\bbN_{\ge 3}$ and let $T_{d}\!=\!(V,E)$ be the $d$-regular tree. Burger--Mozes complement their structure theory with a particularly accessible class of subgroups of $\Aut(T_{d})$ with prescribed local action: Given $F\le\Sym(\Omega)$, their universal group $\mathrm{U}(F)$ is closed in $\Aut(T_{d})$, vertex-transitive, compactly generated and locally permutation isomorphic to $F$. It is discrete if and only if $F$ is semiregular. When $F$ is transitive, $\mathrm{U}(F)$ is maximal up to conjugation among vertex-transitive subgroups of $\Aut(T_{d})$ that are locally permutation isomorphic to $F$, hence \emph{universal}.

We generalize the universal groups by prescribing the local action on balls of a given radius $k\in\bbN$, the Burger--Mozes construction corresponding to the case $k\!=\!1$. Equip $T_{d}$ with a labelling, i.e. a map $l:E\to\Omega$ such that for every $x\in V$ the map $l_{x}\!:\! E(x)\!\to\!\Omega, e\!\mapsto\! l(e)$ is a bijection, and $l(e)\!=\!l(\overline{e})$ for all $e\!\in\! E$. Also, fix a tree $B_{d,k}$ which is isomorphic to a ball of radius $k$ around a vertex in the labelled tree $T_{d}$ and let $l_{x}^{k}:B(x,k)\to B_{d,k}$ ($x\in V$) be the unique label-respecting isomorphism. Then
\begin{displaymath}
 \sigma_{k}:\Aut(T_{d})\times V\to\Aut(B_{d,k}),\ (g,x)\to l_{gx}^{k}\circ g\circ (l_{x}^{k})^{-1}
\end{displaymath}
captures the \emph{$k$-local action} of $g$ at the vertex $x\in V$.

\begin{repdefinition}{def:ukf}
Let $F\le\Aut(B_{d,k})$. Define
\begin{displaymath}
 \mathrm{U}_{k}(F):=\{g\in\Aut(T_{d})\mid \forall x\in V:\ \sigma_{k}(g,x)\in F\}.
\end{displaymath}
\end{repdefinition}

While $\mathrm{U}_{k}(F)$ is always closed, vertex-transitive and compactly generated, other properties of $\mathrm{U}(F)$ need not carry over. Foremost, the group $\mathrm{U}_{k}(F)$ need not be locally action isomorphic to $F$; we say that $F\le\Aut(B_{d,k})$ satisfies condition (C) if it is. This can be viewed as an interchangeability condition on neighbouring local actions, see Section \ref{sec:ukf_examples}. There also is a discreteness condition (D) on $F\le\Aut(B_{d,k})$ in terms of certain stabilizers in $F$ under which $\mathrm{U}_{k}(F)$ is discrete, see Section \ref{sec:ukf_discreteness}. Finally, the groups $\mathrm{U}_{k}(\! F)$ are universal in a sense akin to the above by Theorem~\ref{thm:ukf_universal}.

\begin{comment}
\begin{reptheorem}{thm:ukf_universal}
Let $H\!\le\!\Aut(T_{d})$ be locally transitive and contain an involutive inversion. Then there is a labelling $l$ of $T_{d}$ such that
\begin{displaymath}
 \mathrm{U}_{1}(F^{(1)})\ge\mathrm{U}_{2}(F^{(2)})\ge\cdots\ge\mathrm{U}_{k}(F^{(k)})\ge\cdots\ge H\ge\mathrm{U}_{1}(\{\id\})
\end{displaymath}
where $F^{(k)}\le\Aut(B_{d,k})$ is action isomorphic to the $k$-local action of $H$.
\end{reptheorem}
\end{comment}

For $\smash{\widetilde{F}\le\Aut(B_{d,k})}$, let $\smash{F:=\pi\widetilde{F}\le\Sym(\Omega)}$ denote its projection to $\Aut(B_{d,1})$, which is naturally permutation isomorphic to $\Sym(\Omega)$ via the labelling of $B_{d,1}$.
%Whereas there is usally an abundance of possible actions $\smash{\widetilde{F}}$ ``above'' a given $F$, we also have
The following rigidity theorem is inspired by \cite[Proposition 3.3.1]{BM00a}.

\begin{reptheorem}{thm:ukf_rigid}
\hspace{-0.1cm}Let $F\!\le\!\Sym(\Omega)$ be $2$-transitive and $F_{\omega}$ $(\omega\!\in\!\Omega)$ simple non-abelian. Further, let $\smash{\widetilde{F}\le\Aut(B_{d,k})}$ with $\smash{\pi\widetilde{F}=F}$ satisfy \eqref{eq:C}. Then $\smash{\mathrm{U}_{k}(\widetilde{F})}$ equals either
\begin{displaymath}
 \mathrm{U}_{2}(\Gamma(F)),\quad \mathrm{U}_{2}(\Delta(F))\quad\text{or}\quad\mathrm{U}_{1}(F).
\end{displaymath}
\end{reptheorem}

Here, the groups $\Gamma(F),\Delta(F)\!\le\!\Aut(B_{d,2})$ of Section~\ref{sec:ukf_examples} satisfy both (C) and~(D) and therefore yield discrete universal groups. Illustrating the necessity of the assumptions in Theorem~\ref{thm:ukf_rigid}, we construct further universal groups in the case when either point stabilizers in $F$ are not simple, $F$ is not primitive, or $F$ is not perfect, see e.g. $\Phi(F,N), \Phi(F,\calP),\Pi(F,\rho,X)\le\Aut(B_{d,2})$ in Section~\ref{sec:ukf_examples}.

\vspace{0.2cm}
In Section \ref{sec:applications}, we present three applications of the framework of universal groups. First, we study the quasi-center of subgroups of $\Aut(T_{d})$. The quasi-center $\mathrm{QZ}(G)$ of a topological group $G$ consists of those elements whose centralizer in $G$ is open. It plays a major role in the Burger--Mozes Structure Theorem \ref{thm:burger_mozes_structure_quasiprimitive}: A non-discrete, locally quasiprimitive subgroup of $\Aut(T_{d})$ does not feature any non-trivial quasi-central elliptic elements. We complete this fact to the following local-to-global-type characterization of the automorphism types which the quasi-center of a non-discrete subgroup of $\Aut(T_{d})$ may feature in terms of the group's local action.

\begin{reptheorem}{thm:local_action_qz}
Let $H\le\Aut(T_{d})$ be non-discrete. If $H$ is locally
\begin{enumerate}[(i)]
 \item transitive then $\mathrm{QZ}(H)$ contains no inversion.
 \item semiprimitive then $\mathrm{QZ}(H)$ contains no non-trivial edge-fixating element.
 \item quasiprimitive then $\mathrm{QZ}(H)$ contains no non-trivial elliptic element.
 \item $k$-transitive $(k\in\bbN)$ then $\mathrm{QZ}(H)$ contains no hyperbolic element of length~$k$.
\end{enumerate}
\end{reptheorem}

More importantly, the proof of the above theorem suggests to use groups of the form $\bigcap_{k\in\bbN}\mathrm{U}_{k}(F^{(k)})$ for appropriate local actions $F^{(k)}\le\Aut(B_{d,k})$ in order to \emph{explicitly} construct non-discrete subgroups of $\Aut(T_{d})$ whose quasi-centers contain certain types of automorphisms. This leads to the following sharpness result.

\begin{reptheorem}{thm:local_action_qz_sharp}
There is $d\in\bbN_{\ge 3}$ and a closed, non-discrete, compactly generated subgroup of $\Aut(T_{d})$ which is locally
\begin{enumerate}[(i)]
 \item intransitive and contains a quasi-central inversion.
 \item transitive and contains a non-trivial quasi-central edge-fixating element.
 \item semiprimitive and contains a non-trivial quasi-central elliptic element.
 \item\begin{enumerate}[(a),itemindent=-0.4cm]
        \item \hspace{-0.15cm} intransitive and contains a quasi-central hyperbolic element of length~$1$.
        \item \hspace{-0.15cm} quasiprimitive and contains a quasi-central hyperbolic element of length~$2$.
       \end{enumerate}
\end{enumerate}
\end{reptheorem}

Part (ii) of this theorem can be strengthened to the following result which shows that Burger--Mozes theory does not extend further to locally transitive groups.

\begin{reptheorem}{thm:qz_open}
There is $d\in\bbN_{\ge 3}$ and a closed, non-discrete, compactly generated, locally transitive subgroup of $\Aut(T_{d})$ with open, hence non-discrete, quasi-center.
\end{reptheorem}

We also give an algebraic characterization of the $(P_{k})$-closures of locally transitive subgroups of $\Aut(T_{d})$ which contain an involutive inversion, and thereby partially answer two questions by Banks--Elder--Willis \cite[p. 259]{BEW15}. Recall (Section \ref{sec:independence_simplicity}) that the $(P_{k})$-closure\footnote{The $(P_{k})$-closure was introduced in \cite{BEW15} and called $k$-closure; however the term $k$-closure has an established meaning for permutation groups due to Wielandt, so we use $(P_{k})$-closure here.} $(k\in\bbN)$
of a subgroup $H\le\Aut(T_{d})$ is given by
\begin{displaymath}
 H^{(P_{k})}=\{g\in\Aut(T_{d})\mid \forall x\in V(T_{d})\ \exists h\in H:\ g|_{B(x,k)}=h|_{B(x,k)}\}.
\end{displaymath}

\begin{reptheorem}{thm:k_closure_char}
Let $H\le\Aut(T_{d})$ be locally transitive and contain an involutive inversion. Then $\smash{H^{(P_{k})}=\mathrm{U}_{k}(F^{(k)})}$ for some labelling $l$ of $T_{d}$ and $F^{(k)}\le\Aut(B_{d,k})$.
\end{reptheorem}

Combined with the independence properties $(\!P_{k}\!)$ $(k\!\in\!\bbN)$ (Section \ref{sec:independence_simplicity}), introduced by Banks--Elder--Willis \cite{BEW15} as generalizations of Tits' Independence Property, Theorem \ref{thm:k_closure_char} entails the following characterization of universal groups.

\begin{repcorollary}{cor:ukf_char}
Let $H\le\Aut(T_{d})$ be closed, locally transitive and contain an involutive inversion. Then $H=\mathrm{U}_{k}(F^{(k)})$ if and only if $H$ satisfies Property $(P_{k})$.
\end{repcorollary}

Banks--Elder--Willis use subgroups of $\Aut(T_{d})$ with pairwise distinct $(P_{k})$-closures to construct infinitely many, pairwise non-conjugate, non-discrete simple subgroups of $\Aut(T_{d})$ via Theorem \ref{thm:bew_simplicity} and ask whether they are also pairwise non-isomorphic as topological groups. We partially answer this question in the following theorem.

\begin{reptheorem}{thm:bew_non_isomorphism}
Let $H\le\Aut(T_{d})$ be non-discrete, locally permutation isomorphic to $F\le\Sym(\Omega)$ and contain an involutive inversion. Suppose that $F$ is transitive and that every non-trivial subnormal subgroup of $F_{\omega}$ $(\omega\!\in\!\Omega)$ is transitive on $\Omega\backslash\{\omega\}$. If $H^{(P_{k})}\neq H^{(P_{l})}$ for some $k,l\in\bbN$ then $(H^{(P_{k})})^{+_{k}}$ and $(H^{(P_{l})})^{+_{l}}$ are non-isomorphic.
\end{reptheorem}

Infinitely many families of pairwise non-isomorphic simple groups of this type, each sharing a certain transitive local action, are constructed in Example \ref{ex:bew_example}.

\vspace{0.2cm}
Finally, Section~\ref{sec:view_weiss} offers a new view on the Weiss conjecture \cite{Wei78} which states that there are only finitely many conjugacy classes of discrete, locally primitive and vertex-transitive subgroups of $\Aut(T_{d})$ for a given $d\in\bbN_{\ge 3}$. This conjecture was extended by Poto{\v{c}}nik--Spiga--Verret in \cite{PSV12} to semiprimitive local actions and impressive partial results have been obtained by the same authors as well as Giudici--Morgan \cite{GM14}. We show that under the additional assumption that each group contains an involutive inversion, it suffices to show that for every semiprimitive $F\le\Sym(\Omega)$ there are only finitely many $\smash{\widetilde{F}\le\Aut(B_{d,k})}$ $(k\in\bbN)$ with $\smash{\pi\widetilde{F}=F}$ that satisfy conditions (C) and (D) in a minimal fashion, see Definition~\ref{def:cd_dimension}.

\subsection*{Acknowledgements}

The author is indebted to Marc Burger and George Willis for their support and the suggestion to define generalized universal groups. Thanks are also due to Luke Morgan and Michael Giudici for sharing their insight on permutation groups, and Michael Giudici, for providing a proof of Lemma \ref{lem:2tran_ext}. Two anonymous referees' comments were also much appreciated.

A good part of this research was carried out during visits to The University of Newcastle, Australia, for the hospitality of which the author is thankful. Finally, part of this research was supported by the SNSF Doc.Mobility fellowship 172120 as well as the ARC grants DP120100996, FL170100032 and DE210100180.

%\newpage
\section{Preliminaries}

This section collects preliminaries on permutation groups, graph theory and Burger--Mozes theory. References are given in the respective subsection.

\subsection{Permutation Groups}\label{sec:permutation_groups}

Let $\Omega$ be a set. In this section, we collect definitions and results concerning the group $\Sym(\Omega)$ of bijections of $\Omega$. Refer to \cite{DM96}, \cite{Pra97}, \cite{GM18} and \cite[Section 1.2]{KM08} for details beyond the following.

\vspace{0.2cm}
Let $F\le\Sym(\Omega)$. The \emph{degree} of $F$ is $|\Omega|$. For $\omega\in\Omega$, the \emph{stabilizer} of $\omega$ in $F$ is $F_{\omega}:=\{\sigma\in F\mid \sigma\omega=\omega\}$. The subgroup of $F$ generated by its point stabilizers is denoted by $F^{+}:=\langle\{F_{\omega}\mid\omega\in\Omega\}\rangle$. The permutation group $F$ is \emph{semiregular}, or \emph{free}, if $F_{\omega}=\{\id\}$ for all $\omega\in\Omega$; equivalently, if $F^{+}$ is trivial. It is \emph{transitive} if its action on $\Omega$ is transitive, and \emph{regular} if it is both semiregular and transitive.

Let $F\le\Sym(\Omega)$ be transitive. The \emph{rank} of $F$ is the number $\rank(F):=|F\backslash\Omega^{2}|$ of orbits of the diagonal action $\sigma\cdot(\omega,\omega'):=(\sigma\omega,\sigma\omega')$ of $F$ on $\Omega^{2}$. Equivalently, $\rank(F)=|F_{\omega}\backslash\Omega|$ for all $\omega\in\Omega$. Note that the diagonal $\Delta(\Omega):=\{(\omega,\omega)\mid\omega\in\Omega\}$ is always an orbit of the diagonal action $F\curvearrowright\Omega^{2}$. The permutation group $F$ is \emph{$2$-transitive} if it acts transitively on $\Omega^{2}\backslash\Delta(\Omega)$. In other words, $\rank(F)=2$.

We now define several classes of permutation groups lying in between the classes of transitive and $2$-transitive permutation groups. Let $F\le\Sym(\Omega)$. A partition $\calP:\Omega=\bigsqcup_{i\in I}\Omega_{i}$ of $\Omega$ is \emph{preserved} by $F$, or \emph{$F$-invariant}, if for all $\sigma\in F$ we have $\{\sigma\Omega_{i}\mid i\in I\}=\{\Omega_{i}\mid i\in I\}$. The partitions $\Omega=\Omega$ and $\Omega=\bigsqcup_{\omega\in\Omega}\{\omega\}$ are \emph{trivial}. A map $a:\Omega\to F$ is \emph{constant with respect to $\calP$} if $a(\omega)=a(\omega')$ whenever $\omega,\omega'\in\Omega_{i}$ for some $i\in I$. The permutation group $F$ is \emph{primitive} if it is transitive and preserves no non-trivial partition of $\Omega$. Equivalently, $F$ is transitive and its point stabilizers are maximal subgroups. Given a normal subgroup $N$ of $F$, the partition of $\Omega$ into $N$-orbits is $F$-invariant. Consequently, every non-trivial normal subgroup of a primitive group is transitive. The permutation group $F$ is \emph{quasiprimitive} if it is transitive and all its non-trivial normal subgroups are transitive. Finally, $F$ is \emph{semiprimitive} if it is transitive and all its normal subgroups are either transitive or semiregular. The following implications among the above properties follow from the definitions. We list examples illustrating that each implication is strict. 
\begin{displaymath}
 \text{$2$-transitive}\ \Rightarrow\ \underset{\text{{\normalsize $A_{5}\curvearrowright A_{5}/D_{5}$}}}{\text{primitive}}\ \Rightarrow \underset{\text{{\normalsize $A_{5}\curvearrowright A_{5}/C_{5}$}}}{\text{quasiprimitive}} \Rightarrow\ \underset{\text{{\normalsize $C_{4}\unrhd C_{2}$}}}{\text{semiprimitive}}\ \Rightarrow\underset{\text{{\normalsize $D_{4}\unrhd C_{2}\!\times\! C_{2}$}}}{\text{transitive\phantom{p}}}
\end{displaymath}
Note that $A_{5}$ is simple and that $C_{5}\lneq D_{5}\lneq A_{5}$ is a non-maximal subgroup of $A_{5}$.

\subsubsection*{Permutation Topology}
Let $X$ be a set and $H\le\Sym(X)$. The basic open sets of the \emph{permutation topology} on $H$ are $U_{x,y}:=\{h\in H\mid \forall i\in\{1,\ldots,n\}:\ h(x_{i})=y_{i}\}$, where $n\in\bbN$ and $x=(x_{1},\ldots,x_{n}), y=(y_{1},\ldots,y_{n})\in X^{n}$. This turns $H$ into a Hausdorff, t.d. group and makes the action map $H\times X\to X$ continuous for the discrete topology on $X$. The group $H$ is discrete if and only if the stabilizer in $H$ of a finite subset of $X$ is trivial. It is compact if and only if it is closed and all its orbits are finite. Finally, $\Sym(X)$ is second-countable if and only if $X$ is countable. %See \cite[Section 1.2]{KM08} for more details.

\newpage
\subsection{Graph Theory}

We first recall Serre's \cite{Ser03} notation and definitions in the context of graphs and trees, and then collect generalities about automorphisms of trees. We conclude with an important simplicity criterion.

\subsubsection*{Definitions and Notation}
A \emph{graph} $\Gamma$ is a tuple $(V,E)$ consisting of a \emph{vertex set} $V$ and an \emph{edge set} $E$, together with a fixed-point-free involution of $E$, denoted by $e\mapsto\overline{e}$, and maps $o,t:E\to V$, providing the \emph{origin} and \emph{terminus} of an edge, such that $o(\overline{e})=t(e)$ and $t(\overline{e})=o(e)$ for all $e\in E$. Given $e\in E$, the pair $\{e,\overline{e}\}$ is a \emph{geometric edge}. For $x\in V$, we let $E(x):=o^{-1}(x)=\{e\in E\mid o(e)=x\}$ be the set of edges issuing from $x$. The \emph{valency} of $x\in V$ is $|E(x)|$. A vertex of valency $1$ is a \emph{leaf}. A \emph{morphism} between graphs $\Gamma_{1}=(V_{1},E_{1})$ and $\Gamma_{2}=(V_{2},E_{2})$ is a pair $(\alpha_{V},\alpha_{E})$ of maps $\alpha_{V}:V_{1}\to V_{2}$ and $\alpha_{E}:E_{1}\to E_{2}$ preserving the graph structure, i.e. $\alpha_{V}(o(e))=o(\alpha_{E}(e))$ and $\alpha_{V}(t(e))=t(\alpha_{E}(e))$ for all $e\in E$.

For $n\in\bbN$, let $\mathrm{Path}_{n}$ denote the graph with vertex set $\{0,\ldots,n\}$ and edge set $\smash{\{(k,k+1),\overline{(k,k+1)}\mid k\in\{0,\ldots,n-1\}\}}$. A \emph{path} of length $n$ in a graph $\Gamma$ is a morphism $\gamma$ from $\mathrm{Path}_{n}$ to $\Gamma$. It can be identified with $\smash{(e_{1},\ldots,e_{n})\in E(\Gamma)^{n}}$, where $e_{k}=\gamma((k-1,k))$ for $k\in\{1,\ldots,n\}$. In this case, $\gamma$ is a path \emph{from} $o(e_{1})$ \emph{to} $t(e_{n})$.

Similarly, let $\mathrm{Path}_{\bbN_{0}}$ and $\mathrm{Path}_{\bbZ}$ be the graphs with vertex sets $\bbN_{0}$ and $\bbZ$, and edge sets $\smash{\{(k,k+1),\overline{(k,k+1)}\mid k\in\bbN_{0}\}}$ and $\smash{\{(k,k+1),\overline{(k,k+1)}\mid k\in\bbZ\}}$ respectively. A \emph{half-infinite} path, or \emph{ray}, in a graph $\Gamma$ is a morphism $\gamma$ from $\mathrm{Path}_{\bbN_{0}}$ to $\Gamma$. It can be identified with $(e_{k})_{k\in\bbN}\in E(\Gamma)^{\bbN}$ where $e_{k}=\gamma((k-1,k))$ for $k\in\bbN$. In this case, $\gamma$ \emph{originates at}, or \emph{issues from}, $o(e_{1})$. An \emph{infinite path}, or \emph{line}, in a graph $\Gamma$ is a morphism from $\mathrm{Path}_{\bbZ}$ to $\Gamma$. A pair $\smash{(e_{k},e_{k+1})=(e_{k},\overline{e_{k}})}$ of edges in a path is a \emph{backtracking}. A graph is \emph{connected} if any two of its vertices can be joined by a path. The maximal connected subgraphs of a graph are its \emph{connected components}.

A \emph{forest} is a graph in which there are no non-backtracking paths $(e_{1},\ldots,e_{n})$ with $o(e_{1})=t(e_{n})$ $(n\in\bbN)$. Consequently, a morphism of forests is determined by the underlying vertex map. In particular, a path of length $n\in\bbN$ in a forest is determined by the images of the vertices of $\mathrm{Path}_{n}$.

A \emph{tree} is a connected forest. As a consequence of the above, the vertex set $V$ of a tree $T$ admits a natural metric: Given $x,y\in V$, define $d(x,y)$ as the minimal length of a path from $x$ to $y$. A tree in which every vertex has valency $d\in\bbN$ is \emph{$d$-regular}. It is unique up to isomorphism and denoted by $T_{d}$.

Let $T=(V,E)$ be a tree. For $S\subseteq V\cup E$, the \emph{subtree spanned by $S$} is the unique minimal subtree of $T$ containing $S$. For $x\in V$ and $n\in\bbN_{0}$, the subtree spanned by $\{y\in V\mid d(y,x)\le n\}$ is the \emph{ball} of radius $n$ around $x$, denoted by $B(x,n)$. Similarly, $S(x,n)=\{y\in V\mid d(y,x)\!=\! n\}$ is the \emph{sphere} of radius $n$ around $x$, and $E(x,n):=\{e\in E\mid d(o(e),x),d(t(e),x)\le n\}$. For a subtree $T'\subseteq T$, let $\pi:V\to V(T')$ denote the closest point projection, i.e. $\pi(x)=y$ whenever $d(x,y)=\min_{z\in V(T')}\{d(x,z)\}$. In the case of an edge $e=(x,y)\in E$, the \emph{half-trees} $T_{x}$ and $T_{y}$ are the subtrees spanned by $\pi^{-1}(x)$ and $\pi^{-1}(y)$ respectively.

Two non-backtracking rays $\gamma_{1},\gamma_{2}:\mathrm{Path}_{\bbN}\to T$ in $T$ are \emph{equivalent}, $\gamma_{1}\sim\gamma_{2}$, if there exist $N,d\in\bbN$ such that $\gamma_{1}(n)=\gamma_{2}(n+d)$ for all $n\ge N$. The \emph{boundary}, or \emph{set of ends}, of $T$ is the set $\partial T$ of equivalence classes of non-backtracking rays in $T$.

\vspace{-0.09cm}
\subsubsection*{Automorphism Groups of Graphs}\label{sec:automorphism_groups}
%Let $\Gamma=(V,E)$ be a graph. The group $\Aut(\Gamma)$ of automorphims of $\Gamma$ is our foremost concern. Throughout, we equip $\Aut(\Gamma)$ with the permutation topology for its action on $V\cup E$.
Let $\Gamma=(V,E)$ be a graph. We equipt the group $\Aut(\Gamma)$ of automorphims of $\Gamma$ with the permutation topology for its action on $V\cup E$.

\vspace{0.2cm}
\paragraph*{Notation}
Let $H\le\Aut(\Gamma)$. Given a subgraph $\Gamma'\subseteq\Gamma$, the \emph{pointwise stabilizer} of $\Gamma'$ in $H$ is denoted by $H_{\Gamma'}$\index{$H_{\Gamma'}$}. Similary, the \emph{setwise stabilizer} of $\Gamma'$ in $H$ is denoted by $H_{\{\Gamma'\}}$\index{$H_{\{\Gamma'\}}$}. In the case where $\Gamma'$ is a single vertex $x$, the permutation group that $H_{x}$ induces on $E(x)$ is denoted by $\smash{H_{x}^{(1)}\le\Sym(E(x))}$\index{$H_{x}^{1}$}. Given a property ``X'' of permutation groups, the group $H$ is \emph{locally} ``X''\index{locally ``X''} if for every $x\in V$ the permutation group $\smash{H_{x}^{(1)}}$ has ``X''; with the exception that $H$ is \emph{locally $k$-transitive} $(k\in\bbN_{\ge 3})$ if $H_{x}$ acts transitively on the set of non-backtracking paths of length $k$ issuing from~$x$. It is \emph{locally $\infty$-transitive} if it is locally $k$-transitive for all $k\in\bbN$.

Let $d\in\bbN_{\ge 3}$ and $T_{d}=(V,E)$ the $d$-regular tree. Then $\Aut(T_{d})$ acts on $\partial T_{d}$ by $g\cdot[\gamma]:=[g\circ\gamma]$. Given $[\gamma]\in\partial T_{d}$, the \emph{stabilizer} of $[\gamma]$ in $H$ is $H_{[\gamma]}=\{h\!\in\! H\mid h\circ\gamma\sim\gamma\}$\index{$H_{\omega}$}.

We let $\tensor[^{+}]{H}{}\!=\!\langle \{H_{x}\!\mid\! x\in V\}\rangle$\index{$\tensor[^{+}]{H}{}$} denote the subgroup of $H$ generated by vertex-stabilizers and $H^{+}\!=\!\langle\{H_{e}\!\mid\! e\in E\}\rangle$\index{$H^{+}$} the subgroup generated by edge-stabilizers. For a subtree $T\subseteq T_{d}$ and $k\in\bbN$, let $T^{k}$\index{$T^{k}$} denote the subtree of $T_{d}$ spanned by $\{x\in V\mid d(x,T)\le k\}$. We set $H^{+_{k}}=\langle\{H_{e^{k-1}}\!\mid\! e\in E\}\rangle$\index{$H^{+_{k}}$}. Then $H^{+_{1}}=H^{+}$ and 
\begin{displaymath}
\smash{H^{+_{k}}\unlhd H^{+}\unlhd\tensor[^{+}]{H}{}\unlhd H}.
\end{displaymath}

\paragraph*{Classification of Automorphisms} Automorphisms of $T_{d}$ can be distinguished into three distinct types. Refer to \cite[Section 6.2.2]{GGT18} for details.

For $g\!\in\!\Aut(T_{d})$, set $l(g)\!:=\!\min_{x\in V}d(x,gx)$ and $V(g)\!:=\!\{x\in V\!\mid\! d(x,gx)=l(g)\}$. If $l(g)=0$ then $g$ fixes a vertex. An automorphism of this kind is \emph{elliptic}. Suppose now that $l(g)>0$. If $V(g)$ is infinite then $g$ is \emph{hyperbolic}. Geometrically, it is a translation of \emph{length} $l(g)$ along the line in $T_{d}$ defined by $V(g)$. If $V(g)$ is finite then $l(g)=1$ and $g$ maps some edge $e\in E$ to $\overline{e}$, and is termed an \emph{inversion}.

\vspace{0.2cm}
\paragraph*{Independence and Simplicity}\label{sec:independence_simplicity}
The base case of the simplicity criterion presented below is due to Tits \cite{Tit70} and applies to sufficiently rich subgroups of $\Aut(T_{d})$. The generalized version is due to Banks--Elder--Willis \cite{BEW15}, see also \cite{GGT18}.

Let $C$ denote a path in $T_{d}$ (finite, half-infinite or infinite). For every $x\in V(C)$ and $k\in\bbN_{0}$, the pointwise stabilizer $H_{C^{k}}$ of $C^{k}$\index{$C^{k}$|seeonly{$T^{k}$}} induces an action $\smash{H_{C^{k}}^{(x)}\le\Aut(\pi^{-1}(x))}$ on $\pi^{-1}(x)$, the subtree spanned by those vertices of $T$ whose closest vertex in $C$ is~$x$. We therefore obtain an injective homomorphism
\begin{displaymath}
 \varphi_{C}^{(k)}:H_{C^{k}}\to\prod\nolimits_{x\in V(C)}H_{C^{k}}^{(x)}.
\end{displaymath}

\vspace{-0.1cm}
\noindent
A subgroup $H\le\Aut(T_{d})$ satisfies \emph{Property $(P_{k})$}\index{Property $(P_{k})$}\index{$(P_{k})$} $(k\in\bbN)$ if $\varphi_{C}^{(k-1)}$ is an isomorphism for every path $C$ in $T_{d}$. If $H\le\Aut(T_{d})$ is closed, it suffices to check the above properties in the case where $C$ is a single edge. For example, given a closed subgroup $H\le\Aut(T_{d})$, Property $(P_{k})$ is satisfied by its \emph{$(P_{k})$-closure}\index{$(P_{k})$-closure}
\begin{displaymath}
 H^{(P_{k})}=\{g\in\Aut(T_{d})\mid \forall x\in V(T_{d})\ \exists h\in H:\ g|_{B(x,k)}=h|_{B(x,k)}\}.
\end{displaymath}
%see \cite[Proposition 5.2, Corollary 6.4]{BEW15}. 

\begin{theorem}[{\cite[Theorem 7.3]{BEW15}}]\label{thm:bew_simplicity}
Let $H\le\Aut(T_{d})$. Suppose $H$ neither fixes an end nor stabilizes a proper subtree of $T_{d}$ setwise, and that $H$ satisfies Property $(P_{k})$. Then the group $H^{+_{k}}$ is either trivial or simple.
\end{theorem}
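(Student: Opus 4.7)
The plan is to adapt Tits' classical simplicity argument for $(P) = (P_1)$ \cite{Tit70} to the generalized setting of Property $(P_k)$. Assume $H^{+_k} \neq \{1\}$ and let $N \trianglelefteq H^{+_k}$ be a non-trivial normal subgroup. Since $H^{+_k} = \langle H_{e^{k-1}} \mid e \in E\rangle$ by definition, to show $N = H^{+_k}$ it suffices to establish $H_{e^{k-1}} \subseteq N$ for every edge $e = (x,y)$. Property $(P_k)$ applied to the single edge $C = e$ decomposes $H_{e^{k-1}}$ as a product of its half-tree factors, each of which coincides with the pointwise stabilizer $H_{T_x^{k-1}}$ or $H_{T_y^{k-1}}$; so it suffices in turn to prove $H_{T_x^{k-1}} \subseteq N$ for every half-tree.

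The hypotheses on $H$ supply hyperbolic elements of $H^{+_k}$ with sufficiently long translation length. Since $H$ fixes no end and setwise stabilizes no proper subtree of $T_d$, standard Bass--Serre theory \cite{Ser03} yields two hyperbolic elements of $H$ with disjoint pairs of fixed ends, whose products give hyperbolic elements of arbitrarily large translation length. A short commutator argument --- e.g.\ replacing a hyperbolic $h$ by $h_0 h h_0^{-1} h^{-1}$ for a suitable $h_0 \in H_{e^{k-1}}$ --- shifts such elements into $H^{+_k}$ without destroying hyperbolicity. Consequently, for any edge $e$ we can find $h \in H^{+_k}$ hyperbolic with axis through $e$ and translation length exceeding $2k$.

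The core is the commutator trick. Pick a non-trivial $n \in N$. After replacing $n$ by $h^m n h^{-m} \in N$ for large $m$, we may assume $n(T_x^{k-1}) \subseteq T_y$ with $d(n(T_x^{k-1}), T_x^{k-1}) \ge 2k$. Then for any $g \in H_{T_y^{k-1}}$ the conjugate $n g n^{-1}$ is supported deep inside $T_y$ while $g$ itself is supported deep inside $T_x$, so the commutator $[n,g] = (n g n^{-1}) g^{-1}$ lies in $N \cap H_{e^{k-1}}$ and its $T_x$-component under the $(P_k)$-decomposition equals exactly $g^{-1}$. Varying $g$ and combining with the symmetric construction (swapping $T_x$ and $T_y$) yields $H_{T_x^{k-1}}, H_{T_y^{k-1}} \subseteq N$, and hence $H_{e^{k-1}} \subseteq N$. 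Conjugating along a chain of hyperbolic elements in $H^{+_k}$ linking $e$ to any other edge --- available by the abundance of hyperbolics above --- transports this inclusion to every edge of $T_d$ by normality of $N$ in $H^{+_k}$.

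The main obstacle is the careful support-bookkeeping in the commutator step at scale $k$. Where Tits' original argument only needed to separate a single vertex from its image, here one must arrange that entire $(k-1)$-neighbourhoods lie in opposite half-trees of a common edge, which is exactly what forces the translation length to exceed $2k$ and makes the lemma on hyperbolic elements in $H^{+_k}$ essential. Property $(P_k)$ is invoked both to split commutators into their independent half-tree components and to recombine those components into full edge-neighbourhood stabilizers; the argument collapses for any strictly weaker independence property.
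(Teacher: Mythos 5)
This theorem is quoted by the paper from \cite{BEW15} without proof, so the comparison is against the Tits/Banks--Elder--Willis argument your proposal is explicitly modelled on. Your skeleton is right: reduce to $H_{T_x^{k-1}}\subseteq N$ via the $(P_k)$-splitting of $H_{e^{k-1}}$, produce an element $n\in N$ displacing the relevant half-tree, and run a commutator computation. But the decisive step is missing. From $[n,g]=(ngn^{-1})\cdot g^{-1}\in N$ and the fact that the two factors are the two $(P_k)$-components of $[n,g]$ you conclude that the component $g^{-1}$ lies in $N$. That is a non sequitur: a normal subgroup is not closed under taking components of a direct-product decomposition, and ``varying $g$'' or running the symmetric construction in the other half-tree only produces more elements of the same mixed form $(\text{something supported far away})\cdot g^{-1}$, never the factor $g^{-1}$ itself. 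This is precisely the point where Tits' infinite-product lemma is needed, and it is the place where the surjectivity of $\varphi_C^{(k-1)}$ onto the \emph{unrestricted} product is used in an essential way: for a hyperbolic $n\in N$ the supports $n^{i}(\supp(g))$, $i\ge 0$, are pairwise disjoint, so Property $(P_k)$ guarantees that $\phi(g):=\prod_{i\ge 0}n^{i}gn^{-i}$ is a genuine element of $H_{L^{k-1}}\le H_{e^{k-1}}\le H^{+_k}$ (where $L$ is the axis), and then
\[
 g^{-1}\;=\;\phi(g)^{-1}\cdot n\phi(g)n^{-1}\;=\;\bigl(\phi(g)^{-1}n\phi(g)\bigr)\cdot n^{-1}\;\in\;N,
\]
since $N$ is normalized by $\phi(g)\in H^{+_k}$ and contains $n$. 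Without this telescoping step the proof does not close.

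Two secondary issues. First, the infinite product forces $n$ to be hyperbolic (you need infinitely many pairwise disjoint translates of $\supp(g)$), but your reduction ``replace $n$ by $h^{m}nh^{-m}$'' cannot upgrade an elliptic $n\in N$: conjugation preserves the elliptic type, and an elliptic element fixing a large ball can never displace a half-tree off itself. The correct route is the standard lemma that a non-trivial subgroup normalized by a group with no invariant proper subtree and no fixed end itself contains hyperbolic elements (applied first to $H^{+_k}\unlhd H$, then to $N\unlhd H^{+_k}$). Second, a hyperbolic element whose axis crosses $e$ satisfies $n(T_x)\subsetneq T_x$ or $n(T_y)\subsetneq T_y$, never $n(T_x)\subseteq T_y$; the disjointness you need is obtained by taking $g$ supported on a branch hanging off the axis (equivalently, choosing the axis inside one half-tree, away from the other), so the support bookkeeping in your second paragraph needs to be reorganized around the axis rather than around the edge $e$.
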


\subsection*{Burger--Mozes Theory}\label{sec:bm_theory}

In \cite{BM00a}, Burger--Mozes develop a structure theory of certain locally quasiprimitive automorphism groups of graphs which resembles the theory of semisimple Lie groups. Their fundamental definitions are meaningful in the setting of t.d.l.c. groups. Let $H$ be a t.d.l.c. group. Define
\begin{displaymath}
 H^{(\infty)}:=\bigcap\{N\unlhd H\mid N \text{ is closed and cocompact in $H$}\},
\end{displaymath}
alternatively the intersection of all open finite-index subgroups of $H$, and
\begin{displaymath}
 \mathrm{QZ}(H):=\{h\in H\mid Z_{H}(h)\le H \text{ is open}\},
\end{displaymath}
the \emph{quasi-center} of $H$. Both $H^{(\infty)}$ and $\mathrm{QZ}(H)$ are topologically characteristic subgroups of $H$, i.e. they are preserved by continuous automorphisms of $H$. Whereas $H^{(\infty)}\le H$ is closed, the quasi-center need not be so.

%In $p$-adic semisimple algebraic groups, $H^{(\infty)}$ and $\mathrm{QZ}(H)$ play roles analogous to the connected component of the identity and the kernel of the adjoint representation% as \cite[Example 1.1.1.]{BM00a} shows.

Whereas for a general t.d.l.c. group $H$ nothing much can be said about the size of $H^{(\infty)}$ and $\mathrm{QZ}(H)$, Burger--Mozes show that good control can be obtained in the case of certain locally quasiprimitive automorphism groups of graphs. The following result summarizes their structure theory. It is a combination of Proposition 1.2.1, Corollary 1.5.1, Theorem 1.7.1 and Corollary 1.7.2 in \cite{BM00a}.

\begin{theorem}\label{thm:burger_mozes_structure_quasiprimitive}
Let $\Gamma$ be a locally finite, connected graph. Further, let $H\le\Aut(\Gamma)$ be closed, non-discrete and locally quasiprimitive. Then
\begin{enumerate}[(i)]
 \item $H^{(\infty)}$ is minimal closed normal cocompact in $H$,
 \item $\mathrm{QZ}(H)$ is maximal discrete normal, and non-cocompact in $H$, and
 \item $H^{(\infty)}\!/\mathrm{QZ}(H^{(\infty)})\!=\! H^{(\infty)}\!/(\mathrm{QZ}(H)\cap H^{(\infty)})$ admits minimal, non-trivial closed normal subgroups; finite in number, $H$-conjugate and topologically simple.
\end{enumerate}
If $\Gamma$ is a tree, and, in addition, $H$ is locally primitive then
\begin{enumerate}[(iv)]
 \item[(iv)] $H^{(\infty)}\!/\mathrm{QZ}(H^{(\infty)})$ is a direct product of topologically simple groups.
\end{enumerate}
\end{theorem}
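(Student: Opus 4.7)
The plan is to derive all four statements from a single central observation underlying Burger--Mozes's approach: for any closed normal subgroup $N\unlhd H$ and any vertex $x\in V(\Gamma)$, the image of $N_{x}$ under the local action map is normal in the quasiprimitive group $H_{x}^{(1)}$, and is therefore either trivial or transitive. By the connectedness of $\Gamma$ this dichotomy propagates globally: either $N$ acts trivially (forcing $N=\{\id\}$, as $H\le\Aut(\Gamma)$) or $N$ is edge-transitive, hence cocompact because vertex stabilizers in $H$ are compact.

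For (i), the above yields at once that every non-trivial closed normal subgroup of $H$ is cocompact; it then remains to verify that $H^{(\infty)}$ itself is cocompact (equivalently, non-trivial), which I would handle by a filter argument exploiting non-discreteness of $H$ together with compactness of the $H_{e}$, so that a descending net of closed edge-transitive normal subgroups cannot collapse to the identity. Minimality is then immediate from the definition. For (ii), the heart of the matter is that $\mathrm{QZ}(H)$ admits no non-trivial elliptic element: if $h\in\mathrm{QZ}(H)$ fixes a vertex $x$, the openness of $Z_{H}(h)$ forces $h$ to commute with a finite-index subgroup of $H_{x}$, and a calculation in the quasiprimitive group $H_{x}^{(1)}$ rules this out unless $h$ acts trivially on $E(x)$; iterating outward yields $h=\id$. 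Hence $\mathrm{QZ}(H)$ acts freely on vertices and is therefore discrete. Maximality among discrete normal subgroups is automatic, since any discrete normal subgroup of $H$ has open centralizer; non-cocompactness follows by combining the discreteness of $\mathrm{QZ}(H)$ with the non-discreteness of $H$.

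For (iii), I would pass to the quotient $H^{(\infty)}/\mathrm{QZ}(H^{(\infty)})$, which inherits local quasiprimitivity on a suitable quotient graph and has trivial quasi-center by construction. A Zorn-type argument on the lattice of closed normal subgroups produces minimal closed normal subgroups, which must be topologically simple by minimality. They are pairwise $H$-conjugate because the $H$-conjugate of such a subgroup is again minimal closed normal; distinct minimal closed normals commute pairwise, and cocompactness of their product forces finiteness of their number. Part (iv), under local primitivity on a tree, upgrades this picture to a direct product decomposition via the stronger rigidity afforded by primitive local actions, which ensures that the commuting minimal factors jointly exhaust the quotient. I expect the main obstacle to be the cocompactness of $H^{(\infty)}$ in (i): intersections of cocompact subgroups need not be cocompact in general, and ruling out collapse of the defining intersection genuinely requires the non-discreteness hypothesis on $H$ together with the fine local control supplied by quasiprimitivity.
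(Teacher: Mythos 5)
Your foundational dichotomy is wrong, and the error propagates. If a closed normal subgroup $N\unlhd H$ has trivial (or merely semiregular) local action at every vertex, the correct conclusion is that $N$ acts \emph{freely} on $E$ and is therefore \emph{discrete} --- not that $N$ is trivial: $N$ may consist entirely of hyperbolic elements. Non-trivial discrete closed normal subgroups genuinely occur for closed, non-discrete, locally quasiprimitive groups (this is what $\mathrm{QZ}(H)$ is in general; see Theorem~\ref{thm:local_action_qz_sharp}\ref{item:qz_sharp_k_transitive}\ref{item:qz_sharp_quasiprimitive_hyp} for an explicit example), so your claim that every non-trivial closed normal subgroup of $H$ is cocompact is false. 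The correct statement, around which the whole theorem is built, is the dichotomy of Proposition~\ref{prop:bm_1.2.1}\ref{item:bm_1.2.1_3}: a closed normal subgroup is either discrete and contained in $\mathrm{QZ}(H)$, or non-discrete, cocompact and containing $H^{(\infty)}$. Proving this --- and in particular the cocompactness of $H^{(\infty)}$, which you correctly flag as the main obstacle --- requires machinery your proposal never touches: centralizers and normalizers of subgroups $\Lambda\le\Aut(\Gamma)$ with $\Lambda\backslash\Gamma$ finite are discrete (Lemmas~\ref{lem:bm_1.3.3}--\ref{lem:bm_1.3.6}). The argument is then: minimal elements $M$ of the family of closed normal subgroups not acting freely on $E$ exist by a compactness/Zorn argument (Lemma~\ref{lem:bm_1.4.1}); and for any closed cocompact normal $N$ with $N\not\supseteq M$, the group $[N,M]\subseteq N\cap M$ acts freely by minimality of $M$, hence is discrete, hence $N$, $M$ and finally $H$ are discrete --- a contradiction. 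Thus $H^{(\infty)}=M$ in one stroke. Your descending-net argument handles only chains and cannot even show the family of closed cocompact normal subgroups is directed without this commutator lemma.

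The same machinery is what is missing in your treatment of (iii). Existence, minimality and topological simplicity of the factors do come from a Zorn-type argument relative to $\Lambda=\mathrm{QZ}(H^{(\infty)})$, but finiteness of their number is not ``forced by cocompactness of their product'': an infinite commuting family of closed normal subgroups can still generate a cocompact subgroup. The actual proof (Proposition~\ref{prop:bm_1.5.1}) first shows the set of minimal elements is countable using second countability and separability, then applies Baire's theorem to conclude that the normalizer of one of them is open, hence of finite index, and finishes with transitivity of the $H$-action and the orbit--stabilizer theorem. Your sketch of (ii) is closer to the mark, although the ``calculation in $H_x^{(1)}$'' excluding non-trivial elliptic quasi-central elements is precisely the content of \cite[Proposition 1.2.1(ii)]{BM00a} and needs to be carried out; the paper's softer route is that $\mathrm{QZ}(H)\backslash\Gamma$ is infinite whenever $H$ is non-discrete (Lemma~\ref{lem:bm_1.3.5}), so $\mathrm{QZ}(H)$ falls into the free-action branch of Lemma~\ref{lem:bm_1.4.2} and is discrete. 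Note finally that the paper does not reprove the quasiprimitive theorem at all: it cites \cite{BM00a} and instead proves the more general semiprimitive version (Theorem~\ref{thm:burger_mozes_structure_semiprimitive}) in Section~\ref{sec:bm_theory_semiprimitive}; that is the proof against which the above comparison is made.
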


\subsubsection*{Burger--Mozes Universal Groups}\label{sec:burger_mozes_groups}
The first introduction of Burger--Mozes universal groups in \cite[Section 3.2]{BM00a} was expanded in the introductory article \cite{GGT18}, which we follow closely. Most results are generalized in Section~\ref{sec:universal_groups}.

Let $\Omega$ be a set of cardinality $d\in\bbN_{\ge 3}$ and let $T_{d}=(V,E)$ denote the $d$-regular tree. A \emph{labelling} $l$ of $T_{d}$ is a map $l:E\to\Omega$ such that for every $x\in V$ the map $l_{x}:E(x)\to\Omega,\ e\mapsto l(e)$ is a bijection, and $l(e)\!=\!l(\overline{e})$ for all $e\in E$. The \emph{local action} $\sigma(g,x)\in\Sym(\Omega)$ of an automorphism $g\in\Aut(T_{d})$ at a vertex $x\in V$ is defined via
\begin{displaymath}
 \sigma:\Aut(T_{d})\times X\to\Sym(\Omega),\ (g,x)\mapsto\sigma(g,x):=l_{gx}\circ g\circ l_{x}^{-1}.
\end{displaymath}

\begin{definition}\label{def:u}
Let $F\le\Sym(\Omega)$ and $l$ a labelling of $T_{d}$. Define
\begin{displaymath}
 \mathrm{U}^{(l)}(F)\!:=\!\{g\in\Aut(T_{d})\mid\forall x\in V:\ \sigma(g,x)\in F\}.
\end{displaymath}
\end{definition}

The map $\sigma$ satisfies a \emph{cocycle identity}: For all $g,h\in\Aut(T_{d})$ and $x\in V$ we have $\sigma(gh,x)=\sigma(g,hx)\sigma(h,x)$.
As a consequence, $\mathrm{U}^{(l)}(F)$ is a subgroup of $\Aut(T_{d})$.

Passing to a different labelling amounts to passing to a conjugate of $\mathrm{U}^{(l)}(F)$ inside $\Aut(T_{d})$. We therefore omit the reference to an explicit labelling from here~on.

\begin{comment}
\begin{remark}\label{rem:uf_elements}
Let $F\le\Sym(\Omega)$. Elements of $\mathrm{U}(F)$ are readily constructed: Given $v,w\in V$ and $\tau\in F$, define $g:B(v,1)\to B(w,1)$ by setting $g(v)=w$ and $\sigma(g,v)=\tau$. Given a collection of permutations $(\tau_{\omega})_{\omega\in\Omega}$ such that $\tau(\omega)=\tau_{\omega}(\omega)$ for all $\omega\in\Omega$ there is a unique extension of $g$ to $B(v,2)$ such that $\sigma(g,v_{\omega})=\tau_{\omega}$, where $v_{\omega}\in S(v,1)$ is the unique vertex with $l(v,v_{\omega})=\omega$. Proceed iteratively.
\end{remark}
\end{comment}

The following proposition collects several basic properties of Burger--Mozes groups. We refer the reader to \cite[Section 6.4]{GGT18} for proofs.

\begin{proposition}\label{prop:uf_basic_properties}
Let $F\le\Sym(\Omega)$. The group $\mathrm{U}(F)$ is
\begin{enumerate}[(i)]
 \item closed in $\mathrm{Aut}(T_{d})$,
 \item vertex-transitive,
 \item\label{item:uf_comp_gen} compactly generated,
 \item\label{item:uf_local_f} locally permutation isomorphic to $F$,
 \item edge-transitive if and only if $F$ is transitive, and
 \item\label{item:uf_discrete} discrete if and only if $F$ is semiregular.
\end{enumerate}
\end{proposition}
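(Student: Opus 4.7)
The backbone of the proof is the standard correspondence between automorphisms of $T_d$ and compatible local-action data: any $g\in\Aut(T_d)$ is uniquely determined by its image $g(v)$ at one vertex $v$ together with the family $(\sigma(g,u))_{u\in V}$, subject to the edge-compatibility $\sigma(g,u)(\omega)=\sigma(g,u')(\omega)$ along every edge $(u,u')$ with $l(u,u')=\omega$; conversely, any assignment of local actions in $\Sym(\Omega)$ satisfying this condition, together with a choice of $g(v)$, lifts to a unique automorphism. The cocycle identity for $\sigma$ then confirms that $\mathrm{U}(F)$ is closed under products and inverses.

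For (i), the map $g\mapsto\sigma(g,x)$ depends only on $g|_{B(x,1)}$ and is therefore continuous from the permutation topology on $\Aut(T_d)$ to the discrete topology on $\Sym(\Omega)$, so $\mathrm{U}(F)=\bigcap_{x\in V}\sigma(\cdot,x)^{-1}(F)$ is closed as the preimage of the finite set $F$. For (ii), given $v,w\in V$, the constant assignment $\sigma(g,u)=\id\in F$ at every $u$ together with $g(v)=w$ is trivially compatible and defines an element of $\mathrm{U}(F)$ sending $v$ to $w$. For (iii), $\mathrm{U}(F)_v$ is closed in the compact stabilizer $\Aut(T_d)_v$ and hence compact; together with finitely many elements of $\mathrm{U}(F)$ sending $v$ to each of its neighbors (supplied by (ii)), $\mathrm{U}(F)_v$ generates $\mathrm{U}(F)$ by connectedness of $T_d$ and vertex-transitivity. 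For (iv), given $\tau\in F$, the constant assignment $\sigma(g,u)=\tau$ with $g(v)=v$ is trivially compatible and produces an element of $\mathrm{U}(F)_v$ with local action $\tau$ at $v$; together with the inclusion $\sigma(\mathrm{U}(F)_v,v)\subseteq F$, this yields the claimed permutation isomorphism.

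For (v), the ``if'' direction combines vertex-transitivity with (iv) at any vertex; conversely, any $g\in\mathrm{U}(F)$ sending an edge $e$ to $e'$ with common origin $v$ necessarily fixes $v$, so its local action in $F$ maps $l(e)$ to $l(e')$, making $F$ transitive. For (vi), assume first that $F$ is semiregular and that $g\in\mathrm{U}(F)$ fixes $B(v,1)$ pointwise. Then for every neighbor $w$ of $v$, the local action $\sigma(g,w)\in F$ fixes the label $l(v,w)$, so $\sigma(g,w)=\id$ by semiregularity; hence $g$ fixes $B(w,1)$, and iteration yields $g=\id$, proving discreteness. Conversely, if $F$ is not semiregular, pick $\omega_0\in\Omega$ and $\tau\in F_{\omega_0}\setminus\{\id\}$. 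For any $n\in\bbN$, select a vertex $u$ at distance $n+1$ from $v$ such that the edge from $u$ to its parent has label $\omega_0$ (possible since every label occurs exactly once at each vertex), and define $g$ by $g(v)=v$, $\sigma(g,x)=\tau$ for $x$ in the half-tree $T_u$ of this interface edge, and $\sigma(g,x)=\id$ elsewhere. Edge-compatibility holds because each region uses a constant permutation and at the interface $\id(\omega_0)=\omega_0=\tau(\omega_0)$; the resulting $g$ fixes $B(v,n)$ pointwise and satisfies $\sigma(g,u)=\tau\neq\id$, yielding a non-trivial element of $\mathrm{U}(F)_{B(v,n)}$ and hence non-discreteness.

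The most delicate step is the constructive half of (vi): one must locate a vertex whose incoming edge carries the specific label $\omega_0$ so that the interface between the identity and $\tau$ regions is compatible, and this requires exploiting the labelling convention at each vertex. All remaining verifications reduce to routine bookkeeping against the local-to-global correspondence and the cocycle identity.
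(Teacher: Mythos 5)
Your proof is correct, and its ingredients — the local-to-global correspondence with the edge-compatibility condition $\sigma(g,u)(\omega)=\sigma(g,u')(\omega)$, constant local actions for vertex-transitivity and part (iv), a compact-plus-finite generating set for (iii), and the interface construction switching from $\id$ to $\tau\in F_{\omega_0}\setminus\{\id\}$ across a far-away $\omega_0$-labelled edge for non-discreteness — are exactly the $k=1$ specializations of the arguments the paper gives for the generalized groups $\mathrm{U}_k(F)$ (Lemma \ref{lem:ext} and Propositions \ref{prop:ukf_basic_properties}, \ref{prop:ukf_local_f}, \ref{prop:ukf_discrete}), the paper itself deferring the $k=1$ proofs to \cite[Section 6.4]{GGT18}. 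No gaps; your choice of generating set in (iii) (neighbour-translations instead of the label-respecting inversions of Lemma \ref{lem:uid_fin_gen}) is an immaterial variation.
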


Part \ref{item:uf_comp_gen} of Proposition \ref{prop:uf_basic_properties} relies on the following result which we include for future reference. Given $x\in V$ and $\omega\in\Omega$, let $\smash{\iota_{\omega}^{(x)}\in\mathrm{U}(\{\id\})}$ denote the unique label-respecting inversion of the edge $e_{\omega}\in E$ with $o(e_{\omega})=x$ and $l(e_{\omega})=\omega$.

\begin{lemma}\label{lem:uid_fin_gen}
Let $x\in V$. Then $\mathrm{U}(\{\id\})=\langle\{\iota_{\omega}^{(x)}\mid\omega\in\Omega\}\rangle\cong\underset{\omega\in\Omega}{\bigast}\langle\iota_{\omega}^{(x)}\rangle\cong\underset{\omega\in\Omega}{\bigast}\bbZ\!/2\bbZ$.
%The group $\mathrm{U}(\{\id\})$ is finitely generated, namely $\mathrm{U}(\{\id\})\cong\overset{d}{\underset{i=1}{\bigast}}\bbZ\!/2\bbZ$.
\end{lemma}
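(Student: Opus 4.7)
The plan is to show that $\mathrm{U}(\{\id\})$ acts simply transitively on $V$, and then to use the tree structure of $T_d$ to identify elements of the group with reduced words in the generators $\iota_\omega^{(x)}$. Transitivity is already given by Proposition \ref{prop:uf_basic_properties}(ii). For freeness of the action, I would note that if $g\in\mathrm{U}(\{\id\})$ fixes a vertex $y$, then $g$ preserves the bijection $l_y:E(y)\to\Omega$ and hence fixes every neighbor of $y$; iterating outward along paths, $g$ fixes $V\cup E$ and thus equals the identity. Consequently the orbit map $g\mapsto gx$ is a bijection from $\mathrm{U}(\{\id\})$ onto $V$.

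The heart of the argument is the following computational lemma, proved by induction on $k$: for any $\omega_1,\ldots,\omega_k\in\Omega$ with $\omega_i\neq\omega_{i+1}$ for $1\le i<k$, the element
\[
 w:=\iota_{\omega_k}^{(x)}\iota_{\omega_{k-1}}^{(x)}\cdots\iota_{\omega_1}^{(x)}
\]
sends $x$ to the unique vertex at distance $k$ from $x$ along the non-backtracking path whose successive edge labels read $\omega_k,\omega_{k-1},\ldots,\omega_1$. For the inductive step, set $v_{k-1}:=w_{k-1}(x)$ and let $y$ denote the neighbor of $x$ via label $\omega_k$. The crucial observation is that $\iota_{\omega_k}^{(x)}$ is the unique label-preserving isometry exchanging the half-trees $T_x$ and $T_y$ of the edge $(x,y)$. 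By the induction hypothesis, the path from $x$ to $v_{k-1}$ starts with label $\omega_{k-1}$; the reduction assumption $\omega_{k-1}\neq\omega_k$ places $v_{k-1}$ in $T_x$, so $\iota_{\omega_k}^{(x)}$ sends it to the corresponding vertex of $T_y$ obtained by following the labels $\omega_{k-1},\ldots,\omega_1$ from $y$. Prepending the edge $(x,y)$ of label $\omega_k$ yields the claimed label sequence.

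With this lemma, the rest is formal. For the generation claim, given $g\in\mathrm{U}(\{\id\})$, let $\omega_1,\ldots,\omega_k$ be the labels of the non-backtracking path from $x$ to $gx$; then $\iota_{\omega_1}^{(x)}\cdots\iota_{\omega_k}^{(x)}$ sends $x$ to $gx$ by the lemma, and agrees with $g$ by freeness of the action. For the free product identification, the natural surjection $\bigast_{\omega\in\Omega}\langle\iota_\omega^{(x)}\rangle\to\mathrm{U}(\{\id\})$ is injective because every non-trivial reduced word has length $k\ge 1$ and hence maps to an automorphism displacing $x$ by exactly~$k$, which is non-trivial. Finally, each $\langle\iota_\omega^{(x)}\rangle$ is cyclic of order two since $\iota_\omega^{(x)}$ is an involution.

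The main delicacy is the half-tree bookkeeping in the inductive step: one must verify that $v_{k-1}$ lies in $T_x$ rather than $T_y$, and it is precisely the reduction hypothesis $\omega_{k-1}\neq\omega_k$ that ensures this. Without it, $\iota_{\omega_k}^{(x)}$ would shorten the path rather than extend it, reflecting the relation $(\iota_\omega^{(x)})^2=\id$ that is already built into the free product.
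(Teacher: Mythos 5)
Your proof is correct and follows essentially the same route as the paper: elements of $\mathrm{U}(\{\id\})$ are determined by where they send $x$, transitivity of the subgroup generated by the $\iota_{\omega}^{(x)}$ is obtained by composing generators along the label sequence of a reduced path, and the free product structure comes from the half-tree dynamics. The only cosmetic difference is that the paper cites the ping-pong lemma (with the tables $X_{\omega}=T_{t(e_{\omega})}$) where you unwind it into an explicit induction showing a reduced word of length $k$ displaces $x$ by exactly $k$; the content is the same.
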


\begin{proof}
Every element of $\mathrm{U}(\{\id\})$ is determined by its image on~$x$. Hence it suffices to show that $\smash{\langle\{\iota_{\omega}^{(x)}\mid\omega\in\Omega\}\rangle}$ is vertex-transitive and has the asserted structure. Indeed, let $y\in V\backslash\{x\}$, and let $\omega_{1},\ldots,\omega_{n}\in\Omega$ be the labels of the shortest path from $x$ to $y$. Then $\smash{\iota_{\omega_{1}}^{(x)}\circ\cdots\circ\iota_{\omega_{n}}^{(x)}}$ maps $x$ to $y$ as every $\smash{\iota_{\omega}^{(x)}}$ $(\omega\in\Omega)$ is label-respecting. Setting $X_{\omega}:=T_{t(e_{\omega})}$ we have $\iota_{\omega}(X_{\omega'})\subseteq X_{\omega}$ for all distinct $\omega,\omega'\in\Omega$. Hence the assertion follows from the ping-pong lemma.
\end{proof}

The name \emph{universal group} is due to the following maximality statement. Its proof, see \cite[Proposition 3.2.2]{BM00a}, should be compared with the proof of Theorem \ref{thm:ukf_universal}.

\begin{proposition}\label{prop:uf_universal}
Let $H\le\Aut(T_{d})$ be locally transitive and vertex-transitive. Then there is a labelling $l$ of $T_{d}$ such that $H\le\mathrm{U}^{(l)}(F)$ where $F\le\Sym(\Omega)$ is action isomorphic to the local action of $H$.
\end{proposition}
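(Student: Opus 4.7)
The plan is to construct a labelling $l$ shell-by-shell around a base vertex $x_0 \in V$, by simultaneously choosing, for each $x \in V$, an element $h_x \in H$ with $h_x x_0 = x$ (set $h_{x_0} := \id$). Fix an arbitrary bijection $l_{x_0} : E(x_0) \to \Omega$ and let $F := l_{x_0}\, H_{x_0}^{(1)}\, l_{x_0}^{-1} \le \Sym(\Omega)$; by vertex- and local transitivity, $F$ is permutation isomorphic to the local action of $H$ at every vertex. Define $l_x := l_{x_0} \circ h_x^{-1}|_{E(x)}$. If these $l_x$ assemble into a genuine labelling, then for any $h \in H$ and $x \in V$ with $y := hx$, the cocycle identity gives
\[
\sigma(h, x) = l_y \circ h \circ l_x^{-1} = l_{x_0} \circ (h_y^{-1} h h_x)|_{E(x_0)} \circ l_{x_0}^{-1} \in F
\]
since $h_y^{-1} h h_x \in H_{x_0}$, so $H \le \mathrm{U}^{(l)}(F)$.

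The main obstacle will be arranging the $h_x$ so that the local labellings $l_x$ glue, i.e., $l_x(\bar e) = l_y(e)$ whenever $e = (y, x)$. Unpacking this via $l_{x_0}$ shows the condition is equivalent to $h_y h_x^{-1}$ inverting the geometric edge $\{e, \bar e\}$. So the crux is to produce, for each edge, an element of $H$ inverting it.

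Both hypotheses together deliver this. Given adjacent $x, y \in V$, vertex-transitivity yields $h \in H$ with $hx = y$; since $hy$ is a neighbour of $y$, local transitivity at $y$ provides $h' \in H_y$ with $h'(hy) = x$, so that $g := h'h$ swaps $x$ and $y$, hence inverts the edge between them.

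With this lemma in hand, I would define the $h_x$ inductively on $d(x_0, x)$: for $x \in S(x_0, n+1)$ with parent $y \in S(x_0, n)$ along $e = (y, x)$, pick $g \in H$ inverting $\{e, \bar e\}$ and set $h_x := g h_y$. Then $h_x x_0 = g y = x$, and $h_y h_x^{-1} = g^{-1}$ again inverts $\{e, \bar e\}$, so that $l_x(\bar e) = l_{x_0}(h_x^{-1} \bar e) = l_{x_0}(h_y^{-1} e) = l_y(e)$ as required. Iterating shell by shell produces the desired labelling $l$, and $H \le \mathrm{U}^{(l)}(F)$ follows from the cocycle computation above.
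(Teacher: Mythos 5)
Your proposal is correct and follows essentially the same route as the paper's argument (which is Proposition 3.2.2 of Burger--Mozes): both obtain an edge-inverting element of $H$ for each edge by composing a vertex-transitive translate with a local-transitivity correction, use these inversions to propagate the labelling shell-by-shell from the base vertex, and then verify $H\le\mathrm{U}^{(l)}(F)$ by conjugating $h$ into $H_{x_{0}}$ via the cocycle identity. Your $h_{x}$ is exactly the product of edge inversions along the path from $x_{0}$ to $x$ that appears in the paper's verification step, so the two proofs coincide up to bookkeeping.
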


\begin{comment}
\begin{proof}
Fix $b\in V$ and a bijection $l_{b}:E(b)\to\Omega$. Then the local action of $H$ at $b$ is given by $F:=l_{b}\circ H_{b}|_{E(b)}\circ l_{b}^{-1}$. We now inductively define a legal labelling $l:E\to\Omega$ such that $\smash{H\le\mathrm{U}^{(l)}(F)}$. Set $l|_{E(b)}:=l_{b}$ and suppose inductively that $l$ is defined on $\smash{E(b,n):=\bigcup_{x\in B(b,n-1)}E(x)}$. To extend $l$ to $E(b,n+1)$, let $x\in S(b,n)$ and let $e_{x}\in E$ be the unique edge with $o(e_{x})=x$ and $d(b,t(e_{x}))+1=d(b,x)$. Since $H$ is vertex-transitive and locally transitive, there is an element $\iota_{e_{x}}\in H$ which inverts the edge $e_{x}$. Using $\iota_{e_{x}}$ we may extend $l$ to $E(x)$ by setting $l|_{E(x)}:=l\circ\iota_{e_{x}}$.

To check the inclusion $\smash{H\le\mathrm{U}^{(l)}(F)}$, let $x\in V$ and $h\in H$. If $(b,b_{1},\ldots,b_{n},x)$ and $(b,b_{1}',\ldots,b_{m}',h(x))$ denote the unique reduced paths from $b$ to $x$ and $h(x)$, then
\begin{displaymath}
 s:=\iota_{e_{b_{1}'}}\cdots\iota_{e_{b_{m}'}}\iota_{e_{h(x)}}\circ h\circ\iota_{x}\iota_{e_{b_{n}}}\cdots\iota_{e_{b_{2}}}\iota_{e_{b_{1}}}\in H_{b}
\end{displaymath}
and we have $\sigma(h,x)\!=\!\sigma(s,b)\in F$ by the cocycle identity satisfied by the map $\sigma$.
\end{proof}
\end{comment}

\newpage
\section{Structure Theory of locally semiprimitive groups}\label{sec:bm_theory_semiprimitive}

We generalize the Burger--Mozes theory of locally quasiprimitive automorphism groups of graphs to the semiprimitive case. While this adjustment of Sections 1.1 to 1.5 in \cite{BM00a} is straightforward and has been initiated in \cite[Section II.7]{Tor18} and \cite[Section 6.2]{CB19} we provide a full account for the reader's convenience.

\subsection{General Facts}

Let $\Gamma=(V,E)$ be a connected graph. We first collect a few general facts about several classes of subgroups of $\Aut(\Gamma)$ for future reference.

\begin{lemma}\label{lem:bm_1.3.0}
Let $H\le\Aut(\Gamma)$ be locally transitive. Then $\tensor[^{+}]{H}{}$ is geometric edge transitive and of index at most $2$ in $H$.
\end{lemma}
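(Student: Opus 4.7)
The plan is to proceed in two stages: first show that $\tensor[^{+}]{H}{}$ acts transitively on geometric edges, and then exploit this transitivity to bound the number of $\tensor[^{+}]{H}{}$-orbits on $V$, from which the index bound will follow by a short quotient argument.

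Geometric edge transitivity rests on a single local move. Suppose two edges $e,e'$ share an endpoint $v$. Local transitivity of $H_v$ on $E(v)$ provides an element $h \in H_v \subseteq \tensor[^{+}]{H}{}$ taking the edge of $\{e,\overline{e}\}$ issuing from $v$ to the edge of $\{e',\overline{e'}\}$ issuing from $v$; since automorphisms commute with edge reversal, $h$ then carries the entire geometric edge $\{e,\overline{e}\}$ to $\{e',\overline{e'}\}$. For arbitrary geometric edges $\{e,\overline{e}\}$ and $\{f,\overline{f}\}$, I would use connectedness of $\Gamma$ to pick a sequence of edges from $e$ to $f$ in which consecutive edges share a vertex, apply the local move at each shared vertex, and compose the resulting elements of $\tensor[^{+}]{H}{}$.

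For the index bound, note first that $\tensor[^{+}]{H}{}$ is normal in $H$ because $gH_x g^{-1} = H_{gx}$ for $g \in H$ and $x \in V$. Fix an edge $e_0$ with $o(e_0)=x_0$ and $t(e_0)=y_0$. Every vertex $z \in V$ is incident to some edge $f$; geometric edge transitivity provides $h \in \tensor[^{+}]{H}{}$ taking $\{e_0,\overline{e_0}\}$ to $\{f,\overline{f}\}$, whence $z \in h\{x_0,y_0\} \subseteq \tensor[^{+}]{H}{}\cdot x_0 \cup \tensor[^{+}]{H}{}\cdot y_0$. Thus $\tensor[^{+}]{H}{}$ has at most two vertex orbits. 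The induced $H$-action on the set of these orbits yields a homomorphism from $H$ to a group of order at most $2$; its kernel clearly contains $\tensor[^{+}]{H}{}$, and conversely any $g \in H$ preserving these orbits satisfies $g(x_0)=h(x_0)$ for some $h \in \tensor[^{+}]{H}{}$, so that $h^{-1}g \in H_{x_0} \subseteq \tensor[^{+}]{H}{}$ and hence $g \in \tensor[^{+}]{H}{}$. Therefore $H/\tensor[^{+}]{H}{}$ embeds into $\bbZ/2\bbZ$.

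No real obstacle arises. The only matters requiring care are the commutation of automorphisms with edge reversal (so that the local move genuinely sends geometric edges to geometric edges) and the twofold use of connectedness, both to chain the local moves together and to ensure every vertex is incident to some edge. The hypothesis of local transitivity enters precisely once, in the construction of the local move.
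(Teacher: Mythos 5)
Your proof is correct and follows essentially the same route as the paper's: the paper observes that $\tensor[^{+}]{H}{}_{x}=H_{x}$ makes $\tensor[^{+}]{H}{}$ locally transitive, hence geometric edge transitive by exactly the chaining-of-local-moves argument you spell out, and then deduces the index bound from having at most two vertex orbits. You have merely filled in the details (normality of $\tensor[^{+}]{H}{}$ and the action on the orbit set) that the paper leaves implicit.
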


\begin{proof}
Since $H$ is locally transitive, so is $\tensor[^{+}]{H}{}$ given that $\tensor[^{+}]{H}{}_{x}=H_{x}$ for all $x\in V$. Hence it is geometric edge transitive. In particular it has at most two vertex orbits which implies the second assertion.
\end{proof}

\begin{lemma}\label{lem:bm_1.3.1}
Let $H\!\le\!\Aut(\Gamma)$ and let $\Gamma'=(V',E')$ be a connected subgraph of $\Gamma$. Suppose $R\subseteq H$ is such that for every $x'\in V'$ and $e\in E(x')$ there is $r\in R$ such that $re\in E'$. Then $\Lambda:=\langle R\rangle$ satisfies $\bigcup_{\lambda\in\Lambda}\lambda\Gamma'=\Gamma$.
\end{lemma}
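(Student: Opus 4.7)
The plan is to set $\Gamma'':=\bigcup_{\lambda\in\Lambda}\lambda\Gamma'$ and exploit connectedness of $\Gamma$ by showing that $\Gamma''$ is a non-empty subgraph of $\Gamma$ which is \emph{edge-closed at its vertices}, in the sense that whenever $x\in V(\Gamma'')$ and $e\in E(x)$, we have $e\in E(\Gamma'')$ (and hence $t(e)\in V(\Gamma'')$ as well). Once this is established, starting from any vertex of $\Gamma'\subseteq\Gamma''$ and propagating along paths in $\Gamma$ using connectedness shows $V(\Gamma'')=V$ and $E(\Gamma'')=E$, i.e. $\Gamma''=\Gamma$.

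The bulk of the argument is the edge-closure property, and it is essentially a direct unwinding of the hypothesis on $R$. Given $x\in V(\Gamma'')$ and $e\in E(x)$, pick $\lambda\in\Lambda$ with $x\in\lambda V'$; then $x':=\lambda^{-1}x\in V'$ and $\lambda^{-1}e\in E(x')$. By hypothesis applied to the pair $(x',\lambda^{-1}e)$, there is $r\in R$ such that $r\lambda^{-1}e\in E'$, whence
\[
 e=\lambda r^{-1}(r\lambda^{-1}e)\in\lambda r^{-1}E'\subseteq E(\lambda r^{-1}\Gamma').
\]
Since $r\in R\subseteq\Lambda$ and $\lambda\in\Lambda$, we have $\lambda r^{-1}\in\Lambda$, so $e\in E(\Gamma'')$ and consequently $t(e)\in V(\Gamma'')$ as well.

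With this step in place, connectedness of $\Gamma$ finishes things: $\Gamma'$ is non-empty, so $V(\Gamma'')\neq\emptyset$; and if any vertex $y\in V$ is reached from a vertex of $\Gamma''$ by a path $(e_{1},\dots,e_{n})$ in $\Gamma$, the edge-closure property applied successively to $o(e_{1}),o(e_{2}),\dots$ places every $e_{i}$ and hence $y=t(e_{n})$ in $\Gamma''$. Thus $\Gamma''=\Gamma$. I do not expect a genuine obstacle here; the only mild care needed is to ensure both vertex-covering and edge-covering (not merely vertex-covering), which the formulation of the hypothesis in terms of edges $e\in E(x')$ handles automatically.
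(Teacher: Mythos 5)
Your proof is correct and takes essentially the same approach as the paper: the paper phrases the argument as an induction showing $B(\Gamma',n)\subseteq\bigcup_{\lambda\in\Lambda}\lambda\Gamma'$ for all $n$, while you phrase it as an edge-closure property plus propagation along paths, but the key step — transporting an edge at a vertex of $\lambda\Gamma'$ back to $\Gamma'$ via $\lambda^{-1}$, applying the hypothesis to obtain $r\in R$, and concluding the edge lies in $\lambda r^{-1}\Gamma'$ — is identical.
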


\begin{proof}
By assumption, $B(\Gamma',1)\subseteq\bigcup_{\lambda\in\Lambda}\lambda\Gamma'$. Now suppose $B(\Gamma',n)\subseteq\bigcup_{\lambda\in\Lambda}\lambda\Gamma'$ for some $n\in\bbN$. Let $x'\in V(B(\Gamma',n))$. Pick $\lambda\in\Lambda$ such that $\lambda(x')\!\in\! V'$. Since $\lambda$ induces a bijection between $E(x')$ and $E(\lambda(x'))$ we conclude that $B(\Gamma',n+1)\subseteq\bigcup_{\lambda\in\Lambda}\lambda\Gamma'$.
\end{proof}

Assume from now on that $\Gamma$ is a locally finite, connected graph.

% Every cofinite subgroup of $\Aut(\Gamma)$ contains a finitely generated cofinite subgroup.
\begin{lemma}\label{lem:bm_1.3.2}
Let $H\le\Aut(\Gamma)$. If $H\backslash\Gamma$ is finite then there is a finitely generated subgroup $\Lambda\le H$ such that $\Lambda\backslash\Gamma$ is finite.
\end{lemma}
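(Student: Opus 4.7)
My plan is to apply Lemma \ref{lem:bm_1.3.1} to a carefully chosen finite connected subgraph $\Gamma'$ together with a finite set $R\subseteq H$. The generating set $R$ will be obtained from the finitely many orbit representatives afforded by the hypothesis that $H\backslash\Gamma$ is finite, and local finiteness of $\Gamma$ will ensure there are only finitely many ``outgoing'' edges at the boundary of $\Gamma'$ to account for.

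First, using that $H\backslash\Gamma$ is finite (so that both $H\backslash V$ and $H\backslash E$ are finite), I would pick a finite set of vertices $S\subseteq V$ meeting every $H$-orbit on $V$, and a finite set of edges $T\subseteq E$ meeting every $H$-orbit on $E$. Since $\Gamma$ is connected, I can enlarge $S\cup\{o(e),t(e)\mid e\in T\}\cup T$ to a finite connected subgraph $\Gamma'=(V',E')$ of $\Gamma$; by construction $V'$ meets every $H$-orbit on $V$ and $E'$ meets every $H$-orbit on $E$.

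Next, using that $\Gamma$ is locally finite and that $V'$ is finite, the set $\smash{E'':=\bigcup_{x'\in V'}E(x')}$ is finite. For each $e\in E''$, the $H$-orbit of $e$ meets $E'$, so I may fix an $h_{e}\in H$ with $h_{e}e\in E'$. Setting $R:=\{h_{e}\mid e\in E''\}$ gives a finite subset of $H$ which satisfies the hypothesis of Lemma \ref{lem:bm_1.3.1}. Let $\Lambda:=\langle R\rangle$, which is finitely generated.

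Finally, Lemma \ref{lem:bm_1.3.1} yields $\bigcup_{\lambda\in\Lambda}\lambda\Gamma'=\Gamma$, so every vertex and every edge of $\Gamma$ is $\Lambda$-equivalent to an element of $V'\cup E'$; hence $\Lambda\backslash\Gamma$ is finite (with at most $|V'|+|E'|$ orbits). I do not anticipate a serious obstacle: the only point requiring a little care is checking that both vertex and edge orbit representatives can be accommodated within a single finite connected $\Gamma'$, which is immediate from connectedness of $\Gamma$ and finiteness of the chosen orbit representatives.
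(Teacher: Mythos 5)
Your proof is correct and follows essentially the same route as the paper: choose a finite connected subgraph $\Gamma'$ projecting onto $H\backslash\Gamma$, pick for each edge issuing from a vertex of $\Gamma'$ an element of $H$ moving it into $\Gamma'$, and apply Lemma \ref{lem:bm_1.3.1} to the finitely generated group these elements generate. The only difference is that you spell out explicitly why $\Gamma'$ can be taken finite and why the resulting generating set is finite (local finiteness), which the paper leaves implicit.
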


\begin{proof}
Let $\Gamma'=(V',E')\subseteq\Gamma$ be a connected subgraph which projects onto $H\backslash\Gamma$. For every $x'\in V'$ and $e\in E(x')$, pick $\lambda_{x',e}\in H$ such that $\lambda_{x',e}(e)\in E'$. Then $\Lambda:=\langle\{\lambda_{x',e}\mid x'\in X,\ e\in E(x')\}\rangle$ satisfies the conclusion by Lemma \ref{lem:bm_1.3.1}.
\end{proof}

% Every cofinite subgroup of $\Aut(\Gamma)$ has discrete centralizer.
\begin{lemma}\label{lem:bm_1.3.3}
Let $\Lambda\le\Aut(\Gamma)$. If $\Lambda\backslash\Gamma$ is finite then $Z_{\Aut(\Gamma)}(\Lambda)$ is discrete.
\end{lemma}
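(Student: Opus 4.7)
The plan is to exhibit a finite subset of $V\cup E$ whose pointwise stabilizer in $Z_{\Aut(\Gamma)}(\Lambda)$ is trivial; since the pointwise stabilizer of a finite set is open in the permutation topology on $\Aut(\Gamma)$, this forces the identity to be isolated in $Z_{\Aut(\Gamma)}(\Lambda)$, which by the characterization recalled in Section~\ref{sec:permutation_groups} amounts to discreteness.

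Concretely, because $\Lambda\backslash\Gamma$ is finite, I can choose a finite set $S\subseteq V\cup E$ that meets every $\Lambda$-orbit on $V\cup E$. Let $U_{S}\le\Aut(\Gamma)$ denote the pointwise stabilizer of $S$, which is open. The key step is to show
\begin{displaymath}
 Z_{\Aut(\Gamma)}(\Lambda)\cap U_{S}=\{\id\}.
\end{displaymath}
To see this, let $g\in Z_{\Aut(\Gamma)}(\Lambda)\cap U_{S}$ and let $y\in V\cup E$ be arbitrary. By the choice of $S$ there exist $s\in S$ and $\lambda\in\Lambda$ with $y=\lambda s$. Then, using $g\lambda=\lambda g$ and $g(s)=s$, we compute
\begin{displaymath}
 g(y)=g(\lambda s)=\lambda(g(s))=\lambda(s)=y,
\end{displaymath}
so $g$ fixes every element of $V\cup E$ and hence $g=\id$.

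The remaining observation is essentially bookkeeping: since $U_{S}$ is open in $\Aut(\Gamma)$, its intersection with $Z_{\Aut(\Gamma)}(\Lambda)$ is an open neighbourhood of the identity in the subspace topology, and this neighbourhood is trivial, so $Z_{\Aut(\Gamma)}(\Lambda)$ is discrete. There is no real obstacle here; the only minor subtlety is making sure $S$ is taken to meet every orbit on vertices \emph{and} every orbit on edges (both of which are finite in number by hypothesis), so that the commutation argument applies uniformly on $V\cup E$ and indeed pins down $g$ to the identity on both types of elements.
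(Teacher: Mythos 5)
Your proof is correct and is essentially the same as the paper's: both take a finite set meeting every $\Lambda$-orbit (the paper uses only edges, exploiting connectedness), observe that its pointwise stabilizer intersected with the centralizer is open, and use the commutation relation $g(\lambda s)=\lambda(g(s))$ to show that intersection is trivial.
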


\begin{proof}
Let $F\subseteq E$ be finite such that $\bigcup_{\lambda\in\Lambda}\lambda F=E$ and $U:=\Lambda_{F}\cap Z_{\Aut(\Gamma)}(\Lambda)$, which is open in $Z_{\Aut(\Gamma)}(\Lambda)$. Given that $U$ and $\Lambda$ commute, $U$ acts trivially on $E=\bigcup_{\lambda\in\Lambda}\lambda F$. Hence $U=\{\id\}$ and $Z_{\Aut(\Gamma)}(\Lambda)$ is discrete.
\end{proof}

\begin{lemma}\label{lem:bm_1.3.4}
Let $\Lambda_{1},\Lambda_{2}\le\Aut(\Gamma)$. If $\Lambda_{1}\backslash\Gamma$ is finite and $[\Lambda_{1},\Lambda_{2}]\le\Aut(\Gamma)$ is discrete then $\Lambda_{2}\le\Aut(\Gamma)$ is discrete.
\end{lemma}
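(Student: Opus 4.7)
The plan is to produce an open subgroup of $\Lambda_{2}$ that is trivial, thereby showing $\Lambda_{2}$ is discrete in $\Aut(\Gamma)$.

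First, I would apply Lemma~\ref{lem:bm_1.3.2} to $\Lambda_{1}$ to extract a finitely generated subgroup $\Lambda=\langle\lambda_{1},\ldots,\lambda_{n}\rangle\le\Lambda_{1}$ with $\Lambda\backslash\Gamma$ still finite, and then fix a finite set $F\subseteq E$ with $\Lambda F=E$. This reduction is harmless: $[\Lambda,\Lambda_{2}]\le[\Lambda_{1},\Lambda_{2}]$ is discrete as a subgroup of a discrete group, so I may apply the discreteness hypothesis to $[\Lambda,\Lambda_{2}]$. The point of passing to $\Lambda$ is that finite generation will let me close off a finite set under the action of generators in the next step.

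Next, using discreteness of $[\Lambda,\Lambda_{2}]$, pick a finite set $F'\subseteq V\cup E$ whose pointwise stabilizer in $[\Lambda,\Lambda_{2}]$ is trivial. Enlarge it to the still-finite set
\[
 \widetilde{F}:=F\cup F'\cup\bigcup\nolimits_{i=1}^{n}\lambda_{i}^{-1}(F'),
\]
and define the open subgroup $U:=(\Lambda_{2})_{\widetilde{F}}$ of $\Lambda_{2}$. The key claim is $U=\{\id\}$. Fix $u\in U$. For each generator $\lambda_{i}$ and each $f'\in F'$, a direct computation gives
\[
 [u,\lambda_{i}](f')=u\lambda_{i}u^{-1}\lambda_{i}^{-1}(f')=f',
\]
since $u$ fixes $\lambda_{i}^{-1}(f')\in\widetilde{F}$ and subsequently fixes $f'\in\widetilde{F}$. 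Thus $[u,\lambda_{i}]\in[\Lambda,\Lambda_{2}]$ fixes $F'$ pointwise, so $[u,\lambda_{i}]=\id$, i.e.\ $u$ commutes with each $\lambda_{i}$ and hence with all of $\Lambda$. Combining this with $u|_{F}=\id$, I get $u(\lambda f)=\lambda u(f)=\lambda f$ for every $f\in F$ and $\lambda\in\Lambda$, so $u$ fixes $\Lambda F=E$ pointwise and therefore $u=\id$ (a graph automorphism is determined by its action on the edge set).

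The only place where anything subtle occurs is arranging that $[u,\lambda_{i}]$ actually fixes $F'$; this is what forces the enlargement by the translates $\lambda_{i}^{-1}(F')$, and this is in turn why reducing to a \emph{finitely generated} $\Lambda$ via Lemma~\ref{lem:bm_1.3.2} is essential — only then is $\widetilde{F}$ finite so that $U$ is open in $\Lambda_{2}$. The rest is bookkeeping with commutators and the covering property $\Lambda F=E$.
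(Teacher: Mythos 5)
Your proof is correct and follows essentially the same route as the paper: reduce to a finitely generated $\Lambda\le\Lambda_{1}$ via Lemma~\ref{lem:bm_1.3.2}, use discreteness of the commutator group to produce an open subgroup of $\Lambda_{2}$ centralizing the generators, and then conclude by the covering argument $\Lambda F=E$. The only cosmetic difference is that the paper obtains the open subgroup as an intersection of centralizers and then cites Lemma~\ref{lem:bm_1.3.3}, whereas you realize it explicitly as the pointwise stabilizer of $\widetilde{F}$ and inline the argument of Lemma~\ref{lem:bm_1.3.3}.
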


\begin{proof}
Using Lemma \ref{lem:bm_1.3.2} pick $R\subseteq\Lambda_{1}$ such that $\langle R\rangle\backslash\Gamma$ is finite. As $[\Lambda_{1},\Lambda_{2}]\!\le\!\Aut(\Gamma)$ is discrete, there is an open subgroup $U\le\Lambda_{2}$ such that $[r,U]=\{e\}$ for all $r\in R$. That is, $U\le Z_{\Aut(\Gamma)}(\langle R\rangle)$. Hence $U$ is discrete by Lemma \ref{lem:bm_1.3.3}, and so is $\Lambda_{2}$.
\end{proof}

\begin{lemma}\label{lem:bm_1.3.5}
Let $H\le\Aut(\Gamma)$ be non-discrete. Then $\mathrm{QZ}(H)\backslash\Gamma$ is infinite.
\end{lemma}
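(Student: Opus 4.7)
The plan is to argue by contradiction: assume $\mathrm{QZ}(H)\backslash\Gamma$ is finite and derive discreteness of $H$. The strategy combines two observations: (a) finitely many quasi-central elements have an open common centralizer (intersection of finitely many open sets), and (b) a commuting pair in which one factor covers $\Gamma$ cofinitely forces the other to be discrete by Lemma \ref{lem:bm_1.3.4}.

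First, suppose $\mathrm{QZ}(H)\backslash\Gamma$ is finite. By Lemma \ref{lem:bm_1.3.2} applied to $\mathrm{QZ}(H)\le\Aut(\Gamma)$, there is a finitely generated subgroup $\Lambda\le\mathrm{QZ}(H)$, say $\Lambda=\langle\lambda_{1},\ldots,\lambda_{n}\rangle$, whose quotient $\Lambda\backslash\Gamma$ is still finite. Since each $\lambda_{i}\in\mathrm{QZ}(H)$, by definition $Z_{H}(\lambda_{i})$ is open in $H$; hence
\[
 Z_{H}(\Lambda)=\bigcap_{i=1}^{n}Z_{H}(\lambda_{i})
\]
is open in $H$ as a finite intersection of open subgroups.

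Next, observe that $[\Lambda,Z_{H}(\Lambda)]=\{\id\}$, which is trivially discrete in $\Aut(\Gamma)$. Since $\Lambda\backslash\Gamma$ is finite, Lemma \ref{lem:bm_1.3.4} applied to $\Lambda_{1}:=\Lambda$ and $\Lambda_{2}:=Z_{H}(\Lambda)$ yields that $Z_{H}(\Lambda)$ is discrete in $\Aut(\Gamma)$. But an open discrete subgroup of $H$ forces $H$ itself to be discrete (the trivial subgroup is open in $Z_{H}(\Lambda)$, hence in $H$), contradicting the hypothesis that $H$ is non-discrete.

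There is no real obstacle in this proof; the only point requiring a hint of care is that $\Lambda$ must be chosen \emph{finitely generated} so that $Z_{H}(\Lambda)$ remains open, which is precisely the content of Lemma \ref{lem:bm_1.3.2}. The remainder is a direct application of Lemma \ref{lem:bm_1.3.4} together with the elementary fact that a topological group containing an open discrete subgroup is discrete.
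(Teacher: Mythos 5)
Your proof is correct and follows essentially the same route as the paper: both use Lemma \ref{lem:bm_1.3.2} to extract a finitely generated $\Lambda\le\mathrm{QZ}(H)$ with $\Lambda\backslash\Gamma$ finite, observe that its centralizer in $H$ is open, and conclude that this open centralizer is discrete. The only cosmetic difference is that you invoke Lemma \ref{lem:bm_1.3.4} (in the degenerate case of a trivial commutator), whereas the paper cites Lemma \ref{lem:bm_1.3.3} directly, which is what Lemma \ref{lem:bm_1.3.4} reduces to anyway.
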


\begin{proof}
If $\mathrm{QZ}(H)\backslash\Gamma$ is finite, there is a finitely generated subgroup $\Lambda\le\mathrm{QZ}(H)$ such that $\Lambda\backslash\Gamma$ is finite as well by Lemma \ref{lem:bm_1.3.2}. Hence there is an open subgroup $U\le H$ with $U\le Z_{\Aut(\Gamma)}(\Lambda)$. Hence $U$ and thereby $H$ is discrete by Lemma~\ref{lem:bm_1.3.3}.
\end{proof}

% Every discrete cofinite subgroup of $\Aut(\Gamma)$ has discrete normalizer.
\begin{lemma}\label{lem:bm_1.3.6}
Let $\Lambda\!\le\!\Aut(\Gamma)$ be discrete. If $\Lambda\backslash\Gamma$ is finite then $N_{\Aut(\Gamma)}(\Lambda)$ is discrete.
\end{lemma}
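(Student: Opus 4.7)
The plan is to reduce the claim to Lemma~\ref{lem:bm_1.3.4} by exhibiting $\Lambda$ itself as a witness that forces $N_{\Aut(\Gamma)}(\Lambda)$ to be discrete. The key observation is that normalization turns $\Lambda$ into a ``rigid target'' for the commutator bracket: for every $\lambda\in\Lambda$ and $n\in N_{\Aut(\Gamma)}(\Lambda)$, the element $n\lambda^{-1}n^{-1}$ lies in $\Lambda$, hence $[\lambda,n]=\lambda\cdot(n\lambda^{-1}n^{-1})\in\Lambda$. This gives the containment
\[
 [\Lambda,\,N_{\Aut(\Gamma)}(\Lambda)]\ \le\ \Lambda.
\]

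Since $\Lambda\le\Aut(\Gamma)$ is assumed to be discrete, any subgroup of $\Lambda$ is discrete as a subspace of $\Aut(\Gamma)$: a neighbourhood $U$ of $\id$ in $\Aut(\Gamma)$ with $U\cap\Lambda=\{\id\}$ witnesses discreteness of every subset of $\Lambda$. In particular the commutator subgroup $[\Lambda,N_{\Aut(\Gamma)}(\Lambda)]$ is discrete.

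We now set $\Lambda_{1}:=\Lambda$ and $\Lambda_{2}:=N_{\Aut(\Gamma)}(\Lambda)$ and invoke Lemma~\ref{lem:bm_1.3.4}: the hypothesis ``$\Lambda_{1}\backslash\Gamma$ is finite'' is exactly the standing assumption of the lemma, and we have just verified that $[\Lambda_{1},\Lambda_{2}]\le\Aut(\Gamma)$ is discrete. The conclusion of Lemma~\ref{lem:bm_1.3.4} then yields discreteness of $\Lambda_{2}=N_{\Aut(\Gamma)}(\Lambda)$, which is what we want.

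I do not anticipate a real obstacle here; the lemma is essentially a one-line corollary of Lemma~\ref{lem:bm_1.3.4} once the containment $[\Lambda,N_{\Aut(\Gamma)}(\Lambda)]\le\Lambda$ is recorded. The only point worth being careful about is that ``discrete'' is meant in the subspace topology inherited from $\Aut(\Gamma)$, so that passing from $\Lambda$ to an arbitrary subgroup of it genuinely preserves discreteness; this is immediate from the characterization of discreteness via a single open identity neighbourhood trivializing intersection with the group.
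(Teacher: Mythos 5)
Your proof is correct and is exactly the paper's argument: the paper's entire proof reads ``Apply Lemma~\ref{lem:bm_1.3.4} to $\Lambda_{1}:=\Lambda$ and $\Lambda_{2}:=N_{\Aut(\Gamma)}(\Lambda)$,'' and you have simply spelled out the (routine) verification that $[\Lambda,N_{\Aut(\Gamma)}(\Lambda)]\le\Lambda$ is discrete so that the hypotheses of that lemma hold.
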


\begin{proof}
Apply Lemma \ref{lem:bm_1.3.4} to $\Lambda_{1}:=\Lambda$ and $\Lambda_{2}:=N_{\Aut(\Gamma)}(\Lambda)$.
\end{proof}

\subsection{Normal Subgroups}

Let $\Gamma=(V,E)$ denote a locally finite, connected graph. For closed subgroups $\Lambda\unlhd H$ of $\Aut(\Gamma)$ we define
\begin{displaymath}
 \calN_{\mathrm{nf}}(H,\Lambda)=\{N\unlhd H\mid \Lambda\le N\unlhd H,\ N \text{ is closed and does not act freely on } E\},
\end{displaymath}
the set of closed normal subgroups of $H$ which contain $\Lambda$ and do not act freely on $E$. The set $\calN_{\mathrm{nf}}(H,\Lambda)$ is partially ordered by inclusion. We let $\calM_{\mathrm{nf}}(H,\Lambda)\subseteq\calN_{\mathrm{nf}}(H,\Lambda)$ denote the set of minimal elements in $\calN_{\mathrm{nf}}(H,\Lambda)$.

\begin{lemma}\label{lem:bm_1.4.1}
Let $\Gamma\!=\!(V,E)$ be a locally finite, connected graph and $\Lambda\unlhd H\!\le\!\Aut(\Gamma)$. If $H\backslash\Gamma$ is finite and $H$ does not act freely on $E$ then $\calM_{\mathrm{nf}}(H,\Lambda)\neq\emptyset$.
\end{lemma}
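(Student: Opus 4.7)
My plan is to apply Zorn's lemma to the poset $\calN_{\mathrm{nf}}(H,\Lambda)$ ordered by reverse inclusion. The hypotheses ensure $H\in\calN_{\mathrm{nf}}(H,\Lambda)$, so this poset is non-empty, and for any descending chain $(N_i)_{i\in I}$ in it, the natural lower-bound candidate is $N:=\bigcap_{i\in I}N_i$, which is immediately closed in $\Aut(\Gamma)$, normal in $H$, and contains $\Lambda$.

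The crux is to show that $N$ does not act freely on $E$. I would first reduce the analysis to a single stabilizer: since $|H\backslash E|<\infty$, fix a finite set $F\subseteq E$ of $H$-orbit representatives. Each $N_i$ admits some $e_i\in E$ with $(N_i)_{e_i}\neq\{\id\}$; writing $e_i=h_if_i$ with $h_i\in H$ and $f_i\in F$, the $H$-normality of $N_i$ yields $(N_i)_{f_i}=h_i^{-1}(N_i)_{e_i}h_i\neq\{\id\}$. The sets $F_i:=\{f\in F\mid (N_i)_f\neq\{\id\}\}$ thus form a descending chain of non-empty subsets of the finite set $F$, hence stabilize at a non-empty $F_\infty\subseteq F$; fix any $f\in F_\infty$.

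It then remains to prove that $\bigl(\bigcap_i N_i\bigr)_f=\bigcap_i(N_i)_f\neq\{\id\}$. Here $\bigl((N_i)_f\bigr)_i$ is a descending chain of non-trivial closed subgroups of the compact open stabilizer $H_f\le\Aut(\Gamma)$. The intended strategy is a compactness argument: choose $g_i\in(N_i)_f\setminus\{\id\}$, extract a subnet $g_{i_\alpha}\to g$ convergent in the compact group $H_f$, and observe that $g$ lies in every $N_j$, since $N_j$ is closed and eventually contains $g_{i_\alpha}$; whence $g\in N$.

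The main obstacle is ensuring $g\neq\id$; this is genuine, since in a general compact profinite group a descending chain of non-trivial closed subgroups may well have trivial intersection (e.g.\ $p^n\bbZ_p\subseteq\bbZ_p$). The argument must therefore exploit the specific structure of the present setup, notably the $H$-normality of $(N_i)_f$ in $H_f$ together with the fact that the $g_i$ are graph automorphisms whose non-triviality is witnessed by moved edges in the $H$-orbit $H\cdot f$, to preclude a uniform collapse $g_i\to\id$ (one could for instance upgrade the selection of $g_i$ via normality, spreading non-trivial elements across the finite edge-orbit structure encoded by $F$). Once $(N)_f\neq\{\id\}$ is in hand, $N\in\calN_{\mathrm{nf}}(H,\Lambda)$ is a lower bound for the chain, and Zorn's lemma supplies a minimal element, i.e.\ an element of $\calM_{\mathrm{nf}}(H,\Lambda)$.
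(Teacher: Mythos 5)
Your setup (Zorn's lemma, non-emptiness via $H$ itself, reduction to a single edge among finitely many $H$-orbit representatives using normality) matches the paper's, but the crucial step is exactly the one you flag and then leave open: showing that $\bigcap_i (N_i)_f$ is non-trivial. Choosing arbitrary $g_i\in (N_i)_f\setminus\{\id\}$ and passing to a convergent subnet cannot work as stated, since the limit may well be the identity, and the suggested repair (``upgrade the selection of $g_i$ via normality, spreading non-trivial elements across the finite edge-orbit structure'') is not an argument. As written, the proposal does not prove the lemma.

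The missing idea is to witness non-triviality in a \emph{fixed finite quotient} rather than in the edge stabilizer itself. For each $N$ in the chain, one first finds an orbit representative $e$ such that the restriction $N|_{e^{1}}\le\Aut(e^{1})$ of $N_{e}$ to the $1$-ball around $e$ is non-trivial (this uses connectedness: a non-trivial element fixing an edge must act non-trivially on the $1$-neighbourhood of \emph{some} fixed edge), and by finiteness of $F$ and the chain condition a single $e$ works for all $N$ in the chain. Since $\Gamma$ is locally finite, $\Aut(e^{1})$ is a finite group, so the chain of non-trivial subgroups $N|_{e^{1}}$ has non-trivial intersection; fix $\alpha\neq\id$ in it. Now the fibers $N^{\alpha}=\{g\in N_{e}\mid g|_{e^{1}}=\alpha\}$ are non-empty compact subsets of $H_{e}$ with the finite intersection property, so $\bigcap_{N}N^{\alpha}\neq\emptyset$; any element of this intersection lies in $\bigcap_{N}N_{e}$ and is non-trivial because its restriction to $e^{1}$ is $\alpha$. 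This is precisely how the paper circumvents the collapse-to-identity problem that your compactness argument runs into: non-triviality is detected in a finite (hence discrete) quotient where it is stable under intersecting the chain, and compactness is used only to lift a \emph{prescribed} non-trivial local action, not to produce a non-trivial limit out of thin air.
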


\begin{proof}
We argue using Zorn's Lemma. First note that $\calN_{\mathrm{nf}}(H,\Lambda)$ is non-empty as it contains $H$. Let $C\subseteq\calN_{\mathrm{nf}}(H,\Lambda)$ be a chain. Pick a finite set $F\subseteq E$ of representatives of $H\backslash E$. For every $N\in C$, the set $F_{N}:=\{e\in F\mid N|_{e^{1}}\le\Aut(e^{1}) \text{ is non-trivial}\}$ is non-empty. Since $F$ is finite and $C$ is a chain it follows that $\bigcap_{N\in C}F_{N}$ is non-empty, i.e. there exists $e\in F$ such that $N|_{e^{1}}$ is non-trivial for every $N\in C$. As before, we conclude that $M:=\bigcap_{N\in C}N|_{e^{1}}$ is non-trivial. Now, for $\alpha\in M\backslash\{\id\}$ and $N\in C$, the set $N^{\alpha}:=\{g\in N_{e}\mid g|_{e^{1}}=\alpha\}$ is a non-empty compact subset of $H_{e}$, and since $C$ is a chain every finite subset of $\{N^{\alpha}\mid N\in C\}$ has non-empty intersection. Hence $\bigcap_{N\in C}N^{\alpha}$ is non-empty and therefore $N_{C}:=\bigcap_{N\in C}N$ is a closed normal subgroup of $H$ containing $\Lambda$ that does not act freely on $E$. Overall, $N_{C}\in\calM_{\mathrm{nf}}(H,\Lambda)$.
\end{proof}

The following lemma is contained in the author's PhD thesis \cite[Section II.7]{Tor18} and, independently, in Caprace-Le Boudec \cite[Section 6.2]{CB19}.

\begin{lemma}\label{lem:bm_1.4.2}
Let $\Gamma=(V,E)$ be a locally finite, connected graph. Further, let $H\le\Aut(\Gamma)$ be locally semiprimitive and $N\unlhd H$. Define
\begin{enumerate}
 \item[] $V_{1}:=\{x\in V\mid N_{x}\curvearrowright S(x,1) \text{ is transitive and not semiregular}\}$,
 \item[] $V_{2}:=\{x\in V\mid N_{x}\curvearrowright S(x,1) \text{ is semiregular}\}$.
\end{enumerate} 
Then one of the following holds.
\begin{enumerate}[(i)]
 \item\label{item:semiprimitive_normal_free} $V=V_{2}$ and $N$ acts freely on $E$.
 \item\label{item:semiprimitive_normal_tran} $V=V_{1}$ and $N$ is geometric edge transitive.
 \item $V=V_{1}\sqcup V_{2}$ is an $H$-invariant partition of $V$ and $B(x,1)$ is a fundamental domain for the action of $N$ on $\Gamma$ for any $x\in V_{2}$.
\end{enumerate}
\end{lemma}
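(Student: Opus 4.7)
The plan is to proceed in four steps.

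\textbf{Step 1 (partition and $H$-invariance).} For each $x \in V$, $N \unlhd H$ forces $N_x \unlhd H_x$, so the induced group $N_x^{(1)}$ on $S(x,1)$ is normal in the semiprimitive group $H_x^{(1)}$. Semiprimitivity then forces $N_x^{(1)}$ to be either transitive or semiregular on $S(x,1)$, placing $x$ in $V_1$ or $V_2$; these classes are disjoint by definition, so $V = V_1 \sqcup V_2$. The $H$-invariance of each class is immediate from the observation that conjugation by $h \in H$ intertwines $N_x \curvearrowright S(x,1)$ with $N_{hx} \curvearrowright S(hx,1)$, preserving both transitivity and semiregularity.

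\textbf{Step 2 (Cases (i) and (ii)).} For (i), $V = V_2$: any $n \in N$ fixing an edge $e = (x,y)$ lies in $N_x$ and fixes $y \in S(x,1)$, so semiregularity forces $n$ to fix every neighbor of $x$; applying the same argument at each neighbor and invoking connectedness of $\Gamma$ shows $n = \id$, so $N$ acts freely on $E$. For (ii), $V = V_1$: transitivity of $N_x$ at every vertex shows all edges at $x$ lie in one $N$-orbit, and chaining this along any path in $\Gamma$ via successive incident vertices yields a single geometric-edge orbit.

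\textbf{Step 3 (Case (iii)).} Assume $V_1, V_2$ are both non-empty. I first establish the structural claim that every $V_1$-vertex has all its neighbors in $V_2$: for $z \in V_1$, transitivity of $N_z$ puts all neighbors of $z$ into one $N$-orbit, hence into a single $V_i$; were it $V_1$, iterating the observation and invoking connectedness would force $V = V_1$, contradicting $V_2 \neq \emptyset$. Consequently every edge of $\Gamma$ has at least one endpoint in $V_2$, so the Case (i) stabilizer argument applied there yields $N_e = \{\id\}$ for each $e \in E$; thus $N$ acts freely on $E$. To see that $B(x_0,1)$ is a fundamental domain for $x_0 \in V_2$, I verify two things: every $v \in V$ is $N$-equivalent to some vertex of $B(x_0,1)$, by induction on $d(v,x_0)$ using transitivity of $N_z$ at a $V_1$-vertex $z$ on the geodesic from $x_0$ to $v$ to strictly shorten distance; and distinct vertices of $B(x_0,1)$ lie in distinct $N$-orbits, which follows from the $V_1/V_2$-distinction between $x_0$ and its $V_1$-neighbors, combined with the free $N$-action on $E$ controlling identifications among $N_{x_0}$-orbits on $S(x_0,1)$.

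\textbf{Main obstacle.} The crux is the inductive reduction when the geodesic from $x_0$ to $v$ begins with a $V_2$-$V_2$ edge: the transitivity needed to shorten distance is not immediately available. The plan is to exploit the structural claim together with $V_1 \neq \emptyset$ and connectedness to guarantee a $V_1$-vertex close enough along (or reachable by a short detour from) the geodesic, which then drives the induction. This mirrors the technique used in the quasiprimitive case of Burger--Mozes and is carried out in detail in \cite{Tor18} and \cite{CB19}, which I would follow.
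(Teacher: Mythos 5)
Your Steps 1 and 2 are fine and match the paper. The problems are in Step 3, and the ``main obstacle'' you flag at the end is not a technical wrinkle to be outsourced --- it is the actual content of case (iii), and the paper's resolution is different from (and much shorter than) the route you sketch. The key observation you are missing is that the \emph{ambient} group $H$ does the work: since $H$ is locally semiprimitive it is locally transitive, hence transitive on geometric edges (Lemma \ref{lem:bm_1.3.0}), hence has at most two vertex orbits, and any pair of adjacent vertices is a fundamental domain for $H\curvearrowright V$. When $V_{1}$ and $V_{2}$ are both non-empty they are non-empty $H$-invariant sets partitioning $V$, so they must be \emph{exactly} the two $H$-orbits, and adjacency forces the two endpoints of every edge into different classes. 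In particular $V_{2}$--$V_{2}$ edges simply do not exist, your ``short detour'' problem evaporates, and for $x\in V_{2}$ every leaf of $B(x,1)$ lies in $V_{1}$; Lemma \ref{lem:bm_1.3.1} applied with the transitivity of $N_{x'}$ at those leaves then gives $\bigcup_{n\in N}nB(x,1)=\Gamma$. Your structural claim that every $V_{1}$-vertex has all neighbours in $V_{2}$ is correct but insufficient, and without the $H$-orbit argument your induction genuinely does not close.

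Separately, Step 3 contains a false assertion: in case (iii) the group $N$ does \emph{not} act freely on $E$. Indeed $V_{1}\neq\emptyset$ means there is a vertex $z$ at which $N_{z}\curvearrowright S(z,1)$ is transitive but not semiregular, so some non-trivial element of $N$ fixes $z$ and a neighbour of $z$, i.e.\ fixes an edge. Your derivation of freeness fails precisely because the semiregularity propagation from a $V_{2}$-endpoint stops dead at the $V_{1}$-endpoint, where $N$ is not semiregular on the link. Since your verification that distinct vertices of $B(x_{0},1)$ lie in distinct $N$-orbits leans on this freeness, that part of the argument collapses as well. (For comparison, the paper only establishes the covering half of the fundamental-domain statement, via Lemma \ref{lem:bm_1.3.1}; the separation of the $N$-orbits of $x_{0}$ from those of its neighbours comes for free from the $H$-invariance of $V_{1}\sqcup V_{2}$, not from freeness on $E$.)
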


\begin{proof}
Since $H$ is locally semiprimitive and $N$ is normal in $H$, we have $V=V_{1}\sqcup V_{2}$. If $N$ does not act freely on $E$ then there is an edge $e\in E$ with $N_{e}\neq\{\id\}$ and an $N_{e}$-fixed vertex $x\in V$ for which $N_{x}\curvearrowright S(x,1)$ is not semiregular, hence transitive. That is, $V_{1}\neq\emptyset$. Now, either $V_{2}(N)=\emptyset$ in which case $N$ is locally transitive and we are in case \ref{item:semiprimitive_normal_tran}, or $V_{2}(N)\neq\emptyset$. Being locally transitive, $H$ acts transitively on the set of geometric edges and therefore has at most two vertex orbits. Given that both $V_{1}$ and $V_{2}$ are non-empty and $H$-invariant, they constitute exactly said orbits. Since any pair of adjacent vertices $(x,y)$ is a fundamental domain for the $H$-action on $V$, we conclude that if $y\in V_{2}$ then $x\in V_{1}$. Thus every leaf of $B(y,1)$ is in $V_{1}$ and we are in case (iii) by Lemma \ref{lem:bm_1.3.1}.
\end{proof}

\subsection{The Subquotient $H^{(\infty)}/\mathrm{QZ}(H^{(\infty)}$}

In this section, we achieve control over $H^{(\infty)}$ and $\mathrm{QZ}(H)$ as well as the normal subgroups of $H$ in the semiprimitive case. We then describe the structure of the subquotient $H^{(\infty)}/\mathrm{QZ}(H^{(\infty)})$. First, recall the following lemma from topological group theory.

\begin{lemma}\label{lem:discrete_normal_sub}
Let $G$ be a topological group. If $H\unlhd G$ is discrete then $H\subseteq\mathrm{QZ}(G)$.
\end{lemma}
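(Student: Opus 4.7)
The plan is to show directly that for each $h\in H$ the centralizer $Z_{G}(h)$ contains an open neighborhood of the identity; being a subgroup, it is then open in $G$, which is exactly the defining property of $\mathrm{QZ}(G)$.

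First I would exploit discreteness: since $H$ is discrete in $G$, there exists an open neighborhood $U$ of $e\in G$ with $U\cap H=\{e\}$. Next, fix $h\in H$ and consider the continuous commutator map
\begin{displaymath}
  \varphi_{h}:G\to G,\quad g\mapsto ghg^{-1}h^{-1}.
\end{displaymath}
Since $\varphi_{h}(e)=e$, continuity yields an open neighborhood $V$ of $e$ with $\varphi_{h}(V)\subseteq U$. The key observation is that normality of $H$ forces $\varphi_{h}(V)\subseteq H$: for any $g\in G$ we have $ghg^{-1}\in H$ and hence $ghg^{-1}h^{-1}\in H$. Combining the two inclusions gives $\varphi_{h}(V)\subseteq U\cap H=\{e\}$, i.e.\ $ghg^{-1}=h$ for every $g\in V$, so $V\subseteq Z_{G}(h)$.

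Finally, since $Z_{G}(h)$ is a subgroup of $G$ containing the open neighborhood $V$ of the identity, it is open in $G$. Hence $h\in\mathrm{QZ}(G)$, and as $h\in H$ was arbitrary, $H\subseteq\mathrm{QZ}(G)$. There is no real obstacle here; the only subtlety is remembering to use \emph{both} discreteness (to get the identity neighborhood meeting $H$ trivially) and normality (to keep the commutator inside $H$), which together convert the continuity of conjugation into the openness of the centralizer.
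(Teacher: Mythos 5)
Your proof is correct and is essentially the same argument as the paper's: the paper applies continuity to the conjugation map $g\mapsto ghg^{-1}$ viewed as a map into the discrete group $H$ and pulls back the open singleton $\{h\}$, while you phrase it via the commutator map and a neighbourhood of the identity meeting $H$ trivially, which is the same use of continuity, normality and discreteness. No gaps.
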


\begin{proof}
For $h\in H$, the map $\varphi_{h}:G\to H$, $g\mapsto ghg^{-1}$ is well-defined because $H\unlhd G$, and continuous. Hence there is an open set $U\subseteq G$ containing $1\in G$ and such that $\varphi_{h}(U)\subseteq\{h\}$, i.e. $U\subseteq Z_{G}(h)$.
\end{proof}

\begin{proposition}\label{prop:bm_1.2.1}
Let $\Gamma=(V,E)$ be a locally finite, connected graph. Further, let $H\le\Aut(\Gamma)$ be closed, non-discrete and locally semiprimitive. Then
\begin{enumerate}[(i)]
 \item\label{item:bm_1.2.1_1} $H/H^{(\infty)}$ is compact,
 \item\label{item:bm_1.2.1_2} $\mathrm{QZ}(H)$ acts freely on $E$, and is discrete non-cocompact in $H$,
 \item\label{item:bm_1.2.1_3} for any closed normal subgroup $N\unlhd H$, either $N$ is non-discrete cocompact and $N\unrhd H^{(\infty)}$, or $N$ is discrete and $N\unlhd\mathrm{QZ}(H)$,
 \item\label{item:bm_1.2.1_4} $\mathrm{QZ}(H^{(\infty)})=\mathrm{QZ}(H)\cap H^{(\infty)}$ acts freely on $E$ without inversions,
 \item\label{item:bm_1.2.1_5} for any open normal subgroup $N\unlhd H^{(\infty)}$ we have $N=H^{(\infty)}$, and
 \item\label{item:bm_1.2.1_6} $H^{(\infty)}$ is topologically perfect, i.e. $H^{(\infty)}=[H^{(\infty)},H^{(\infty)}]$.
\end{enumerate}
\end{proposition}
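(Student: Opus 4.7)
The overall strategy is to exploit Lemma~\ref{lem:bm_1.4.2} --- which places every normal subgroup of $H$ in one of three cases, the first of which is ``acts freely on $E$'' while the other two force a finite quotient graph --- together with Lemma~\ref{lem:discrete_normal_sub}, which places discrete normal subgroups into the quasi-center. Parts \ref{item:bm_1.2.1_1}--\ref{item:bm_1.2.1_3} are established first; the remaining items \ref{item:bm_1.2.1_4}--\ref{item:bm_1.2.1_6} then follow by applying \ref{item:bm_1.2.1_3} to suitable characteristic subgroups of $H^{(\infty)}$, supported by the non-discreteness of $H^{(\infty)}$.

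Part \ref{item:bm_1.2.1_1} is the standard fact that $H/H^{(\infty)}$ embeds into the product $\prod_{N} H/N$ indexed by closed cocompact normal subgroups, each factor being compact by assumption. For \ref{item:bm_1.2.1_2}, I apply Lemma~\ref{lem:bm_1.4.2} to the characteristic subgroup $\mathrm{QZ}(H)\unlhd H$: cases \ref{item:semiprimitive_normal_tran} and (iii) of that lemma would each force $\mathrm{QZ}(H)\backslash\Gamma$ finite, contradicting Lemma~\ref{lem:bm_1.3.5}. Hence $\mathrm{QZ}(H)$ acts freely on $E$, so edge stabilizers are trivial and the group is discrete; non-cocompactness follows from Lemma~\ref{lem:bm_1.3.5}. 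Part \ref{item:bm_1.2.1_3} follows the same pattern for a closed normal $N\unlhd H$: case \ref{item:semiprimitive_normal_free} gives $N$ discrete and hence, by Lemma~\ref{lem:discrete_normal_sub}, contained in $\mathrm{QZ}(H)$; in the remaining cases $N$ is cocompact (so contains $H^{(\infty)}$ by definition of the latter), and non-discrete, since otherwise $N\subseteq\mathrm{QZ}(H)$ would make $\mathrm{QZ}(H)$ cocompact, contradicting \ref{item:bm_1.2.1_2}. In particular $H^{(\infty)}$ itself is non-discrete.

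For \ref{item:bm_1.2.1_4}, the containment $\mathrm{QZ}(H^{(\infty)})\supseteq\mathrm{QZ}(H)\cap H^{(\infty)}$ is immediate from intersecting open centralizers with $H^{(\infty)}$. The reverse is the heart of the matter: the closure $\overline{\mathrm{QZ}(H^{(\infty)})}$ is closed normal in $H$ (characteristic in $H^{(\infty)}$) and lies inside $H^{(\infty)}$; applying \ref{item:bm_1.2.1_3}, the cocompact alternative would force $\overline{\mathrm{QZ}(H^{(\infty)})}=H^{(\infty)}$. Since the action on the discrete set $V\cup E$ is continuous, $\mathrm{QZ}(H^{(\infty)})$ has the same orbits as its closure, so $\mathrm{QZ}(H^{(\infty)})\backslash\Gamma$ would be finite; Lemma~\ref{lem:bm_1.3.2} then supplies a finitely generated $\Lambda\le\mathrm{QZ}(H^{(\infty)})$ with $\Lambda\backslash\Gamma$ finite, whose centralizer in $H^{(\infty)}$ is open (as a finite intersection of open centralizers of generators) and discrete by Lemma~\ref{lem:bm_1.3.3} --- contradicting non-discreteness of $H^{(\infty)}$. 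Thus $\overline{\mathrm{QZ}(H^{(\infty)})}$ is discrete and Lemma~\ref{lem:discrete_normal_sub} yields $\mathrm{QZ}(H^{(\infty)})\subseteq\mathrm{QZ}(H)\cap H^{(\infty)}$. The free action on $E$ is inherited from \ref{item:bm_1.2.1_2}; ruling out inversions requires a finer step, noting that an inversion $g$ of $e=(x,y)$ in $\mathrm{QZ}(H^{(\infty)})$ has order $2$ by freeness on $E$ and forces the open subgroup $Z_{H^{(\infty)}_{x}}(g)$ into $H^{(\infty)}_{e}$, which one then combines with the local semiprimitive action of $H_{x}$ on $E(x)$ to derive a contradiction.

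For \ref{item:bm_1.2.1_5}, given $N$ open normal in $H^{(\infty)}$, consider the closure $\overline{N_1}$ of its $H$-normal closure $N_1=\langle hNh^{-1}\mid h\in H\rangle$: it is closed normal in $H$, contained in $H^{(\infty)}$, and open in $H^{(\infty)}$ (since it contains $N$), hence clopen. Applying \ref{item:bm_1.2.1_3} and using non-discreteness of $H^{(\infty)}$ forces $\overline{N_1}=H^{(\infty)}$, and clopenness then gives $N_1=H^{(\infty)}$. The main obstacle is upgrading this to $N=H^{(\infty)}$, which requires exploiting the local semiprimitive structure via the finite-index normal subgroup $N\cap H^{(\infty)}_{x}\unlhd H^{(\infty)}_{x}$ to conclude that $N$ already contains its $H$-conjugates. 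Finally, for \ref{item:bm_1.2.1_6}, applying \ref{item:bm_1.2.1_3} to the characteristic subgroup $C:=\overline{[H^{(\infty)},H^{(\infty)}]}$ gives either $C=H^{(\infty)}$ (done) or $C$ discrete; in the latter case $H^{(\infty)}/C$ is a non-discrete abelian t.d.l.c.\ group which by \ref{item:bm_1.2.1_5} admits no proper open subgroup --- impossible for any non-trivial abelian t.d.l.c.\ group since compact open subgroups always exist and are normal.
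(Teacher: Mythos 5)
Your treatment of parts \ref{item:bm_1.2.1_2} and \ref{item:bm_1.2.1_3} matches the paper, and your route to \ref{item:bm_1.2.1_6} via van Dantzig is a legitimate alternative, but two of the six items have genuine gaps. The most serious is part \ref{item:bm_1.2.1_1}: the continuous injection $H/H^{(\infty)}\to\prod_{N}H/N$ does not yield compactness, because the image of such a map need not be closed (for instance $\mathbb{Z}\to\widehat{\mathbb{Z}}$ is a continuous injection into a compact group whose image is dense and non-compact; in general the intersection of a family of closed cocompact normal subgroups need not be cocompact). The actual content of \ref{item:bm_1.2.1_1} is that $H^{(\infty)}$ coincides with a \emph{single minimal} element $M$ of $\calN_{\mathrm{nf}}(H,\{\id\})$, which exists by the Zorn argument of Lemma~\ref{lem:bm_1.4.1} and is cocompact by Lemma~\ref{lem:bm_1.4.2}. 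One shows every $N\in\calN_{\mathrm{nf}}(H,\{\id\})$ contains $M$: otherwise $N\cap M$ acts freely on $E$ by minimality of $M$, so $[N,M]\subseteq N\cap M$ is discrete, whence $N$ and $M$ are discrete by Lemma~\ref{lem:bm_1.3.4} and so is $H$ by Lemma~\ref{lem:bm_1.3.6} --- a contradiction. This identification of $H^{(\infty)}$ with $M$ is the step your proposal skips, and it is also what supplies the finiteness of $H^{(\infty)}\backslash\Gamma$ on which your arguments for \ref{item:bm_1.2.1_4}--\ref{item:bm_1.2.1_6} rely.

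The second gap is part \ref{item:bm_1.2.1_5}: you prove only that the $H$-normal closure $N_{1}$ of $N$ equals $H^{(\infty)}$ and then state that descending from $N_{1}$ to $N$ ``requires exploiting the local semiprimitive structure'' --- but that descent is precisely the assertion to be proved, and no argument is given. The paper avoids the normal closure altogether: for $M\in\calM_{\mathrm{nf}}(H^{(\infty)},\{\id\})$ (non-empty by Lemma~\ref{lem:bm_1.4.1}, each member non-discrete since $\mathrm{QZ}(H^{(\infty)})$ acts freely on $E$), the intersection $N\cap M$ is open in $M$, hence non-discrete, hence does not act freely on $E$, hence equals $M$ by minimality; so $N$ contains the closed subgroup generated by all of $\calM_{\mathrm{nf}}(H^{(\infty)},\{\id\})$, which is normal in $H$ (the set is permuted by conjugation), non-discrete, and therefore equal to $H^{(\infty)}$ by \ref{item:bm_1.2.1_3}. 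Two minor further remarks: your ``finer step'' for ruling out inversions in \ref{item:bm_1.2.1_4} is only a gesture, not an argument; and the paper's proof of \ref{item:bm_1.2.1_6} is more direct than yours, since $[H^{(\infty)},H^{(\infty)}]$ is non-discrete immediately from Lemma~\ref{lem:bm_1.3.4} applied with $\Lambda_{1}=\Lambda_{2}=H^{(\infty)}$, so \ref{item:bm_1.2.1_3} applies to its closure without any appeal to \ref{item:bm_1.2.1_5}.
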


\begin{proof}
For \ref{item:bm_1.2.1_1}, let $N\unlhd H$ be closed and cocompact. Since $H$ is non-discrete, so is $N$ in view of Lemma \ref{lem:bm_1.3.6}. Hence $N\in\calN_{\mathrm{nf}}(H,\{\id\})$. Conversely, if $N\in\calN_{\mathrm{nf}}(H,\{\id\})$ then $N$ is cocompact in $H$ by Lemma \ref{lem:bm_1.4.2}. We conclude that $H^{(\infty)}=\bigcap\calN_{\mathrm{nf}}(H,\{\id\})$. This intersection is in fact given by a single minimal element of $\calN_{\mathrm{nf}}(H,\{\id\})$: Using Lemma \ref{lem:bm_1.4.1}, pick $M\in\calM_{\mathrm{nf}}(H,\{\id\})$, and let $N\in\calN_{\mathrm{nf}}(H,\{\id\})$. Suppose $N\not\supseteq M$. Because $M$ is minimal, $N\cap M$ acts freely on $E$. In particular, $N\cap M$ is discrete. Since both $N$ and $M$ are normal in $H$, we also have $N\cap M\supseteq[N,M]$ and hence $N$ and $M$ are discrete by Lemma \ref{lem:bm_1.3.4}. Then so is $H\subseteq N_{\Aut(\frakg)}(H)$ by Lemma \ref{lem:bm_1.3.6}. Overall, $\smash{H^{(\infty)}\!=\!M\!\in\!\calM_{\mathrm{nf}}(H,\{\id\})}$ and assertion now follows from Lemma \ref{lem:bm_1.4.2}.
 
As to \ref{item:bm_1.2.1_2}, the group $\mathrm{QZ}(H)$ is non-cocompact by Lemma \ref{lem:bm_1.3.5} and therefore acts freely on $E$ by Lemma \ref{lem:bm_1.4.2}. In particular, it is discrete.

For \ref{item:bm_1.2.1_3}, let $N\unlhd H$ be a closed normal subgroup. If $N$ acts freely on $E$, then $N$ is discrete and hence contained in $\mathrm{QZ}(H)$ by Lemma \ref{lem:discrete_normal_sub}. If $N$ does not act freely on $E$ then $N$ is cocompact in $H$ by Lemma \ref{lem:bm_1.4.2} and therefore contains $H^{(\infty)}$.

Concerning \ref{item:bm_1.2.1_4} the inclusion $\mathrm{QZ}(H)\cap H^{(\infty)}\subseteq\mathrm{QZ}(H^{(\infty)})$ is automatic. Further, $\mathrm{QZ}(H^{(\infty)})$ is normal in $H$ because it is topologically characteristic in $H^{(\infty)}\unlhd H$. Therefore, if $\mathrm{QZ}(H^{(\infty)})\not\subseteq\mathrm{QZ}(H)$, then $\mathrm{QZ}(H^{(\infty)})$ is non-discrete by part \ref{item:bm_1.2.1_3} and does not act freely on $E$. Then $\mathrm{QZ}(H^{(\infty)})\backslash\Gamma$ is finite by Lemma \ref{lem:bm_1.4.2}, contradicting Lemma \ref{lem:bm_1.3.5} applied to $H^{(\infty)}$ which is non-discrete because $\mathrm{QZ}(H^{(\infty)})\le H^{(\infty)}$ is. Consequently, $\mathrm{QZ}(H^{(\infty)})\le\mathrm{QZ}(H)$ which proves the assertion.

For part \ref{item:bm_1.2.1_5}, note that $\calM_{\mathrm{nf}}(H^{(\infty)},\{\id\})$ is non-empty by Lemma \ref{lem:bm_1.4.1} as $H^{(\infty)}$ is cocompact in $\Aut(\Gamma)$ by part \ref{item:bm_1.2.1_1} and non-discrete by part \ref{item:bm_1.2.1_3}. Further, since $\mathrm{QZ}(H^{(\infty)})$ acts freely on $E$, every $N\in\calN_{\mathrm{nf}}(H^{(\infty)},\{\id\})$ is non-discrete by part \ref{item:bm_1.2.1_3} as well. Given an open subgroup $U\unlhd H^{(\infty)}$ and $N\in\calM_{\mathrm{nf}}(H^{\infty},\{\id\})$, the group $U\cap N$ is normal in $H^{(\infty)}$ and non-discrete. In particular, $U\cap N$ does not act freely on $E$ and hence $U\cap N=N$. Thus $U$ contains the subgroup of $H^{(\infty)}$ generated by the elements of $\calM_{\mathrm{nf}}(H^{(\infty)},\{\id\})$, which is closed, normal and non-discrete. Hence $U= H^{(\infty)}$.

As to \ref{item:bm_1.2.1_6}, the group $[H^{(\infty)},H^{(\infty)}]$ is non-discrete by part \ref{item:bm_1.2.1_1} and Lemma \ref{lem:bm_1.3.4}. Hence so is $\smash{\overline{[H^{(\infty)},H^{(\infty)}]}\unlhd H^{(\infty)}}$. Now apply part \ref{item:bm_1.2.1_3}.
\end{proof}

\begin{comment}
\begin{corollary}\label{cor:bm_1.2.2}
Let $\Gamma=(V,E)$ be a locally finite, connected graph. Further, let $H\le\Aut(\Gamma)$ be closed, non-discrete and locally semiprimitive. Finally, let $G\le H$ be closed with $H^{(\infty)}\le G$. Then
\begin{enumerate}[(i)]
 \item\label{item:bm_1.2.2_1} $G/\overline{[G,G]}$ is compact, and
 \item\label{item:bm_1.2.2_2} if $N\unlhd G$ is open then $N\ge H^{\infty}$. Hence $G^{(\infty)}=H^{(\infty)}$.
\end{enumerate}
\end{corollary}

\begin{proof}
Since $G\le H$ contains $H^{(\infty)}$, part \ref{item:bm_1.2.2_1} follows from parts \ref{item:bm_1.2.1_1} and \ref{item:bm_1.2.1_6} of Proposition \ref{prop:bm_1.2.1}. Part \ref{item:bm_1.2.1_2} follows from part \ref{item:bm_1.2.1_5} of Proposition \ref{prop:bm_1.2.1}.
\end{proof}

\begin{corollary}\label{cor:bm_1.2.3}
Let $\Gamma=(V,E)$ be a locally finite, connected graph. Further, let $H\le\Aut(\Gamma)$ be closed, non-discrete and locally semiprimitive. Then
\begin{enumerate}[(i)]
 \item\label{item:bm_1.2.3_1} if $H\gneq\tensor[^{+}]{H}{}$ and $(x,y)\in E$ then $\tensor[^{+}]{H}{}=H_{(x,y)}\cdot H^{(\infty)}$, and
 \item\label{item:bm_1.2.3_2} if $H=\tensor[^{+}]{H}{}$ there is $x\in V$ such that $\tensor[^{+}]{H}{}=H_{x}\cdot H^{(\infty)}$.
\end{enumerate}
\end{corollary}

% Problem with proof of cor:bm_1.2.3: Applying Bass-Serre theory to $\tensor[^{+}]{H}{}$ requires it to act without inversion on $\Gamma$. So $\Gamma$ must not contain $3$-cycles.
\end{comment}

\begin{proposition}\label{prop:bm_1.5.1}
Let $\Gamma=(V,E)$ be a locally finite, connected graph. Further, let $H\le\Aut(\Gamma)$ be a closed, non-discrete and locally semiprimitive. Finally, let $\Lambda\unlhd H$ such that $\Lambda\le\mathrm{QZ}(H^{(\infty)})$. Then the following hold.
\begin{enumerate}[(i)]
 \item\label{item:bm_1.5.1_1} \begin{enumerate}[(a)]
	  \item\label{item:bm_1.5.1_1_a} The group $H$ acts transitively on $\calM_{\mathrm{nf}}(H^{(\infty)},\Lambda)$.
	  \item\label{item:bm_1.5.1_1_b} The set $\calM_{\mathrm{nf}}(H^{(\infty)},\Lambda)$ is finite and non-empty.
       \end{enumerate}
 \item\label{item:bm_1.5.1_2} Let $M\in\calM_{\mathrm{nf}}(H^{(\infty)},\Lambda)$
 \begin{enumerate}[(a)]
  \item\label{item:bm_1.5.1_2_a} The group $M/\Lambda$ is topologically perfect.
  \item\label{item:bm_1.5.1_2_b} The group $\mathrm{QZ}(M)$ acts freely on $E$ and $\mathrm{QZ}(M)=\mathrm{QZ}(H^{(\infty)})\cap M$.
  \item\label{item:bm_1.5.1_2_c} The group $M/\mathrm{QZ}(M)$ is topologically simple.
 \end{enumerate}
 \item\label{item:bm_1.5.1_3} For every $N\in\calN_{\mathrm{nf}}(H^{(\infty)},\Lambda)$ there is $M\in\calM_{\mathrm{nf}}(H^{(\infty)},\Lambda)$ with $N\supseteq M$.
\end{enumerate}
\end{proposition}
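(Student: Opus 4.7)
The plan is to adapt the Burger--Mozes argument from \cite[Proposition 1.5.1]{BM00a} to the semiprimitive setting, leaning on the general facts of this section together with Proposition \ref{prop:bm_1.2.1}. The central device will be a commutator trick: two distinct minimal elements of $\calM_{\mathrm{nf}}$ intersect in a subgroup that minimality forces to act freely on $E$ and hence be discrete; Lemma \ref{lem:bm_1.3.4} will then propagate discreteness until it forces membership in $\mathrm{QZ}(H^{(\infty)})$, which already acts freely on $E$ by Proposition \ref{prop:bm_1.2.1}\ref{item:bm_1.2.1_4}, contradicting the defining condition of $\calN_{\mathrm{nf}}$.

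First I will settle non-emptiness of $\calM_{\mathrm{nf}}(H^{(\infty)},\Lambda)$ and part (iii) in one stroke. Since $H$ is locally transitive, Lemma \ref{lem:bm_1.3.0} makes $H\backslash\Gamma$ finite, and Proposition \ref{prop:bm_1.2.1}\ref{item:bm_1.2.1_1} transfers this to $H^{(\infty)}$; Proposition \ref{prop:bm_1.2.1}\ref{item:bm_1.2.1_3} renders $H^{(\infty)}$ non-discrete, so $H^{(\infty)}\in\calN_{\mathrm{nf}}(H^{(\infty)},\Lambda)$ and Lemma \ref{lem:bm_1.4.1} yields a minimal element. Replaying its Zorn/compactness argument inside the subcollection $\{N'\in\calN_{\mathrm{nf}}(H^{(\infty)},\Lambda):N'\subseteq N\}$ establishes part (iii). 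For part (i)(a), $H$ permutes $\calM_{\mathrm{nf}}$ by conjugation since it normalizes both $H^{(\infty)}$ and $\Lambda$. Supposing $M_1,M_2\in\calM_{\mathrm{nf}}$ lie in distinct $H$-orbits, the intersection $M_1\cap M_2$ is $H^{(\infty)}$-normal, contains $\Lambda$ and is strictly smaller than each $M_i$, so minimality forces it to act freely on $E$, making $[M_1,M_2]\subseteq M_1\cap M_2$ discrete. Passing to $\tilde M_1:=\overline{\langle hM_1h^{-1}\mid h\in H\rangle}\unlhd H$, which does not act freely on $E$ (as it contains $M_1$), Lemma \ref{lem:bm_1.4.2} gives $\tilde M_1\backslash\Gamma$ finite and Lemma \ref{lem:bm_1.3.2} supplies a finitely generated $\Delta\le\langle hM_1h^{-1}\mid h\in H\rangle$ of finite covolume. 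A commutator-identity reduction via $[ab,c]=[a,c]^b[b,c]$ expresses each $[\delta,m_2]$ as a product of terms $h[m_1,h^{-1}m_2h]h^{-1}\in h(M_1\cap h^{-1}M_2h)h^{-1}$; since $h^{-1}M_2h\neq M_1$ (different $H$-orbits), each factor $M_1\cap h^{-1}M_2h$ acts freely by minimality, is discrete, hence lies in $\mathrm{QZ}(H^{(\infty)})$ by Lemma \ref{lem:discrete_normal_sub}. Conjugation by $h\in H$ preserves $\mathrm{QZ}(H^{(\infty)})$, so $[\Delta,M_2]\subseteq\mathrm{QZ}(H^{(\infty)})$ is discrete, and Lemma \ref{lem:bm_1.3.4} then renders $M_2$ discrete, hence $M_2\le\mathrm{QZ}(H^{(\infty)})$ acts freely on $E$ (Proposition \ref{prop:bm_1.2.1}\ref{item:bm_1.2.1_4}), contradicting $M_2\in\calN_{\mathrm{nf}}$. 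For finiteness (i)(b), the orbit has size $[H:N_H(M)]$ with $N_H(M)\supseteq H^{(\infty)}$; openness of $N_H(M)$ in $H$---obtained from continuity of conjugation and local finiteness of $\Gamma$---together with compactness of $H/H^{(\infty)}$ makes $H/N_H(M)$ simultaneously compact and discrete, hence finite.

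Part (ii)(a) will then follow by applying the same scheme to $P:=\overline{[M,M]\Lambda}\unlhd H^{(\infty)}$: if $P\subsetneq M$ minimality makes $P$ act freely on $E$, and the Lemma \ref{lem:bm_1.3.4} device (using $\tilde M$ in place of $\tilde M_1$) renders $M$ discrete, contradicting $M\in\calN_{\mathrm{nf}}$. For (b), $\mathrm{QZ}(M)$ is topologically characteristic in the $H^{(\infty)}$-normal $M$, hence itself $H^{(\infty)}$-normal; if it failed to act freely on $E$ then $\overline{\mathrm{QZ}(M)\cdot\Lambda}$ would lie in $\calN_{\mathrm{nf}}(H^{(\infty)},\Lambda)$ strictly inside $M$ (equality being precluded because $\mathrm{QZ}(M)\cdot\Lambda\subseteq\mathrm{QZ}(H^{(\infty)})$ acts freely), contradicting minimality; this gives $\mathrm{QZ}(M)\le\mathrm{QZ}(H^{(\infty)})\cap M$ and the reverse inclusion is automatic. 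For (c), given a non-trivial closed $\tilde N\unlhd M/\mathrm{QZ}(M)$, I lift to $\mathrm{QZ}(M)\le N\unlhd M$ and form the $H^{(\infty)}$-normal closure $N^{\sharp}:=\overline{\langle hNh^{-1}\mid h\in H^{(\infty)}\rangle}\subseteq M$, which contains $\Lambda$; the dichotomy from (i)(a) and (b) forces $N^{\sharp}=M$ (the alternative would put $N^{\sharp}\subseteq\mathrm{QZ}(M)$ and trivialise $\tilde N$), after which a standard argument using that $\tilde N\unlhd\tilde M$ and $\tilde N^{\sharp}=\tilde M$ upgrades to $\tilde N=\tilde M$.

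The hard part throughout will be the discreteness of $[\Delta,M_2]$ in the transitivity argument: although each commutator $[hm_1h^{-1},m_2]$ sits in the discrete $hM_1h^{-1}\cap M_2$, the closed subgroup they collectively generate is not obviously discrete, and one must avoid closures when extracting $\Delta$---so that only finitely many conjugates of $M_1$ enter---and route each discrete $H^{(\infty)}$-normal piece through $\mathrm{QZ}(H^{(\infty)})$ via Lemma \ref{lem:discrete_normal_sub} to keep the whole commutator subgroup inside the (still discrete) freely acting quasi-center.
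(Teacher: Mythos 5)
Your overall strategy — minimality forces intersections of distinct minimal elements to act freely, hence commutators are discrete, hence lie in $\mathrm{QZ}(H^{(\infty)})$, and Lemma \ref{lem:bm_1.3.4} then propagates discreteness to a contradiction — is exactly the paper's, and your treatments of (i)(a), (ii)(a) and (iii) are sound (the paper packages your inline Lemma \ref{lem:bm_1.3.4} computation into a preliminary claim that $[H^{(\infty)},N]\not\subseteq\mathrm{QZ}(H^{(\infty)})$ for every $N\in\calN_{\mathrm{nf}}(H^{(\infty)},\Lambda)$, and uses that $\overline{M_{S}}=H^{(\infty)}$ for any $H$-orbit $S$, but the content is the same). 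However, your proof of finiteness in (i)(b) has a real gap: the openness of $N_{H}(M)$ does not follow from ``continuity of conjugation and local finiteness of $\Gamma$''. Normalizers of closed subgroups are closed, not open, and no local argument is available. The paper must first prove that $\calM_{\mathrm{nf}}(H^{(\infty)},\Lambda)$ is \emph{countable}, using second countability: a countable dense subgroup of the union of the $\overline{M_{S}}$ over finite $S$ is already contained in $\overline{M_{S_{\infty}}}$ for a countable $S_{\infty}$, which must then be all of $\calM_{\mathrm{nf}}$ by the disjointness claim $\overline{M_{S}}\cap\overline{M_{S'}}\subseteq\mathrm{QZ}(H^{(\infty)})$. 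Only then is $N_{H}(M)$ closed of countable index, hence open by Baire, hence of finite index since it contains the cocompact $H^{(\infty)}$. You omit this entire step.

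Part (ii)(b) as you have written it is circular: to preclude $\overline{\mathrm{QZ}(M)\cdot\Lambda}=M$ you invoke ``$\mathrm{QZ}(M)\cdot\Lambda\subseteq\mathrm{QZ}(H^{(\infty)})$ acts freely'', but $\mathrm{QZ}(M)\subseteq\mathrm{QZ}(H^{(\infty)})$ is precisely the nontrivial half of the statement being proved (the reverse inclusion is the automatic one), and it directly contradicts your standing hypothesis that $\mathrm{QZ}(M)$ does not act freely. Note also that minimality would force \emph{equality} $\overline{\mathrm{QZ}(M)\cdot\Lambda}=M$ in that case, not a proper inclusion, and equality yields no immediate contradiction. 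The paper instead works with $\Omega:=\mathrm{QZ}(M_{1})\cdots\mathrm{QZ}(M_{r})$, which is normal in $H$ so that Lemma \ref{lem:bm_1.4.2} applies, and runs a two-step centralizer argument (open centralizers of the $\mathrm{QZ}(M_{1})$-components from the definition of the quasi-center, open centralizers of the remaining components from $[M_{2}\cdots M_{r},M_{1}]\subseteq\mathrm{QZ}(H^{(\infty)})$) to contradict non-discreteness of $M_{1}$ via Lemma \ref{lem:bm_1.3.3}. Finally, in (ii)(c) the ``standard argument'' upgrading $N^{\sharp}=M$ to $N=M$ does not exist: a proper closed normal subgroup of $M$ can have $H^{(\infty)}$-normal closure equal to $M$. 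The correct observation is that $N$ is \emph{already} normal in $H^{(\infty)}$, because each $M'\neq M$ satisfies $[M',N]\subseteq\mathrm{QZ}(H^{(\infty)})\cap M=\mathrm{QZ}(M)\subseteq N$ (this uses (ii)(b)), so $N^{\sharp}=N$ and the minimality dichotomy applies to $N$ itself.
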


\begin{proof}
Since every discrete normal subgroup of $H^{(\infty)}$ is contained in $\mathrm{QZ}(H^{(\infty)})$ by Lemma \ref{lem:discrete_normal_sub} \ref{item:bm_1.2.1_3} and the latter acts freely on $E$ by Proposition \ref{prop:bm_1.2.1} \ref{item:bm_1.2.1_3}, every element of $\mathcal{N}_{\mathrm{nf}}(H^{(\infty)},\Lambda)$ is non-discrete. We proceed with a number of claims.

\begin{enumerate}[(1),series=this_proof]
 \item\label{item:bm_1.5.1_proof_1} For every $N\in\mathcal{N}_{\mathrm{nf}}(H^{(\infty)},\Lambda)$ we have $[H^{(\infty)},N]\not\subseteq\mathrm{QZ}(H^{(\infty)})$. \newline
 This follows from the above combined with \ref{prop:bm_1.2.1} \ref{item:bm_1.2.1_1} and Lemma \ref{lem:bm_1.3.4}.
\end{enumerate}

In the following, given $S\subseteq\calM_{\mathrm{nf}}(H^{(\infty)},\Lambda)$, we let $M_{S}:=\langle M\mid M\in S\rangle\le H^{(\infty)}$ denote the subgroup of $H^{(\infty)}$ generated by $\bigcup_{M\in S}M$.

\begin{enumerate}[(1),resume=this_proof]
 \item\label{item:bm_1.5.1_proof_2} The group $H$ acts transitively on $\mathcal{M}_{\mathrm{nf}}(H^{(\infty)},\Lambda)$. \newline
 Let $ S$ be an orbit for the action of $H$ on $\calM_{\mathrm{nf}}(H^{(\infty)},\Lambda)$, and suppose there is an element $M\in\calM_{\mathrm{nf}}(H^{(\infty)},\Lambda)\backslash S$. For every $N\in S$, the subgroup $N\cap M$ is normal in $H^{(\infty)}$ and acts freely on $E$ by minimality of $M$, hence is discrete. The same therefore holds for $[N,M]\subseteq N\cap M$. Thus $[N,M]\subseteq\mathrm{QZ}(H^{(\infty)})$. As $\mathrm{QZ}(H^{(\infty)})$ is discrete by Proposition \ref{prop:bm_1.2.1} and therefore closed in $H^{(\infty)}$ we conclude $[\overline{M_{ S}},M]\subseteq\mathrm{QZ}(H^{(\infty)})$. On the other hand, $\overline{M_{S}}$ is normal in $H$ since $S$ is an $H$-orbit. It is also closed in $H$, and non-discrete by the above. Thus $\overline{M_{S}}=H^{(\infty)}$ by Proposition \ref{prop:bm_1.2.1} \ref{item:bm_1.2.1_3}, and $[H^{(\infty)},M]\subseteq\mathrm{QZ}(H^{(\infty)})$ which contradicts part \ref{item:bm_1.5.1_proof_1}.
 
 \item\label{item:bm_1.5.1_proof_3} For every $M\in\calM_{\mathrm{nf}}(H^{(\infty)},\Lambda)$ we have $\smash{\overline{[M,M]\cdot\Lambda}=M}$. \newline
 Note that $\overline{[M,M]\cdot\Lambda}$ is a group because $\Lambda$ is normal in $M$. Suppose there is an element $M_{0}\in\calM_{\mathrm{nf}}(H^{(\infty)},\Lambda)$ with $\overline{[M_{0},M_{0}]\cdot\Lambda}\lneq M_{0}$. Then $\overline{[M_{0},M_{0}]\cdot\Lambda}$ acts freely on $E$ by minimality of $M_{0}$ and is discrete. Being normal in $H$, we obtain $[M_{0},M_{0}]\subseteq\mathrm{QZ}(H^{(\infty})$. Part \ref{item:bm_1.5.1_proof_2} now implies that $[M,M]\subseteq\mathrm{QZ}(H^{(\infty)})$ for all $M\in\mathcal{M}_{\mathrm{nf}}(H^{(\infty)},\Lambda)$. Given that $[M,M']\subseteq\mathrm{QZ}(H^{(\infty)})$ for all distinct $M,M'$ in $\calM_{\mathrm{nf}}(H^{(\infty)},\Lambda)$ as well, we conclude that $[H^{(\infty)},H^{(\infty)}]\subseteq\mathrm{QZ}(H^{(\infty)})$ which contradicts part \ref{item:bm_1.5.1_proof_1}.
 
 \item\label{item:bm_1.5.1_proof_4} For every $N\in\calN_{\mathrm{nf}}(H^{(\infty)},\Lambda)$ there is $M\in\calM_{\mathrm{nf}}(H^{(\infty)},\Lambda)$ with $N\supseteq M$. \newline
 Let $ S\!:=\!\{M\!\in\!\calM_{\mathrm{nf}}(H^{(\infty)},\Lambda)\!\mid\! N\not\supseteq M\}$. Then $[\overline{M_{S}},N]\!\subseteq\!\mathrm{QZ}(H^{(\infty)})$ as above. On the other hand, for $T:=\calM_{\mathrm{nf}}(H^{(\infty)},\Lambda)$, the group $\overline{M_{T}}\subseteq H^{(\infty)}$ is closed, non-discrete and normal in $H$, thus $\overline{M_{T}}=H^{(\infty)}$. Using \ref{item:bm_1.5.1_proof_1}, we conclude that $ S\neq T$ which proves the assertion.
 
 \item\label{item:bm_1.5.1_proof_5} Let $S, S'$ be disjoint subsets of $\calM_{\mathrm{nf}}(H^{(\infty)},\Lambda)$. Then $\overline{M_{S}}\cap\overline{M_{S'}}\subseteq\mathrm{QZ}(H^{(\infty)})$. %\newline 
 If not, we have $\smash{\overline{M_{ S}}\cap\overline{M_{ S'}}\!\in\!\calM_{\mathrm{nf}}(H^{(\infty)},\Lambda)}$ and there is, by part \ref{item:bm_1.5.1_proof_4}, an element $M\in\calM_{\mathrm{nf}}(H^{(\infty)},\Lambda)$ with $\smash{M\subseteq\overline{M_{S}}\cap\overline{M_{ S'}}}$. However, this implies that $[M,M]\subseteq[\overline{M_{S}},\overline{M_{S'}}]\subseteq\mathrm{QZ}(H^{(\infty)})$ which contradicts part \ref{item:bm_1.5.1_proof_3}.
 
 \item\label{item:bm_1.5.1_proof_6} The set $\calM_{\mathrm{nf}}(H^{(\infty)},\Lambda)$ is finite and non-empty. \newline 
 The set $\calM_{\mathrm{nf}}(H^{(\infty)},\Lambda)$ is non-empty by Lemma~\ref{lem:bm_1.4.1}. Let $G=\bigcup\overline{M_{S}}$, where the union is taken over all finite subsets $S$ of the set $\calM_{\mathrm{nf}}(H^{(\infty)},\Lambda)$. Then $G$ is non-discrete and normal in $H$. Hence $\overline{G}=H^{(\infty)}$ by Proposition \ref{prop:bm_1.2.1} \ref{item:bm_1.2.1_3}. Since $H$ is second-countable and locally compact, it is metrizable. Hence $H^{(\infty)}$ is a separable metric space and the same holds for $G$. Let $L\subseteq G$ be a countable dense subgroup, and fix an exhaustion $F_{1}\subseteq F_{2}\subseteq\dots\subseteq F$ of $F$ by finite sets. Let $( S_{n})_{n\in\bbN}$ be an increasing sequence of finite subsets of $\calM_{\mathrm{nf}}(H^{(\infty)},\Lambda)$ such that $F_{n}\subseteq\overline{M_{S_{n}}}$. In particular
 \begin{displaymath}
  \displaybump L\subseteq\overline{M_{\bigcup_{n\in\bbN} S_{n}}} \quad\text{and thus}\quad \overline{M_{\bigcup_{n\in\bbN} S_{n}}}=H^{(\infty)}
 \end{displaymath}
 which by \ref{item:bm_1.5.1_proof_5} and \ref{item:bm_1.5.1_proof_1} implies $\calM_{\mathrm{nf}}(H^{(\infty)},\Lambda)=\bigcup_{n\in\bbN} S_{n}$. Thus $\calM_{\mathrm{nf}}(H^{(\infty)},\Lambda)$ is countable. Next, fix $M\in\calM_{\mathrm{nf}}(H^{(\infty)},\Lambda)$. Then $N_{H}(M)$ is closed and of countable index in $H$, and thus has non-empty interior as $H$ is a Baire space. Hence $N_{H}(M)$ is open in $H$. Given that $N_{H}(M)$ contains $H^{(\infty)}$ we conclude that $N_{H}(M)$ is of finite index in $H$ using Proposition \ref{prop:bm_1.2.1} \ref{item:bm_1.2.1_1}. Since $H$ acts transitively by on $\calM_{\mathrm{nf}}(H^{(\infty)},\Lambda)$ by \ref{item:bm_1.5.1_proof_2} we conclude that $\calM_{\mathrm{nf}}(H^{(\infty)},\Lambda)$ is finite by the orbit-stabilizer theorem.
\end{enumerate}

The above claims yield parts \ref{item:bm_1.5.1_1}\ref{item:bm_1.5.1_1_a}, \ref{item:bm_1.5.1_1}\ref{item:bm_1.5.1_1_b}, \ref{item:bm_1.5.1_2}\ref{item:bm_1.5.1_2_a} and \ref{item:bm_1.5.1_3} of Proposition \ref{prop:bm_1.5.1}. We now turn to parts \ref{item:bm_1.5.1_2}\ref{item:bm_1.5.1_2_b} and \ref{item:bm_1.5.1_2}\ref{item:bm_1.5.1_2_c}.

\begin{enumerate}
 \item[\ref{item:bm_1.5.1_2}\ref{item:bm_1.5.1_2_b}] Using part \ref{item:bm_1.5.1_proof_6}, let $\calM_{\mathrm{nf}}(H^{(\infty)},\Lambda)=\{M_{1},\ldots,M_{r}\}$ and define
 \begin{displaymath}
  \displaybump \Omega:=\mathrm{QZ}(M_{1})\cdot\ldots\cdot\mathrm{QZ}(M_{r}).
 \end{displaymath}
 Note that since $\mathrm{QZ}(M_{i})$ is characteristic in $M_{i}$, which is normal in $H^{(\infty)}$, the quasi-centers in the above definition normalize each other, so $\Omega$ is a group. It is then normal in $H$. If $\Omega$ does not act freely on $E$ then $\Omega\backslash\Gamma$ is finite by Lemma \ref{lem:bm_1.4.2} and there exist $\lambda_{1},\ldots,\lambda_{k}\in\Omega$ by Lemma \ref{lem:bm_1.3.2} such that for $\Omega':=\langle\lambda_{1},\ldots,\lambda_{k}\rangle$ the quotient $\Omega'\backslash\Gamma$ is finite. For every $i\in\{1,\ldots,k\}$, write $\lambda_{i}=a_{i}b_{i}$ where $a_{i}\in\mathrm{QZ}(M_{1})$ and $b_{i}\in\mathrm{QZ}(M_{2})\cdot\ldots\cdot\mathrm{QZ}(M_{r})$. Let $U_{1}\le M_{1}$ be an open subgroup such that $[a_{i},U_{1}]=\{e\}$ for all $i\in\{1,\ldots,k\}$. Since $[M_{2}\cdot\ldots M_{r},M_{1}]\subseteq\mathrm{QZ}(H^{(\infty)})$, there is an open subgroup $U_{2}\le M_{1}$ such that $[b_{i},U_{2}]=\{e\}$ for all $i\in\{1,\ldots,k\}$. Hence $U:=U_{1}\cap U_{2}\le M_{1}$ is contained in $Z_{\Aut(\Gamma)}(\Omega')$ which by Lemma \ref{lem:bm_1.3.3} implies that $U$ and hence $M_{1}$ is discrete, a contradiction. Thus $\Omega$ acts freely on $E$, is discrete and therefore $\Omega\subseteq\mathrm{QZ}(H^{(\infty)})$. That is, $\mathrm{QZ}(M_{i})\subseteq\mathrm{QZ}(H^{(\infty)})\cap M_{i}$. The opposite inclusion follows from the definitions.
 
 \item[\ref{item:bm_1.5.1_2}\ref{item:bm_1.5.1_2_c}] Let $M\in\calM_{\mathrm{nf}}(H^{(\infty)},\Lambda)$ and $N\unlhd M$ a closed subgroup containing $\mathrm{QZ}(M)$. For every $M'\in\calM_{\mathrm{nf}}(H^{(\infty)},\Lambda)$ with $M\neq M'$ we have
 \begin{displaymath}
  \displaybump [M',M]\subseteq M'\subseteq M\subseteq\mathrm{QZ}(H^{(\infty)})
 \end{displaymath}
 This implies $[M',N]\subseteq\mathrm{QZ}(H^{(\infty)})\cap M=\mathrm{QZ}(M)\subseteq N$, i.e. $M'$ normalizes $N$. Since $N\unlhd M$, this implies $N\unlhd H^{(\infty)}$ and hence, by minimality of $M$, we have either $N=M$ or $N$ acts freely on $E$ and $N\subseteq\mathrm{QZ}(H^{(\infty)})\cap M=\mathrm{QZ}(M)$. \qedhere
\end{enumerate}
\end{proof}

\begin{corollary}\label{cor:bm_1.5.2}
Let $\Gamma=(V,E)$ be a locally finite, connected graph. Further, let $H\le\Aut(\Gamma)$ be closed, non-discrete and locally semiprimitive. Minimal, non-trivial closed normal subgroups of $H^{(\infty)}\!/\mathrm{QZ}(H^{(\infty)})$ exist. They are all $H$-conjugate, finite in number and topologically simple.
\end{corollary}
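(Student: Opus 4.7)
The plan is to derive the corollary from Proposition~\ref{prop:bm_1.5.1} by choosing $\Lambda=\mathrm{QZ}(H^{(\infty)})$. This $\Lambda$ is normal in $H$ (being topologically characteristic in $H^{(\infty)}\unlhd H$) and trivially contained in $\mathrm{QZ}(H^{(\infty)})$, so the hypotheses of Proposition~\ref{prop:bm_1.5.1} are satisfied. Write $Q:=H^{(\infty)}/\Lambda$ and let $\pi\colon H^{(\infty)}\to Q$ denote the quotient map. Note that $\Lambda$ is discrete, hence closed in $H^{(\infty)}$, so for any closed subgroup $M\le H^{(\infty)}$ containing $\Lambda$, the image $\pi(M)=M/\Lambda$ is closed in~$Q$.

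The key step is to set up a bijection between $\calM_{\mathrm{nf}}(H^{(\infty)},\Lambda)$ and the collection of minimal non-trivial closed normal subgroups of $Q$. Given $M\in\calM_{\mathrm{nf}}(H^{(\infty)},\Lambda)$, the image $\pi(M)$ is closed and normal in $Q$, and non-trivial because $M$ does not act freely on $E$ whereas $\Lambda$ does by Proposition~\ref{prop:bm_1.2.1}\ref{item:bm_1.2.1_4}. To verify minimality of $\pi(M)$, let $\overline{N}\unlhd Q$ be a closed subgroup with $\{1\}\lneq\overline{N}\le\pi(M)$ and set $\tilde N:=\pi^{-1}(\overline{N})$. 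Then $\tilde N$ is closed, normal in $H^{(\infty)}$, strictly contains~$\Lambda$, and is contained in $M$. If $\tilde N$ acted freely on $E$ it would be discrete, hence $\tilde N\subseteq\mathrm{QZ}(H^{(\infty)})=\Lambda$ by Lemma~\ref{lem:discrete_normal_sub}, contradicting $\tilde N\supsetneq\Lambda$. Therefore $\tilde N\in\calN_{\mathrm{nf}}(H^{(\infty)},\Lambda)$, and Proposition~\ref{prop:bm_1.5.1}\ref{item:bm_1.5.1_3} yields some $M'\in\calM_{\mathrm{nf}}(H^{(\infty)},\Lambda)$ with $M'\subseteq\tilde N\subseteq M$; minimality of $M$ gives $M'=M=\tilde N$ and thus $\overline{N}=\pi(M)$. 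Conversely, any minimal non-trivial closed normal subgroup $\overline{N}\unlhd Q$ has preimage $\tilde N\in\calN_{\mathrm{nf}}(H^{(\infty)},\Lambda)$ by the same freeness argument, and Proposition~\ref{prop:bm_1.5.1}\ref{item:bm_1.5.1_3} gives some $M\in\calM_{\mathrm{nf}}(H^{(\infty)},\Lambda)$ with $M\subseteq\tilde N$; then $\pi(M)$ is a non-trivial closed normal subgroup of $Q$ inside $\overline{N}$, hence equal to $\overline{N}$ by minimality.

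With the bijection in place, the four assertions follow directly. Existence and finiteness come from Proposition~\ref{prop:bm_1.5.1}\ref{item:bm_1.5.1_1}\ref{item:bm_1.5.1_1_b}. The $H$-conjugacy transfers from the transitive $H$-action on $\calM_{\mathrm{nf}}(H^{(\infty)},\Lambda)$ guaranteed by Proposition~\ref{prop:bm_1.5.1}\ref{item:bm_1.5.1_1}\ref{item:bm_1.5.1_1_a}, since $\pi$ is $H$-equivariant (as $H$ normalises $\Lambda$). Finally, for topological simplicity, the second isomorphism theorem combined with Proposition~\ref{prop:bm_1.5.1}\ref{item:bm_1.5.1_2}\ref{item:bm_1.5.1_2_b} gives
\[
\pi(M)\;=\;M/\Lambda\;=\;M/(M\cap\mathrm{QZ}(H^{(\infty)}))\;=\;M/\mathrm{QZ}(M),
\]
which is topologically simple by Proposition~\ref{prop:bm_1.5.1}\ref{item:bm_1.5.1_2}\ref{item:bm_1.5.1_2_c}.

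The main obstacle is the correspondence step, specifically ensuring that a non-trivial closed normal subgroup of $Q$ must come from an element of $\calN_{\mathrm{nf}}$ rather than a discrete normal overgroup of $\Lambda$. This is handled by invoking the fundamental dichotomy for normal subgroups in Proposition~\ref{prop:bm_1.2.1}\ref{item:bm_1.2.1_3} (equivalently Lemma~\ref{lem:discrete_normal_sub}), which pins every discrete normal subgroup of $H^{(\infty)}$ inside $\mathrm{QZ}(H^{(\infty)})=\Lambda$ and therefore forces any strict enlargement of $\Lambda$ in $H^{(\infty)}$ to act non-freely on~$E$.
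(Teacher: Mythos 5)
Your proposal is correct and follows exactly the paper's route: the paper's entire proof is ``Apply Proposition~\ref{prop:bm_1.5.1} to $\Lambda=\mathrm{QZ}(H^{(\infty)})$'', and you have simply made explicit the correspondence between $\calM_{\mathrm{nf}}(H^{(\infty)},\Lambda)$ and the minimal non-trivial closed normal subgroups of the quotient that this application implicitly relies on. The details you supply (the non-free-action argument for preimages via Lemma~\ref{lem:discrete_normal_sub}, and the identification $M/\Lambda=M/\mathrm{QZ}(M)$ via Proposition~\ref{prop:bm_1.5.1}\ref{item:bm_1.5.1_2}\ref{item:bm_1.5.1_2_b}) are all sound.
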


\begin{proof}
Apply Proposition~\ref{prop:bm_1.5.1} to $\Lambda=\mathrm{QZ}(H^{(\infty)})$.
\end{proof}

We summarize the previous results in the following theorem, which is a verbatim copy of Burger--Mozes' Theorem~\ref{thm:burger_mozes_structure_quasiprimitive}, except that the local action need only be semiprimitive, not quasiprimitive.

\begin{theorem}\label{thm:burger_mozes_structure_semiprimitive}
Let $\Gamma$ be a locally finite, connected graph. Further, let $H\!\le\!\Aut(\Gamma)$ be closed, non-discrete and locally semiprimitive. Then
\begin{enumerate}[(i),series=bm_structure]
 \item\label{item:bm_structure_h_infty} $H^{(\infty)}$ is minimal closed normal cocompact in $H$,
 \item\label{item:bm_structure_qz} $\mathrm{QZ}(H)$ is maximal discrete normal, and non-cocompact in $H$, and
 \item\label{item:bm_structure_subquotient} $H^{(\infty)}\!/\mathrm{QZ}(H^{(\infty)})\!=\! H^{(\infty)}\!/(\mathrm{QZ}(H)\cap H^{(\infty)})$ admits minimal, non-trivial closed normal subgroups; finite in number, $H$-conjugate and topologically simple.
\end{enumerate}
If $\Gamma$ is a tree, and, in addition, $H$ is locally primitive then
\begin{enumerate}[(i),resume=bm_structure]
 \item\label{item:bm_structure_primitive} $H^{(\infty)}\!/\mathrm{QZ}(H^{(\infty)})$ is a direct product of topologically simple groups.
\end{enumerate}
\end{theorem}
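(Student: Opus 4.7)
The plan is that parts (i)--(iii) are essentially assemblies of the preceding propositions, while part (iv) alone uses the tree and local-primitivity hypotheses. Part (i) follows from Proposition \ref{prop:bm_1.2.1}\ref{item:bm_1.2.1_1} (cocompactness of $H^{(\infty)}$) together with part \ref{item:bm_1.2.1_3}, which forces every closed normal subgroup of $H$ to be either non-discrete cocompact, hence $\supseteq H^{(\infty)}$, or discrete. Part (ii) is the combination of Proposition \ref{prop:bm_1.2.1}\ref{item:bm_1.2.1_2} with Lemma \ref{lem:discrete_normal_sub}, which forces every discrete normal subgroup of $H$ into $\mathrm{QZ}(H)$. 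Part (iii) records the identification $\mathrm{QZ}(H^{(\infty)}) = \mathrm{QZ}(H) \cap H^{(\infty)}$ of Proposition \ref{prop:bm_1.2.1}\ref{item:bm_1.2.1_4} and then cites Corollary \ref{cor:bm_1.5.2} for the existence, finiteness, $H$-conjugacy and topological simplicity of the minimal non-trivial closed normal subgroups of $H^{(\infty)}/\mathrm{QZ}(H^{(\infty)})$.

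For part (iv), I would apply Proposition \ref{prop:bm_1.5.1} with $\Lambda = \mathrm{QZ}(H^{(\infty)})$ to extract the finite non-empty family $\calM_{\mathrm{nf}}(H^{(\infty)}, \mathrm{QZ}(H^{(\infty)})) = \{M_1, \dots, M_r\}$; each $M_i$ contains $\mathrm{QZ}(H^{(\infty)})$ and projects onto a minimal non-trivial closed normal subgroup of the quotient. The first substantive step is to show that $M_i \cap M_j = \mathrm{QZ}(H^{(\infty)})$ whenever $i \neq j$: if this intersection did not act freely on $E$, it would lie in $\calN_{\mathrm{nf}}(H^{(\infty)}, \mathrm{QZ}(H^{(\infty)}))$ and hence contain some $M_k$ by Proposition \ref{prop:bm_1.5.1}\ref{item:bm_1.5.1_3}, forcing $M_i = M_k = M_j$ by minimality; thus the intersection acts freely, is discrete and normal in $H$, and therefore sits in $\mathrm{QZ}(H) \cap H^{(\infty)} = \mathrm{QZ}(H^{(\infty)})$. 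This simultaneously yields $[M_i, M_j] \subseteq M_i \cap M_j \subseteq \mathrm{QZ}(H^{(\infty)})$, so that the images of the $M_i$ in $H^{(\infty)}/\mathrm{QZ}(H^{(\infty)})$ commute pairwise and intersect trivially; algebraically they generate the internal direct product $\prod_i M_i/\mathrm{QZ}(M_i)$ of topologically simple groups (the simplicity of each factor being Proposition \ref{prop:bm_1.5.1}\ref{item:bm_1.5.1_2}\ref{item:bm_1.5.1_2_c}).

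The remaining and principal difficulty is to promote this internal algebraic direct product to an equality with matching product topology inside $H^{(\infty)}/\mathrm{QZ}(H^{(\infty)})$. Density of $M_1 \cdots M_r$ in $H^{(\infty)}$ is already implicit in the proof of Proposition \ref{prop:bm_1.5.1}: the closed normal subgroup generated by the $M_i$ is non-discrete and therefore equals $H^{(\infty)}$ by Proposition \ref{prop:bm_1.2.1}\ref{item:bm_1.2.1_3}. What must genuinely be added is closedness of the product itself, and this is where I expect the tree hypothesis and local primitivity to enter decisively: on a tree, local primitivity ensures via Lemma \ref{lem:bm_1.4.2}\ref{item:semiprimitive_normal_tran} that each $M_i$ is geometric-edge transitive, and I would exploit this, together with the commutation of the $M_i$ modulo $\mathrm{QZ}(H^{(\infty)})$, to produce a vertex-stabilizer factorization of an arbitrary element of $H^{(\infty)}$ in terms of the $M_i$. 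Establishing this factorization, and then deducing that the continuous bijection $\prod_i M_i/\mathrm{QZ}(M_i) \to H^{(\infty)}/\mathrm{QZ}(H^{(\infty)})$ is a homeomorphism, is the main obstacle.
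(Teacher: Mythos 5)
Your treatment of parts (i)--(iii) is correct and coincides with the paper's: both are direct assemblies of Proposition \ref{prop:bm_1.2.1}, Lemma \ref{lem:discrete_normal_sub} and Corollary \ref{cor:bm_1.5.2}, and your intermediate observations for part (iv) (that $M_{i}\cap M_{j}=\mathrm{QZ}(H^{(\infty)})$ for $i\neq j$, hence that the images of the $M_{i}$ commute pairwise and intersect trivially in the quotient, and that $M_{1}\cdots M_{r}$ is dense) are all sound and consistent with claims (4)--(6) in the proof of Proposition \ref{prop:bm_1.5.1}.

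The genuine gap is in part (iv), and you name it yourself: you never establish that the dense internal direct product is actually all of $H^{(\infty)}/\mathrm{QZ}(H^{(\infty)})$ with the product topology, and you leave the proposed vertex-stabilizer factorization as ``the main obstacle.'' As written, part (iv) is therefore not proven. The paper does not attempt this either; it observes that part (iv) carries the \emph{additional} hypothesis that $H$ is locally primitive, so that no semiprimitive generalization is needed there at all, and it simply invokes \cite[Corollary 1.7.2]{BM00a} (which rests on \cite[Theorem 1.7.1]{BM00a}), checking only that the hypothesis of that theorem holds, namely that the commutator of any two distinct elements of $\calM_{\mathrm{nf}}(H^{(\infty)},\Lambda)$ lies in $\mathrm{QZ}(H^{(\infty)})$ --- which is precisely the fact you derived. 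If you do want to argue from scratch, be aware of a further wrinkle in your sketch: Lemma \ref{lem:bm_1.4.2} does not immediately give that each $M_{i}$ is geometric-edge transitive even when $H$ is locally primitive, since case (iii) of that lemma (the bipartite fundamental-domain case) is not excluded by primitivity; the actual proof of the product decomposition in \cite{BM00a} is a nontrivial argument using the tree structure, not a one-line consequence of edge-transitivity. The efficient repair is to replace your final paragraph by the citation together with the verification of its hypothesis.
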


\begin{proof}
Parts \ref{item:bm_structure_h_infty} and \ref{item:bm_structure_qz} stem from parts \ref{item:bm_1.2.1_1}, \ref{item:bm_1.2.1_2} and \ref{item:bm_1.2.1_3} of Proposition \ref{prop:bm_1.2.1} in combination with Section \ref{sec:bm_theory}. For part \ref{item:bm_structure_subquotient}, use part \ref{item:bm_1.2.1_4} of Proposition \ref{prop:bm_1.2.1} and Corollary \ref{cor:bm_1.5.2}. Finally, part \ref{item:bm_structure_primitive} is Corollary 1.7.2 in \cite{BM00a}. It follows from Theorem 1.7.1 in \cite{BM00a} as the commutator of any two distinct elements in $\calM_{\mathrm{nf}}(H^{(\infty)},\Lambda)$ is contained in $\mathrm{QZ}(H^{(\infty)})$.
\end{proof}

\section{Universal Groups}\label{sec:universal_groups}

In this section, we develop a generalization of Burger--Mozes universal groups that arises through prescribing the local action on balls of a given radius $k\in\bbN$ around vertices. The Burger--Mozes construction corresponds to the case $k=1$.

Whereas many properties of the original construction carry over to the new setup, others require adjustments. Notably, there are compatibility and discreteness conditions on the local action $F$ under which the associated universal group is locally action isomorphic to $F$ and discrete respectively.

We then exhibit examples and (non)-rigidity phenomena of our construction. Finally, a universality statement holds under an additional assumption.

\subsection{Definition and Basic Properties}

\subsubsection{Definition}

Let $\Omega$ be a set of cardinality $d\in\bbN_{\ge 3}$ and let $T_{d}=(V,E)$ denote the $d$-regular tree. A \emph{labelling} $l$ of $T_{d}$ is a map $l:E\to\Omega$ such that for every $x\in V$ the map $l_{x}\!:E(x)\to\Omega,\ e\mapsto l(e)$ is a bijection, and $l(e)=l(\overline{e})$ for all $e\in E$.

For every $k\in\bbN$, fix a tree $B_{d,k}$ which is isomorphic to a ball of radius $k$ around a vertex in $T_{d}$. Let $b$ denote its center and carry over the labelling of $T_{d}$ to $B_{d,k}$ via the chosen isomorphism. Then for every $x\in V$ there is a unique, label-respecting isomorphism $l_{x}^{k}:B(x,k)\to B_{d,k}$. We define the \emph{$k$-local action} $\sigma_{k}(g,x)\!\in\!\Aut(B_{d,k})$ of an automorphism $g\!\in\!\Aut(T_{d})$ at a vertex $x\in V$ via
\begin{displaymath}
\sigma_{k}:\Aut(T_{d})\times V\to\Aut(B_{d,k}),\ (g,x)\mapsto \sigma_{k}(g,x):=l_{gx}^{k}\circ g\circ (l_{x}^{k})^{-1}.
\end{displaymath}

\begin{definition}\label{def:ukf}
Let $F\le\Aut(B_{d,k})$ and $l$ be a labelling of $T_{d}$. Define
\begin{displaymath}
 \mathrm{U}_{k}^{(l)}(F):=\{g\in\Aut(T_{d})\mid\ \forall x\in V:\ \sigma_{k}(g,x)\in F\}.
\end{displaymath}
\end{definition}

The following lemma states that the maps $\sigma_{k}$ satisfy a cocycle identity which implies that $\smash{\mathrm{U}_{k}^{(l)}(F)}$ is a subgroup of $\Aut(T_{d})$ for every $F\le\Aut(B_{d,k})$.

\begin{lemma}\label{lem:cocycle}
Let $x\in V$ and $g,h\in\Aut(T_{d})$. Then $\sigma_{k}(gh,x)=\sigma_{k}(g,hx)\sigma_{k}(h,x)$.
\end{lemma}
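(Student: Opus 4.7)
The plan is to verify the cocycle identity directly by expanding both sides using the definition $\sigma_k(g,x) = l_{gx}^{k} \circ g \circ (l_x^k)^{-1}$ and observing that an inverse-direct pair cancels in the middle.

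Before computing, I would first make sure the compositions involved are well defined. For any $y \in V$, the map $l_y^k$ has domain $B(y,k)$ and codomain $B_{d,k}$, and an automorphism $g \in \Aut(T_d)$ restricts to an isomorphism $B(y,k) \to B(gy,k)$, since tree automorphisms preserve the distance metric. Thus $g \circ (l_y^k)^{-1}$ makes sense as a map $B_{d,k} \to B(gy,k)$, and $l_{gy}^k \circ g \circ (l_y^k)^{-1}$ is a well-defined element of $\Aut(B_{d,k})$, so the formula for $\sigma_k$ is legitimate.

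Next I would compute, for $x \in V$ and $g,h \in \Aut(T_d)$,
\begin{align*}
\sigma_k(g,hx)\,\sigma_k(h,x)
&= \bigl(l_{ghx}^{k} \circ g \circ (l_{hx}^{k})^{-1}\bigr)\circ\bigl(l_{hx}^{k} \circ h \circ (l_x^k)^{-1}\bigr)\\
&= l_{ghx}^{k} \circ g \circ h \circ (l_x^k)^{-1}\\
&= l_{(gh)x}^{k} \circ (gh) \circ (l_x^k)^{-1}\\
&= \sigma_k(gh,x),
\end{align*}
where the second line uses $(l_{hx}^k)^{-1} \circ l_{hx}^k = \id_{B(hx,k)}$ and the third merely re-brackets the composition. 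This is the desired identity; the verification requires no further ingredients, and correspondingly $\mathrm{U}_k^{(l)}(F)$ being a subgroup follows by the standard argument: closure under multiplication from the cocycle identity, and closure under inversion by applying the identity to $g=h^{-1}$ and $x$ replaced by $hx$, since $\sigma_k(\id,x)=\id_{B_{d,k}}$.

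There is no real obstacle here; the only point deserving attention is the bookkeeping of domains and codomains of the label-respecting isomorphisms $l_y^k$, which I would flag explicitly so the cancellation $(l_{hx}^k)^{-1}\circ l_{hx}^k = \id$ is transparently justified.
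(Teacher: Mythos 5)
Your proposal is correct and is essentially the same computation as the paper's proof: both expand the definition of $\sigma_{k}$ and insert the cancelling pair $(l_{hx}^{k})^{-1}\circ l_{hx}^{k}=\id$ in the middle (the paper runs the chain from $\sigma_{k}(gh,x)$ to the product, you run it in the opposite direction). The extra remarks on domains and codomains are fine but not needed beyond what the paper takes as understood.
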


\begin{proof}
We compute
\begin{align*}
  \sigma_{k}(gh,x)=&l_{(gh)x}^{k}\circ gh\circ (l_{x}^{k})^{-1}=l_{(gh)x}^{k}\circ g\circ h\circ(l_{x}^{k})^{-1}= \\
  &=l_{(gh)x}^{k}\circ g\circ (l_{hx}^{k})^{-1}\circ l_{hx}^{k}\circ h\circ(l_{x}^{k})^{-1}=\sigma_{k}(g,hx)\sigma_{k}(h,x).\qedhere
\end{align*}
\end{proof}

\subsubsection{Basic Properties}

Note that the group $\smash{\mathrm{U}_{1}^{(l)}(F)}$ of Definition \ref{def:ukf} coincides with the Burger--Mozes universal group $\smash{\mathrm{U}_{(l)}(F)}$ introduced in \cite[Section 3.2]{BM00a} under the natural isomorphism $\Aut(B_{d,1})\cong\Sym(\Omega)$. Several basic properties of the latter group carry over to the generalized setup. First of all, passing between different labellings of $T_{d}$ amounts to conjugating in $\Aut(T_{d})$. Subsequently, we shall therefore omit the reference to an explicit labelling.

\begin{lemma}\label{lem:ext}
For every quadruple $(l,l',x,x')$ of labellings $l,l'$ of $T_{d}$ and vertices $x,x'\in V$, there is a unique automorphism $g\in\Aut(T_{d})$ with $gx=x'$ and $l'=l\circ g$.
\end{lemma}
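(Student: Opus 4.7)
The plan is to construct $g$ by induction on balls $B(x,n)$ around $x$, where at each stage the labelling constraint forces a unique choice. To start, set $g(x):=x'$. Since both $l_x$ and $l'_{x'}$ are bijections $E(x),E(x')\to\Omega$, the condition $l'(e)=l(g(e))$ for $e\in E(x)$ determines $g|_{E(x)}$ uniquely by $g(e):=(l_{x'})^{-1}(l'(e))$, and then $g(t(e)):=t(g(e))$ extends $g$ to $B(x,1)$ in the only way compatible with $g$ being a graph morphism.

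For the inductive step, assume $g$ is defined and label-compatible on $B(x,n)$. For each $y\in S(x,n)$ with image $y':=g(y)$, and each edge $e\in E(y)$ whose terminus lies in $S(x,n+1)$, set $g(e):=(l_{y'})^{-1}(l'(e))$ and then $g(t(e)):=t(g(e))$. Again the labelling condition at $y$ determines $g$ on these edges uniquely, and a graph morphism must extend in precisely this way on the new vertices.

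The one point requiring verification is compatibility at the unique back-edge $e_0\in E(y)$ (with $t(e_0)=y_{-1}\in S(x,n-1)$), which was already assigned in the preceding stage: we must check $l(g(e_0))=l'(e_0)$. In the previous stage, applied at $y_{-1}$, we imposed $l(g(\overline{e_0}))=l'(\overline{e_0})$. Since $g$ is a graph morphism, $g(e_0)=\overline{g(\overline{e_0})}$, and since by definition of a labelling both $l$ and $l'$ satisfy $l(f)=l(\overline{f})$, we obtain
\[
 l(g(e_0))=l(\overline{g(\overline{e_0})})=l(g(\overline{e_0}))=l'(\overline{e_0})=l'(e_0),
\]
as required.

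Iterating over $n\in\bbN$ and using connectedness of $T_d$ yields a well-defined $g\in\Aut(T_d)$ with $gx=x'$ and $l'=l\circ g$. Uniqueness follows because at every inductive step the choice of $g$ was forced by the two prescribed conditions together with the bijectivity of $l_y$ and $l'_{y'}$ on edge-stars. The only subtle point is the back-edge compatibility above, which is precisely where the axiom $l(e)=l(\overline{e})$ of a labelling is used.
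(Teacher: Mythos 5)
Your proof is correct and follows essentially the same route as the paper: an inductive construction on the balls $B(x,n)$ in which $g$ is forced on each edge-star $E(y)$ by the bijectivity of the labellings restricted to edge-stars, namely $g|_{E(y)}=l|_{E(gy)}^{-1}\circ l'|_{E(y)}$. Your explicit verification of consistency on the back-edge via $l(e)=l(\overline{e})$ is a detail the paper leaves implicit, but it does not change the argument.
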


\begin{proof}
Set $gx:=x'$. Now assume inductively that $g$ is uniquely determined on $B(x,n)$ $(n\in\bbN_{0})$ and let $v\in S(x,n)$. Then $g$ is also uniquely determined on $E(v)$ by the requirement $l'=l\circ g$, namely $g|_{E(v)}:=l|_{E(gv)}^{-1}\circ l'|_{E(v)}$.
\end{proof}

\begin{proposition}\label{prop:ukf_labelling}
Let $F\le\Aut(B_{d,k})$. Further, let $l$ and $l'$ be labellings of $T_{d}$. Then the groups $\smash{\mathrm{U}_{k}^{(l)}(F)}$ and $\smash{\mathrm{U}_{k}^{(l')}(F)}$ are conjugate in $\Aut(T_{d})$.
\end{proposition}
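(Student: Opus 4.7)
The plan is to produce an explicit automorphism $g\in\Aut(T_d)$ that conjugates $\mathrm{U}_k^{(l)}(F)$ onto $\mathrm{U}_k^{(l')}(F)$. Fix any vertex $x_0\in V$. By Lemma \ref{lem:ext} applied to the quadruple $(l,l',x_0,x_0)$, there exists $g\in\Aut(T_d)$ with $gx_0=x_0$ and $l'=l\circ g$; only the relation $l'=l\circ g$ will actually be used below.

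The key technical step is to express the $l'$-local isomorphisms in terms of the $l$-local ones. I claim that
\[
(l')_x^k=l_{gx}^k\circ g\quad\text{for every } x\in V,
\]
where $(l')_x^k$ denotes the label-respecting isomorphism $B(x,k)\to B_{d,k}$ for the labelling $l'$. Indeed, $g$ restricts to an isomorphism $B(x,k)\to B(gx,k)$, and for every edge $e\in E(B(x,k))$ one has $l(g(e))=l'(e)$ by construction of $g$. Hence the composition $l_{gx}^k\circ g$ is label-respecting with respect to $l'$, and the asserted identity follows from the uniqueness clause defining the $l_x^k$.

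Using this identity together with the definition of the $k$-local action, a direct computation gives, for any $h\in\Aut(T_d)$ and $x\in V$,
\[
\sigma_k^{(l')}(h,x)=(l')_{hx}^k\circ h\circ((l')_x^k)^{-1}=l_{ghx}^k\circ ghg^{-1}\circ(l_{gx}^k)^{-1}=\sigma_k^{(l)}(ghg^{-1},gx).
\]
Since $x\mapsto gx$ is a bijection of $V$, the condition $\sigma_k^{(l')}(h,x)\in F$ for all $x\in V$ is equivalent to $\sigma_k^{(l)}(ghg^{-1},y)\in F$ for all $y\in V$. This yields $h\in\mathrm{U}_k^{(l')}(F)\iff ghg^{-1}\in\mathrm{U}_k^{(l)}(F)$, i.e.\ $\mathrm{U}_k^{(l')}(F)=g^{-1}\mathrm{U}_k^{(l)}(F)g$, as required.

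There is no substantive obstacle here: once the translation formula $(l')_x^k=l_{gx}^k\circ g$ is established by a one-line uniqueness argument, the result drops out of the very definition of $\sigma_k$. In spirit this is the analogue of the well-known observation for the Burger--Mozes groups $\mathrm{U}^{(l)}(F)$, and indeed specialises to it when $k=1$.
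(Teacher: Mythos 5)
Your proposal is correct and follows the same route as the paper: both take the automorphism $g$ supplied by Lemma \ref{lem:ext} for the quadruple $(l,l',x_0,x_0)$ and conjugate by it, the paper simply omitting the verification that you spell out via the identity $(l')_x^k=l_{gx}^k\circ g$ and the resulting formula $\sigma_k^{(l')}(h,x)=\sigma_k^{(l)}(ghg^{-1},gx)$. Your write-up is a correct, fully detailed version of the paper's one-line argument.
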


\begin{proof}
Choose $x\in V$. Let $\tau\in\Aut(T_{d})$ denote the automorphism of $T_{d}$ associated to $(l,l',x,x)$ by Lemma \ref{lem:ext}, then  $\smash{\mathrm{U}_{k}^{(l)}(F)=\tau\mathrm{U}_{k}^{(l')}(F)\tau^{-1}}$.
\end{proof}

The following basic properties of $\mathrm{U}_{k}(F)$ are as in Proposition \ref{prop:uf_basic_properties}.

\begin{proposition}\label{prop:ukf_basic_properties}
Let $F\le\Aut(B_{d,k})$. The group $\mathrm{U}_{k}(F)$ is
\begin{enumerate}[(i)]
 \item closed in $\Aut(T_{d})$,
 \item vertex-transitive, and
 \item\label{item:ukf_comp_gen} compactly generated.
\end{enumerate}
\end{proposition}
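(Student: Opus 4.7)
The three assertions are proven separately, each reusing the cocycle identity of Lemma \ref{lem:cocycle} and the fact that $\Aut(B_{d,k})$ is a finite (hence discrete) group.

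For closedness, I would observe that for any fixed $x\in V$, the map $g\mapsto \sigma_{k}(g,x)$ depends only on the restriction $g|_{B(x,k)}$, and is thus continuous with respect to the permutation topology on $\Aut(T_{d})$ and the discrete topology on $\Aut(B_{d,k})$. Since $F\le\Aut(B_{d,k})$ is clopen, the preimage $\sigma_{k}(\cdot,x)^{-1}(F)\subseteq\Aut(T_{d})$ is clopen. Then
\[
 \mathrm{U}_{k}(F)=\bigcap_{x\in V}\sigma_{k}(\cdot,x)^{-1}(F)
\]
is closed in $\Aut(T_{d})$ as an intersection of closed sets.

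For vertex-transitivity, the key observation is that $\mathrm{U}(\{\id\})\subseteq\mathrm{U}_{k}(F)$ for every $F\le\Aut(B_{d,k})$ containing the identity (which every subgroup does). Indeed, if $g\in\mathrm{U}(\{\id\})$ then $g$ is label-respecting, which forces $l_{gx}^{k}\circ g\circ(l_{x}^{k})^{-1}=\id_{B_{d,k}}$ for every $x\in V$, since $l_{x}^{k}$ and $l_{gx}^{k}$ are the unique label-respecting isomorphisms onto $B_{d,k}$. By Lemma \ref{lem:uid_fin_gen}, $\mathrm{U}(\{\id\})$ already acts transitively on $V$, so a fortiori $\mathrm{U}_{k}(F)$ does.

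For compact generation, I fix a vertex $x\in V$. The stabilizer $\mathrm{U}_{k}(F)_{x}$ is closed in the compact group $\Aut(T_{d})_{x}$ and therefore compact. Vertex-transitivity gives the factorization $\mathrm{U}_{k}(F)=\mathrm{U}_{k}(F)_{x}\cdot\mathrm{U}(\{\id\})$, and Lemma \ref{lem:uid_fin_gen} identifies $\mathrm{U}(\{\id\})$ as the group generated by the finite set $\{\iota_{\omega}^{(x)}\mid\omega\in\Omega\}$. Hence
\[
 \mathrm{U}_{k}(F)=\langle\mathrm{U}_{k}(F)_{x}\cup\{\iota_{\omega}^{(x)}\mid\omega\in\Omega\}\rangle,
\]
which is compactly generated by the union of a compact set with a finite set. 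None of the three steps seems to present a real obstacle; the only subtlety to verify carefully is that label-respecting elements lie in $\mathrm{U}_{k}(F)$ regardless of $F$, which is what promotes the $k=1$ construction of Lemma \ref{lem:uid_fin_gen} to the general setting.
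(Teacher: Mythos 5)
Your proposal is correct and follows essentially the same route as the paper: closedness from the fact that membership is detected on balls of radius $k$ (so the complement is open), vertex-transitivity from the observation that label-respecting automorphisms have trivial $k$-local action and hence $\mathrm{U}_{1}(\{\id\})=\mathrm{U}_{k}(\{\id\})\le\mathrm{U}_{k}(F)$, and compact generation from the factorization through the compact stabilizer $\mathrm{U}_{k}(F)_{x}$ together with the finite generating set of $\mathrm{U}_{1}(\{\id\})$ from Lemma \ref{lem:uid_fin_gen}. The only cosmetic difference is that for vertex-transitivity the paper invokes Lemma \ref{lem:ext} directly rather than the transitivity of $\mathrm{U}_{1}(\{\id\})$, but the underlying mechanism (label-respecting automorphisms) is identical.
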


\begin{proof}
As to (i), note that if $g\notin\mathrm{U}_{k}(F)$ then $\sigma_{k}(g,x)\notin F$ for some $x\in V$. In this case, the open neighbourhood $\{h\in\Aut(T_{d})\mid h|_{B(x,k)}=g|_{B(x,k)}\}$ of $g$ in $\Aut(T_{d})$ is also contained in the complement of $\mathrm{U}_{k}(F)$.

For (ii), let $x,x'\in V$ and let $g\in\Aut(T_{d})$ be the automorphism of $T_{d}$ associated to $(l,l,x,x')$ by Lemma \ref{lem:ext}. Then $g\in\mathrm{U}_{k}(F)$ as $\sigma_{k}(g,v)=\id\in F$ for all $v\in V$.

To prove (iii), fix $x\in V$. We show that $\mathrm{U}_{k}(F)$ is generated by the join of the compact set $\mathrm{U}_{k}(F)_{x}$ and the finite generating set of $\mathrm{U}_{1}(\{\id\})=\mathrm{U}_{k}(\{\id\})\le\mathrm{U}_{k}(F)$ guaranteed by Lemma \ref{lem:uid_fin_gen}: Indeed, for $g\in\mathrm{U}_{k}(F)$ pick $g'$ in the finitely generated, vertex-transitive subgroup $\mathrm{U}_{1}(\{\id\})$ of $\mathrm{U}_{k}(F)$ such that $g'gx=x$. We then have $g'g\in\mathrm{U}_{k}(F)_{x}$ and the assertion follows.
\end{proof}

For completeness, we explicitly state the following.

\begin{proposition}
Let $F\le\Aut(B_{d,k})$. Then $\mathrm{U}_{k}(F)$ is a compactly generated, totally disconnected, locally compact, second countable group.
\end{proposition}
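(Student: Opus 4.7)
The plan is to observe that each of the four properties is essentially free, either by a direct appeal to Proposition \ref{prop:ukf_basic_properties} or by inheritance from $\Aut(T_{d})\le\Sym(V\cup E)$ equipped with the permutation topology, as discussed in Section \ref{sec:permutation_groups}.

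First I would recall that compact generation is exactly Proposition \ref{prop:ukf_basic_properties}\ref{item:ukf_comp_gen}, so nothing remains to prove there. For total disconnectedness, I would use that any permutation topology group is Hausdorff and totally disconnected (Section \ref{sec:permutation_groups}), and that $\mathrm{U}_{k}(F)\le\Aut(T_{d})\le\Sym(V\cup E)$ inherits the subspace topology; being a topological subgroup of a totally disconnected group, it is itself totally disconnected.

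For local compactness, I would first note that $\Aut(T_{d})$ is locally compact: since $T_{d}$ is locally finite, every vertex stabilizer $\Aut(T_{d})_{x}$ is closed and all its orbits on $V\cup E$ are finite, hence $\Aut(T_{d})_{x}$ is compact by the criterion from Section \ref{sec:permutation_groups}. Since $\mathrm{U}_{k}(F)$ is closed in $\Aut(T_{d})$ by Proposition \ref{prop:ukf_basic_properties}(i), the stabilizer $\mathrm{U}_{k}(F)_{x}=\mathrm{U}_{k}(F)\cap\Aut(T_{d})_{x}$ is a closed subgroup of a compact group, hence compact and open in $\mathrm{U}_{k}(F)$, yielding local compactness.

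Finally, for second countability, I would use that $V\cup E$ is countable (since $T_{d}$ is locally finite and connected), so $\Sym(V\cup E)$ is second countable in the permutation topology by Section \ref{sec:permutation_groups}, and this property passes to the subgroup $\mathrm{U}_{k}(F)$. There is no genuine obstacle here; the proposition really just compiles facts already in hand, and the only thing to be careful about is citing the correct criteria from the permutation-topology discussion rather than reproving them.
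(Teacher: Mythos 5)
Your proposal is correct and matches the paper's own (much terser) proof: the paper simply observes that $\mathrm{U}_{k}(F)$ is totally disconnected, locally compact and second countable as a closed subgroup of $\Aut(T_{d})$, and compactly generated by Proposition \ref{prop:ukf_basic_properties}. You have merely unpacked why $\Aut(T_{d})$ itself is t.d.l.c.s.c.\ using the permutation-topology criteria, which is a harmless elaboration of the same argument.
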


\begin{proof}
The group $\mathrm{U}_{k}(F)$ is totally disconnected, locally compact, second countable as a closed subgroup of $\Aut(T_{d})$ and compactly generated by Proposition \ref{prop:ukf_basic_properties}.
\end{proof}

Finally, we record that the groups $\mathrm{U}_{k}(F)$ are $(P_{k})$-closed.

\begin{proposition}\label{prop:ukf_pk}
Let $F\le\Aut(B_{d,k})$. Then $\mathrm{U}_{k}(F)$ satisfies Property $(P_{k})$.
\end{proposition}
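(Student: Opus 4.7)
Since $\mathrm{U}_k(F)$ is closed by Proposition~\ref{prop:ukf_basic_properties}, it suffices to verify Property~$(P_k)$ for a single edge $C=e=(x,y)$; and since the map $\varphi_{e}^{(k-1)}:H_{e^{k-1}}\to H_{e^{k-1}}^{(x)}\times H_{e^{k-1}}^{(y)}$ is automatically injective, only surjectivity needs to be established. Concretely, given $g_x\in H_{e^{k-1}}^{(x)}$ and $g_y\in H_{e^{k-1}}^{(y)}$, coming from elements $h_x,h_y\in H_{e^{k-1}}$ with $h_x|_{T_x}=g_x$ and $h_y|_{T_y}=g_y$, the plan is to glue them by defining $g$ on $V(T_d)=V(T_x)\sqcup V(T_y)$ via $g|_{T_x}:=h_x|_{T_x}$ and $g|_{T_y}:=h_y|_{T_y}$, and to show that $g\in\mathrm{U}_k(F)_{e^{k-1}}$ with $\varphi_e^{(k-1)}(g)=(g_x,g_y)$.

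First I would check that $g$ is a well-defined automorphism of $T_d$: the only edges between $V(T_x)$ and $V(T_y)$ are $e$ and $\overline{e}$, and since $x,y\in e^{k-1}$, both $h_x$ and $h_y$ fix $x$ and $y$, so the gluing is consistent on the edge $e$ and respects incidence. That $g$ fixes $e^{k-1}$ pointwise is then immediate from the construction.

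The main point is to verify that $\sigma_k(g,v)\in F$ for every $v\in V$; by symmetry assume $v\in V(T_x)$. If $d(v,x)\ge k$, then $B(v,k)\subseteq T_x$, so $g|_{B(v,k)}=h_x|_{B(v,k)}$ and $\sigma_k(g,v)=\sigma_k(h_x,v)\in F$ because $h_x\in\mathrm{U}_k(F)$. The delicate case is $v$ near the seam, i.e.\ $d(v,x)=j<k$, where $B(v,k)$ meets $T_y$. Here the key observation is that every vertex $w\in B(v,k)\cap T_y$ satisfies $d(y,w)\le k-j-1\le k-1$, so $w\in e^{k-1}$. Consequently both $h_x$ and the glued map $g$ act as the identity on $B(v,k)\cap T_y$, while $g|_{B(v,k)\cap T_x}=h_x|_{B(v,k)\cap T_x}$ by definition. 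Hence $g|_{B(v,k)}=h_x|_{B(v,k)}$, and again $\sigma_k(g,v)=\sigma_k(h_x,v)\in F$.

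The main obstacle is precisely this last verification at vertices straddling the edge: one must exploit the fact that the $(k-1)$-thickening $e^{k-1}$ was chosen exactly so that the $k$-neighbourhood of any vertex in one half-tree only reaches into $e^{k-1}$ on the other side, which is where the index $k-1$ (rather than $k$) in Property~$(P_k)$ becomes essential. Once this is in place, both injectivity and surjectivity of $\varphi_e^{(k-1)}$ are in hand, and Property~$(P_k)$ follows.
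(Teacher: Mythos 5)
Your proposal is correct and follows essentially the same route as the paper: the paper factors a given $g$ stabilizing the thickened edge pointwise into $g=g_{y}\circ g_{x}$ with each factor supported on one half-tree, which is exactly the gluing you perform in reverse, and in both cases the only real content is your "seam" verification that for a vertex $v$ at distance $j<k$ from the edge the ball $B(v,k)$ only reaches into the pointwise-fixed thickening on the other side, so the prescribed $k$-local actions remain in $F$. (You also correctly use the $(k-1)$-thickening $e^{k-1}$ matching the definition of $\varphi_{C}^{(k-1)}$, and the reduction to a single edge via closedness is the same as in the paper.)
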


\begin{proof}
Let $e=(x,y)\in E$. Clearly, $\mathrm{U}_{k}(F)_{e^{k}}\supseteq\mathrm{U}_{k}(F)_{e^{k},T_{y}}\cdot\mathrm{U}_{k}(F)_{e^{k},T_{x}}$. Conversely, consider $g\in\mathrm{U}_{k}(F)_{e^{k}}$ and define $g_{y}\in\Aut(T_{d})$ and $g_{x}\in\Aut(T_{d})$ by
\begin{displaymath}
 \sigma_{k}(g_{y},v)=\begin{cases}\sigma_{k}(g,v) & v\in V(T_{x}) \\ \id & v\in V(T_{y})\end{cases} \quad\text{and}\quad \sigma_{k}(g_{x},v)=\begin{cases}\id & v\in V(T_{x}) \\ \sigma_{k}(g,v) & v\in V(T_{y})\end{cases}
\end{displaymath}
respectively. Then $g_{y}\in\mathrm{U}_{k}(F)_{e^{k},T_{y}}$, $g_{x}\in\mathrm{U}_{k}(F)_{e^{k},T_{x}}$ and $g=g_{y}\circ g_{x}$.
\end{proof}

\subsection{Compatibility and Discreteness}\label{sec:comp_disc}

We now generalize parts \ref{item:uf_local_f} and \ref{item:uf_discrete} of Burger--Mozes' Proposition \ref{prop:uf_basic_properties}. There are compatibility and discreteness conditions (C) and (D) on subgroups $F\le\Aut(B_{d,k})$ that hold if and only if the associated universal group is locally action isomorphic to $F$ and discrete respectively.

We introduce the following notation for vertices in the labelled tree $(T_{d},l)$: Given $x\in V$ and $w=(\omega_{1},\ldots,\omega_{n})\in\Omega^{n}$ $(n\in\bbN_{0})$, set $x_{w}:=\gamma_{x,w}(n)$ where
\begin{displaymath}
\psset{xunit=1.75cm}
\gamma_{x,w}:\mathrm{Path}_{n}^{(w)}:=\pspicture(-0.1,-0.075)(2.1,0.5)\psline(0,0)(1.25,0)\psline(1.75,0)(2,0)\psdots[](0,0)(0.5,0)(1,0)(2,0)\uput[d](0,0){$0$}\uput[d](0.5,0){$1$}\uput[d](1,0){$2$}\rput(1.5,0){$\ldots$}\uput[d](2,0){$n$}\uput[u](0.25,0){$\omega_{1}$}\uput[u](0.75,0){$\omega_{2}$}\endpspicture\to T_{d}
\end{displaymath}

\vspace{0.475cm}
\noindent
is the unique label-respecting morphism sending $0$ to $x\in V$. If $w$ is the empty word, set $x_{w}:=x$. Whenever admissible, we also adopt this notation in the case of $B_{d,k}$ and its labelling. In particular, $S(x,n)$ is in natural bijection with the set $\Omega^{(n)}:=\{(\omega_{1},\ldots,\omega_{n})\in\Omega^{n}\mid \forall k\in\{1,\ldots,n-1\}:\ \omega_{k+1}\neq\omega_{k}\}$.

\subsubsection{Compatibility}

First, we ask whether $\mathrm{U}_{k}(F)$ locally acts like $F$, that is whether the actions $\mathrm{U}_{k}(F)_{x}\curvearrowright B(x,k)$ and $F\curvearrowright B_{d,k}$ are isomorphic for every $x\in V$. Whereas this always holds for $k=1$ by Proposition \ref{prop:uf_basic_properties}\ref{item:uf_local_f} it need not be true for $k\ge 2$, the issue being (non)-compatibility among elements of $F$. See Example \ref{ex:ukf_not_c}. The condition developed in this section allows for computations. A more practical version from a theoretical viewpoint follows in Section \ref{sec:ukf_examples}.

\begin{comment}
Given a vertex $x\in V$ and a non-empty word $w=\omega_{1}\omega_{2}\cdots \omega_{\vert w\vert}$ over $\Omega$, we define $x_{w}:=\gamma_{x,w}(p_{\vert w\vert})\in V$ where 
\begin{displaymath}
\psset{xunit=1.75cm}
\gamma_{x,w}:\pspicture(-0.1,-0.075)(2.1,0.5)\psline(0,0)(1.25,0)\psline(1.75,0)(2,0)\psdots[](0,0)(0.5,0)(1,0)(2,0)\uput[d](0,0){$p_{0}$}\uput[d](0.5,0){$p_{1}$}\uput[d](1,0){$p_{2}$}\rput(1.5,0){$\ldots$}\uput[d](2,0){$p_{\vert w\vert}$}\uput[u](0.25,0){$w_{1}$}\uput[u](0.75,0){$w_{2}$}\endpspicture\to T_{d}
\end{displaymath}

\vspace{0.475cm}
\noindent
is the unique label-respecting morphism sending $p_{0}$ to $x\in V$. If $w$ is the empty word, set $x_{w}:=x$. Whenever admissible, we also adopt this notation in the case of $B_{d,k}$ and its labelling.
\end{comment}

\vspace{0.2cm}
Now, let $x\in V$ and suppose that $\alpha\in\mathrm{U}_{k}(F)_{x}$ realizes $a\in F$ at $x$, that is
\begin{displaymath}
 \alpha|_{B(x,k)}=(l_{x}^{k})^{-1}\circ a\circ l_{x}^{k}.
\end{displaymath}
Then given the condition that $\sigma_{k}(\alpha,x_{\omega})$ be in $F$ for all $\omega\in\Omega$, we obtain the following necessary \emph{compatibility condition} on $F$ for $\mathrm{U}_{k}(F)$ to act like $F$ at $x\in V$:
\begin{displaymath}
 \forall a\in F\ \forall \omega\in\Omega:\ \exists a_{\omega}\in F:\ (l_{x}^{k})^{-1}\circ a\circ l_{x}^{k}|_{S_{\omega}}=(l_{\alpha x_{\omega}}^{k})^{-1}\circ a_{\omega}\circ l_{x_{\omega}}^{k}|_{S_{\omega}}
\end{displaymath}
where $S_{\omega}:=B(x,k)\cap B(x_{\omega},k)\subseteq T_{d}$. Set $T_{\omega}:=l_{x}^{k}(S_{\omega})\subseteq B_{d,k}$. Then the above condition can be rewritten as
\begin{displaymath}
 \forall a\in F\ \forall \omega\in\Omega:\ \exists a_{\omega}\in F:\ a_{\omega}|_{T_{\omega}}=l_{\alpha x_{\omega}}^{k}\circ(l_{x}^{k})^{-1}\circ a\circ l_{x}^{k}\circ(l_{x_{\omega}}^{k})^{-1}|_{T_{\omega}}.
\end{displaymath}
Now observe the following: First, $\alpha x_{\omega}$ depends only on $a$. Second, the subtree $T_{\omega}$ of $B_{d,k}$ does not depend on $x$. Third, $\iota_{\omega}:=l_{x}^{k}|^{T_{\omega}}\circ(l_{x_{\omega}}^{k})^{-1}|_{T_{\omega}}$ is the unique non-trivial, involutive and label-respecting automorphism of $T_{\omega}$; it is given by
\begin{displaymath}
 \iota_{\omega}:=\left.l_{x}^{k}\right|^{T_{\omega}}\circ  \left.(l_{x_{\omega}}^{k})^{-1}\right|_{T_{\omega}}:T_{\omega}\to S_{\omega}\to T_{\omega},\ b_{w}\mapsto x_{\omega w}\mapsto b_{\omega w}
\end{displaymath}
for admissible words $w$. Hence the above condition may be rewritten as
\begin{equation}
  \forall a\in F\ \forall \omega\in\Omega:\ \exists a_{\omega}\in F:\ a_{\omega}|_{T_{\omega}}=\iota_{a\omega}\circ a\circ\iota_{\omega}.
  \label{eq:C}
  \tag{C}
\end{equation}
In this situation we shall say that \emph{$a_{\omega}$ is compatible with $a$ in direction $\omega$}.

\begin{proposition}\label{prop:ukf_local_f}
Let $F\le\Aut(B_{d,k})$. Then $\mathrm{U}_{k}(F)$ is locally action isomorphic to $F$ if and only if $F$ satisfies \eqref{eq:C}.
\end{proposition}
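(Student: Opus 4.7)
The plan is to prove the two directions separately, with necessity essentially already contained in the chain of reductions immediately preceding the proposition statement. Namely, if $\mathrm{U}_k(F)$ is locally action isomorphic to $F$, then at any vertex $x\in V$ every $a\in F$ is realized by an $\alpha\in\mathrm{U}_k(F)_x$; the membership $\sigma_k(\alpha,x_\omega)\in F$, after translation through the labelling isomorphisms $l_x^k$ and $l_{x_\omega}^k$, is precisely the relation demanded by \eqref{eq:C} for the element $a_\omega:=\sigma_k(\alpha,x_\omega)$.

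For sufficiency, assume \eqref{eq:C}. Fix $x\in V$ and $a\in F$; the plan is to construct $\alpha\in\mathrm{U}_k(F)_x$ with $\sigma_k(\alpha,x)=a$ by extending $\alpha$ sphere by sphere around $x$. Set $\alpha|_{B(x,k)}:=(l_x^k)^{-1}\circ a\circ l_x^k$, so that $\sigma_k(\alpha,x)=a\in F$. Inductively, suppose $\alpha$ has been defined on $B(x,k+n)$ with $\sigma_k(\alpha,v)\in F$ for all $v\in B(x,n)$. For each $v\in S(x,n+1)$ with parent $v'\in S(x,n)$, let $\omega$ be the label of the edge $(v',v)$ and put $a':=\sigma_k(\alpha,v')\in F$; apply \eqref{eq:C} to choose $a'_\omega\in F$ with $a'_\omega|_{T_\omega}=\iota_{a'\omega}\circ a'\circ\iota_\omega$, and extend $\alpha$ to $B(v,k)$ by $\alpha|_{B(v,k)}:=(l_{\alpha v}^k)^{-1}\circ a'_\omega\circ l_v^k$, where $\alpha v=(\alpha v')_{a'\omega}$ is already prescribed by $\alpha|_{B(v',k)}$.

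The main verification is consistency of the two candidate definitions of $\alpha$ on the overlap $S_\omega:=B(v',k)\cap B(v,k)$. Transporting both expressions through the labelling maps and using that $l_{v'}^k\circ(l_v^k)^{-1}|_{T_\omega}=\iota_\omega$ and $l_{\alpha v}^k\circ(l_{\alpha v'}^k)^{-1}|_{T_{a'\omega}}=\iota_{a'\omega}$ (both being the unique non-trivial label-respecting involutions on the respective overlap subtrees), the required agreement reduces precisely to the defining identity in \eqref{eq:C}. Distinct $v\in S(x,n+1)$ extend $\alpha$ over pairwise disjoint regions $B(v,k)\setminus B(v',k)$, so there is no further conflict at this stage. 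Iterating in $n$ yields a well-defined $\alpha\in\Aut(T_d)$ fixing $x$ with $\sigma_k(\alpha,v)\in F$ at every vertex, so $\alpha\in\mathrm{U}_k(F)_x$ realizes $a$, proving the missing direction; since vertex-transitivity of $\mathrm{U}_k(F)$ (Proposition \ref{prop:ukf_basic_properties}) transports the conclusion to every vertex, this suffices. I expect the main obstacle to be the bookkeeping around the orientations of the involutions $\iota_\omega$ and $\iota_{a'\omega}$ together with the directions in which the label-respecting isomorphisms act; once this is set up carefully, the inductive step itself is mechanical.
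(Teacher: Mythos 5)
Your proposal is correct and follows essentially the same route as the paper: necessity is read off from the derivation of \eqref{eq:C} preceding the statement, and sufficiency is proved by the same sphere-by-sphere inductive extension, where the overlap consistency on $B(v,k)\cap B(v',k)$ reduces, via the involutions $\iota_{\omega}$ and $\iota_{a'\omega}$, exactly to the defining identity of \eqref{eq:C}. The only difference is a harmless re-indexing of the induction (your $B(x,k+n)$ versus the paper's $B(x,n)$ with extension at $S(x,n-k+1)$).
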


\begin{proof}
By the above, condition \eqref{eq:C} is necessary. To show that it is also sufficient, let $x\in V$ and $a\in F$. We aim to define an automorphism $\alpha\in\mathrm{U}_{k}(F)$ which realizes $a$ at $x$. This forces us to define
\begin{displaymath}
 \alpha|_{B(x,k)}:=(l_{x}^{k})^{-1}\circ a\circ l_{x}^{k}.
\end{displaymath}
Now, assume inductively that $\alpha$ is defined consistently on $B(x,n)$ in the sense that $\sigma_{k}(\alpha,y)\in F$ for all $y\in B(x,n)$ with $B(y,k)\subseteq B(x,n)$. In order to extend $\alpha$ to $B(x,n+1)$, let $y\in S(x,n-k+1)$ and let $\omega\in\Omega$ be the unique label such that $y_{\omega}\in S(x,n-k)$. Set $c:=\sigma_{k}(\alpha,y_{\omega})$. Applying condition \eqref{eq:C} to the pair $(c,\omega)$ yields an element $c_{\omega}\in F$ such that
\begin{displaymath}
 \left.(l_{\alpha y_{\omega}}^{k})^{-1}\circ c\circ l_{y_{\omega}}^{k}\right|_{S_{\omega}}=\left.(l_{\alpha y}^{k})^{-1}\circ c_{\omega}\circ l_{y}^{k}\right|_{S_{\omega}}
\end{displaymath}
where $S_{\omega}:=B(y,k)\cap B(y_{\omega},k)$ and we have realized
\begin{displaymath}
 \iota_{\omega} \text{ as } \left.l_{y_{\omega}}^{k}\right|^{T_{\omega}}\circ  \left.(l_{y}^{k})^{-1}\right|_{T_{\omega}} \quad\text{and}\quad	\iota_{c\omega} \text{ as } \left.l_{\alpha y}^{k}\right|^{T_{c\omega}}\circ  \left.(l_{\alpha y_{\omega}}^{k})^{-1}\right|_{T_{c\omega}}.
\end{displaymath}
Now extend $\alpha$ consistently to $B(v,n+1)$ by setting $\alpha|_{B(x,k)}:=(l_{\alpha x}^{k})^{-1}\circ c_{\omega}\circ l_{x}^{k}$.
\end{proof}

\begin{example}\label{ex:ukf_not_c}
Let $\Omega:=\{1,2,3\}$ and $a\in\Aut(B_{3,2})$ be the element which swaps the leaves $b_{12}$ and $b_{13}$ of $B_{3,2}$. Then $F:=\langle a\rangle=\{\id,a\}$ does not contain an element compatible with $a$ in direction $1\in\Omega$ and hence does not satisfy condition~\eqref{eq:C}.
\end{example}

We show that it suffices to check condition \eqref{eq:C} on the elements of a generating~set. Let $F\le\Aut(B_{d,k})$ and $a,b\in F$. Set $c:=ab$. Then
\begin{comment}
\begin{equation}
 c_{\omega}|_{T_{\omega}}=\iota_{c\omega}\circ a\circ b\circ\iota_{\omega}=\left(\iota_{c\omega}\circ a\circ\iota_{b\omega}\right)\circ\left(\iota_{b\omega}\circ b\circ\iota_{\omega}\right)=\left(\iota_{a(b\omega)}\circ a\circ\sigma_{b\omega}\right)\circ\left(\iota_{b\omega}\circ b\circ\iota_{\omega}\right).
  \label{eq:M}
  \tag{M}
\end{equation}
\end{comment}
\begin{align*}
  c_{\omega}|_{T_{\omega}}=\iota_{c\omega}\circ a\circ b\circ\iota_{\omega}&=\left(\iota_{c\omega}\circ a\circ\iota_{b\omega}\right)\circ\left(\iota_{b\omega}\circ b\circ\iota_{\omega}\right) \\
  &=\left(\iota_{a(b\omega)}\circ a\circ\iota_{b\omega}\right)\circ\left(\iota_{b\omega}\circ b\circ\iota_{\omega}\right).
  \label{eq:M}
  \tag{M}
\end{align*}
%and $a_{i}|_{T_{i}}=\sigma_{a(i)}\circ a\circ\sigma_{i}$ which implies
%\begin{displaymath}
  %\left(a_{i}|_{T_{i}}^{a_{i}(T_{i})}\right)^{-1}=\sigma_{i}\circ a^{-1}\circ\sigma_{a(i)}=\sigma_{a^{-1}(a(i))}\circ %a^{-1}\circ\sigma_{a(i)}.
  %\label{eq:I}
  %\tag{I}
%\end{displaymath}
Let $C_{F}(a,\omega)$ denote the \emph{compatibility set} of elements in $F$ which are compatible with $a\in F$ in direction $\omega\in\Omega$. Then \eqref{eq:M} shows that $C_{F}(ab,\omega)\supseteq C_{F}(a,b\omega)C_{F}(b,\omega)$. It therefore suffices to check condition \eqref{eq:C} on a generating set of $F$.

Given $S\subseteq\Omega$, we also define $C_{F}(a,S):=\bigcap_{\omega\in S}C_{F}(a,\omega)$, the set of elements in $F$ which are compatible with $a\in F$ in all directions from $S$. We omit $F$ in this notation when it is clear from the context.

\vspace{0.2cm}
As a consequence, we obtain the following description of the local action of $\mathrm{U}_{k}(F)$ when $F$ does not satisfy condition \eqref{eq:C}.

\begin{proposition}\label{prop:ukf_max_c}
Let $F\!\le\!\Aut(B_{d,k})$. Then $F$ has a unique maximal subgroup $C(F)$ which satisfies \eqref{eq:C}. We have $C(C(F))\!=\!C(F)$ and $\mathrm{U}_{k}(F)\!=\!\mathrm{U}_{k}(C(F))$.
\end{proposition}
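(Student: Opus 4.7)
The plan is to construct $C(F)$ as the subgroup of $F$ generated by the union of all subgroups of $F$ that satisfy \eqref{eq:C}, and then verify it does the job. This requires two closure properties of the class of subgroups satisfying \eqref{eq:C}.

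First, \emph{closure under taking joins}: if $H_{1},H_{2}\le F$ both satisfy \eqref{eq:C} with witnesses drawn from themselves, then so does $\langle H_{1}\cup H_{2}\rangle$. Indeed, every element is a word $a=h_{1}\cdots h_{n}$ with each $h_{i}$ in some $H_{j_{i}}$, and iterated application of \eqref{eq:M} exhibits a compatibility witness for $a$ in direction $\omega$ as a product of compatibility witnesses for the factors, each lying in $H_{j_{i}}\subseteq\langle H_{1}\cup H_{2}\rangle$. Second, \emph{closure under inverses}: if $a_{\omega}\in F$ satisfies $a_{\omega}|_{T_{\omega}}=\iota_{a\omega}\circ a\circ\iota_{\omega}$, then $a_{\omega}^{-1}$ witnesses compatibility of $a^{-1}$ in direction $a\omega$. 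This needs the geometric observation that, since $a\in\Aut(B_{d,k})$ fixes the center and sends $b_{\omega}$ to $b_{a\omega}$, one has $a(T_{\omega})=T_{a\omega}$, so $a_{\omega}$ restricts to a bijection $T_{\omega}\to T_{a\omega}$; using $\iota_{\omega}^{-1}=\iota_{\omega}$ one then computes
\[
 a_{\omega}^{-1}|_{T_{a\omega}}=(\iota_{a\omega}\circ a\circ\iota_{\omega})^{-1}=\iota_{\omega}\circ a^{-1}\circ\iota_{a\omega}=\iota_{a^{-1}(a\omega)}\circ a^{-1}\circ\iota_{a\omega}.
\]

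Combining these two closure properties, the subgroup $C(F)$ defined above itself satisfies \eqref{eq:C}: an arbitrary element can be written as a product of elements drawn from subgroups satisfying \eqref{eq:C}, and the construction above assembles a witness inside $C(F)$. Any subgroup of $F$ satisfying \eqref{eq:C} is contained in $C(F)$ by definition, so $C(F)$ is the unique maximal such subgroup. Idempotence $C(C(F))=C(F)$ is then immediate, since $C(F)$ is a subgroup of itself which satisfies \eqref{eq:C}.

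For the equality $\mathrm{U}_{k}(F)=\mathrm{U}_{k}(C(F))$, only the inclusion $\mathrm{U}_{k}(F)\le\mathrm{U}_{k}(C(F))$ needs an argument. Fix $g\in\mathrm{U}_{k}(F)$ and set $H:=\langle\{\sigma_{k}(g,x)\mid x\in V\}\rangle\le F$. The derivation preceding \eqref{eq:C} in the text shows that $\sigma_{k}(g,x_{\omega})$ is always compatible with $\sigma_{k}(g,x)$ in direction $\omega$; thus each generator of $H$ admits a compatibility witness inside the generating set, hence inside $H$. The two closure properties then promote this to \eqref{eq:C} for all of $H$, so $H\le C(F)$ and $\sigma_{k}(g,x)\in C(F)$ for every $x\in V$, i.e.\ $g\in\mathrm{U}_{k}(C(F))$.

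The main potential obstacle is the inverse-closure step: one must be careful that $a_{\omega}$, \emph{a priori} only an automorphism of $B_{d,k}$, restricts to a bijection between $T_{\omega}$ and $T_{a\omega}$, which relies on identifying $a(T_{\omega})=T_{a\omega}$ from the description of $T_{\omega}$ as the subtree spanned by vertices at distance at most $k-1$ from the edge $\{b,b_{\omega}\}$. Once this is in place, everything else is a direct manipulation using \eqref{eq:M} and the involutivity of the $\iota_{\omega}$.
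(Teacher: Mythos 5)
Your proof is correct and follows essentially the same route as the paper: define $C(F)$ as the subgroup generated by all subgroups satisfying \eqref{eq:C}, use the identity \eqref{eq:M} to verify \eqref{eq:C} on products, and observe that the $k$-local actions of any $g\in\mathrm{U}_{k}(F)$ are mutually compatible (via $\sigma_{k}(g,x)_{\omega}:=\sigma_{k}(g,x_{\omega})$), so they generate a subgroup satisfying \eqref{eq:C} and hence lie in $C(F)$. Your explicit verification that compatibility witnesses pass to inverses (via $a(T_{\omega})=T_{a\omega}$ and involutivity of the $\iota_{\omega}$) is a welcome addition that the paper leaves implicit in its claim that \eqref{eq:C} need only be checked on a generating set.
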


\begin{proof}
By the above, $C(F)\!:=\!\langle H\le F\mid H \text{ satisfies \eqref{eq:C}}\rangle\!\le\! F$ satisfies condition \eqref{eq:C}. It is the unique maximal such subgroup of $F$ by definition, and $C(C(F))=C(F)$.

Furthermore, $\mathrm{U}_{k}(C(F))\le\mathrm{U}_{k}(F)$. Conversely, suppose $g\in\mathrm{U}_{k}(F)\backslash\mathrm{U}_{k}(C(F))$. Then there is $x\in V$ such that $\sigma_{k}(g,x)\in F\backslash C(F)$ and the group
\begin{displaymath}
  C(F)\lneq\langle C(F),\{\sigma_{k}(g,x)\mid x\in V\}\rangle\le F
\end{displaymath}
satisfies condition \eqref{eq:C}, too, as can be seen by setting $\sigma_{k}(g,x)_{\omega}:=\sigma_{k}(g,x_{\omega})$. This contradicts the maximality of $C(F)$.
\end{proof}

\begin{remark}\label{rem:ukf_elements}
Let $F\le\Aut(B_{d,k})$ satisfy \eqref{eq:C}. The proof of Proposition~\ref{prop:ukf_local_f} shows that elements of $\mathrm{U}_{k}(F)$ are readily constructed: Given $x,y\in V(T_{d})$ and $a\in F$, define $g:B(x,k)\to B(y,k)$ by setting $g(x)=y$ and $\sigma_{k}(g,x)=a$. Then, given elements $a_{\omega}\in F$ $(\omega\in\Omega)$ such that $a_{\omega}\in C_{F}(a,\omega)$ for all $\omega\in\Omega$, there is a unique extension of $g$ to $B(x,k+1)$ so that $\sigma_{k}(g,x_{\omega})=a_{\omega}$ for all $\omega\in\Omega$. Proceed iteratively.
\end{remark}

\subsubsection{Discreteness}\label{sec:ukf_discreteness} The group $F\le\Aut(B_{d,k})$ also determines whether or not $\mathrm{U}_{k}(F)$ is discrete. In fact, the following proposition generalizes Proposition \ref{prop:uf_basic_properties}\ref{item:uf_discrete}.

\begin{proposition}\label{prop:ukf_discrete}
Let $F\le\Aut(B_{d,k})$. Then $\mathrm{U}_{k}(F)$ is discrete if $F$ satisfies 
\begin{equation}
  \forall \omega\in\Omega:\ F_{T_{\omega}}=\{\id\}.
  \label{eq:D}
  \tag{D}
\end{equation}
Conversely, if $\mathrm{U}_{k}(F)$ is discrete and $F$ satisfies \eqref{eq:C}, then $F$ satisfies \eqref{eq:D}.
\end{proposition}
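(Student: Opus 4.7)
The plan is to prove both implications by iteratively prescribing $k$-local actions along $T_{d}$ in the spirit of Remark~\ref{rem:ukf_elements}, running the inductive construction outward from a base vertex $x$ in opposite ways for the two statements.

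For \eqref{eq:D} $\Rightarrow$ discreteness I will establish the stronger statement that $\mathrm{U}_{k}(F)_{B(x,k)} = \{\id\}$ for every $x \in V$. Let $g \in \mathrm{U}_{k}(F)$ fix $B(x,k)$ pointwise. For each $\omega \in \Omega$, the vertex $x_{\omega}$ is then fixed and the subtree $B(x,k-1) \cup B(x_{\omega},k-1) \subseteq B(x,k)$ is fixed pointwise by $g$; under $l_{x_{\omega}}^{k}$ this subtree maps onto $T_{\omega}$, so $\sigma_{k}(g,x_{\omega}) \in F$ fixes $T_{\omega}$ pointwise. By \eqref{eq:D} this forces $\sigma_{k}(g,x_{\omega}) = \id$, hence $g|_{B(x_{\omega},k)} = \id$, and a routine induction on distance from $x$ yields $g = \id$ on all of $T_{d}$.

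For the converse I argue the contrapositive: assuming \eqref{eq:C} and the existence of $\omega^{*} \in \Omega$ and $a \in F_{T_{\omega^{*}}} \setminus \{\id\}$, I construct, for every $N \geq k$, a non-trivial $g \in \mathrm{U}_{k}(F)$ with $g|_{B(x,N)} = \id$, from which non-discreteness follows. Using Remark~\ref{rem:ukf_elements}, the element $g$ is built by prescribing $k$-local actions: set $\sigma_{k}(g,v) = \id$ at every vertex $v$ of depth at most $N-k$, which is consistent since $\id \in C_{F}(\id,\eta) = F_{T_{\eta}}$ for every $\eta$ and which forces $g|_{B(x,N)} = \id$ because the balls $B(v,k)$ for such $v$ cover $B(x,N)$. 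Next pick a vertex $v^{*}$ at depth $N-k+1$ whose incident edge toward $x$ carries the label $\omega^{*}$ and set $\sigma_{k}(g,v^{*}) = a$, which is compatible with the identity at $v^{*}_{-}$ precisely because $a \in F_{T_{\omega^{*}}} = C_{F}(\id,\omega^{*})$; at every other vertex of depth $N-k+1$ keep the local action $\id$, and at every deeper vertex invoke \eqref{eq:C} to pick any compatible element. The resulting $g \in \mathrm{U}_{k}(F)$ is non-trivial because $\sigma_{k}(g,v^{*}) = a \neq \id$.

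The one step requiring genuine verification is that the non-trivial choice at $v^{*}$ does not conflict with the identity already imposed on $B(x,N)$, which amounts to the tree-geometric inclusion $B(v^{*},k) \cap B(x,N) \subseteq B(v^{*},k-1) \cup B(v^{*}_{-},k-1) = (l_{v^{*}}^{k})^{-1}(T_{\omega^{*}})$. I expect this to follow by projecting any $u$ in the intersection onto the geodesic from $x$ to $v^{*}$: using $d(x,v^{*}) = N-k+1$, the bounds $d(v^{*},u) \leq k$ and $d(x,u) \leq N$ place $u$ within $k-1$ of $v^{*}_{-}$ (respectively $v^{*}$) depending on which side of $v^{*}$ its projection lies. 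Since $a$ fixes $T_{\omega^{*}}$ pointwise, the prescription at $v^{*}$ then agrees with the identity on this overlap, and the rest of the construction is mechanical bookkeeping via \eqref{eq:C} and Remark~\ref{rem:ukf_elements}.
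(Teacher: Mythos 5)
Your proposal is correct and follows essentially the same route as the paper: your first implication is the direct form of the paper's contrapositive (both hinge on the observation that an element fixing $B(x,k)$ pointwise has $\sigma_{k}(g,x_{\omega})\in F_{T_{\omega}}$), and your converse is the paper's construction of planting a non-trivial $s\in F_{T_{\omega}}$ at a vertex in $S(x,n-k+1)$ and extending via \eqref{eq:C}. The geometric overlap check you flag ($B(v^{*},k)\cap B(x,N)\subseteq B(v^{*},k-1)\cup B(v^{*}_{-},k-1)$) does go through by the projection argument you sketch, so no gap remains.
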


Alternatively, $\mathrm{U}_{k}(F)$ is discrete if and only if $C(F)$ satisfies \eqref{eq:D}. Example \ref{ex:ukf_not_c} shows that condition \eqref{eq:C} is necessary for the second part of Proposition \ref{prop:ukf_discrete}.

Finally, note that $F$ satisfies \eqref{eq:D} if and only if $C_{F}(\id,\omega)=\{\id\}$ for all $\omega\in\Omega$.

\begin{proof}
(Proposition \ref{prop:ukf_discrete}). Fix $x\in V$. A subgroup $H\le\Aut(T_{d})$ is non-discrete if and only if for every $n\in\bbN$ there is $h\in H\backslash \{\id\}$ such that $h|_{B(x,n)}=\id$.

Suppose that $\mathrm{U}_{k}(F)$ is non-discrete. Then there are $n\in\bbN_{\ge k}$ and $\alpha\in\text{U}_{k}(F)$ such that $\alpha|_{B(x,n)}=\id$ and $\alpha|_{B(x,n+1)}\neq\id$. Hence there is $y\in S(x,n-k+1)$ with $a:=\sigma_{k}(\alpha,y)\neq\id$. In particular, $a\in F_{T_{\omega}}\backslash\{\id\}$ where $\omega$ is the label of the unique edge $e\in E$ with $o(e)=y$ and $d(x,y)=d(x,t(e))+1$.

Conversely, suppose that $F$ satisfies \eqref{eq:C} and $F_{T_{\omega}}\neq\{\id\}$ for some $\omega\in\Omega$. Then for every $n\in\bbN_{\ge k}$, we define an automorphism $\alpha\in\text{U}_{k}(F)$ with $\alpha|_{B(x,n)}=\id$ and $\alpha|_{B(x,n+1)}\neq\id$: If $\alpha|_{B(x,n)}=\id$, then $\sigma_{k}(\alpha,y)\in F$ for all $y\in B(x,n-k)$. Choose $e\in E$ with $y:=o(e)\in S(x,n-k+1)$ and $t(e)\in S(x,n-k)$ such that $l(e)=\omega$. We extend $\alpha$ to $B(y,k)$ by setting $\alpha|_{B(y,k)}:=l_{y}^{k}\circ s\circ(l_{y}^{k})^{-1}$ where $s\in F_{T_{\omega}}\backslash\{\id\}$. Finally, we extend $\alpha$ to $T_{d}$ using \eqref{eq:C}.
\end{proof}

We define condition (CD) on $F\le\Aut(B_{d,k})$ as the conjunction of \eqref{eq:C} and \eqref{eq:D}. The following description is immediate from the above.
\begin{equation}
  \forall a\in F\ \forall \omega\in\Omega:\ \exists!\ a_{\omega}\in F:\ a_{\omega}|_{T_{\omega}}=\iota_{a\omega}\circ a\circ\iota_{\omega}.
  \label{eq:CD}
  \tag{CD}
\end{equation}
When $F$ satisfies \eqref{eq:CD}, an element of $\mathrm{U}_{k}(F)_{x}$ is determined by its action on $B(x,k)$. Hence $\mathrm{U}_{k}(F)_{x}\cong F$ for every $x\in V$ and $\mathrm{U}_{k}(F)_{(x,y)}\cong F_{(b,b_{\omega})}$ for every $(x,y)\in E$ with $l(x,y)=\omega$.
Furthermore, $F$ admits a unique \emph{involutive compatibility cocycle}, i.e. a map $z:F\times\Omega\to F,\ (a,\omega)\mapsto a_{\omega}$ which for all $a,b\in F$ and $\omega\in\Omega$ satisfies
\begin{enumerate}[(i)]
 \item (compatibility) $z(a,\omega)\in C_{F}(a,\omega)$,
 \item (cocycle) $z(ab,\omega)=z(a,b\omega)z(b,\omega)$, and 
 \item (involutive) $z(z(a,\omega),\omega)=a$.
\end{enumerate}
Note that $z$ restricts to an automorphism $z_{\omega}$ of $F_{(b,b_{\omega})}$ $(\omega\in\Omega)$ of order at most $2$.

\subsection{Group Structure}
For $\smash{\widetilde{F}\le\Aut(B_{d,k})}$, let $\smash{F:=\pi\widetilde{F}\le\Sym(\Omega)}$ denote the projection of $\smash{\widetilde{F}}$ onto $\Aut(B_{d,1})\cong\Sym(\Omega)$. As an illustration, we record that the group structure of $\smash{\mathrm{U}_{k}(\widetilde{F})}$ is particularly clear when $F$ is regular.

\begin{proposition}\label{prop:ukf_structure_reg}
Let $\smash{\widetilde{F}\le\Aut(B_{d,k})}$ satisfy \eqref{eq:C}. Suppose $\smash{F:=\pi\widetilde{F}}$ is regular. Then $\smash{\mathrm{U}_{k}(\widetilde{F})=\mathrm{U}_{1}(F)\cong F\ast\bbZ\!/2\bbZ}$.
\end{proposition}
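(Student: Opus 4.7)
The inclusion $\mathrm{U}_{k}(\widetilde{F})\subseteq\mathrm{U}_{1}(F)$ is immediate: $\sigma_{k}(g,x)\in\widetilde{F}$ forces $\sigma_{1}(g,x)=\pi\sigma_{k}(g,x)\in\pi\widetilde{F}=F$. For the reverse inclusion the plan is a counting argument on vertex stabilizers, exploiting that regularity (hence semiregularity) of $F$ makes $\mathrm{U}_{1}(F)$ discrete by Proposition~\ref{prop:uf_basic_properties}\ref{item:uf_discrete}, so that every vertex stabilizer in $\mathrm{U}_{1}(F)$, and a fortiori in $\mathrm{U}_{k}(\widetilde{F})$, is finite.

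First I would establish $|\mathrm{U}_{1}(F)_{x}|=|F|=d$. The evaluation $\sigma_{1}(\cdot,x)\colon\mathrm{U}_{1}(F)_{x}\to F$ is onto by Proposition~\ref{prop:uf_basic_properties}\ref{item:uf_local_f}, and injective by regularity: if $\sigma_{1}(g,x)=\id$ then $g$ fixes $B(x,1)$ pointwise, and at every neighbour $y$ of $x$ the element $\sigma_{1}(g,y)\in F$ fixes the label of $(y,x)$, hence lies in $F_{\omega}=\{\id\}$; an induction on distance from $x$ propagates $g=\id$ throughout $T_{d}$. Since $\widetilde{F}$ satisfies~\eqref{eq:C}, Proposition~\ref{prop:ukf_local_f} provides a surjection $\sigma_{k}(\cdot,x)\colon\mathrm{U}_{k}(\widetilde{F})_{x}\twoheadrightarrow\widetilde{F}$, yielding the chain
\[
 d=|F|\le|\widetilde{F}|\le|\mathrm{U}_{k}(\widetilde{F})_{x}|\le|\mathrm{U}_{1}(F)_{x}|=d,
\]
where the first inequality uses $\pi\widetilde{F}=F$ and the third uses $\mathrm{U}_{k}(\widetilde{F})_{x}\subseteq\mathrm{U}_{1}(F)_{x}$. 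All terms coincide, so $\pi|_{\widetilde{F}}$ is bijective and $\mathrm{U}_{k}(\widetilde{F})_{x}=\mathrm{U}_{1}(F)_{x}$. Both groups are vertex-transitive by Propositions~\ref{prop:uf_basic_properties} and~\ref{prop:ukf_basic_properties}, so a routine transport argument (given $g\in\mathrm{U}_{1}(F)$, pick $h\in\mathrm{U}_{k}(\widetilde{F})$ with $hx=gx$; then $h^{-1}g\in\mathrm{U}_{1}(F)_{x}=\mathrm{U}_{k}(\widetilde{F})_{x}$) upgrades the equality of stabilizers to $\mathrm{U}_{k}(\widetilde{F})=\mathrm{U}_{1}(F)$.

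For the identification $\mathrm{U}_{1}(F)\cong F\ast\bbZ/2\bbZ$ the plan is to apply Bass--Serre theory to the $\mathrm{U}_{1}(F)$-action on $T_{d}$. Vertex stabilizers are isomorphic to $F$ by the previous paragraph; edge stabilizers are trivial, for if $e=(x,y)$ has label $\omega$ and $g\in\mathrm{U}_{1}(F)_{e}$, then $\sigma_{1}(g,x)\in F_{\omega}=\{\id\}$, which as above forces $g=\id$. The label-respecting inversion $\iota_{\omega_{0}}^{(x)}\in\mathrm{U}_{1}(\{\id\})\subseteq\mathrm{U}_{1}(F)$ from Lemma~\ref{lem:uid_fin_gen} supplies an edge inversion. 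Subdividing each geometric edge of $T_{d}$ yields a tree on which $\mathrm{U}_{1}(F)$ acts without inversions (the two new vertex orbits have non-isomorphic stabilizers, $F$ of order $d\ge 3$ and $\bbZ/2\bbZ$) and whose quotient is a single segment with vertex groups $F$ and $\bbZ/2\bbZ$ and trivial edge group. The Bass--Serre structure theorem then delivers $\mathrm{U}_{1}(F)\cong F\ast\bbZ/2\bbZ$. The only genuine obstacle I anticipate is the injectivity of $\sigma_{1}(\cdot,x)$ on $\mathrm{U}_{1}(F)_{x}$; once this rigidity of label-respecting vertex stabilizers is in hand, the counting inequality and the Bass--Serre bookkeeping are routine.
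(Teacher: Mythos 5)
Your proof is correct, but it takes a genuinely different route from the paper's. The paper argues directly on $\mathrm{U}_{k}(\widetilde{F})$: it first observes that regularity together with condition \eqref{eq:C} forces every $\alpha\in\mathrm{U}_{k}(\widetilde{F})_{x}$ to have \emph{constant} $1$-local action ($\sigma_{1}(\alpha,y)=\sigma_{1}(\alpha,x)$ for all $y$), so that $\mathrm{U}_{k}(\widetilde{F})_{x}\cong F$; it then notes that $\mathrm{U}_{k}(\widetilde{F})$ is generated by this stabilizer together with a single label-respecting inversion $\iota$, and exhibits the free product $F\ast\bbZ/2\bbZ$ by a direct ping-pong argument on the two half-trees $T_{x}$ and $T_{x_{\omega}}$. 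You instead pin down the stabilizers by the counting chain $d=|F|\le|\widetilde{F}|\le|\mathrm{U}_{k}(\widetilde{F})_{x}|\le|\mathrm{U}_{1}(F)_{x}|=d$ (using surjectivity of $\sigma_{k}(\cdot,x)$ from Proposition~\ref{prop:ukf_local_f} and the rigidity of label-respecting stabilizers for the outer bounds), and then obtain the free product from Bass--Serre theory on the barycentric subdivision. Your version has the merit of making the equality $\mathrm{U}_{k}(\widetilde{F})=\mathrm{U}_{1}(F)$ fully explicit --- the paper's proof establishes the isomorphism type of $\mathrm{U}_{k}(\widetilde{F})$ but leaves the identification with $\mathrm{U}_{1}(F)$ somewhat implicit --- and it yields as a byproduct that $\pi|_{\widetilde{F}}$ is injective. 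The paper's version is more elementary and self-contained (no appeal to Bass--Serre), and its ping-pong computation is the prototype for the general amalgam decomposition recorded immediately afterwards in Proposition~\ref{prop:ukf_structure}. Both arguments are sound; the only point worth double-checking in yours is the surjectivity $\sigma_{1}(\cdot,x)\colon\mathrm{U}_{1}(F)_{x}\twoheadrightarrow F$, which is indeed Proposition~\ref{prop:uf_basic_properties}\ref{item:uf_local_f}, and the injectivity propagation, which you handle correctly.
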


\begin{proof}
Fix $x\in V$. Since $F$ is transitive, the group $\mathrm{U}_{k}(\widetilde{F})$ is generated by $\mathrm{U}_{k}(\widetilde{F})_{x}$ and an involution $\iota$ inverting an edge with origin $x$. Given $\smash{\alpha\in\mathrm{U}_{k}(\widetilde{F})_{x}}$, regularity of $F$ implies that $\sigma_{1}(\alpha,y)=\sigma_{1}(\alpha,x)\in F$ for all $y\in V$. Now, the subgroups $H_{1}:=\smash{\mathrm{U}_{k}(\widetilde{F})_{x}\cong F}$ and $H_{2}:=\langle\iota\rangle$ of $\smash{\mathrm{U}_{k}(\widetilde{F})}$ generate a free product within $\mathrm{U}_{k}(F)$ by the ping-pong lemma: Put $X_{1}:=V(T_{x})$ and $X_{2}:=V(T_{x_{\omega}})$. Any non-trivial element of $H_{1}$ maps $X_{2}$ into $X_{1}$ as $F_{\omega}=\{\id\}$, and $\iota\in H_{2}$ maps $X_{1}$ into $X_{2}$.
\end{proof}

More generally, Bass-Serre theory \cite{Ser03} identifies the universal groups $\mathrm{U}_{k}(F)$ as amalgamated free products, taking into account that $\mathrm{U}_{k}(F)$ acts with inversions.

\begin{proposition}\label{prop:ukf_structure}
Let $\smash{F\le\Aut(B_{d,k})}$ satisfy \eqref{eq:C} (and \eqref{eq:D}). If $\pi F$ is transitive,
\begin{displaymath}
 \mathrm{U}_{k}(F)\cong\mathrm{U}_{k}(F)_{x}\underset{\text{\makebox[0.5cm]{\raisebox{0pt}[0.3cm]{$\mathrm{U}_{k}(F)_{(x,y)}$}}}}{\ast}\mathrm{U}_{k}(F)_{\{x,y\}}\left(\cong F\underset{\text{\makebox[0.5cm]{\raisebox{0pt}[0.3cm]{$F_{(b,b_{\omega})}$}}}}{\ast}(F_{(b,b_{\omega})}\rtimes\bbZ\!/2\bbZ)\right)
\end{displaymath}
for any edge $(x,y)\in E$, where $\omega=l(x,y)$ and $\bbZ\!/2\bbZ$ acts on $F_{(b,b_{\omega})}$ as $z_{\omega}$.
\end{proposition}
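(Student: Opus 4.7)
The plan is to apply Bass-Serre theory to the action of $G := \mathrm{U}_{k}(F)$ on $T_{d}$. By Proposition~\ref{prop:ukf_basic_properties}, $G$ is vertex-transitive on $T_{d}$; combined with transitivity of $\pi F$ on $\Omega$ this yields transitivity on geometric edges. Moreover, by Lemma~\ref{lem:uid_fin_gen} the subgroup $\mathrm{U}_{1}(\{\id\})\le G$ supplies a label-respecting involutive inversion $\iota$ of the fixed edge $(x,y)$, so $G$ acts with inversions.

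To handle the inversion I would pass to the barycentric subdivision $T_{d}'$ of $T_{d}$, obtained by inserting a midpoint $m$ on each geometric edge. The induced $G$-action on $T_{d}'$ is without inversions, and the quotient $G\backslash T_{d}'$ is a single edge joining the image of $x$ to the image of the midpoint of $(x,y)$. The vertex stabilisers along this edge in the quotient are $G_{x}$ and $G_{\{x,y\}}$, while the edge stabiliser is the pointwise stabiliser $G_{(x,y)}$. The fundamental theorem of Bass-Serre theory, applied to this graph of groups, then yields
\begin{displaymath}
G \cong G_{x}\underset{G_{(x,y)}}{\ast}G_{\{x,y\}},
\end{displaymath}
which is the first isomorphism of the proposition.

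For the parenthetical identification under \eqref{eq:CD}, the discussion following \eqref{eq:CD} shows that each element of $G_{x}$ is determined by its restriction to $B(x,k)$, so $G_{x}\cong F$; analogously $G_{(x,y)}\cong F_{(b,b_{\omega})}$ with $\omega := l(x,y)$. Since $\iota\in G$ is an involutive inversion of $(x,y)$, it lies in $G_{\{x,y\}}\backslash G_{(x,y)}$ and the resulting short exact sequence $1\to G_{(x,y)}\to G_{\{x,y\}}\to\bbZ\!/2\bbZ\to 1$ splits via $\langle\iota\rangle$, giving $G_{\{x,y\}}\cong F_{(b,b_{\omega})}\rtimes\bbZ\!/2\bbZ$.

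I expect the main obstacle to be verifying that, under this identification, conjugation by $\iota$ on $G_{(x,y)}\cong F_{(b,b_{\omega})}$ coincides with the involutive compatibility cocycle $z_{\omega}$. This reduces to unwinding $\sigma_{k}$ through the labelling isomorphisms $l_{x}^{k}, l_{y}^{k}$ and comparing the outcome to $\iota_{\omega}\circ a\circ\iota_{\omega}$ as in condition~\eqref{eq:C}; uniqueness of the compatible element granted by \eqref{eq:D} then forces the two $\bbZ\!/2\bbZ$-actions to agree, which completes the identification.
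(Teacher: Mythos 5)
Your proposal is correct and follows exactly the route the paper intends: the paper offers no written proof beyond the remark that Bass--Serre theory applies ``taking into account that $\mathrm{U}_{k}(F)$ acts with inversions'', and your barycentric-subdivision argument together with the stabilizer identifications under \eqref{eq:CD} is the standard way to make that precise. Your final point also checks out: since the involutive inversion $\iota$ is label-respecting, $\sigma_{k}(\iota,v)=\id$ for all $v$, so the cocycle identity gives $\sigma_{k}(\iota g\iota,x)=\sigma_{k}(g,x_{\omega})\in C_{F}(\sigma_{k}(g,x),\omega)$, and uniqueness from \eqref{eq:D} identifies conjugation by $\iota$ with $z_{\omega}$.
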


\begin{corollary}\label{cor:ukf_cd_iso}
%Let $F,F'\!\le\!\Aut(\! B_{d,k})$ satisfy \eqref{eq:CD}. If $\varphi\!:\!F\!\to\! F'$ is an isomorphism such that $\varphi(F_{(b,b_{\omega})})=F'_{(b,b_{\omega'})}$ for some $\omega,\omega'\in\Omega$, then $\mathrm{U}_{k}(F)\cong\mathrm{U}_{k}(F')$. \qed
Let $F,F'\!\le\!\Aut(\! B_{d,k})$ satisfy \eqref{eq:CD}. If there are $\omega,\omega'\in\Omega$ and an isomorphism $\varphi\!:\!F\!\to\! F'$ such that $\varphi(F_{(b,b_{\omega})})=F'_{(b,b_{\omega'})}$, then $\mathrm{U}_{k}(F)\cong\mathrm{U}_{k}(F')$. \qed
\end{corollary}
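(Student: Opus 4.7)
The plan is to invoke Proposition~\ref{prop:ukf_structure} and reduce the claim to an isomorphism of amalgamated free products. First I would observe that the transitivity of $\pi F$, which is needed for Proposition~\ref{prop:ukf_structure}, transfers to $\pi F'$ under the hypothesis: since $F_{b}=F$ one has $F_{(b,b_{\omega})}=F_{b_{\omega}}$, whence $[F:F_{(b,b_{\omega})}]=|F\cdot b_{\omega}|$, and this index is preserved by $\varphi$; if the common index is $d$, both $\pi F$ and $\pi F'$ are transitive. In this setting both universal groups admit presentations
\[ \mathrm{U}_{k}(F) \cong F \underset{F_{(b,b_{\omega})}}{\ast} \bigl(F_{(b,b_{\omega})} \rtimes_{z_{\omega}} \mathbb{Z}/2\mathbb{Z}\bigr), \quad \mathrm{U}_{k}(F') \cong F' \underset{F'_{(b,b_{\omega'})}}{\ast} \bigl(F'_{(b,b_{\omega'})} \rtimes_{z'_{\omega'}} \mathbb{Z}/2\mathbb{Z}\bigr), \]
and $\varphi$ restricts to an isomorphism $\varphi_{0}\colon F_{(b,b_{\omega})}\to F'_{(b,b_{\omega'})}$ identifying the amalgamating subgroups.

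The central step is to extend $\varphi_{0}$ to an isomorphism
\[ \tilde\varphi\colon F_{(b,b_{\omega})}\rtimes_{z_{\omega}}\mathbb{Z}/2\mathbb{Z}\longrightarrow F'_{(b,b_{\omega'})}\rtimes_{z'_{\omega'}}\mathbb{Z}/2\mathbb{Z} \]
of the edge-stabilizer semidirect products, compatibly with $\varphi_{0}$ on $F_{(b,b_{\omega})}$. Once $\tilde\varphi$ is in hand, the universal property of the pushout glues $\varphi$ and $\tilde\varphi$ along $\varphi_{0}$ into the desired isomorphism $\mathrm{U}_{k}(F)\to\mathrm{U}_{k}(F')$.

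The main obstacle is the construction of $\tilde\varphi$, which amounts to the intertwining relation $\varphi_{0}\circ z_{\omega}=z'_{\omega'}\circ\varphi_{0}$ up to an inner automorphism of $F'_{(b,b_{\omega'})}$ (an inner modification of $\varphi_{0}$ can be absorbed by conjugating inside the amalgamation and leaves the pushout unchanged). Under (CD), $z_{\omega}$ is the \emph{unique} involutive compatibility cocycle on $F$, and $z_{\omega}|_{F_{(b,b_{\omega})}}$ is realized inside $\mathrm{U}_{k}(F)$ as conjugation by any involutive inverter of the edge $(x,y)$ in $\mathrm{U}_{k}(F)_{\{x,y\}}$; any two such inverters differ by an element of $\mathrm{U}_{k}(F)_{(x,y)}\cong F_{(b,b_{\omega})}$, so the outer class of $z_{\omega}|_{F_{(b,b_{\omega})}}$ in $\mathrm{Out}(F_{(b,b_{\omega})})$ is an intrinsic invariant of the pair $(F,F_{(b,b_{\omega})})$. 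Since $\varphi$ identifies this pair with $(F',F'_{(b,b_{\omega'})})$, it must send the outer class of $z_{\omega}$ to that of $z'_{\omega'}$, and a single inner adjustment of $\varphi_{0}$ makes the intertwining hold on the nose. This yields $\tilde\varphi$ by setting $(f,\varepsilon)\mapsto(\varphi_{0}(f),\varepsilon)$ and completes the construction. An alternative, more hands-on, route would bypass the cocycle matching by using Remark~\ref{rem:ukf_elements} to build the isomorphism vertex-by-vertex on $T_{d}$, transporting $k$-local actions through $\varphi$ along a consistent choice of labelling.
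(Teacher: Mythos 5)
Your overall route is the paper's: Corollary~\ref{cor:ukf_cd_iso} is presented there as an immediate consequence of the amalgam decomposition of Proposition~\ref{prop:ukf_structure}, and you correctly isolate the one point that the decomposition does not hand you for free, namely that $\varphi_{0}:=\varphi|_{F_{(b,b_{\omega})}}$ must intertwine the two $\mathbb{Z}/2\mathbb{Z}$-actions $z_{\omega}$ and $z'_{\omega'}$, up to an admissible correction, before the pushout can be glued. The problem is that your justification of this step is circular. You argue that the outer class of $z_{\omega}|_{F_{(b,b_{\omega})}}$ is an ``intrinsic invariant of the pair $(F,F_{(b,b_{\omega})})$'' because it is realized by conjugation by an involutive inverter of $(x,y)$, well defined up to an element of $\mathrm{U}_{k}(F)_{(x,y)}$. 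What that shows is that the outer class is an invariant of $\mathrm{U}_{k}(F)_{\{x,y\}}\cong F_{(b,b_{\omega})}\rtimes_{z_{\omega}}\mathbb{Z}/2\mathbb{Z}$ together with its index-two subgroup --- that is, of the very edge stabilizer whose isomorphism type, compatibly with $\varphi_{0}$, you are trying to establish. The hypothesis of the corollary only provides an \emph{abstract} isomorphism of the pair $(F,F_{(b,b_{\omega})})$ with $(F',F'_{(b,b_{\omega'})})$, whereas the cocycle $z$ is defined through the embedding $F\le\Aut(B_{d,k})$ (it sends $a$ to the unique element of $F$ restricting to $\iota_{a\omega}\circ a\circ\iota_{\omega}$ on $T_{\omega}$); nothing in the hypotheses forces $\varphi_{0}\circ z_{\omega}\circ\varphi_{0}^{-1}$ and $z'_{\omega'}$ into the same outer class. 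That is precisely the content a complete argument must supply, and yours does not.

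There is a secondary gap in the final step even granting the outer-class matching: repairing the intertwining by replacing the generator $t'$ of the $\mathbb{Z}/2\mathbb{Z}$-factor with $a_{0}'t'$ requires $(a_{0}'t')^{2}=a_{0}'\,z'_{\omega'}(a_{0}')=\mathrm{id}$, and absorbing the discrepancy into $\varphi_{0}$ by an inner automorphism $\mathrm{inn}_{c'}$ only realizes corrections of the twisted form $\mathrm{inn}_{c'^{-1}z'_{\omega'}(c')}$; neither condition is verified, so the prescription $(f,\varepsilon)\mapsto(\varphi_{0}(f),\varepsilon)$ is not yet known to be a homomorphism. Your fallback via Remark~\ref{rem:ukf_elements} meets the same obstruction, since transporting $k$-local actions through $\varphi$ is consistent across an edge exactly when $\varphi$ respects the compatibility cocycles. (A minor additional point: Proposition~\ref{prop:ukf_structure} assumes $\pi F$ transitive; your index argument correctly shows transitivity transfers between $\pi F$ and $\pi F'$, but the case where neither is transitive is not covered by this route.)
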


Note that Corollary \ref{cor:ukf_cd_iso} applies to conjugate subgroups of $\Aut(B_{d,k})$ which satisfy \eqref{eq:CD}. The following example shows that the assumption that both $F$ and $F'$ in Corollary \ref{cor:ukf_cd_iso} satisfy \eqref{eq:CD} is indeed necessary.

\begin{example}
Let $\Omega:=\{1,2,3\}$ and $t\in\Aut(B_{3,2})$ be the element which swaps the leaves $x_{12}$ and $x_{13}$ of $B_{3,2}$. Using the notation of Section \ref{sec:ukf_examples_k=2}, consider the group $\Gamma(A_{3})\!\le\!\Aut(B_{3,2})$ which satisfies \eqref{eq:C}. In particular, $\mathrm{U}_{2}(\Gamma(A_{3}))\!\cong\! A_{3}\ast\bbZ\!/2\bbZ$ by Proposition \ref{prop:ukf_structure_reg}. On the other hand, set $F':=t\Gamma(A_{3})t^{-1}$. Then $\pi F'=A_{3}$ while for a non-trivial element $\alpha$ of $F'$, we have $\sigma_{1}(\alpha,b_{\omega})\!\in\! S_{3}\backslash A_{3}$ for some $\omega\in\Omega$. Therefore, $\mathrm{U}_{2}(F')=\mathrm{U}_{1}(\{\id\})$ is isomorphic to $\bbZ\!/2\bbZ\ast\bbZ\!/2\bbZ\ast\bbZ\!/2\bbZ$ by Lemma \ref{lem:uid_fin_gen}. In particular, $\mathrm{U}_{2}(\Gamma(A_{3}))$ and $\mathrm{U}_{2}(t\Gamma(A_{3})t^{-1})$ are not isomorphic.
\end{example}

Conversely, the following Proposition based on \cite[Appendix A]{Rad17}, which states that in certain cases the tree can be recovered from the topological group structure of a subgroup of $\Aut(T_{d})$, applies to appropriate universal groups.

\begin{proposition}\label{prop:isomorphism_conjugate}
Let $H,H'\le\Aut(T_{d})$ be closed and locally transitive with distinct point stabilizers. Then $H$ and $H'$ are isomorphic topological groups if and only if they are conjugate in $\Aut(T_{d})$.
\end{proposition}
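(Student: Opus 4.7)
The ``if'' direction is immediate: conjugation by an element of $\Aut(T_{d})$ is a homeomorphism of $\Aut(T_{d})$ and restricts to a topological isomorphism between any two conjugate subgroups. For the ``only if'' direction, let $\varphi : H \to H'$ be an isomorphism of topological groups; the plan is to reconstruct, from the topological group structure of $H$ alone, a canonical action on $T_{d}$, so that $\varphi$ is implemented by an element of $\Aut(T_{d})$.

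The first step is to single out the vertex stabilizers intrinsically inside $H$. Any compact subgroup of $\Aut(T_{d})$ fixes either a vertex or the midpoint of a geometric edge, so every maximal compact open subgroup of $H$ is either a vertex stabilizer $H_{x}$ or the setwise stabilizer of a geometric edge. Using local transitivity of $H$, one distinguishes the vertex stabilizers among these by a purely algebraic invariant (for instance by an index-counting argument on the collection of maximal compact open subgroups whose pairwise intersections with a fixed one are again compact open of a prescribed index), producing a canonical family $\mathcal{V}(H)$ of subgroups of $H$. The hypothesis that point stabilizers in $H$ are distinct then puts $\mathcal{V}(H)$ in canonical bijection with $V$; the analogous discussion applies to $H'$.

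Since $\varphi$ preserves compactness, openness, and every algebraic invariant, it restricts to a bijection $\mathcal{V}(H)\to\mathcal{V}(H')$ and induces a bijection $f : V \to V$ characterized by $\varphi(H_{x}) = H'_{f(x)}$. Adjacency of $x,y \in V$ is likewise detected algebraically, for instance by the condition that $H_{x}\cap H_{y}$ is a maximal compact open subgroup of $H_{x}$ which is not contained in any third vertex stabilizer of $H$. Because this characterization is preserved by $\varphi$, the map $f$ is an automorphism of $T_{d}$.

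To conclude, observe that for every $h \in H$ and $x \in V$,
\[
 H'_{f(hx)}=\varphi(H_{hx})=\varphi(hH_{x}h^{-1})=\varphi(h)H'_{f(x)}\varphi(h)^{-1}=H'_{\varphi(h)f(x)},
\]
and the distinctness of point stabilizers in $H'$ forces $\varphi(h)f(x)=f(hx)$ for all $x\in V$. As $H'\le\Aut(T_{d})$ acts faithfully on $V$, this equality holds in $\Aut(T_{d})$, so $\varphi(h)=fhf^{-1}$ and hence $H' = fHf^{-1}$. The principal technical obstacle is the first step: giving a purely algebraic characterization of vertex stabilizers among maximal compact open subgroups of $H$. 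This is the content of Radu's argument in \cite{Rad17}, and local transitivity is essential for making such a characterization go through, while the distinct-point-stabilizers hypothesis is what promotes it into a faithful combinatorial reconstruction of $T_{d}$.
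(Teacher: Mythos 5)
Your proposal is correct and follows essentially the same route as the paper: reconstruct $T_{d}$ from the lattice of maximal compact subgroups of $H$ by index counting, then show the resulting bijection on vertices intertwines the two actions via the computation $H'_{f(hx)}=H'_{\varphi(h)f(x)}$. The only caveat is that the specific criteria you float as ``for instance'' are not quite the ones that work --- the paper distinguishes vertex stabilizers as those maximal compact subgroups $K$ admitting no maximal compact $K'$ with $[K:K\cap K']=2$, and detects adjacency of $x,y$ by minimality of the index $[H_{x}:H_{x}\cap H_{y}]$ over all $z\in V$, which is precisely where the distinct-point-stabilizers hypothesis is used (your suggested adjacency test via maximality of $H_{x}\cap H_{y}$ in $H_{x}$ can fail when the local action is merely transitive).
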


\begin{proof}
By \cite{FTN91}, every compact subgroup of $H$ is either contained in a vertex stabilizer $H_{x}$ $(x\in V)$ or, in case $H\not\leq\Aut(T_{d})^{+}$, in a geometric edge stabilizer $H_{\{e,\overline{e}\}}$ $(e\in E)$. Since $H$ is locally transitive, the above are pairwise distinct.

The vertex stabilizers are precisely those maximal compact subgroups $K\le H$ for which there is no maximal compact subgroup $K'$ with $[K:K\cap K']=2$: Indeed, for $e\in E$ and $x\in\{o(e),t(e)\}$ we have $[H_{\{e,\overline{e}\}}:H_{\{e,\overline{e}\}}\cap H_{x}]=2$ whereas $[H_{x}:H_{x}\cap H_{y}],[H_{x}:H_{x}\cap H_{\{e,\overline{e}\}}]\ge 3$ for all distinct $x,y\in V$ and $e\in E$ by the orbit-stabilizer theorem because $d\ge 3$ and $H$ is locally transitive.

Adjacency can be expressed in terms of indices as well: Let $x,y\in V$ be distinct. Then $(x,y)\in E$ if and only if $[H_{x}:H_{x}\cap H_{y}]\le [H_{x}:H_{x}\cap H_{z}]$ for all $z\in V$: Indeed, if $(x,y)\in E$, then $[H_{x}:H_{x}\cap H_{y}]=d$ by the orbit-stabilizer theorem given that $H$ is locally transitive. If $z\in V$ is not adjacent to $x$ then $[H_{x}:H_{x}\cap H_{z}]>d$ because point stabilizers of every local action of $H$ are distinct.

Now, let $\Phi:H\to H'$ be an isomorphism of topological groups. Then $\Phi$ induces a bijection between the maximal compact subgroups of $H$ and $H'$, and preserves indices. Hence there is an automorphism $\varphi\in\Aut(T_{d})$ such that $\Phi(H_{x})=H'_{\varphi(x)}$ for all $x\in V$. Furthermore, since vertex stabilizers in $H'$ are pairwise distinct and
\begin{displaymath}
 H'_{\varphi h\varphi^{-1}(x)}=\Phi(H_{h\varphi^{-1}(x)})=\Phi(hH_{\varphi^{-1}(x)}h^{-1})=\Phi(h)H'_{x}\Phi(h^{-1})=H'_{\Phi(h)x}
\end{displaymath}
for all $x\in V$ we have $\varphi h\varphi^{-1}=\Phi(h)$ for all $h\in H$.
\end{proof}

The following Corollary uses the notation $\Phi^{k}(F')$ from Section \ref{sec:ukf_examples_general_case}.

\begin{corollary}\label{cor:ukf_c_iso}
Let $F\!\le\!\Aut(B_{d,k})$ and $F'\!\le\!\Aut(B_{d,k'})$ satisfy \eqref{eq:C}. Assume $k\!\ge\! k'$ and $\pi F,\pi F'\!\le\!\Sym(\Omega)$ are transitive with distinct point stabilizers. If $\mathrm{U}_{k}(F)$ and $\mathrm{U}_{k'}(F')$ are isomorphic topological groups then $F,\Phi^{k}(F')\!\le\!\Aut(\!B_{d,k})$ are conjugate.
\end{corollary}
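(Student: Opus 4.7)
The plan is to feed the hypotheses into Proposition~\ref{prop:isomorphism_conjugate} and then extract conjugacy of the local actions from conjugacy of the universal groups.

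First, I would verify the hypotheses of Proposition~\ref{prop:isomorphism_conjugate} for $\mathrm{U}_k(F)$ and $\mathrm{U}_{k'}(F')$. Both are closed in $\Aut(T_d)$ by Proposition~\ref{prop:ukf_basic_properties}(i). Since $F$ satisfies \eqref{eq:C}, Proposition~\ref{prop:ukf_local_f} yields that $\mathrm{U}_k(F)$ is locally action isomorphic to $F$, and since the induced action on $E(x)$ factors through the natural identification $\Aut(B_{d,1})\cong\Sym(\Omega)$, this local action is $\pi F$. Transitivity of $\pi F$ thus makes $\mathrm{U}_k(F)$ locally transitive, and distinctness of the point stabilizers of $\pi F$ yields the distinct-point-stabilizers hypothesis at each vertex; the same applies to $\mathrm{U}_{k'}(F')$. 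Hence Proposition~\ref{prop:isomorphism_conjugate} produces $g\in\Aut(T_d)$ with $\mathrm{U}_k(F)=g\,\mathrm{U}_{k'}(F')\,g^{-1}$.

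Next, since $k\ge k'$, the definition of $\Phi^k$ in Section~\ref{sec:ukf_examples_general_case} gives $\mathrm{U}_{k'}(F')=\mathrm{U}_k(\Phi^k(F'))$, and $\Phi^k(F')\le\Aut(B_{d,k})$ inherits \eqref{eq:C} from $F'$. Write $\widetilde F:=\Phi^k(F')$ so that $\mathrm{U}_k(F)=g\,\mathrm{U}_k(\widetilde F)\,g^{-1}$.

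Finally, I would recover the conjugacy in $\Aut(B_{d,k})$ by comparing local actions at a single vertex. Fix $x\in V$ and set $y:=g^{-1}x$. For any $\alpha\in\mathrm{U}_k(\widetilde F)_y$, the automorphism $g\alpha g^{-1}$ fixes $x$, and a direct computation using $g\alpha g^{-1}x=x$ and $\alpha y=y$ gives
\begin{displaymath}
 \sigma_k(g\alpha g^{-1},x)=\varphi\circ\sigma_k(\alpha,y)\circ\varphi^{-1},\qquad \varphi:=l_x^k\circ g\circ(l_y^k)^{-1}\in\Aut(B_{d,k}).
\end{displaymath}
As $\alpha$ ranges over $\mathrm{U}_k(\widetilde F)_y$, the right-hand local action $\sigma_k(\alpha,y)$ ranges over all of $\widetilde F$ by Proposition~\ref{prop:ukf_local_f}; and as $g\alpha g^{-1}$ ranges over $\mathrm{U}_k(F)_x$, the left-hand local action ranges over all of $F$ by the same proposition applied to $F$. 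Therefore $\varphi\,\widetilde F\,\varphi^{-1}=F$, establishing that $F$ and $\Phi^k(F')$ are conjugate in $\Aut(B_{d,k})$.

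The only real subtlety is step three, the cocycle manipulation verifying that conjugation of $\mathrm{U}_k$-groups by an element of $\Aut(T_d)$ translates into conjugation of local actions by an element of $\Aut(B_{d,k})$; this is a bookkeeping matter once one keeps track of the label-respecting bijections $l_x^k$ and $l_y^k$, and uses that $g$ itself need not respect the labelling. Everything else is either a direct quotation of the preceding results or a verification of hypotheses.
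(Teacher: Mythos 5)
Your proposal is correct and takes essentially the same route as the paper's (very terse) proof: invoke Proposition~\ref{prop:isomorphism_conjugate} to get conjugacy of $\mathrm{U}_{k}(F)$ and $\mathrm{U}_{k'}(F')=\mathrm{U}_{k}(\Phi^{k}(F'))$ in $\Aut(T_{d})$, then pass to vertex stabilizers and their $k$-local actions. The paper leaves the final step (that conjugating by $g$ conjugates the local actions by $\sigma_{k}(g,y)\in\Aut(B_{d,k})$) implicit, which you correctly spell out.
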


\begin{proof}
By Proposition \ref{prop:isomorphism_conjugate}, the groups $\mathrm{U}_{k}(F)$ and $\mathrm{U}_{k}(F')$ are conjugate in $\Aut(T_{d})$, hence so are $\mathrm{U}_{k}(F)_{x}$ and $\mathrm{U}_{k'}(F')_{x}$ for every $x\in V$ and the assertion follows.
\end{proof}

\begin{example}
Section~\ref{sec:ukf_examples_k=2} introduces the isomorphic, non-conjugate subgroups $\Pi(S_{3},\mathrm{sgn},\{1\})$ and $\Pi(S_{3},\mathrm{sgn},\{0,1\})$ of $\Aut(B_{3,2})$, both of which project onto $S_{3}$ and satisfy \eqref{eq:C} but not~\eqref{eq:D}. An explicit isomorphism satisfies the assumption of Corollary \ref{cor:ukf_cd_iso}. However, by Corollary \ref{cor:ukf_c_iso} the universal groups $\mathrm{U}_{2}(\Pi(S_{3},\mathrm{sgn},\{1\}))$ and $\mathrm{U}_{2}(\Pi(S_{3},\mathrm{sgn},\{0,1\}))$ are non-isomorphic. Therefore, Corollary \ref{cor:ukf_cd_iso} does not generalize to the non-discrete case.
\end{example}

\begin{question}
Let $F,F'\le\Aut(B_{d,k})$ satisfy \eqref{eq:C} and be conjugate. Are the associated universal groups $\mathrm{U}_{k}(F)$ and $\mathrm{U}_{k}(F')$ necessarily isomorphic?
\end{question}

In the following, we determine the Burger--Mozes subquotient $H^{(\infty)}\!/\mathrm{QZ}(H^{(\infty)})$ of Theorem~\ref{thm:burger_mozes_structure_semiprimitive} for non-discrete, locally semiprimitive universal groups.

\begin{proposition}\label{prop:ukf_qz}
Let $F\le\Aut(B_{d,k})$ satisfy \eqref{eq:C}. If, in addition, $F$ satisfies \eqref{eq:D} then $\mathrm{QZ}(\mathrm{U}_{k}(F))=\mathrm{U}_{k}(F)$. Otherwise, $\mathrm{QZ}(\mathrm{U}_{k}(F))=\{\id\}$.
\end{proposition}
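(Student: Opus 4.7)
The argument splits according to whether $F$ satisfies \eqref{eq:D}. If it does, Proposition \ref{prop:ukf_discrete} gives that $H := \mathrm{U}_{k}(F)$ is discrete; in a discrete topological group $\{\id\}$ is open and contained in every centralizer, so $\mathrm{QZ}(H) = H$ immediately.

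Suppose $F$ does not satisfy \eqref{eq:D}. Then $H$ is non-discrete by Proposition \ref{prop:ukf_discrete}, and we argue $\mathrm{QZ}(H) = \{\id\}$ by contradiction. Let $g \in \mathrm{QZ}(H) \setminus \{\id\}$. Openness of $Z_{H}(g)$ together with the fact that $\{H_{B(x, n)} : x \in V,\ n \in \bbN\}$ forms a neighbourhood basis of the identity in $H$ yields $x_{0} \in V$ and $n \ge k$ with $H_{B(x_{0}, n)} \subseteq Z_{H}(g)$. We aim to exhibit $h \in H_{B(x_{0}, n)}$ that fails to commute with $g$. Since $g$ is non-trivial, it moves infinitely many vertices of $T_{d}$, so we may choose $v$ with $gv \neq v$ lying far from $B(x_{0}, n)$, together with an edge $e = (a, b) \in E$ such that the associated half-trees satisfy $v \in T_{a}$ with $d(v, a) \ge 1$ and $T_{b} \supseteq B(x_{0}, n) \cup \{gv\}$; such a pair $(v, e)$ exists in each automorphism class of $g$ (hyperbolic: take $v$ off the axis of $g$ at distance $\ge 2$; elliptic or inversive: take $v$ in a non-trivially $g$-permuted branch at distance $\ge 2$ from the fixed subtree of $g$). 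Any $h \in H$ supported in $T_{a}$ then fixes $T_{b}$, and hence $B(x_{0}, n) \cup \{gv\}$, placing it in $H_{B(x_{0}, n)}$; if in addition $hv \neq v$, then
\[
(hg)(v) = h(gv) = gv \neq g(hv) = (gh)(v)
\]
yields the desired contradiction to $h \in Z_{H}(g)$.

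Such an $h$ is constructed from the non-trivial element $s \in F_{T_{\omega_{0}}}$ provided by the failure of \eqref{eq:D}. By refining the choice of $v$ among the infinitely many $g$-moved vertices---exploiting that every vertex of $T_{d}$ has neighbours in all $d$ label directions---we may assume the existence of a vertex $y \in T_{a}$ adjacent to $v$ whose parent edge (the first edge on the path from $y$ to $a$) carries label $\omega_{0}$ and whose edge to $v$ carries a label $\omega_{1} \neq \omega_{0}$ on which $s$ acts non-trivially. Placing $s$ as the $k$-local action of $h$ at $y$ and extending to $T_{d}$ via \eqref{eq:C} with trivial compatible elements on the $T_{b}$-side---admissible because $s|_{T_{\omega_{0}}} = \id$, so $\id$ is a valid compatible extension in direction $\omega_{0}$---yields the required $h \in H$: it is supported in $T_{a}$ and sends $v$ to the $s(\omega_{1})$-neighbour of $y$. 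The principal obstacle is the simultaneous geometric and combinatorial selection of $(v, e, y)$: one must verify existence of the separating edge $e$ in each automorphism type of $g$, while the label matching at $y$ rests on the freedom in selecting $v$ together with the surjectivity of labelling at each vertex.
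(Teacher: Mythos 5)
Your first case is fine, and the overall strategy for the second case (exhibit $h\in H_{B(x_{0},n)}$ not commuting with $g$ by supporting $h$ in a half-tree that misses $B(x_{0},n)\cup\{gv\}$ but moves a vertex $v$ with $gv\neq v$) is viable, but the construction of $h$ is wrong for every $k\ge 2$. A non-trivial $s\in F_{T_{\omega_{0}}}$ fixes $T_{\omega_{0}}$ pointwise, and $T_{\omega_{0}}=l_{x}^{k}(S_{\omega_{0}})$ contains all of $B(b,k-1)$; in particular $s$ fixes every vertex of $S(b,1)$, so for $k\ge 2$ there is no label $\omega_{1}$ ``on which $s$ acts non-trivially''. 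Correspondingly, the automorphism $h$ with $\sigma_{k}(h,y)=s$ fixes $B(y,k-1)$ pointwise and hence fixes every neighbour of $y$; it moves only vertices at distance exactly $k$ from $y$ and beyond. Placing $y$ adjacent to $v$ therefore gives $hv=v$, and the non-commutation computation collapses. The repair is to take $y$ with $d(y,v)=k$, chosen so that the label-word of the path from $y$ to $v$ is a leaf of $B_{d,k}$ actually moved by $s$ while the first edge from $y$ towards $B(x_{0},n)\cup\{gv\}$ carries the label $\omega_{0}$. This reopens the geometric bookkeeping you yourself flag as the ``principal obstacle'': the separating edge must now also clear $B(y,k)$, and your case analysis for its existence is only sketched (an inversion has no fixed subtree, and the median of $v$, $gv$, $x_{0}$ can land inside $B(x_{0},n)$, forcing the cut edge to be moved out along the path $[x_{0},v]$).

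For comparison, the paper sidesteps all of this: it shows that the fixator of every half-tree in $\mathrm{U}_{k}(F)$ is non-trivial --- using non-discreteness, Property $(P_{k})$ from Proposition \ref{prop:ukf_pk} to factor $\mathrm{U}_{k}(F)_{e^{k}}$ over the two half-trees, and the label-respecting inversions to transfer non-triviality from one side to the other --- and then quotes M{\"o}ller--Vonk (Property H implies trivial quasi-center). Your direct argument would make the proof self-contained once the placement of $s$ is corrected, which is a genuine gain; but as written the key step fails for all $k\ge 2$, which is precisely the range where the proposition goes beyond the Burger--Mozes case.
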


\begin{proof}
If $F$ satisfies \eqref{eq:D} then $\mathrm{U}_{k}(F)$ is discrete and hence $\mathrm{QZ}(\mathrm{U}_{k}(F))=\mathrm{U}_{k}(F)$. Conversely, if $F$ satisfies \eqref{eq:C} but not \eqref{eq:D} then the stabilizer of any half-tree $T\subseteq T_{d}$ in $\mathrm{U}_{k}(F)$ is non-trivial: We have $T\in\{T_{x},T_{y}\}$ for some edge $e:=(x,y)\in E$. Since $\mathrm{U}_{k}(F)$ is non-discrete by Proposition \ref{prop:ukf_discrete} and has Property $(P_{k})$ by Proposition~\ref{prop:ukf_pk}, the group $\mathrm{U}_{k}(F)_{e^{k}}=\mathrm{U}_{k}(F)_{e^{k},T_{y}}\cdot\mathrm{U}_{k}(F)_{e^{k},T_{x}}$ is non-trivial. In particular, either $\mathrm{U}_{k}(F)_{T_{x}}$ or $\mathrm{U}_{k}(F)_{T_{y}}$ is non-trivial. In view of the existence of label-respecting inversions, both are non-trivial and hence so is $\mathrm{U}_{k}(F)_{T}$. Therefore, $\mathrm{U}_{k}(F)$ has Property H of M{\"o}ller--Vonk \cite[Definition 2.3]{MV12} and \cite[Proposition 2.6]{MV12} implies that $\mathrm{U}_{k}(F)$ has trivial quasi-center.
\end{proof}

\begin{proposition}\label{prop:ukf_bm_subquotient}
Let $F\le\Aut(B_{d,k})$ satisfy \eqref{eq:C} but not \eqref{eq:D}. Suppose that $\pi F$ is semiprimitive. Then $\mathrm{U}_{k}(F)^{(\infty)}/\mathrm{QZ}(\mathrm{U}_{k}(F)^{(\infty)})=\mathrm{U}_{k}(F)^{(\infty)}=\mathrm{U}_{k}(F)^{+_{k}}$.
\end{proposition}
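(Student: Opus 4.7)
Write $H:=\mathrm{U}_{k}(F)$. Since $F$ satisfies~\eqref{eq:C}, Proposition~\ref{prop:ukf_local_f} yields that $H$ is locally action isomorphic to $F$, so (as $\pi F$ is semiprimitive) $H$ is locally semiprimitive. Since $F$ fails~\eqref{eq:D}, Proposition~\ref{prop:ukf_discrete} gives that $H$ is non-discrete, bringing the structure theory of Section~\ref{sec:bm_theory_semiprimitive} into play.

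For the first equality, I would combine Proposition~\ref{prop:bm_1.2.1}\ref{item:bm_1.2.1_4} with Proposition~\ref{prop:ukf_qz} to obtain $\mathrm{QZ}(H^{(\infty)})=\mathrm{QZ}(H)\cap H^{(\infty)}=\{\id\}$, whence $H^{(\infty)}/\mathrm{QZ}(H^{(\infty)})=H^{(\infty)}$.

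For the second equality, I would first verify that $H$ meets the hypotheses of Theorem~\ref{thm:bew_simplicity}: local transitivity of $H_{x}$ on $E(x)$ together with $d\ge 3$ forbids $H$ from fixing an end, vertex-transitivity of $H$ forbids setwise stabilizing a proper subtree, and Proposition~\ref{prop:ukf_pk} provides Property $(P_{k})$. Thus $H^{+_{k}}$ is either trivial or abstractly simple. The former is excluded because the construction in the proof of Proposition~\ref{prop:ukf_discrete} produces, from the failure of~\eqref{eq:D}, a non-trivial element of $H_{e^{k-1}}$ for any $e\in E$; hence $H^{+_{k}}$ is simple. Next, $H^{+_{k}}$ is open (it contains the open subgroup $H_{e^{k-1}}$), closed, and normal in $H$; since it does not act freely on $E$, Lemma~\ref{lem:bm_1.4.2} applied to $N:=H^{+_{k}}$ inside the locally semiprimitive $H$ implies that $H^{+_{k}}$ has at most two orbits on $V$. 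Moreover, the pointwise stabilizer of $B(x,k)$ in $H_{x}$ is contained in $H_{e^{k-1}}\subseteq H^{+_{k}}$ for every $e\in E(x)$, so $H_{x}\cap H^{+_{k}}$ contains the kernel of $H_{x}\to F$ and thus $H_{x}/(H_{x}\cap H^{+_{k}})$ is a quotient of the finite group $F$. Combined with the orbit count this gives $[H:H^{+_{k}}]\le 2|F|<\infty$. By minimality of $H^{(\infty)}$ among closed cocompact normal subgroups of $H$ (Proposition~\ref{prop:bm_1.2.1}\ref{item:bm_1.2.1_1}), this inclusion yields $H^{(\infty)}\subseteq H^{+_{k}}$.

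The conclusion now follows: $H^{(\infty)}$ is a normal subgroup of the simple group $H^{+_{k}}$, so it equals either $\{\id\}$ or $H^{+_{k}}$. Triviality is excluded because $H=\mathrm{U}_{k}(F)$ is non-compact (it is vertex-transitive on the infinite tree $T_{d}$) while $H^{(\infty)}$ is cocompact in $H$. Hence $H^{(\infty)}=H^{+_{k}}$. The main obstacle is the cocompactness step for $H^{+_{k}}$: once it is secured via Lemma~\ref{lem:bm_1.4.2} and the finiteness of $F$, the simplicity produced by Theorem~\ref{thm:bew_simplicity} pins $H^{(\infty)}$ down.
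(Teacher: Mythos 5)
Your proof is correct and follows essentially the same route as the paper's: both obtain the first equality from Proposition~\ref{prop:ukf_qz} together with Proposition~\ref{prop:bm_1.2.1}\ref{item:bm_1.2.1_4}, and the second from Property $(P_{k})$ (Proposition~\ref{prop:ukf_pk}), the Banks--Elder--Willis simplicity theorem, and the containment $\mathrm{U}_{k}(F)^{(\infty)}\le\mathrm{U}_{k}(F)^{+_{k}}$. The only divergence is in that containment: the paper gets it in one line from the normal-subgroup dichotomy of Proposition~\ref{prop:bm_1.2.1}\ref{item:bm_1.2.1_3} applied to the non-discrete closed normal subgroup $\mathrm{U}_{k}(F)^{+_{k}}$, whereas you establish cocompactness by hand via Lemma~\ref{lem:bm_1.4.2} and an index bound (note that case (iii) of that lemma is ruled out by vertex-transitivity, which is why your ``at most two orbits'' claim holds; but even without that observation all cases give finitely many orbits, so your argument is sound either way).
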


\begin{proof}
The subgroup $\mathrm{U}_{k}(F)^{+_{k}}\le\mathrm{U}_{k}(F)$ is open, hence closed, and normal in $\mathrm{U}_{k}(F)$ by definition. Since $\mathrm{U}_{k}(F)$ is non-discrete by Proposition \ref{prop:ukf_discrete}, so is $\mathrm{U}_{k}(F)^{+_{k}}$. Using Proposition \ref{prop:bm_1.2.1}\ref{item:bm_1.2.1_3}, we conclude that $\mathrm{U}_{k}(F)^{+_{k}}\ge\mathrm{U}_{k}(F)^{(\infty)}$. Since $\mathrm{U}_{k}(F)$ satisfies Property $(P_{k})$ by Proposition \ref{prop:ukf_pk}, the group $\mathrm{U}_{k}(F)^{+_{k}}$ is simple due to Theorem \ref{thm:bew_simplicity}. Thus $\mathrm{U}_{k}(F)^{+_{k}}=\mathrm{U}_{k}(F)^{(\infty)}$. Given that $\mathrm{QZ}(\mathrm{U}_{k}(F)^{(\infty)})=\mathrm{QZ}(\mathrm{U}_{k}(F))\cap\mathrm{U}_{k}(F)^{(\infty)}$ by Proposition \ref{prop:bm_1.2.1}\ref{item:bm_1.2.1_4}, the assertion follows from Proposition \ref{prop:ukf_qz}.
\end{proof}

In the context of Proposition \ref{prop:ukf_bm_subquotient}, the group $\mathrm{U}_{k}(F)^{+_{k}}$ is simple, compactly generated, non-discrete, totally disconnected, locally compact, second countable. Compact generation follows from \cite[Corollary 2.11]{KM08} given that $\smash{\mathrm{U}_{k}(F)^{+_{k}}}$ is cocompact in $\mathrm{U}_{k}(F)$ by Proposition \ref{prop:bm_1.2.1}\ref{item:bm_1.2.1_1}.

\subsection{Examples}\label{sec:ukf_examples}

We now construct various classes of examples of subgroups of $\Aut(B_{d,k})$ satisfying \eqref{eq:C} or \eqref{eq:CD}, and prove a rigidity result for certain local actions.

\vspace{0.2cm}
First, we give a suitable realization of $\Aut(B_{d,k})$ and the conditions \eqref{eq:C} and \eqref{eq:D}. Namely, we view an automorphism $\alpha$ of $B_{d,k}$ as the set $\{\sigma_{k-1}(\alpha,v)\mid v\in B(b,1)\}$ as follows: Let $\Aut(B_{d,1})\cong\Sym(\Omega)$ be the natural isomorphism. For $k\ge 2$, we iteratively identify $\Aut(B_{d,k})$ with its image under the map
\begin{displaymath}
 \Aut(B_{d,k})\to\Aut(B_{d,k-1})\ltimes\prod\nolimits_{\omega\in\Omega}\Aut(B_{d,k-1}),\ \alpha\mapsto(\sigma_{k-1}(\alpha,b),(\sigma_{k-1}(\alpha,b_{\omega}))_{\omega})
 \end{displaymath}
where $\Aut(B_{d,k-1})$ acts on $\smash{\prod_{\omega\in\Omega}\Aut(B_{d,k-1})}$ by permuting the factors according to its action on $S(b,1)\cong\Omega$. That is, multiplication in $\Aut(B_{d,k})$ is given by
\begin{displaymath}
 (\alpha,(\alpha_{\omega})_{\omega\in\Omega})\circ (\beta,(\beta_{\omega})_{\omega\in\Omega})=(\alpha\beta,(\alpha_{\beta\omega}\beta_{\omega})_{\omega\in\Omega}).
\end{displaymath}
Consider the homomorphism $\pi_{k-1}:\Aut(B_{d,k})\to\Aut(B_{d,k-1}),\ \alpha\mapsto\sigma_{k-1}(\alpha,b)$, the projections $\pr_{\omega}:\Aut(B_{d,k})\to\Aut(B_{d,k-1}),\ \alpha\mapsto\sigma_{k-1}(\alpha,b_{\omega})$ $(\omega\in\Omega)$, and
\begin{displaymath}
 p_{\omega}=(\pi_{k-1},\pr_{\omega}):\Aut(B_{d,k})\to\Aut(B_{d,k-1})\times\Aut(B_{d,k-1}),%\ \alpha\mapsto(\sigma_{k-1}(\alpha,b),\sigma_{k-1}(\alpha,b_{\omega}))
\end{displaymath}
whose image we interpret as a relation on $\Aut(B_{d,k-1})$. The conditions \eqref{eq:C} and \eqref{eq:D} for a subgroup $F\le\Aut(B_{d,k})$ now read as follows.
\begin{equation}
  \forall \omega\in\Omega:\ p_{\omega}(F)\text{ is symmetric}
  \tag{C}
\end{equation}
\vspace{-0.6cm}
\begin{equation}
  \forall \omega\in\Omega:\ p_{\omega}|_{F}^{-1}(\id,\id)=\{\id\}
  \tag{D} 
\end{equation}

\subsubsection{The case $k=2$}\label{sec:ukf_examples_k=2}
We first consider the case $k=2$ which is all-encompassing in certain situations, see Theorem \ref{thm:ukf_rigid}. By the above, $\Aut(B_{d,2})$ is realized as follows: $\Aut(B_{d,2})=\{(a,(a_{\omega})_{\omega\in\Omega})\mid a\in\Sym(\Omega),\ \forall\omega\in\Omega:\ a_{\omega}\in\Sym(\Omega) \text{ and } a_{\omega}\omega=a\omega\}$.

Consider the map $\gamma:\Sym(\Omega)\to\Aut(B_{d,2})$, $a\mapsto (a,(a,\ldots,a))\in\Aut(B_{d,2})$, using the realization of $\Aut(B_{d,2})$ from above. For every $F\le\Sym(\Omega)$, the image
\begin{displaymath}
 \Gamma(F):=\image(\gamma|_{F})=\{(a,(a,\ldots,a))\mid a\in F\}\cong F
\end{displaymath}
is a subgroup of $\Aut(B_{d,2})$ which is isomorphic to $F$ and satisfies both \eqref{eq:C} and \eqref{eq:D}. The involutive compatibility cocycle is given by $\Gamma(F)\times\Omega\to\Gamma(F),\ (\gamma(a),\omega)\mapsto \gamma(a)$. Note that $\Gamma(F)\!\cong\! F$ implements the diagonal action $F\curvearrowright\Omega^{2}$ on $S(b,2)\cong\Omega^{(2)}\subset\Omega^{2}$.
%Note that $\Gamma(F)\cong F$ acts diagonally on $S(b,2)\cong\Omega^{(2)}\subset\Omega^{2}$.

We obtain $\mathrm{U}_{2}(\Gamma(F))\!=\!\{\alpha\in\Aut(T_{d})\mid \exists a\in F: \forall x\in V:\ \sigma_{1}(\alpha,x)=a\}=:\mathrm{D}(F)$, following the notation of \cite{BEW15}. Moreover, there is the following description of all subgroups $\smash{\widetilde{F}\le\Aut(B_{d,2})}$ with $\smash{\pi \widetilde{F}=F}$ that satisfy \eqref{eq:C} and contain $\Gamma(F)$.

\begin{proposition}\label{prop:u2f_split}
Let $F\le\Sym(\Omega)$. Given $K\le\prod_{\omega\in\Omega}F_{\omega}\cong\ker\pi\le\Aut(B_{d,2})$, there is $\smash{\widetilde{F}\le\Aut(B_{d,2})}$ satisfying \eqref{eq:C} and fitting into the split exact sequence
\begin{displaymath}
\xymatrix{
 1 \ar[r] & K \ar@{ >->}[r]^{\iota} & \widetilde{F} \ar@{-^>}@<.3ex>[r]^-{\pi} & F \ar@{-^>}@<.3ex>[l]^-{\gamma} \ar[r] & 1
}
\end{displaymath}
if and only if $K$ is preserved by the action $F\curvearrowright\prod_{\omega\in\Omega}F_{\omega}$, $\ a\cdot(a_{\omega})_{\omega}:=(aa_{a^{-1}\omega}a^{-1})_{\omega}$.
\end{proposition}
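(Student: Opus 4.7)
My plan is to work throughout in the semidirect realisation $\Aut(B_{d,2}) = \Sym(\Omega)\ltimes\prod_{\omega}\Sym(\Omega)$ introduced above, and to reduce both directions to short explicit calculations in this realisation.

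For the forward direction I would observe that if $\widetilde{F}$ fitting into such a split sequence exists, then $K = \ker(\pi|_{\widetilde{F}})$ is normal in $\widetilde{F}$ and hence stabilised under conjugation by the splitting image $\gamma(F) \le \widetilde{F}$. A direct application of the multiplication formula yields
\[ \gamma(a)\,(\mathrm{id},(k_\omega)_\omega)\,\gamma(a)^{-1} = (\mathrm{id},(a\,k_{a^{-1}\omega}\,a^{-1})_\omega), \]
which is exactly the stated $F$-action on $\prod_\omega F_\omega$, so $K$ is $F$-invariant.

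For the reverse direction, assuming this invariance, I would set $\widetilde{F} := K\gamma(F)$. The hypothesis ensures that $\gamma(F)$ normalises $K$ in $\Aut(B_{d,2})$, so $\widetilde{F}$ is a subgroup; exactness of $1 \to K \to \widetilde{F} \xrightarrow{\pi} F \to 1$ with splitting $\gamma$ is then immediate from the product decomposition. The one step that requires care is the verification of~\eqref{eq:C}. Given $\alpha = (\mathrm{id},(k_{\omega'})_{\omega'})\gamma(a) \in \widetilde{F}$ and $\omega \in \Omega$, I would unfold $\iota_{a\omega}\circ \alpha\circ\iota_\omega$ on $T_\omega$ using that $\iota_\omega$ swaps $b\leftrightarrow b_\omega$ and $b_{\omega'}\leftrightarrow b_{\omega\omega'}$ for $\omega' \neq \omega$. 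Comparing the images of the four types of vertices of $T_\omega$ with those produced by a generic element of $\Aut(B_{d,2})$ shows that any compatible element must project to $k_{a\omega}a$ at~$b$ and carry the value $a$ in its $\omega$-component, with no constraint imposed on the other components.

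I would then exhibit
\[ \beta := (\mathrm{id},(k_{\omega'}^{-1})_{\omega'})\,\gamma(k_{a\omega}\,a), \]
which lies in $\widetilde{F}$ since $k_{a\omega} \in F_{a\omega} \le F$ and $K$ is closed under inversion, and verify $\beta_\omega = a$ using that $k_{a\omega}$ fixes $a\omega$. The main obstacle I anticipate is keeping indices straight through the $\iota$-sandwich, but once the dictionary is in place the verification is mechanical, and neither direction is logically deep beyond the multiplication formula of the semidirect realisation.
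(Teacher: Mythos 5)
Your proposal is correct and follows essentially the same route as the paper: the forward direction is the identical conjugation computation, and for the converse the paper likewise takes $\widetilde{F}=\langle\Gamma(F),K\rangle=\{(a,(aa_{\omega})_{\omega})\mid a\in F,\ (a_{\omega})_{\omega}\in K\}$ and verifies \eqref{eq:C} by exhibiting explicit compatible elements. The only (cosmetic) difference is that the paper reduces the check of \eqref{eq:C} to the generators $\gamma(a)$ and $k\in K$ via the inclusion $C(ab,\omega)\supseteq C(a,b\omega)C(b,\omega)$, producing $\gamma(a)$ and $\gamma(\pr_{\omega}k)k^{-1}$ respectively, whereas you verify it directly on a general product $k\gamma(a)$.
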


\begin{proof}
If there is a split exact sequence as above then $K\unlhd \widetilde{F}$ is invariant under conjugation by $\smash{\Gamma(F)\le \widetilde{F}}$, hence the assertion.

Conversely, if $K$ is invariant under the given action, then
\begin{displaymath}
  \smash{\widetilde{F}:=\{(a,(aa_{\omega})_{\omega})\mid a\in F,\ (a_{\omega})_{\omega}\in K\}}
\end{displaymath}
fits into the sequence: First, note that $\smash{\widetilde{F}}$ contains both $K$ and $\Gamma(F)$. It is also a subgroup of $\Aut(B_{d,2})$: For $\smash{(a,(aa_{\omega})_{\omega}),\ (b,(bb_{\omega})_{\omega})\in \widetilde{F}}$ we have
\begin{align*}
 (a,(aa_{\omega})_{\omega})\circ(b,(bb_{\omega})_{\omega})=(ab,(aa_{b\omega}bb_{\omega})_{\omega})=(ab,(ab\circ b^{-1}a_{b\omega}b\circ b_{\omega})_{\omega})\in \widetilde{F}
\end{align*}
by assumption. In particular, $\widetilde{F}=\langle \Gamma(F),K\rangle$. It suffices to check condition \eqref{eq:C} on these generators of $\smash{\widetilde{F}}$. As before, $\gamma(a)\in C(\gamma(a),\omega)$ for all $a\in F$ and $\omega\in\Omega$. Now let $k\in K$. Then $\gamma(\pr_{\omega}k)k^{-1}\in C(k,\omega)$ for all $\omega\in\Omega$.
\end{proof}

\begin{example}\label{ex:dihedral}
%Here, we investigate Proposition \ref{prop:u2f_split} for primitive dihedral groups.
We show that for certain dihedral groups there are only four groups of the type given in Proposition~\ref{prop:u2f_split}: Set $F:=D_{p}\le\Sym(p)$ for some prime $p\ge 3$. Then $F_{\omega}\!\cong\!(\bbF_{2},+)$. Hence $\smash{U\!:=\!\prod_{\omega\in\Omega}F_{\omega}}$ is a $p$-dimensional vector space over $\bbF_{2}$ and the $F$-action on it permutes coordinates. When $2\in(\bbZ/p\bbZ)^{\ast}$ is primitive, there are only four $F$-invariant subspaces of $U$: The trivial subspace, the diagonal subspace $\langle(1,\ldots,1)\rangle$, the whole space, and $\smash{K:=\ker\sigma\cong\bbF_{2}^{(p-1)}}$ where $\smash{\sigma\!:U\to\bbF_{2}}$ is given by $\smash{(v_{1},\ldots,v_{p})\mapsto \sum_{i=1}^{p}v_{i}}$. Note that $K$ is $F$-invariant because the homomorphism $\sigma$ is. Conjecturally, there are infinitely many primes for which $2\in(\bbZ/p\bbZ)^{\ast}$ is primitive. The list starts with $3$, $5$, $11$, $13,\ldots$, see \cite[A001122]{Slo}.

Suppose that $W\le U$ is $F$-invariant. It suffices to show that $W$ contains $K$ as soon as $W\cap\ker\sigma$ contains a non-trivial element $w$. To see this, we show that the orbit of $w$ under the cyclic group $\langle\varrho\rangle=C_{p}\le D_{p}$ generates a $(p-1)$-dimensional subspace of $K$ which hence equals $K$: Indeed, the rank of the circulant matrix $C:=(w,\varrho w,\varrho^{2}w,\ldots,\varrho^{(p-1)}w)$ equals $p-\deg(\gcd(x^{p}-1,f(x)))$ where $f(x)\in\bbF_{2}[x]$ is the polynomial $f(x)=w_{p}x^{p-1}+\cdots+w_{2}x+w_{1}$, see e.g. \cite[Corollary 1]{Day60}. The polynomial $x^{p}-1\in\bbF_{2}[x]$ factors into the irreducibles $(x^{p-1}+x^{p-2}+\cdots+x+1)(x-1)$ by the assumption on $p$. Since $f$ has an even number of non-zero coefficients, we conclude that $\mathrm{rank}(C)=p-1$.
\end{example}

The following subgroups of $\Aut(B_{d,2})$ are of the type given in Proposition \ref{prop:u2f_split}. Let $F\le\Sym(\Omega)$ be transitive. Fix $\omega_{0}\in\Omega$, let $C\le Z(F_{\omega_{0}})$ and let $N\unlhd F_{\omega_{0}}$ be normal. Furthermore, fix elements $f_{\omega}\in F$ ($\omega\in\Omega$) satisfying $f_{\omega}(\omega_{0})=\omega$. We define
\begin{displaymath}
 \Delta(F,C):=\{(a,(a\circ f_{\omega}a_{0}f_{\omega}^{-1})_{\omega})\mid a\in F,\ a_{0}\in C\}\cong F\times C,\ \text{and}
\end{displaymath}
\vspace{-0.5cm}
\begin{displaymath}
 \Phi(F,N):=\{(a,(a\circ f_{\omega}a_{0}^{(\omega)}f_{\omega}^{-1})_{\omega})\mid a\in F,\ \forall \omega\in\Omega:\ a_{0}^{(\omega)}\in N\}\cong F\ltimes N^{d}.
\end{displaymath}
In the case of $\Delta(F,C)$ we have $K=\{(f_{\omega}a_{0}f_{\omega}^{-1})_{\omega}\mid a_{0}\in C\}$ whereas in the case of $\Phi(F,N)$ we have $\smash{K=\{(f_{\omega}a_{0}^{(\omega)}f_{\omega}^{-1})_{\omega}\mid \forall\omega\in\Omega: a_{0}^{(\omega)}\in N\}}$. In both cases, invariance under the action of $F$ is readily verified, as is condition \eqref{eq:D} for $\Delta(F,C)$.

\vspace{0.2cm}
The group $\Delta(F,F_{\omega_{0}})$ can be defined for non-abelian $F_{\omega_{0}}$ as well, namely
\begin{displaymath}
 \Delta(F):=\{(a,(f_{a\omega}f_{\omega}^{-1}\circ f_{\omega}a_{0}f_{\omega}^{-1})_{\omega})\mid a\in F, a_{0}\in F_{\omega_{0}}\}\cong F\times F_{\omega_{0}}.
\end{displaymath}
However, it need not contain $\Gamma(F)$. Note that $\Phi(F,N)$ does not depend on the choice of the elements $(f_{\omega})_{\omega\in\Omega}$ as $N$ is normal in $F_{\omega_{0}}$, whereas $\Delta(F,C)$ and $\Delta(F)$ may. However, any group of the form $\{(a,(z(a,\omega)\alpha_{\omega}(a_{0}))_{\omega})\mid a\in F,\ a_{0}\in F_{\omega_{0}}\}$, where $z$ is a compatibility cocycle of $F$ and $\alpha_{\omega}:F_{\omega_{0}}\to F_{\omega}$ $(\omega\in\Omega)$ are isomorphisms, which satisfies \eqref{eq:C} and in which $\{(a,(z(a,\omega))_{\omega})\mid a\in F\}$ and $\{(\id,(\alpha_{\omega}(a_{0}))_{\omega})\mid a_{0}\in F_{\omega_{0}}\}$ commute, will be referred to as $\Delta(F)$ in view of Corollary~\ref{cor:ukf_cd_iso}. %In particular, this applies to $\Delta(F,F_{\omega_{0}})$ when $F_{\omega_{0}}$ is abelian.

The group $\Phi(F,F_{\omega_{0}})$ can be defined without assuming transitivity of $F$, namely
\begin{displaymath}
 \Phi(F):=\{(a,(a_{\omega})_{\omega})\mid a\in F,\ \forall \omega\in\Omega:\ a_{\omega}\in C_{F}(a,\omega)\}\cong F\ltimes\prod\nolimits_{\omega\in\Omega}F_{\omega}.
\end{displaymath}
We conclude that $\mathrm{U}_{2}(\Phi(F))=\mathrm{U}_{1}(F)$ for every $F\le \Sym(\Omega)$.

When $F\le\Sym(\Omega)$ preserves a partition $\calP:\Omega=\bigsqcup_{i\in I}\Omega_{i}$ of $\Omega$, we define
\begin{displaymath}
 \Phi(F,\calP):=\{(a,(a_{\omega})_{\omega})\mid a\in F,\ a_{\omega}\in C_{F}(a,\omega) \text{ constant w.r.t. $\calP$}\}\cong F\ltimes\prod\nolimits_{i\in I}F_{\Omega_{i}}.
\end{displaymath}
The group $\Phi(F,\calP)$ satisfies \eqref{eq:C} as well and features prominently in Section \ref{sec:non_trivial_qz}.

\begin{comment}
The following kind of $2$-local action is ubiquitous in \cite{Rad17}. For $F\le\Sym(\Omega)$, set
\begin{displaymath}
 S(F):=\left\{(a,(a_{\omega})_{\omega})\in\Phi(F)\left|\ \prod\nolimits_{\omega\in\Omega}\mathrm{sgn}(a_{\omega})=1\right.\right\}.
\end{displaymath}

\begin{proposition}\label{prop:pif}
\hspace{-0.079cm}Let $F\!\le\!\Sym(\Omega)$ be transitive and generated by point stabilizers. Then $S(F)$ satisfies \eqref{eq:C}. It is a proper subgroup of $\Phi(F)$ if and only if $F\not\le\mathrm{Alt}(\Omega)$. In that case, $\Gamma(F)\le S(F)$ if and only if $d$ is even.
\end{proposition}

\begin{proof}
If $F\le\Alt(\Omega)$ then the sign condition is void and therefore $S(F)=\Phi(F)$. If $F\not\le\Alt(\Omega)$ then $F_{\omega}\not\le\Alt(\Omega)$ for all $\omega\in\Omega$ as $F$ is generated by point stabilizers, and transitive. Since $C_{F}(a,\omega)=aF_{\omega}$, we conclude that $S(F)$ satisfies \eqref{eq:C} and is a proper subgroup of $\Phi(F)$. Then $\Gamma(F)\le S(F)$ if and only if $d$ is even.
\end{proof}
\end{comment}

The following kind of $2$-local action generalises the sign construction in \cite{Rad17}. Let $F\le\Sym(\Omega)$ and $\rho:F\twoheadrightarrow A$ a homomorphism to an abelian group $A$. Define
\begin{align*}
 \Pi(F,\rho,\{1\})&:=\left\{(a,(a_{\omega})_{\omega})\in\Phi(F)\left|\ \prod\nolimits_{\omega\in\Omega}\rho(a_{\omega})=1\right.\right\},\ \text{and} \\
 \Pi(F,\rho,\{0,1\})&:=\left\{(a,(a_{\omega})_{\omega})\in\Phi(F)\left|\ \rho(a)\prod\nolimits_{\omega\in\Omega}\rho(a_{\omega})=1\right.\right\}.
\end{align*}
This construction is generalised to $k\geq 2$ in Section~\ref{sec:ukf_examples_general_case} where the third entry of~$\Pi$ is a set of radii over which the defining product is taken.

\begin{proposition}\label{prop:pif}
Let $F\le\Sym(\Omega)$ and $\rho:F\twoheadrightarrow A$ a homomorphism to an abelian group $A$. Let $\smash{\widetilde{F}\in\{\Pi(F,\rho,\{1\}), \Pi(F,\rho,\{0,1\})\}}$. If $\rho(F_{\omega})=A$ for all $\omega\in\Omega$ then $\smash{\pi\widetilde{F}=F}$ and $\smash{\widetilde{F}}$ satisfies \eqref{eq:C}.
\end{proposition}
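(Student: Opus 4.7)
The statement has two claims: that $\pi$ restricts to a surjection $\widetilde{F}\to F$, and that $\widetilde{F}$ satisfies \eqref{eq:C}. Both rest on a common observation: given any target $c\in A$, the hypothesis $\rho(F_{\omega_{0}})=A$ for any fixed $\omega_{0}\in\Omega$ lets us choose $f_{\omega_{0}}\in F_{\omega_{0}}$ with $\rho(f_{\omega_{0}})=c$ and set $f_{\omega}=\id$ for $\omega\neq\omega_{0}$, so that $\prod_{\omega\in\Omega}\rho(f_{\omega})=c$. Hence any single product-type equation $\prod_{\omega\in S}\rho(f_{\omega})=c$ over any subset $S\subseteq\Omega$ with unknowns $f_{\omega}\in F_{\omega}$ admits a solution.

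For $\pi\widetilde{F}=F$, take $a\in F$. Any tuple $(a,(af_{\omega})_{\omega})$ with $f_{\omega}\in F_{\omega}$ lies in $\Phi(F)$, since $af_{\omega}\in aF_{\omega}=C_{F}(a,\omega)$. Membership in $\Pi(F,\rho,\{1\})$, respectively $\Pi(F,\rho,\{0,1\})$, then translates to the single equation $\prod_{\omega}\rho(f_{\omega})=\rho(a)^{-d}$, respectively $\rho(a)^{-(d+1)}$, which is solvable by the observation above.

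For \eqref{eq:C}, fix $a=(\alpha,(\alpha_{\eta})_{\eta})\in\widetilde{F}$ and $\omega\in\Omega$, and seek $(\beta,(\beta_{\eta})_{\eta})\in\widetilde{F}$ compatible with $a$ in direction $\omega$. A direct computation of $\iota_{\alpha\omega}\circ a\circ\iota_{\omega}$ on the vertices of $T_{\omega}\subseteq B_{d,2}$ (namely $b$, $b_{\omega}$, the $b_{\eta}$ with $\eta\neq\omega$, and the $b_{\omega\eta}$ with $\eta\neq\omega$) shows that compatibility forces exactly $\beta=\alpha_{\omega}$ and $\beta_{\omega}=\alpha$, while the remaining $\beta_{\eta}$ for $\eta\neq\omega$ describe vertices outside $T_{\omega}$ and are unconstrained by compatibility. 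The two forced values are consistent with membership in $\Phi(F)$, because $\alpha_{\omega}\in F$ and $\alpha\in\alpha F_{\omega}=\alpha_{\omega}F_{\omega}$. The defining relation of $\widetilde{F}$ then reduces to a single equation $\prod_{\eta\neq\omega}\rho(\beta_{\eta})=c$ in $A$, for an explicit $c$ expressible in terms of $\rho(\alpha)$ and $\rho(\alpha_{\omega})$, and is once more solvable by the initial observation.

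The main technical point is the bookkeeping in the last step: making explicit which two coordinates of the candidate element are pinned down by compatibility in direction $\omega$ and which remain free. Once this is isolated, the surjectivity hypothesis $\rho(F_{\omega})=A$ directly supplies the flexibility needed to satisfy the defining $\rho$-product identity of $\widetilde{F}$, and no further structural input is required.
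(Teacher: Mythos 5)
Your proof is correct and follows essentially the same route as the paper's: both arguments rest on the observations that $C_{F}(a,\omega)=aF_{\omega}$ and that, since $\rho(F_{\omega})=A$, one may adjust a single unconstrained coordinate of an element of $\Phi(F)$ to satisfy the defining $\rho$-product identity, which simultaneously yields $\pi\widetilde{F}=F$ and condition \eqref{eq:C}. Your version merely makes explicit the bookkeeping (which two coordinates compatibility in direction $\omega$ pins down and which remain free) that the paper leaves implicit.
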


\begin{proof}
As $C_{F}(a,\omega)\!=\!aF_{\omega}$, and $\rho(F_{\omega})\!=\!A$ for all $\omega\in\Omega$, an element $(a,(a_{\omega})_{\omega})\!\in\!\Phi(F)$ can be turned into an element of $\smash{\widetilde{F}}$ by changing $a_{\omega}$ for a single, arbitrary $\omega\!\in\!\Omega$. We conclude that $\smash{\pi\widetilde{F}=F}$ and that $\smash{\widetilde{F}}$ satisfies $\eqref{eq:C}$.
\end{proof}

\subsubsection{General case}\label{sec:ukf_examples_general_case}
We extend some constructions of Section \ref{sec:ukf_examples_k=2} to arbitrary~$k$. Given $\smash{F\le\Aut(B_{d,k})}$ satisfying \eqref{eq:C}, define the subgroup
\begin{displaymath}
 \Phi_{k}(F):=\{(\alpha,(\alpha_{\omega})_{\omega})\mid \alpha\in F,\ \forall\omega\in\Omega:\ \alpha_{\omega}\in C_{F}(\alpha,\omega)\}\le\Aut(B_{d,k+1}).
\end{displaymath}
Then $\Phi_{k}(F)$ inherits condition \eqref{eq:C} from $F$ and we obtain $\mathrm{U}_{k+1}(\Phi_{k}(F))=\mathrm{U}_{k}(F)$. Concerning the construction $\Gamma$ we have the following.

\begin{proposition}\label{prop:ukf_gamma_k}
Let $F\!\le\!\Aut(B_{d,k})$ satisfy \eqref{eq:C}. Then there exists a group $\Gamma_{k}(F)\!\le\!\Aut(B_{d,k+1})$ satisfying \eqref{eq:CD} such that $\pi_{k}:\Gamma_{k}(F)\to F$ is an isomorphism if and only if $F$ admits an involutive compatibility cocycle $z$.
\end{proposition}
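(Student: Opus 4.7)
The natural candidate generalising $\Gamma(F) = \{(a,(a,\ldots,a)) : a \in F\}$ (which corresponds to the trivial cocycle $z(a,\omega) = a$) is
\[
 \Gamma_{k}(F) := \{(\alpha, (z(\alpha, \omega))_{\omega \in \Omega}) : \alpha \in F\},
\]
viewed inside $\Aut(B_{d,k}) \ltimes \prod_{\omega \in \Omega} \Aut(B_{d,k})$ via the realisation of $\Aut(B_{d,k+1})$ recorded at the start of Section~\ref{sec:ukf_examples}. I would approach the proof by showing that each of the three cocycle axioms encodes exactly one property of this candidate.

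For the ``if'' direction, I would first check that $\Gamma_{k}(F) \subseteq \Aut(B_{d,k+1})$: an element $(\alpha, (\alpha_{\omega})_{\omega})$ of the semi-direct product lies in $\Aut(B_{d,k+1})$ exactly when each $\alpha_{\omega}$ is compatible with $\alpha$ in direction $\omega$ at level $k$, and the compatibility axiom (i) of $z$ supplies this. Closure under the product $(\alpha,(\alpha_{\omega})_{\omega})(\beta,(\beta_{\omega})_{\omega}) = (\alpha\beta,(\alpha_{\beta\omega}\beta_{\omega})_{\omega})$ is exactly the cocycle axiom (ii), so $\pi_{k}$ restricts to a bijective homomorphism $\Gamma_{k}(F) \to F$. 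For \eqref{eq:D}, the cocycle axiom applied to $\id \cdot \id = \id$ forces $z(\id,\omega) = \id$, so any $g = (\alpha,(z(\alpha,\omega'))_{\omega'}) \in \Gamma_{k}(F)$ with $p_{\omega}(g) = (\id, \id)$ satisfies $\alpha = \id$ and is hence trivial. The key step is \eqref{eq:C}: using the reformulation at the start of Section~\ref{sec:ukf_examples}, observe that
\[
 p_{\omega}(\Gamma_{k}(F)) = \{(\alpha, z(\alpha, \omega)) : \alpha \in F\}
\]
is the graph of the self-map $z(\cdot,\omega) : F \to F$, and a functional graph is a symmetric relation precisely when the underlying map is an involution --- which is axiom (iii).

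For the converse, given $\Gamma_{k}(F) \le \Aut(B_{d,k+1})$ satisfying \eqref{eq:CD} with $\pi_{k}|_{\Gamma_{k}(F)}$ an isomorphism, let $\gamma := (\pi_{k}|_{\Gamma_{k}(F)})^{-1}$ and define $z(\alpha,\omega) := \pr_{\omega}(\gamma(\alpha))$. Axiom (i) follows because membership $\gamma(\alpha) \in \Aut(B_{d,k+1})$ forces $z(\alpha,\omega) \in C_{F}(\alpha,\omega)$; axiom (ii) follows by comparing $\gamma(\alpha\beta)$ with $\gamma(\alpha)\gamma(\beta)$ using the multiplication rule above; and axiom (iii) falls out by reversing the graph-symmetry argument using \eqref{eq:C}. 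The main conceptual step, in either direction, is recognising that the involutivity of $z$ is exactly the content of compatibility \eqref{eq:C} for the diagonal-type lift $\Gamma_{k}(F)$; once this correspondence is spotted, the remaining items are routine bookkeeping with the semi-direct product realisation.
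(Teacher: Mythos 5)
Your proof is correct and follows essentially the same route as the paper's (which is considerably terser): the same diagonal-type lift $\Gamma_{k}(F)=\{(\alpha,(z(\alpha,\omega))_{\omega})\mid\alpha\in F\}$ in one direction, the same formula $z(\alpha,\omega)=\pr_{\omega}\pi_{k}^{-1}\alpha$ in the other, and the identification of the involutivity of $z$ with the symmetry of the relation $p_{\omega}(\Gamma_{k}(F))$, i.e. with condition \eqref{eq:C}. The only nitpick is that in the converse, membership of $\gamma(\alpha)$ in $\Aut(B_{d,k+1})$ by itself only yields that $z(\alpha,\omega)$ is \emph{compatible} with $\alpha$ in direction $\omega$; that $z(\alpha,\omega)$ actually lies in $F$ (so that it belongs to $C_{F}(\alpha,\omega)$) uses condition \eqref{eq:C} together with $\pi_{k}(\Gamma_{k}(F))=F$ — the same symmetry you already invoke for axiom (iii).
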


\begin{proof}
If $F$ admits an involutive compatibility cocycle $z$, define
\begin{displaymath}
 \Gamma_{k}(F):=\{(\alpha,(z(\alpha,\omega))_{\omega})\mid\alpha\in F\}\le\Aut(B_{d,k+1}).
\end{displaymath}
Then $\gamma_{z}:F\to\Gamma_{k}(F),\ \alpha\mapsto(\alpha,(z(\alpha,\omega))_{\omega})$ is an isomorphism and the involutive compatibility cocycle of $\Gamma_{k}(F)$ is given by $\widetilde{z}:(\gamma_{z}(\alpha),\omega)\mapsto \gamma_{z}(z(\alpha,\omega))$. Conversely, if a group $\Gamma_{k}(F)$ with the asserted properties exists, set $\smash{z:(\alpha,\omega)\mapsto\pr_{\omega}\pi_{k}^{-1}\alpha}$.
\end{proof}

Let $F\le\Aut(B_{d,k})$ satisfy \eqref{eq:C} and let $l>k$. We set $\Gamma^{l}(F):=\Gamma_{l-1}\circ\cdots\circ\Gamma_{k}(F)$ for an implicit sequence of involutive compatibility cocycles. Similarly, we define $\Phi^{l}(F):=\Phi_{l-1}\circ\cdots\circ\Phi_{k}(F)$. Now, let $\smash{\widetilde{F}\le\Aut(B_{d,k})}$. Assume $\smash{F:=\pi\widetilde{F}\le\Sym(\Omega)}$ preserves a partition $\calP:\Omega=\bigsqcup_{i\in I}\Omega_{i}$ of $\Omega$. Define the group
\begin{displaymath}
 \Phi_{k}(\widetilde{F},\calP):=\{(\alpha,(\alpha_{\omega})_{\omega})\mid \alpha\in\widetilde{F},\ \alpha_{\omega}\in C_{\widetilde{F}}(\alpha,\omega) \text{ is constant w.r.t. $\calP$}\}.
\end{displaymath}
If $\smash{C_{\widetilde{F}}(\alpha,\Omega_{i})}$ is non-empty for all $\smash{\alpha\in\widetilde{F}}$ and $i\in I$ then $\smash{\Phi_{k}(\widetilde{F},\calP)}$ satisfies \eqref{eq:C}, and if $\smash{C_{\widetilde{F}}(\id,\Omega_{i})}$ is non-trivial for all $i\in I$ then $\smash{\Phi_{k}(\widetilde{F},\calP)}$ does not satisfy \eqref{eq:D}. % We let $\Phi^{l}(F,\calP):=\Phi_{l-1}(\cdots\Phi_{k}(F,\calP),\cdots,\calP)$ be the iteration from $k$ to $l$ of this construction.

The following statement generalizes Proposition \ref{prop:u2f_split}.

\begin{proposition}\label{prop:ukf_split}
Let $F\le\Aut(B_{d,k})$ satisfy \eqref{eq:C}. Suppose $F$ admits an involutive compatibility cocycle $z$. Given $K\le\Phi_{k}(F)\cap\ker(\pi_{k})$, there is $\smash{\widetilde{F}\le\Aut(B_{d,k+1})}$ satisfying \eqref{eq:C} and fitting into the split exact sequence
\begin{displaymath}
\xymatrix{
 1 \ar[r] & K \ar@{ >->}[r]^{\iota} & \widetilde{F} \ar@{-^>}@<.3ex>[r]^-{\pi} & F \ar@{-^>}@<.3ex>[l]^-{\gamma_{z}} \ar[r] & 1
}
\end{displaymath}
if and only if $\Gamma_{k}(F)$ normalizes $K$, and for all $k\in K$ and $\omega\in\Omega$ there is $k_{\omega}\in K$ such that $\pr_{\omega}k_{\omega}=z(\pr_{\omega}k,\omega)^{-1}$.
\end{proposition}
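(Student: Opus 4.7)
The plan is to generalise the argument of Proposition \ref{prop:u2f_split} from level $1$ to level $k+1$, using the iterative realisation $\Aut(B_{d,k+1}) = \Aut(B_{d,k}) \ltimes \prod_{\omega} \Aut(B_{d,k})$ and substituting the section $\gamma_z$ for the trivial diagonal embedding used at $k=1$. Throughout I would reformulate \eqref{eq:C} via the alternative description from Section \ref{sec:ukf_examples}, asking that the relation $p_\omega(\widetilde{F}) = \{(\pi_k g, \pr_\omega g) : g \in \widetilde{F}\}$ be symmetric.

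For the forward direction, assume the split extension $\widetilde{F}$ exists. Then $K = \ker(\pi|_{\widetilde{F}})$ is normal in $\widetilde{F}$, and in particular normalised by $\gamma_z(F) = \Gamma_k(F) \le \widetilde{F}$. For the second condition, take $\kappa \in K$. Since $\widetilde{F}$ satisfies \eqref{eq:C}, the pair $p_\omega(\kappa) = (\id, \pr_\omega \kappa)$ is matched by a symmetric partner: there exists $\kappa'' \in \widetilde{F}$ with $p_\omega(\kappa'') = (\pr_\omega \kappa, \id)$. Writing $\kappa''$ through the splitting as $\gamma_z(\pr_\omega \kappa) \cdot k_\omega$ for some $k_\omega \in K$, the requirement $\pr_\omega \kappa'' = z(\pr_\omega \kappa, \omega) \cdot \pr_\omega k_\omega = \id$ forces $\pr_\omega k_\omega = z(\pr_\omega \kappa, \omega)^{-1}$, as claimed.

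For the reverse direction, I would define $\widetilde{F} := \gamma_z(F) \cdot K = \{(\alpha, (z(\alpha, \omega) \kappa_\omega)_\omega) : \alpha \in F,\ (\kappa_\omega)_\omega \in K\}$ and verify it is a subgroup: the cocycle identity $z(\alpha_1 \alpha_2, \omega) = z(\alpha_1, \alpha_2 \omega) z(\alpha_2, \omega)$ together with the conjugation formula $\gamma_z(\beta)^{-1} \kappa \gamma_z(\beta) = (\id, (z(\beta, \omega)^{-1} \kappa_{\beta \omega} z(\beta, \omega))_\omega)$ lying in $K$ (by the normalisation hypothesis) yields closure under products and inverses. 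The splitting and the kernel are then immediate from the construction. To check \eqref{eq:C} on $\widetilde{F}$, take $g = \gamma_z(\alpha) \cdot \kappa$ and set $\beta := z(\alpha, \omega) \pr_\omega \kappa \in F$. Since $\pr_\omega \kappa \in C_F(\id, \omega) = F_{T_\omega}$ fixes the letter $\omega$, the cocycle and involutivity of $z$ give $z(\beta, \omega) = z(z(\alpha, \omega), \omega) \cdot z(\pr_\omega \kappa, \omega) = \alpha \cdot z(\pr_\omega \kappa, \omega)$. The second hypothesis applied to $\kappa$ supplies $k_\omega \in K$ with $\pr_\omega k_\omega = z(\pr_\omega \kappa, \omega)^{-1}$, and then $g' := \gamma_z(\beta) \cdot k_\omega \in \widetilde{F}$ satisfies $p_\omega(g') = (\beta, z(\beta, \omega) \cdot z(\pr_\omega \kappa, \omega)^{-1}) = (\beta, \alpha)$, witnessing the missing symmetric partner of $p_\omega(g) = (\alpha, \beta)$.

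The main obstacle is isolating how \eqref{eq:C} at level $k+1$ decomposes across the factorisation $\widetilde{F} = \gamma_z(F) \cdot K$: the action of the section $\gamma_z$ introduces a ``twist'' by $z$ when passing from a coset back to $K$, and one has to verify that the two hypotheses exactly cover these two contributions. Concretely, the normalisation hypothesis governs how $\gamma_z(F)$ acts on $K$ (thereby controlling the group structure), while the second hypothesis ensures that $K$ itself carries enough elements to realise the compatibility partners of its own members in each direction $\omega$, once these partners are measured through $z$.
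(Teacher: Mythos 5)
Your proposal is correct and follows essentially the same route as the paper: the same construction $\widetilde{F}=\gamma_{z}(F)\cdot K=\{(\alpha,(z(\alpha,\omega)\alpha_{\omega})_{\omega})\}$, the same use of the cocycle identity plus the normalisation hypothesis for closure under products, and the same extraction of the two conditions in the forward direction from the splitting and condition \eqref{eq:C} applied to elements of $K$. The only cosmetic difference is that you verify \eqref{eq:C} on an arbitrary element $\gamma_{z}(\alpha)\kappa$ via the symmetric-relation formulation of $p_{\omega}$, whereas the paper checks it only on the generators $\Gamma_{k}(F)$ and $K$ using the previously established inclusion $C_{F}(ab,\omega)\supseteq C_{F}(a,b\omega)C_{F}(b,\omega)$; both are valid.
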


\begin{proof}
If there is a split exact sequence as above then $K\unlhd\widetilde{F}$ is invariant under conjugation by $\Gamma_{k}(F)$. Moreover, all elements of $\smash{\widetilde{F}}$ have the form $(\alpha,(z(\alpha,\omega)\alpha_{\omega})_{\omega})$ for some $\alpha\!\in\! F$ and $(\alpha_{\omega})_{\omega}\in K$. This implies the second assertion on $K$.

Conversely, if $K$ satisfies the assumptions, then
\begin{displaymath}
  \widetilde{F}:=\{(\alpha,(z(\alpha,\omega)\alpha_{\omega})_{\omega})\mid \alpha\in F,\ (\alpha_{\omega})_{\omega}\in K\}
\end{displaymath}
fits into the sequence: First, note that $\widetilde{F}$ contains both $K$ and $\Gamma_{k}(F)$. It is also a subgroup of $\Aut(B_{d,k+1})$: For $\smash{(\alpha,(z(\alpha,\omega)\alpha_{\omega})_{\omega}),\ (\beta,(z(\beta,\omega)\beta_{\omega})_{\omega})\in\widetilde{F}}$ we have
\begin{align*}
 (\alpha,(z(\alpha,\omega)\alpha_{\omega})_{\omega})&\circ(\beta,(z(\beta,\omega)\beta_{\omega})_{\omega})=(\alpha\beta,(z(\alpha,\beta\omega)\alpha_{\beta\omega}z(\beta,\omega)\beta_{\omega})_{\omega}) \\
 &=(\alpha\beta,(z(\alpha,\beta\omega)z(\beta,\omega)\circ z(\beta,\omega)^{-1}\alpha_{\beta\omega}z(\beta,\omega)\circ\beta_{\omega})_{\omega}) \\
 &=(\alpha\beta,(z(\alpha\beta,\omega)\alpha_{\omega}'\beta_{\omega})_{\omega})\in\widetilde{F}
\end{align*}
for some $(\alpha_{\omega}')_{\omega}\in K$ because $\Gamma_{k}(F)$ normalizes $K$. In particular, $\smash{\widetilde{F}=\langle \Gamma_{k}(F),K\rangle}$. We check condition \eqref{eq:C} on these generators. As before, $\gamma_{z}(z(\alpha,\omega))\in C(\gamma_{z}(\alpha),\omega)$ for all $\alpha\in F$ and $\omega\in\Omega$ because $z$ is involutive. Now, let $k\in K$. We then have $\gamma_{z}(\pr_{\omega}k)k_{\omega}\in C(k,\omega)$ for all $\omega\in\Omega$ by the assumption on $k_{\omega}$.
\end{proof}

\begin{comment}
\begin{proposition}\label{prop:delta_k}
Let $F\le\Aut(B_{d,k})$ satisfy \eqref{eq:C}. Suppose $F$ admits an involutive compatibility cocycle $z$. Further, let $C\le Z_{\Phi_{k}(F)}(\Gamma_{k}(F))\cap\ker(\pi_{k})$ be so that $\pr_{\omega}|_{C}$ is injective for all $\omega\in\Omega$, and for every $c\in C$ and $\omega\in\Omega$ there is $c_{\omega}\in C$ with $\pr_{\omega}c_{\omega}=z(\pr_{\omega}c_{\omega},\omega)^{-1}$. Then the following group $\Delta_{k}(F,C)\!\le\!\Phi_{k}(F)$ satisfies \eqref{eq:CD}:
\begin{displaymath}
 \Delta_{k}(F,C):=\langle\Gamma_{k}(F),C\rangle=\{(\alpha,(z(\alpha,\omega)c_{\omega})_{\omega})\mid\alpha\in F,\ (\id,(c_{\omega})_{\omega})\in C\}\cong F\times C.
\end{displaymath}
\end{proposition}

In the case $k=1$, the group $C$ plays the role of $N\cong\{(\id,(f_{\omega}a_{\omega_{0}}f_{\omega}^{-1}))\!\mid\! a_{\omega_{0}}\in N\}$ and we may take $c_{\omega}:=\id$ for all $c\in C$ and $\omega\in\Omega$.

\begin{proof}
First of all, $\Delta_{k}(F,C)\cong \Gamma_{k}(F)\times C\cong F\times C$ since $\Gamma_{k}(F)$ and $C$ commute with trivial intersection. To see that $\Delta_{k}(F,C)$ satisfies \eqref{eq:C}, put $\widetilde{z}(\gamma_{k}(\alpha),\omega)=\gamma_{k}(z(\alpha,\omega))$ for all $\alpha\in F$ and $\widetilde{z}(c,\omega):=\gamma_{k}(\pr_{\omega}(c))\circ c_{\omega}$ for all $c\in C$. The assumption that $\pr_{\omega}|_{C}$ be injective ensures that $\Delta_{k}(F,C)$ also satisfies \eqref{eq:D}.
\end{proof}
\end{comment}

In the split situation of Proposition \ref{prop:ukf_split} we also denote $\smash{\widetilde{F}}$ by $\Sigma_{k}(F,K)$. For instance, the group $\Pi(S_{3},\mathrm{sgn},\{1\})$ of Proposition \ref{prop:pif} satisfies \eqref{eq:C}, admits an involutive compatibility cocycle but does not satisfy \eqref{eq:D}, see Section \ref{sec:view_weiss}.

\vspace{0.2cm}
Now, let $F\le\Sym(\Omega)$ and $\rho:F\twoheadrightarrow A$ a homomorphism to an abelian group~$A$. Further, let $k\in\bbN$ and $X\subseteq\{0,\ldots,k-1\}$. Define
\begin{displaymath}
 \Pi^{k}(F,\rho,X):=\left\{\alpha\in\Phi^{k}(F)\left|\ \prod\nolimits_{r\in X}\prod\nolimits_{x\in S(b,r)}\rho(\sigma_{1}(\alpha,x))=1\right.\right\}.
\end{displaymath}

\begin{proposition}\label{prop:pikf}
Let $F\!\le\!\Sym(\Omega)$ and $\rho\!:\!F\twoheadrightarrow\! A$ a homomorphism to an abelian group $A$. Further, let $k\in\bbN$ and $X\subseteq\{0,\ldots,k-1\}$ non-empty and non-zero with $k\!-\!1\!\in\! X$. If $\rho(F_{\omega})\!=\!A$ for all $\omega\!\in\!\Omega$ then $\smash{\pi(\Pi^{k}(F,\rho,X))\!=\!F}$ and $\smash{\Pi^{k}(F,\rho,X)}$ has~\eqref{eq:C}.
\end{proposition}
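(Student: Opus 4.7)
The plan is to exploit a single degree of freedom at the outer level $S(b,k-1)$ of $B_{d,k}$ to adjust the defining product into $A$ to be trivial. Recall from Section~\ref{sec:ukf_examples_general_case} that $\Phi^{k}(F)=\Phi_{k-1}\circ\cdots\circ\Phi_{1}(F)$ satisfies \eqref{eq:C} (inherited level by level from the case $F\le\Sym(\Omega)$, which trivially satisfies \eqref{eq:C}), that $\pi(\Phi^{k}(F))=F$, and that $\Phi^{k}(F)$ consists of those $\alpha\in\Aut(B_{d,k})$ for which $\sigma_{1}(\alpha,x)\in F$ for every $x\in B(b,k-1)$ and, for $x\neq b$ with parent $y^{*}$ via an edge of label $\omega^{*}$, $\sigma_{1}(\alpha,x)\in\sigma_{1}(\alpha,y^{*})F_{\omega^{*}}$. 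The central observation is that for any $x^{*}\in S(b,k-1)$ the value $\sigma_{1}(\alpha,x^{*})$ can be replaced by $\sigma_{1}(\alpha,x^{*})\cdot s$ for an arbitrary $s\in F_{\omega^{*}}$, without affecting any other compatibility of $\alpha$ in $\Phi^{k}(F)$: the children of $x^{*}$ lie in $S(b,k)$ and, being leaves of $B_{d,k}$, carry no $1$-local action. Since $A$ is abelian and $k-1\in X$, such a modification multiplies the product $\prod_{r\in X}\prod_{x\in S(b,r)}\rho(\sigma_{1}(\alpha,x))$ by exactly $\rho(s)$, and the hypothesis $\rho(F_{\omega^{*}})=A$ allows any element of $A$ to be realised this way.

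For the surjectivity $\pi(\Pi^{k}(F,\rho,X))=F$, given $a\in F$ I would pick any lift $\alpha_{0}\in\Phi^{k}(F)$, let $P\in A$ be its current product, choose an arbitrary $x^{*}\in S(b,k-1)$ (which is non-empty since $k\ge 2$, as forced by the non-zero assumption on $X$ together with $k-1\in X$), and take $s\in F_{\omega^{*}}$ with $\rho(s)=P^{-1}$. The modified element lies in $\Pi^{k}(F,\rho,X)$ and still projects onto $a$, since the modification fixes $\sigma_{1}(\alpha_{0},b)=\pi(\alpha_{0})$.

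For condition \eqref{eq:C}, given $\alpha\in\Pi^{k}(F,\rho,X)$ and $\omega\in\Omega$, I would use \eqref{eq:C} for $\Phi^{k}(F)$ to produce a compatible partner $\beta\in\Phi^{k}(F)$, let $Q\in A$ be its product, and then modify $\beta$ at some $x^{*}\in S(b,k-1)$ \emph{not} lying in the $\omega$-direction from $b$; such $x^{*}$ exists because $d\ge 3$ and $k-1\ge 1$. The main (and only mildly delicate) step is to verify that the modification preserves compatibility with $\alpha$: the children of such an $x^{*}$ lie in $S(b,k)$ and, being leaves not in the $\omega$-direction, lie outside $T_{\omega}$; hence $\beta|_{T_{\omega}}$ is unaffected, so the resulting $\alpha_{\omega}\in\Phi^{k}(F)$ remains compatible with $\alpha$ in direction $\omega$, and it lies in $\Pi^{k}(F,\rho,X)$ by the choice of $s$.
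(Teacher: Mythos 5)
Your proposal is correct and follows essentially the same route as the paper's (much terser) proof: use the free degree of freedom in the coset $\sigma_{1}(\alpha,x^{*})F_{\omega^{*}}$ at a single vertex $x^{*}\in S(b,k-1)$, together with $\rho(F_{\omega^{*}})=A$ and $k-1\in X$, to normalise the defining product. Your extra care in choosing $x^{*}$ outside the $\omega$-direction for the verification of \eqref{eq:C} is exactly the point the paper leaves implicit, and your justification of it is correct.
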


\begin{proof}
As $C_{F}(a,\omega)=aF_{\omega}$, and $\rho(F_{\omega})=A$ for all $\omega\in\Omega$, an element $\alpha\in\Phi^{k}(F)$ can be turned into an element of $\Pi^{k}(F,\rho,X)$ by changing $\sigma_{1}(\alpha,x)$ for a single, arbitrary $x\in S(b,k-1)$. When $X$ is non-zero we conclude that $\pi(\Pi^{k}(F,\rho,X))\!=\!F$ and that $\Pi^{k}(F,\rho,X)$ satisfies $\eqref{eq:C}$.
\end{proof}

\subsubsection{A rigid case}\label{sec:ukf_examples_rigid}
For certain $F\le \Sym(\Omega)$ the groups $\Gamma(F)$, $\Delta(F)$ and $\Phi(F)$ already yield all possible $\smash{\mathrm{U}_{k}(\widetilde{F})}$ with $\smash{\pi\widetilde{F}=F}$. The main argument is based on Sections 3.4 and 3.5 of \cite{BM00a}. We first record the following lemma whose proof is due to M. Giudici by personal communication.

\begin{lemma}\label{lem:2tran_ext}
Let $F\le \Sym(\Omega)$ be $2$-transitive and $F_{\omega}$ $(\omega\in\Omega)$ simple non-abelian. Then every extension $\smash{\widetilde{F}}$ of $F_{\omega}$ ($\omega\in\Omega$) by $F$ is equivalent to $F_{\omega}\times F$.
\end{lemma}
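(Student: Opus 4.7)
The plan is to classify extensions of $F_{\omega}$ by $F$ via their induced outer action and show that only the trivial one arises. Since $F_{\omega}$ is simple non-abelian, its centre is trivial, and so the Eilenberg--MacLane classification of extensions with centreless kernel yields a bijection between equivalence classes of extensions $1\to F_{\omega}\to\widetilde{F}\to F\to 1$ and homomorphisms $\overline{\phi}\colon F\to\Out(F_{\omega})$; here $\overline{\phi}$ records the conjugation action of $F\cong\widetilde{F}/F_{\omega}$ on $F_{\omega}$ modulo inner automorphisms, and the direct product $F_{\omega}\times F$ corresponds precisely to the trivial $\overline{\phi}$. It therefore suffices to show that every such $\overline{\phi}$ is trivial.

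First I would restrict $\overline{\phi}$ to the subgroup $F_{\omega}\leq F$. As $F_{\omega}$ is simple, $\ker(\overline{\phi}|_{F_{\omega}})$ is either trivial or all of $F_{\omega}$. The former would yield an embedding $F_{\omega}\hookrightarrow\Out(F_{\omega})$. This is ruled out by the Schreier conjecture, a theorem via the classification of finite simple groups stating that $\Out(S)$ is solvable for every finite simple group $S$, since $F_{\omega}$ is simple non-abelian and hence non-solvable. Thus $\overline{\phi}|_{F_{\omega}}$ is trivial.

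To propagate this triviality to all of $F$ I would exploit $2$-transitivity. Every point stabiliser is an $F$-conjugate of $F_{\omega}$, whence $\overline{\phi}(F_{g\omega})=\overline{\phi}(g)\overline{\phi}(F_{\omega})\overline{\phi}(g)^{-1}=\{\id\}$ for all $g\in F$. The subgroup $N:=\langle F_{\omega'}\mid\omega'\in\Omega\rangle\unlhd F$ is then contained in $\ker\overline{\phi}$; it is non-trivial since $F_{\omega}$ is, and hence transitive by the primitivity implied by $2$-transitivity. Comparing orders via the orbit-stabiliser theorem, $|N|\geq|F_{\omega}|\cdot|\Omega|=|F|$, so $N=F$. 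Consequently $\overline{\phi}$ is identically trivial and $\widetilde{F}\cong F_{\omega}\times F$ as extensions.

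The main obstacle is the appeal to Schreier's conjecture: though a theorem via CFSG, it is a deep one. A possible workaround would be a case analysis over the short list of $2$-transitive permutation groups with simple non-abelian point stabiliser, where the precise structure of $\Out(F_{\omega})$ (typically a solvable group of very small order) can be consulted directly. Note that $2$-transitivity is used only through primitivity in the final step; transitivity alone would not suffice, since then $\overline{\phi}$ could factor non-trivially through the regular quotient $F/N$.
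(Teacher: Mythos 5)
Your proof is correct, but it takes a genuinely different route from the paper's. The paper works inside $\widetilde{F}$ directly: it sets $K:=Z_{\widetilde{F}}(F_{\omega})$, notes $F_{\omega}\cap K=\{\id\}$ since $F_{\omega}$ is centreless, uses the Schreier conjecture to force $K\neq\{\id\}$ (because $\widetilde{F}/F_{\omega}K$ embeds in the solvable group $\Out(F_{\omega})$ while $F$ is not solvable), and then invokes Burnside's dichotomy for $2$-transitive groups (almost simple or affine type) in a two-case analysis to conclude $F_{\omega}K=\widetilde{F}$. You instead pass through the Eilenberg--MacLane classification for centreless kernels, reducing everything to showing that the induced outer action $F\to\Out(F_{\omega})$ is trivial, which you do by applying Schreier to its restriction to a point stabiliser and then noting that the point stabilisers generate $F$ (normality of the generated subgroup, primitivity, orbit--stabiliser). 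Both arguments hinge on the Schreier conjecture, but yours dispenses with Burnside's theorem and the almost-simple/affine case split, and — as you observe — uses $2$-transitivity only through primitivity, so it actually establishes the slightly stronger statement for primitive $F$ with simple non-abelian point stabilisers. What the paper's argument buys in exchange is that it is self-contained at the level of elementary group theory, exhibiting the complement by hand rather than citing the extension-theoretic machinery; though for a centreless kernel that machinery amounts to the concrete fibre-product description $\widetilde{F}\cong\Aut(F_{\omega})\times_{\Out(F_{\omega})}F$, so the gap in sophistication is smaller than it first appears.
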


\begin{proof}
Regarding $F_{\omega}$ as a normal subgroup of $\smash{\widetilde{F}}$, consider the conjugation map $\smash{\varphi:\widetilde{F}\to\Aut(F_{\omega})}$. We show that $\smash{K:=\ker\varphi=Z_{\widetilde{F}}(F_{\omega})\unlhd \widetilde{F}}$ complements $F_{\omega}$ in $\widetilde{F}$. Since $Z(F_{\omega})=\{\id\}$, we have $F_{\omega}\cap K=\{\id\}$. Hence $\smash{F_{\omega}K\unlhd \widetilde{F}}$. Next, consider $\smash{\widetilde{F}/(F_{\omega}K)\lesssim\mathrm{Out}(F_{\omega})}$. By the solution of Schreier' conjecture, $\mathrm{Out}(F_{\omega})$ is solvable. Since $\smash{\widetilde{F}/F_{\omega}\cong F}$ is not solvable we conclude $K\neq\{\id\}$. Now, by a theorem of Burnside, every $2$-transitive permutation group $F$ is either almost simple or affine type, see \cite[Theorem 4.1B and Section 4.8]{DM96}.

In the first case, $F$ is actually simple: Let $N\unlhd F$. Then $F_{\omega}\cap N\unlhd F_{\omega}$. Hence either $F_{\omega}\cap N=\{\id\}$ or $F_{\omega}\cap N=F_{\omega}$. Since $F$ is $2$-transitive and thereby primitive, every normal subgroup acts transitively. Hence, in the first case, $N$ is regular which contradicts $F$ being almost simple. Thus the second case holds and $N=NF_{\omega}=F$. Now $\smash{\widetilde{F}/F_{\omega}K}$ is a proper quotient of $F$ and therefore trivial. We conclude that $\smash{\widetilde{F}=F_{\omega}K\cong F_{\omega}\times K}$ and $\smash{K\cong \widetilde{F}/F_{\omega}\cong F}$.

In the second case, $\smash{F=F_{\omega}\ltimes C_{p}^{d}}$ for some $d\in\bbN$ and prime $p$. Given that $K$ is non-trivial and $K\cong F_{\omega}K/F_{\omega}\lhdsim F$, it contains the unique minimal normal subgroup $\smash{C_{p}^{d}}\lhdsim K\lhdsim F$. Since $F/C_{p}^{d}\cong F_{\omega}$ is non-abelian simple whereas the proper quotient $\smash{\widetilde{F}/F_{\omega}K}$ of $F$ is solvable, $\smash{K\neq C_{p}^{d}}$. But $\smash{F/C_{p}^{d}\cong F_{\omega}}$ is simple, so $\smash{\smash{F_{\omega}K=\widetilde{F}}}$.
\end{proof}

The following propositions are of independent interest and used in Theorem~\ref{thm:ukf_rigid} below. We introduce the following notation: Let $\smash{\widetilde{F}\!\le\!\Aut(B_{d,k})}$ and $\smash{K\le\widetilde{F}_{b_{w}}}$ for some $w=(\omega_{1},\ldots,\omega_{k-1})\in\Omega^{(k-1)}$. We set $\smash{\pi_{w}K:=\sigma_{1}(K,b_{w})\le\Sym(\Omega)_{\omega_{k-1}}}$.

\begin{proposition}\label{prop:ukf_subnormal}
Let $\smash{\widetilde{F}\le\Aut(B_{d,k})}$ satisfy \eqref{eq:C}. Suppose $\smash{F:=\pi\widetilde{F}}$ is transitive. Further, let $\omega\!\in\!\Omega$ and $w\!=\!(\omega_{1},\ldots,\omega_{k-1})\!\in\!\Omega^{(k-1)}$ with $\omega_{1}\!\neq\!\omega$. Then $\smash{\pi_{w}(\widetilde{F}_{b_{w}}\!\cap\ker\pi)}$ and $\smash{\pi_{w}\widetilde{F}_{T_{\omega}}}$ are subnormal in $F_{\omega_{k-1}}$ of depth at most $k-1$ and $k$ respectively.
\end{proposition}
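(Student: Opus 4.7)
The strategy uses condition (C) to transport local actions along the geodesic $b = v_0, v_1, \ldots, v_{k-1} = b_w$ in $B_{d,k}$, together with ball-stabilizer chains in $\widetilde{F}$.

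I would first place both subgroups inside $F_{\omega_{k-1}}$. Any $\alpha \in \widetilde{F}_{b_w} \cap \ker\pi$ fixes $b$, all of $S(b,1)$, and $b_w$, so by uniqueness of tree paths it fixes the entire geodesic $v_0, v_1, \ldots, v_{k-1}$. Hence $\sigma_1(\alpha, b_w)$ fixes the label $\omega_{k-1}$ of the edge $\{v_{k-2}, b_w\}$ and lies in $\Sym(\Omega)_{\omega_{k-1}}$. Iterating (C) at the directions $\omega_1, \omega_2, \ldots, \omega_{k-1}$ along the path produces a sequence $\alpha_1, \ldots, \alpha_{k-1} \in \widetilde{F}$ with $\pi(\alpha_{k-1}) = \sigma_1(\alpha, b_w)$: at the $j$-th step the element $\alpha_j$ fixes $v_{j+1}$ because $\alpha$ does, so the subsequent application of (C) takes the simplified form $\iota_{\omega_{j+1}} \alpha_j \iota_{\omega_{j+1}}$ on $T_{\omega_{j+1}}$, from which one checks directly that $\sigma_1(\alpha_{j+1}, b) = \sigma_1(\alpha_j, b_{\omega_{j+1}})$. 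This places $\sigma_1(\alpha, b_w) \in F_{\omega_{k-1}}$. The corresponding containment for $\pi_w(\widetilde{F}_{T_\omega})$ is immediate from $\widetilde{F}_{T_\omega} \le \widetilde{F}_{b_w} \cap \ker\pi$, which holds because $T_\omega \supseteq B(b, k-1)$ contains both $B(b, 1)$ and $b_w$.

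For (a), set $A_j := \pi_w(\widetilde{F}_{b_w} \cap \widetilde{F}_{B(b, j)})$ for $j = 0, 1, \ldots, k-1$. Since any element of $\widetilde{F}_{B(b, j)}$ preserves $B(b, j+1)$ setwise, a standard conjugation argument yields $A_{j+1} \lhd A_j$, and in particular $A_1 = \pi_w(\widetilde{F}_{b_w} \cap \ker\pi) \lhd A_0 = \pi_w(\widetilde{F}_{b_w})$. The chain is then extended upward from $A_0$ to $F_{\omega_{k-1}}$ by $k-2$ further normal steps coming from ball stabilizers centered on successive path vertices $v_{k-2}, v_{k-3}, \ldots, v_1$ with suitably chosen radii, each normalized by the preceding one via the same setwise-preservation principle; the iterated (C) correspondence identifies the images in $F_{\omega_{k-1}}$ and produces the normalizing elements. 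The total length is at most $k-1$. Claim (b) follows: any $\alpha \in \widetilde{F}_{b_w} \cap \ker\pi$ fixes $b$ and $b_\omega$ (both in $S(b, 1)$), hence preserves $T_\omega$ setwise, so $\widetilde{F}_{T_\omega} \lhd \widetilde{F}_{b_w} \cap \ker\pi$; projecting under $\pi_w$ gives $\pi_w(\widetilde{F}_{T_\omega}) \lhd A_1$, extending the chain of (a) by one step to total depth at most $k$.

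The principal obstacle is the precise construction of the $k-2$ intermediate normal subgroups bridging $A_0$ and $F_{\omega_{k-1}}$. This requires carefully coordinating centers and radii of the ball stabilizers along the path so that each inclusion is normal, and repeatedly invoking condition (C) both to identify the intermediate images inside $F_{\omega_{k-1}}$ and to supply the normalizing elements. The bookkeeping is the technical heart of the argument, even though each individual normality check reduces to the same preservation-of-a-ball principle.
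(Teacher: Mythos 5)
Your reduction of the second assertion to the first via $\smash{\widetilde{F}_{T_{\omega}}\unlhd\widetilde{F}_{b_{w}}\cap\ker\pi}$ is exactly what the paper does, and your containment argument (an element fixing $b$ and $b_{w}$ fixes the geodesic between them, and iterated applications of \eqref{eq:C} place its $1$-local action at $b_{w}$ inside $F$) is sound. The gap is that the subnormal chain itself --- the actual content of the proposition --- is never constructed. Your chain $A_{j}:=\pi_{w}(\widetilde{F}_{b_{w}}\cap\widetilde{F}_{B(b,j)})$ supplies only the single step $A_{1}\unlhd A_{0}=\pi_{w}(\widetilde{F}_{b_{w}})$, which is not where the difficulty lies: the difficulty is connecting $\pi_{w}(\widetilde{F}_{b_{w}})$, a $1$-local action recorded at the far vertex $b_{w}$, to $F_{\omega_{k-1}}$, a point stabilizer of the $1$-local action recorded at the centre $b$, by $k-2$ further normal inclusions. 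For these you offer only a description of what the intermediate groups should look like (``ball stabilizers centered on successive path vertices with suitably chosen radii'') together with an appeal to \eqref{eq:C} to ``produce the normalizing elements,'' and you explicitly defer the verification as bookkeeping. That verification \emph{is} the theorem; as written, the proposal is a plan for a proof rather than a proof, and it is not clear that the chain you sketch closes up, since normality between groups defined as images under $\pi_{w}$ of stabilizers with different centres does not follow from the ``setwise preservation of a ball'' principle alone.

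The paper sidesteps all of this bookkeeping by inducting on $k$ and producing exactly one link of the chain per induction step, with every normality statement made \emph{before} projecting to $\Sym(\Omega)$. Concretely, for $\smash{\widetilde{F}\le\Aut(B_{d,k+1})}$ condition \eqref{eq:C} gives the single normal inclusion $\smash{\pr_{\omega_{1}}(\widetilde{F}_{b_{w}}\cap\ker\pi)\unlhd(\pi_{k}\widetilde{F})_{b_{w'}}\cap\ker\pi}$ inside $\Aut(B_{d,k})$, where $w'=(\omega_{2},\ldots,\omega_{k})$; since $\pi_{w}=\pi_{w'}\circ\pr_{\omega_{1}}$ and homomorphic images of normal subgroups are normal in the image, applying the induction hypothesis to $\pi_{k}\widetilde{F}$ and the word $w'$ yields depth at most $(k-1)+1$. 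The base case $k=2$ is the only place where a normality statement in $F_{\omega_{1}}$ itself must be extracted from \eqref{eq:C}. If you restructure your argument this way, the ``coordination of centres and radii'' you identify as the technical heart disappears entirely.
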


\begin{proof}
We argue by induction on $k\!\ge\! 2$. For $k\!=\!2$, the assertion that $\smash{\pi_{w}(\widetilde{F}_{b_{w}}\cap\ker\pi)}$ is normal in $F_{\omega_{1}}$ is a consequence of condition \eqref{eq:C}. Now, suppose $\smash{\widetilde{F}\le\Aut(B_{d,k+1})}$ satisfies the assumptions, and let $\omega\in\Omega$ and $w=(\omega_{1},\ldots,\omega_{k})\in\Omega^{(k)}$ be such that $\omega_{1}\!\neq\!\omega$. Since $\smash{\widetilde{F}}$ satisfies \eqref{eq:C}, we have $\smash{\pr_{\omega_{1}}(\widetilde{F}_{b_{w}}\cap\ker\pi)\unlhd(\pi_{k}\widetilde{F})_{b_{w'}}\cap\ker\pi}$, where $w':=(\omega_{2},\ldots,\omega_{k-1})$ and the right hand side $\pi$ implicitly has domain $\smash{\pi_{k}\widetilde{F}}$. Hence
\begin{displaymath}
 \pi_{w}(\widetilde{F}_{b_{w}}\cap\ker\pi)=\pi_{w'}(\pr_{\omega_{1}}(\widetilde{F}_{b_{w}}\cap\ker\pi))\unlhd\pi_{w'}((\pi_{k}\widetilde{F})_{b_{w'}}\cap\ker\pi)\unlhd F_{\omega_{k-1}}
\end{displaymath}
by the induction hypothesis. The second assertion follows as $\smash{\widetilde{F}_{T_{\omega}}\unlhd\smash{\widetilde{F}_{b_{w}}\cap\ker\pi}}$.
\end{proof}

\begin{proposition}\label{prop:ukf_transitive}
Let $\smash{\widetilde{F}\le\Aut(B_{d,k})}$ satisfy \eqref{eq:C} but not \eqref{eq:D}. Suppose $\smash{F:=\pi\widetilde{F}}$ is transitive, and every non-trivial subnormal subgroup of $F_{\omega}$ ($\omega\in\Omega$) of depth at most $k-1$ is transitive on $\Omega\backslash\{\omega\}$. Then $\smash{\mathrm{U}_{k}(\widetilde{F})}$ is locally $k$-transitive.
\end{proposition}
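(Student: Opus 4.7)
By Proposition~\ref{prop:ukf_local_f}, condition~\eqref{eq:C} ensures that $\mathrm{U}_k(\widetilde{F})$ is locally action isomorphic to $\widetilde{F}$ acting on $B_{d,k}$; hence local $k$-transitivity of $\mathrm{U}_k(\widetilde{F})$ is equivalent to $\widetilde{F}$ acting transitively on the set $\Omega^{(k)}$ of leaves of $B_{d,k}$ (equivalently, on non-backtracking paths of length $k$ issuing from the centre $b$). The plan is to prove this transitivity by induction on $j\in\{1,\ldots,k\}$, showing at each stage that $\widetilde{F}$ acts transitively on $\Omega^{(j)}$. The base case $j=1$ is exactly the assumption that $F=\pi\widetilde{F}$ is transitive.

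For the inductive step, if $w,w'\in\Omega^{(j)}$ then the induction hypothesis supplies an element of $\widetilde{F}$ carrying the length-$(j-1)$ prefix of $w$ to that of $w'$; writing $w_0\in\Omega^{(j-1)}$ for the common prefix, matters reduce to showing that $\pi_{w_0}\widetilde{F}_{b_{w_0}}$ acts transitively on $\Omega\setminus\{\omega_{j-1}\}$, for every $w_0\in\Omega^{(j-1)}$. Here I would invoke Proposition~\ref{prop:ukf_subnormal}, applied to $\widetilde{F}$ when $j=k$ and to the restriction $\rho_j\widetilde{F}\le\Aut(B_{d,j})$ otherwise (using that the local action at a vertex of depth $j-1$ is unchanged by passing to $B_{d,j}$, so that $\pi_{w_0}(\widetilde{F}_{b_{w_0}}\cap\ker\pi)=\pi_{w_0}((\rho_j\widetilde{F})_{b_{w_0}}\cap\ker\pi)$): the group $\pi_{w_0}(\widetilde{F}_{b_{w_0}}\cap\ker\pi)$ is subnormal in $F_{\omega_{j-1}}$ of depth at most $j-1\le k-1$. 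The subnormal hypothesis of the proposition then forces it to be transitive on $\Omega\setminus\{\omega_{j-1}\}$ whenever it is non-trivial, which completes the step.

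The crux, and the main obstacle, is therefore to establish non-triviality of $\pi_{w_0}(\widetilde{F}_{b_{w_0}}\cap\ker\pi)$ for every $w_0$. Since $\widetilde{F}$ does not satisfy~\eqref{eq:D}, there is $\omega_0\in\Omega$ with $\widetilde{F}_{T_{\omega_0}}\ne\{\id\}$; because $T_{\omega_0}$ contains $B(b,k-1)$ pointwise, a non-trivial element of $\widetilde{F}_{T_{\omega_0}}$ must act non-trivially on some leaf outside $T_{\omega_0}$, which yields $\pi_{w_0^*}(\widetilde{F}_{b_{w_0^*}}\cap\ker\pi)\ne\{\id\}$ for at least one distinguished $w_0^*\in\Omega^{(k-1)}$ with $\omega_1^*\ne\omega_0$. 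Non-triviality is then transported to an arbitrary $w_0\in\Omega^{(j-1)}$ by conjugation: choosing $\alpha\in\widetilde{F}$ with $\alpha b_{w_0^*}=b_{w_0}$ (available from the transitivity on $\Omega^{(j-1)}$ established at the previous induction level, after passage to $\rho_j\widetilde{F}$ when $j<k$), Lemma~\ref{lem:cocycle} exhibits $\pi_{w_0}(\widetilde{F}_{b_{w_0}}\cap\ker\pi)$ as the $\sigma_1(\alpha,b_{w_0^*})$-conjugate of $\pi_{w_0^*}(\widetilde{F}_{b_{w_0^*}}\cap\ker\pi)$ inside $F_{\omega_{j-1}}$, hence also non-trivial. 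Managing this transfer uniformly across the levels $j$, and in particular handling projections $\rho_j\widetilde{F}$ that may themselves satisfy~\eqref{eq:D}, is the delicate point of the argument.
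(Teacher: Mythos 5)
Your overall strategy coincides with the paper's: an induction on the sphere radius (the paper phrases it as an induction on $k$ via the restrictions $\pi_{j}\widetilde{F}$, your $\rho_{j}\widetilde{F}$), with Proposition~\ref{prop:ukf_subnormal} supplying subnormality of $\pi_{w}(\widetilde{F}_{b_{w}}\cap\ker\pi)$, the failure of \eqref{eq:D} together with the already-established transitivity supplying non-triviality, and the hypothesis on subnormal subgroups then forcing transitivity on the children of $b_{w}$. The reduction via Proposition~\ref{prop:ukf_local_f}, the seed element extracted from $\widetilde{F}_{T_{\omega_{0}}}\neq\{\id\}$ (using $B(b,k-1)\subseteq T_{\omega_{0}}$), and the conjugation transport via the cocycle identity are all correct at the top level $j=k$.

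The point you flag as delicate is, however, a genuine gap as written. For $j<k$ your seed $w_{0}^{*}$ lies in $\Omega^{(k-1)}$ while the target $w_{0}$ lies in $\Omega^{(j-1)}$; no $\alpha\in\widetilde{F}$ can carry $b_{w_{0}^{*}}$ to $b_{w_{0}}$, since every element of $\Aut(B_{d,k})$ fixes $b$ and hence preserves spheres. So at level $j$ you must manufacture a fresh seed inside $\pi_{j}\widetilde{F}\le\Aut(B_{d,j})$, and for that you need to know that $\pi_{j}\widetilde{F}$ satisfies \eqref{eq:C} but \emph{fails} \eqref{eq:D} --- exactly the statement you leave open. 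This is what the paper's proof asserts (``the same holds for $\pi_{k}\widetilde{F}$''), and it can be closed with machinery already available: compatible pairs restrict, so $\pi_{j}\widetilde{F}$ inherits \eqref{eq:C}; since $\widetilde{F}$ satisfies \eqref{eq:C} but not \eqref{eq:D}, the group $\mathrm{U}_{k}(\widetilde{F})$ is non-discrete by the second part of Proposition~\ref{prop:ukf_discrete}; moreover $\sigma_{j}(g,x)=\pi_{j}\sigma_{k}(g,x)$ gives $\mathrm{U}_{k}(\widetilde{F})\le\mathrm{U}_{j}(\pi_{j}\widetilde{F})$, so $\mathrm{U}_{j}(\pi_{j}\widetilde{F})$ is non-discrete as well, and the first part of Proposition~\ref{prop:ukf_discrete} (in contrapositive) shows that $\pi_{j}\widetilde{F}$ fails \eqref{eq:D}. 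With that inserted, your induction closes and the argument agrees with the paper's.
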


\begin{proof}
We argue by induction on $k$. For $k=1$, the assertion follows from transitivity of $F$. Now, let $\smash{\widetilde{F}\le\Aut(B_{d,k+1})}$ satisfy \eqref{eq:C} but not \eqref{eq:D}. Then the same holds for $\smash{F^{(k)}:=\pi_{k}\widetilde{F}\le\Aut(B_{d,k})}$. Given $\smash{\widetilde{w},\widetilde{w}'\in\Omega^{(k)}}$, write $\smash{\widetilde{w}=(w,\omega)}$ and $\smash{\widetilde{w}'=(w',\omega')}$ where $w,w'\in\Omega^{(k-1)}$ and $\omega,\omega'\in\Omega$. By the induction hypothesis, the group $\smash{F^{(k)}}$ acts transitively on $S(b,k)$. Hence, using \eqref{eq:C}, there is $\smash{g\in\widetilde{F}}$ such that $gb_{w}=b_{w'}$. As $\smash{\widetilde{F}}$ does not satisfy \eqref{eq:D} said transitivity further implies that $\smash{\pi_{w'}(\widetilde{F}_{b_{w'}}\cap\ker\pi))}$ is non-trivial. By Proposition \ref{prop:ukf_subnormal}, it is also subnormal of depth at most $k-1$ in $F_{\omega'}$ and thus transitive. Hence there is $\smash{g'\in\widetilde{F}_{b_{w'}}}$ with $g'gb_{\widetilde{w}}=b_{\widetilde{w}'}$.
\end{proof}

The following theorem is closely related to \cite[Proposition 3.3.1]{BM00a}.

\begin{theorem}\label{thm:ukf_rigid}
\hspace{-0.05cm}Let $F\!\le\!\Sym(\Omega)$ be $2$-transitive and $F_{\omega}$ $(\omega\!\in\!\Omega)$ simple non-abelian. Further, let $\smash{\widetilde{F}\le\Aut(B_{d,k})}$ with $\smash{\pi\widetilde{F}=F}$ satisfy \eqref{eq:C}. Then $\smash{\mathrm{U}_{k}(\widetilde{F})}$ equals either
\begin{displaymath}
 \mathrm{U}_{2}(\Gamma_{1}(F)),\quad \mathrm{U}_{2}(\Delta(F))\quad\text{or}\quad\mathrm{U}_{2}(\Phi(F))=\mathrm{U}_{1}(F).
\end{displaymath}
\end{theorem}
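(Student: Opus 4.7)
The plan is to reduce to a classification at $k=2$ and read off $\mathrm{U}_{k}(\widetilde{F})$ from the $2$-local structure. I would first set $\smash{\widetilde{F}^{(2)} := (\pi_{2} \circ \cdots \circ \pi_{k-1})(\widetilde{F}) \le \Aut(B_{d,2})}$ and verify from the explicit form of \eqref{eq:C} in Section \ref{sec:comp_disc} that this condition descends along the projections $\pi_{j}$, so that $\smash{\widetilde{F}^{(2)}}$ still satisfies \eqref{eq:C} with $\smash{\pi\widetilde{F}^{(2)} = F}$ and $\smash{\mathrm{U}_{k}(\widetilde{F}) \subseteq \mathrm{U}_{2}(\widetilde{F}^{(2)})}$. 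Once $\smash{\widetilde{F}^{(2)}}$ is identified with one of the three candidates, an inductive repetition of the kernel analysis below at levels $3, \ldots, k$ would force $\widetilde{F}$ to be the canonical iterated $\Gamma/\Delta/\Phi$-lift of $\smash{\widetilde{F}^{(2)}}$, giving the desired equality of universal groups.

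The heart of the classification is the kernel $\smash{K := \widetilde{F}^{(2)} \cap \ker\pi \le \prod_{\omega \in \Omega} F_{\omega}}$. Proposition \ref{prop:ukf_subnormal}, applied in the depth-one case, yields $\pi_{\omega} K \unlhd F_{\omega}$ for every $\omega$, and simplicity of $F_{\omega}$ collapses this to $\pi_{\omega} K \in \{\{\id\}, F_{\omega}\}$. Transitivity of $F$ on $\Omega$ combined with the $F$-equivariance of $K$ under the action $a \cdot (a_{\omega})_{\omega} = (aa_{a^{-1}\omega}a^{-1})_{\omega}$ from Proposition \ref{prop:u2f_split} makes this dichotomy uniform: either $K = \{\id\}$ (\emph{Case I}), or $\pi_{\omega} K = F_{\omega}$ for every $\omega$ (\emph{Case II}).

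In \emph{Case I}, $\smash{\pi|_{\widetilde{F}^{(2)}}}$ is an isomorphism onto $F$ whose inverse, by \eqref{eq:C}, yields an involutive compatibility cocycle; hence $\smash{\widetilde{F}^{(2)} = \Gamma_{1}(F)}$ in the sense of Proposition \ref{prop:ukf_gamma_k} and Corollary \ref{cor:ukf_cd_iso}, giving $\mathrm{U}_{k}(\widetilde{F}) = \mathrm{U}_{2}(\Gamma_{1}(F))$. In \emph{Case II}, I would fix $\omega_{0} \in \Omega$ and analyze $L := K \cap \ker\pi_{\omega_{0}} \le \prod_{\omega \neq \omega_{0}} F_{\omega}$, which is $F_{\omega_{0}}$-invariant. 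The same projection/simplicity argument — now using that $F_{\omega_{0}}$ acts transitively on $\Omega \setminus \{\omega_{0}\}$ by $2$-transitivity of $F$ — shows every $\pi_{\omega} L \in \{\{\id\}, F_{\omega}\}$, uniformly in $\omega$. If $L = \{\id\}$ then $K \cong F_{\omega_{0}}$ and the extension $\smash{1 \to K \to \widetilde{F}^{(2)} \to F \to 1}$ splits as a direct product by Lemma \ref{lem:2tran_ext}; the $F$-equivariance of $K$ then pins it to the diagonal copy of $F_{\omega_{0}}$ in $\prod_{\omega} F_{\omega}$, so $\smash{\widetilde{F}^{(2)} = \Delta(F)}$ and $\mathrm{U}_{k}(\widetilde{F}) = \mathrm{U}_{2}(\Delta(F))$. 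If $L = \prod_{\omega \neq \omega_{0}} F_{\omega}$, then $K = \prod_{\omega} F_{\omega}$, so $\smash{\widetilde{F}^{(2)} = \Phi(F)}$ and $\mathrm{U}_{k}(\widetilde{F}) = \mathrm{U}_{2}(\Phi(F)) = \mathrm{U}_{1}(F)$.

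The main obstacle is the Goursat-style elimination of intermediate possibilities for $L$ in \emph{Case II}: a subdirect product subgroup of $\prod_{\omega \neq \omega_{0}} F_{\omega}$ with full coordinate projections need not be the full product, and without extra input one could envisage diagonal-with-twist subgroups producing further extensions of powers of $F_{\omega_{0}}$ by $F$. Here the combined strength of $2$-transitivity of $F$ (which promotes coordinate-wise to uniform behaviour through the transitivity of $F_{\omega_{0}}$ on $\Omega \setminus \{\omega_{0}\}$) and Lemma \ref{lem:2tran_ext} (which forces every extension of $F_{\omega_{0}}$ by $F$ to be an untwisted direct product) is essential. A secondary obstacle is the inductive lift from $\smash{\widetilde{F}^{(2)}}$ back to $\widetilde{F}$ itself, which requires repeating the kernel analysis at each intermediate level and checking that no new rigidity failure occurs — again relying on the simplicity and transitivity hypotheses being inherited at each step.
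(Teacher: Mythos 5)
Your overall strategy is the same as the paper's: pass to $F^{(2)}:=\pi_{2}\widetilde{F}\le\Aut(B_{d,2})$, observe that \eqref{eq:C} descends, analyse $K=\ker\pi\le\prod_{\omega\in\Omega}F_{\omega}$ via Proposition \ref{prop:ukf_subnormal}, simplicity of $F_{\omega}$ and transitivity of $F$, and sort the outcome into $\Gamma_{1}(F)$, $\Delta(F)$ or $\Phi(F)$, using Lemma \ref{lem:2tran_ext} to untwist the extension in the diagonal case. The gap is in Case II, and it is precisely the point you name as ``the main obstacle'' and then do not close. Showing that the projections $\pr_{\omega}L$ of $L=K\cap\ker\pr_{\omega_{0}}$ are uniformly trivial or uniformly all of $F_{\omega}$ does not give the dichotomy $L=\{\id\}$ or $L=\prod_{\omega\neq\omega_{0}}F_{\omega}$: a subdirect product of non-abelian simple groups with full coordinate projections can be a product of diagonals over any partition of the coordinate set, and the only symmetry $L$ retains is invariance under $F_{\omega_{0}}$ acting on $\Omega\setminus\{\omega_{0}\}$ --- an action which is transitive by $2$-transitivity of $F$ but in general \emph{not} primitive, so non-trivial invariant partitions (hence intermediate, twisted-diagonal kernels) are not excluded. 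Your branch ``if $L=\prod_{\omega\neq\omega_{0}}F_{\omega}$ then \dots'' therefore presupposes exactly what has to be proved, and the appeal to ``the combined strength of $2$-transitivity and Lemma \ref{lem:2tran_ext}'' is an assertion, not an argument.

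The paper avoids this by working with $K$ itself, which is invariant under the action of $F$ on all $d$ coordinates: by \cite[Lemma 2.3]{Rad17}, such a subgroup of $F_{\omega_{0}}^{d}$ is a product of subdiagonals over a partition of $\Omega$ preserved by $F$, and since a $2$-transitive action is primitive this partition is either a single block (full diagonal, $K\cong F_{\omega_{0}}$, leading to $\Delta(F)$) or all singletons (full product, leading to $\Phi(F)$). Some form of this subdirect-product structure theorem is indispensable; with it in hand, the uniformity of $\pr_{\omega}L$ that you do establish would let you finish (it forces the block of $\omega_{0}$ to be a singleton or all of $\Omega$), but without it your peeling argument stalls. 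A secondary, more minor issue is that the reduction from level $k$ back to level $2$ is left as ``inductive repetition''; the cleaner route, implicit in the paper, is that once $F^{(2)}$ is known to satisfy \eqref{eq:CD} in the $\Gamma$ and $\Delta$ cases, $\mathrm{U}_{2}(F^{(2)})$ is discrete with vertex stabilizers determined on $2$-balls, which pins down $\mathrm{U}_{k}(\widetilde{F})$ without any further kernel analysis at levels $3,\dots,k$.
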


\begin{proof}
Since $\mathrm{U}_{1}(F)=\mathrm{U}_{2}(\Phi(F))$, we may assume $k\ge 2$. Given that $\widetilde{F}\le\Aut(B_{d,k})$ satisfies \eqref{eq:C} so does the restriction $\smash{F^{(2)}:=\pi_{2}\widetilde{F}\le\Phi(F)\le\Aut(B_{d,2})}$. Consider the projection $\smash{\pi:F^{(2)}\twoheadrightarrow F}$. We have $\ker\pi\le\prod_{\omega\in\Omega}F_{\omega}$ and $\pr_{\omega}\ker\pi\unlhd F_{\omega}$ for all $\omega\in\Omega$ by Proposition \ref{prop:ukf_subnormal}. Since $F_{\omega}$ is simple, $\smash{\ker\pi\unlhd F^{(2)}}$ and $F$ is transitive this implies that either $\pr_{\omega}\ker\pi=\{\id\}$ for all $\omega\in\Omega$ or $\pr_{\omega}\ker\pi=F_{\omega}$ for all $\omega\in\Omega$.

In the first case, $\smash{\pi:F^{(2)}\to F}$ is an isomorphism. Hence $\smash{F^{(2)}}$ satisfies~\eqref{eq:CD} and $\smash{\mathrm{U}_{k}(\widetilde{F})\!=\!\mathrm{U}_{2}(\Gamma_{1}(F))}$ for an involutive compatibility cocycle of $F$ by Proposition~\ref{prop:ukf_gamma_k}.

In the second case, fix $\omega_{0}\in\Omega$. We have $\ker\pi\le\prod_{\omega\in\Omega}F_{\omega}\cong F_{\omega_{0}}^{d}$ by transitivity of $F$. Since $F_{\omega_{0}}$ is simple non-abelian, \cite[Lemma 2.3]{Rad17} implies that the group $\ker\pi$ is a product of subdiagonals preserved by the primitive action of $F$ on the index set of $F_{\omega_{0}}^{d}$. Hence, either there is just one block and $\ker\pi\cong F_{\omega_{0}}$ has the form $\{(\id,(\alpha_{\omega}(a_{0}))_{\omega})\mid a_{0}\in F_{\omega_{0}}\}$ for some isomorphisms $\alpha_{\omega}:F_{\omega_{0}}\to F_{\omega}$, or all blocks are singletons and $\smash{\ker\pi=\prod_{\omega\in\Omega}F_{\omega}\cong F_{\omega_{0}}^{d}}$. In the first case, there is a compatibility cocycle $z$ of $F$ such that $F\cong\{(a,(z(a,\omega))_{\omega})\mid a\in F\}\le F^{(2)}$ commutes with $\ker\pi\le F^{(2)}$ by Lemma \ref{lem:2tran_ext}. Thus $F^{(2)}=\{a,(z(a,\omega)\alpha_{\omega}(a_{0}))_{\omega}\mid a\in F,\ a_{0}\in F_{\omega_{0}}\}$. In particular, $F^{(2)}$ satisfies \eqref{eq:CD}. Hence $\smash{\mathrm{U}_{k}(\widetilde{F})=\mathrm{U}_{2}(\Delta(F))}$. 

When $\smash{\ker\pi\cong F_{\omega_{0}}^{d}}$, we have $\smash{\mathrm{U}_{k}(\widetilde{F})=\mathrm{U}_{1}(F)}$ by \cite[Proposition 3.3.1]{BM00a}.
\end{proof}

If $F$ does not have simple point stabilizers or preserves a non-trivial partition, more universal groups are given by $\mathrm{U}_{2}(\Phi(F,N))$ and $\mathrm{U}_{2}(\Phi(F,\calP))$, see Section~\ref{sec:ukf_examples_k=2}. When $F$ is $2$-transitive and has abelian point stabilizers then $F\!\cong\!\AGL(1,q)$ %=\bbF_{q}^{\ast}\ltimes\bbF_{q}\curvearrowright\bbF_{q}$
for some prime power $q$ by \cite{KKP90}. Hence point stabilizers in $F$ are isomorphic to $\bbF_{q}^{\ast}$ and simple if and only if $q-1$ is a Mersenne prime. For any value of $q$, the projection $\rho:\AGL(1,q)\to \bbF_{q}^{\ast}$ satisfies the assumptions of Proposition~\ref{prop:pikf} and so the groups $\smash{\mathrm{U}_{k}(\Pi^{k}(\AGL(1,q),\rho,X))}$ provide further examples.
%universal groups in this case.
The following question remains.

\begin{question}
Let $F\!\le\!\Sym(\Omega)$ be primitive and $F_{\omega}$ $(\omega\in\Omega)$ simple non-abelian. Is there $\smash{\widetilde{F}\!\le\!\Aut(B_{d,k})}$ with \eqref{eq:C} and $\pi\widetilde{F}\!=\!F$ other than $\Gamma^{k}(F)$, $\Delta(F)$ and $\Phi^{k}(F)$?
\end{question}

\begin{comment}
or deg in [3..128] do for k in [1..NrPrimitiveGroups(deg)] do F:=PrimitiveGroup(deg,k); F1:=Stabilizer(F,1); if Transitivity(F,[1..deg])=1 and IsSimple(F1) and not IsAbelian(F1) then Print(deg,",",k,",",F,"\n"); fi; od; od;
16,19,2^4:A(5)
36,6,PSU(3, 3)
50,1,PSU(3, 5)
56,1,PSL(3, 4)
57,1,PSL(2, 19)
60,1,Alt(5)^2
64,57,2^6:O-(6, 2)
64,59,2^6:Alt(8)
64,61,2^6:Alt(7)
68,1,PSL(2, 16)
81,133,3^4:Alt(6)
81,141,3^4:Alt(5)
100,1,J_2
100,3,HS
...
\end{comment}

\newpage
\subsection{Universality}
The constructed groups $\mathrm{U}_{k}(F)$ are universal in the sense of the following maximality statement, which should be compared to Proposition \ref{prop:uf_universal}.

\begin{theorem}\label{thm:ukf_universal}
Let $H\le\Aut(T_{d})$ be locally transitive and contain an involutive inversion. Then there is a labelling $l$ of $T_{d}$ such that
\begin{displaymath}
 \mathrm{U}_{1}^{(l)}(F^{(1)})\ge\mathrm{U}_{2}^{(l)}(F^{(2)})\ge\cdots\ge\mathrm{U}_{k}^{(l)}(F^{(k)})\ge\cdots\ge H\ge\mathrm{U}_{1}^{(l)}(\{\id\})
\end{displaymath}
where $F^{(k)}\le\Aut(B_{d,k})$ is action isomorphic to the $k$-local action of $H$.
\end{theorem}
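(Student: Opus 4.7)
The plan is to adapt the argument for Proposition~\ref{prop:uf_universal} and then exploit the involutivity hypothesis to show, in addition, that $H$ contains every label-respecting automorphism. Local transitivity of $H$ combined with the existence of an inversion forces $H$ to be vertex- and hence oriented-edge-transitive (using Lemma~\ref{lem:bm_1.3.0}: $\tensor[^{+}]{H}{}$ has at most two vertex orbits, and any inversion swaps them). Conjugating the given involutive inversion within $H$ therefore yields an involutive inversion $\iota_{e}\in H$ of every $e\in E$. Fix a base vertex $b\in V$ and a bijection $l_{b}:E(b)\to\Omega$, and write $\iota_{\omega}$ for the chosen involutive inversion of $l_{b}^{-1}(\omega)$. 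Setting $G:=\langle \iota_{\omega}\mid\omega\in\Omega\rangle\le H$, the ping-pong argument underlying Lemma~\ref{lem:uid_fin_gen} (applied to the half-trees $T_{b_{\omega}}$) shows $G\cong\underset{\omega\in\Omega}{\bigast}\bbZ\!/2\bbZ$ and that $G$ acts simply transitively on $V$.

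Next, I would build the labelling $l$ directly from $G$: letting $g_{x}\in G$ denote the unique element with $g_{x}(b)=x$, define $l|_{E(x)}:=l_{b}\circ g_{x}^{-1}|_{E(x)}$. Verifying $l(e)=l(\overline{e})$ for every $e\in E$ is the step where involutivity is genuinely required. Writing $x=o(e)$, $y=t(e)$, $\omega:=l(e)$, one observes that $g_{x}^{-1}(e)=l_{b}^{-1}(\omega)$ by construction, so $g_{x}(b_{\omega})=y$ and free action forces $g_{y}=g_{x}\iota_{\omega}$ in $G$ (regardless of reducibility of the word, since any cancellation just returns the correct predecessor on the $b$-path). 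Then $g_{y}^{-1}(\overline{e})=\iota_{\omega}(\overline{g_{x}^{-1}(e)})=\iota_{\omega}(\overline{l_{b}^{-1}(\omega)})=l_{b}^{-1}(\omega)$, so $l(\overline{e})=\omega=l(e)$.

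The identification $G=\mathrm{U}_{1}^{(l)}(\{\id\})$ then follows by a short cocycle computation: for $g\in G$ and $x\in V$, free action gives $g_{g(x)}=g\cdot g_{x}$, hence $g_{g(x)}^{-1}\circ g=g_{x}^{-1}$ and so $l\circ g|_{E(x)}=l|_{E(x)}$. The reverse inclusion is immediate since both groups act simply transitively on $V$ with trivial stabilizer of $b$. This supplies the rightmost inequality $H\ge\mathrm{U}_{1}^{(l)}(\{\id\})$. I would then set $F^{(k)}:=\sigma_{k}(H_{b},b)\le\Aut(B_{d,k})$; vertex-transitivity of $H$ makes $F^{(k)}$ action-isomorphic to the $k$-local action of $H$ at every vertex.

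Finally, for the upper bounds: given $h\in H$ and $x\in V$ the element $s:=g_{h(x)}^{-1}hg_{x}\in H_{b}$ satisfies $h=g_{h(x)}sg_{x}^{-1}$. Since $g_{x}^{-1},g_{h(x)}\in\mathrm{U}_{1}^{(l)}(\{\id\})=\mathrm{U}_{k}^{(l)}(\{\id\})$ have trivial $k$-local action everywhere, Lemma~\ref{lem:cocycle} collapses this to $\sigma_{k}(h,x)=\sigma_{k}(s,b)\in F^{(k)}$, so $H\le\mathrm{U}_{k}^{(l)}(F^{(k)})$. The descending chain follows from $\pi_{k}\circ\sigma_{k+1}(\cdot,x)=\sigma_{k}(\cdot,x)$ together with $\pi_{k}(F^{(k+1)})=F^{(k)}$. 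The main obstacle throughout will be the labelling consistency check in the second paragraph: it is the only step where the proof of Proposition~\ref{prop:uf_universal} truly needs upgrading to the involutive setting, and every subsequent claim depends on the $\iota_{\omega}$'s being the canonical label-respecting inversions at $b$.
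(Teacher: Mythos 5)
Your proposal is correct and follows essentially the same route as the paper: both obtain involutive inversions of all edges issuing from the base vertex, use them to manufacture a labelling $l$ for which $\mathrm{U}_{1}^{(l)}(\{\id\})\le H$, and then conjugate an arbitrary $h\in H$ into the vertex stabilizer $H_{b}$ by label-respecting elements so that the cocycle identity gives $\sigma_{k}(h,y)=\sigma_{k}(s,b)\in F^{(k)}$. The only cosmetic difference is that you define the labelling by transporting $l_{b}$ along the simply transitive group $G=\langle\iota_{\omega}\mid\omega\in\Omega\rangle$ whereas the paper defines it by direct induction on balls; just be aware that the transitivity of $G$ on $V$ must be argued separately by induction on distance, since the transitivity step in the proof of Lemma~\ref{lem:uid_fin_gen} uses that the generators are label-respecting, which at that stage of your argument has not yet been established.
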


\begin{proof}
First, we construct a labelling $l$ of $T_{d}$ such that $H\ge\mathrm{U}_{1}^{(l)}(\{\id\})$: Fix $x\in V$ and choose a bijection $l_{x}:E(x)\to\Omega$. By the assumptions, there is an involutive inversion $\iota_{\omega}\in H$ of the edge $(x,x_{\omega})\in E$ for every $\omega\in\Omega$. Using these inversions, we define the announced labelling inductively: Set $l|_{E(x)}:=l_{x}$ and assume that $l$ is defined on $E(x,n)$. For $e\in E(x,n+1)\backslash E(x,n)$ put $l(e):=l(\iota_{\omega}(e))$ if $x_{\omega}$ is part of the unique reduced path from $x$ to $o(e)$. Since the $\iota_{\omega}$ $(\omega\in\Omega)$ have order $2$, we obtain $\sigma_{1}(\iota_{\omega},y)=\id$ for all $\omega\in\Omega$ and $y\in V$. Therefore, $\smash{\langle\{\iota_{\omega}\mid\omega\in\Omega\}\rangle=\mathrm{U}_{1}^{(l)}(\{\id\})\le H}$, following the proof of Lemma \ref{lem:uid_fin_gen}.

Now, let $h\in H$ and $y\in V$. Further, let $(x,x_{1},\ldots,x_{n},y)$ and $(x,x_{1}',\ldots,x_{m}',h(y))$ be the unique reduced paths from $x$ to $y$ and $h(y)$ respectively. Since $\smash{\mathrm{U}_{1}^{(l)}(\{\id\})}\le H$, the group $H$ contains the unique label-respecting inversion $\iota_{e}$ of every edge $e\in E$. We therefore have
\begin{displaymath}
 s:=\iota_{(x_{1}',x)}^{-1}\cdots\iota_{(x_{m}',x_{m-1}')}^{-1}\iota_{(h(y),x_{m}')}^{-1}\circ h\circ \iota_{(y,x_{n})}\cdots\iota_{(x_{2},x_{1})}\iota_{(x_{1},x)}\in H.
\end{displaymath}
Also, $s$ stabilizes $x$. The cocycle identity implies for every $k\in\bbN$:
\begin{displaymath}
 \sigma_{k}(h,y)=\sigma_{k}(\iota_{(h(y),x_{m}')}\cdots\iota_{(x_{1}',x)}\circ s\circ \iota_{(x_{1},x)}^{-1}\cdots\iota_{(y,x_{n})}^{-1},y)=\sigma_{k}(s,x)\in F^{(k)}.
\end{displaymath}
where $F^{(k)}\le\Aut(B_{d,k})$ is defined by $l_{x}^{k}\circ H_{x}|_{B(x,k)}\circ (l_{x}^{k})^{-1}$.
%The remaining assertions are now immediate from \cite[Theorem 5.4]{BEW15}.
\end{proof}

\begin{remark}\label{rem:inv_necessary}
Retain the notation of Theorem \ref{thm:ukf_universal}. By Proposition \ref{prop:uf_universal}, there is a labelling $l$ of $T_{d}$ such that $\smash{\mathrm{U}_{1}^{(l)}(F^{(1)})\ge H}$ regardless of the minimal order of an inversion in $H$. This labelling may be distinct from the one of Theorem \ref{thm:ukf_universal} which fails without assuming the existence of an involutive inversion: For example, a vertex-stabilizer of the group $G_{2}^{1}$ of Example \ref{ex:s3} below is action isomorphic to $\Gamma(S_{3})$ but $\smash{G_{2}^{1}\not\le\mathrm{U}_{2}^{(l)}(\Gamma(S_{3}))}$ for any labelling $l$ because $(G_{2}^{1})_{\{b,b_{\omega}\}}\cong\bbZ/4\bbZ$ whereas
\begin{displaymath}
  \mathrm{U}_{2}^{(l)}(\Gamma(S_{3}))_{\{b,b_{\omega}\}}\cong\Gamma(S_{3})_{(b,b_{\omega})}\rtimes\bbZ\!/2\bbZ\cong\bbZ\!/2\bbZ\times\bbZ\!/2\bbZ
\end{displaymath}
by Proposition \ref{prop:ukf_structure}.
\end{remark}

%We complement Theorem \ref{thm:ukf_universal} with the following criterion for certain discrete subgroups of $\Aut(T_{d})$ to contain an involutive inversion.

We complement Theorem \ref{thm:ukf_universal} with the following criterion for certain subgroups of $\Aut(T_{d})$ to contain an involutive inversions.

\begin{proposition}\label{prop:crit_inv_edge}
%Let $H\!\le\!\Aut(T_{d})$ be discrete and locally transitive with odd order point stabilizers. If $H$ contains an inversion then it contains an involutive one.
Let $H\le\Aut(T_{d})$ be locally transitive with odd order point stabilizers. If $H$ contains a finite order inversion then it contains an involutive one.
\end{proposition}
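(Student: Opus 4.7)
The plan is to reduce the problem to finding a suitable power of a given finite-order inversion $g\in H$. The key observation is that if $g$ inverts an edge $e=(x,y)$, then $g$ acts on $\{x,y\}$ as a transposition, so $|g|$ must be even; write $|g|=2m$. Moreover $g^{2}$ fixes both $x$ and $y$ and therefore lies in $H_{x}$. Since $H_{x}$ has odd order by hypothesis, $m=|g^{2}|$ is odd.

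The candidate for the involutive inversion is then $h:=g^{m}$. It satisfies $h^{2}=g^{2m}=\id$, so $|h|$ divides $2$. Since $m-1$ is even, I would write
\begin{displaymath}
h = g\cdot g^{m-1} = g\cdot (g^{2})^{(m-1)/2};
\end{displaymath}
the second factor lies in $H_{x}\cap H_{y}$, so it fixes both $x$ and $y$. Hence $h(x)=g(x)=y$ and $h(y)=g(y)=x$, showing that $h$ is an inversion of $e$. Because $h$ swaps the distinct vertices $x$ and $y$, one has $h\neq\id$, so $|h|=2$ exactly, and $h\in H$ is the desired involutive inversion.

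I do not anticipate a real obstacle; the argument is essentially a one-line parity computation once one notes that $g^{2}\in H_{x}$ and exploits that odd-order groups contain no element of even order. Local transitivity plays no role in the proof itself and presumably enters the statement only to place it in the context of the surrounding theorems (for instance, to make the odd-order hypothesis on point stabilizers uniform across vertices when combined with vertex-transitivity of $\tensor[^{+}]{H}{}$, cf.\ Lemma~\ref{lem:bm_1.3.0}).
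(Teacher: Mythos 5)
There is a genuine gap, and it sits exactly at the one step that carries the real content of the proposition. You read ``odd order point stabilizers'' as saying that the vertex stabilizers $H_{x}\le H$ have odd order. In this paper the phrase qualifies the \emph{local action}: by the convention of Section~1 (``locally `X'\,''), the hypothesis is that for every $x$ the finite permutation group $H_{x}^{(1)}\le\Sym(E(x))$ is transitive and its point stabilizers $(H_{x}^{(1)})_{f}$ ($f\in E(x)$) have odd order. This is also what the remark following the proposition refers to (primitive permutation groups with odd order point stabilizers, e.g.\ $\PSL(2,q)\curvearrowright\mathrm{P}^{1}(\bbF_{q})$ for $q\equiv 3 \bmod 4$). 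The full stabilizer $H_{x}$ is in general an infinite profinite group and its image in $\Sym(E(x))$ typically has even order: for instance $H=\mathrm{U}(F)$ with $F=\PSL(2,3)\cong A_{4}$ acting on the four points of $\mathrm{P}^{1}(\bbF_{3})$ satisfies the hypotheses (point stabilizers of order $3$), yet $H_{x}$ is infinite and surjects onto $A_{4}$. So ``$g^{2}\in H_{x}$, hence $|g^{2}|$ is odd'' does not follow, and this is precisely the claim that needs an argument; the rest of your computation (that $g^{m}$ is an involutive inversion once $m$ is odd) is the easy part.

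The missing step is supplied by the paper roughly as follows. Write $\mathrm{ord}(g)=2^{k}m$ with $m$ odd; one must show $k=1$. If $k\ge 2$, then $h:=g^{2^{k-1}m}$ is a non-trivial element of order $2$ which, being an even power of $g$, fixes both endpoints of $e$ and hence $e$ pointwise. Since $h\neq\id$ and its fixed-point set is a subtree containing $e$, there is a vertex $v$ fixed by $h$ at which $h$ moves some edge of $E(v)$ while fixing the edge of $E(v)$ pointing towards $e$ (or $e$ itself if $v$ is an endpoint). The permutation induced by $h$ on $E(v)$ is then a non-trivial element of a point stabilizer of $H_{v}^{(1)}$, hence has odd order at least $3$; but $h^{2}=\id$ forces that permutation to square to the identity, a contradiction. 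This localization argument --- passing from the global element to its action on the edges at a well-chosen vertex --- is what your proof omits, and it is where both the local hypothesis and the tree geometry actually enter.
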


\begin{proof}
Let $\iota\in H$ be a finite order inversion of an edge $e\in E$ and $\mathrm{ord}(\iota)=2^{k}\cdot m$ for some odd $m\in\bbN$ and some $k\in\bbN$. It suffices to show that $k=1$, in which case $\iota^{m}$ is an involutive inversion. Suppose $k\ge 1$. Then $\smash{\iota^{2^{k-1}\cdot m}}$ is non-trivial and fixes the edge $e$. Because point stabilizers in the local action of $H$ have odd order, it follows that $\smash{(\iota^{2^{k-1}\cdot m})^{2}}$ is non-trivial as well, but $\smash{(\iota^{2^{k-1}\cdot m})^{2}=\iota^{\mathrm{ord}(\iota)}=\id}$.
%Let $k_{0}\in\bbN_{0}$ be minimal such that stabilizers in $H$ of balls of radius $k_{0}$ around edges in $T_{d}$ are trivial. Further, let $\iota\in H$ be an inversion of an edge $e\in E$. Then $\iota^{2}\in H_{e}$. If $k_{0}=0$, the assertion follows. Otherwise, the smallest integer $n_{1}\!\in\!\bbN$ such that $(\iota^{2})^{n_{1}}\!\in\! H_{B(1,e)}$ is odd by the assumption on the local action of $H$. Iteratively, the smallest integer $n_{k}\in\bbN$ such that $(\iota^{2})^{n_{k}}\in H_{B(k,e)}$ is odd for every $k\le k_{0}$ and we conclude that $\iota^{n_{k_{0}}}$ is an involutive inversion.
\end{proof}

For example, Proposition \ref{prop:crit_inv_edge} applies when $H$ is discrete and vertex-transitive: Combined with local transitivity this implies the existence of a finite order inversion.
%In Proposition \ref{prop:crit_inv_edge}, we may for example assume that $H$ be vertex-transitive. Combined with local transitivity this implies the existence of an inversion.

We remark that primitive permutation groups with odd order point stabilizers were classified in \cite{LS91}. For instance, they include $\mathrm{PSL}(2,q)\curvearrowright\mathrm{P}^{1}(\bbF_{q})$ for any prime power $q$ that satisfies $q\equiv 3\ \mathrm{mod}\ 4$.

\begin{comment}
%{\blue
\begin{question}
To strengthen Proposition \ref{prop:universal}, one ought to either find assumptions on $H$ which ensure that it contains an involutive inversion other then the one of Proposition \ref{prop:crit_inv_edge} or consider the following:

Let $H\le\Aut(T_{d})$ be vertex-transitive and locally transitive. Further, let $\sigma\in\Aut(T_{d})$ be an involutive inversion. Then Proposition \ref{prop:universal} applies to $\langle H,\sigma\rangle$. By how much do the local actions of $H$ and $\langle H,\sigma\rangle$ differ? If $H$ is discrete, can $\sigma$ be chosen such that $\langle H,\sigma\rangle$ is discrete as well?
\end{question}
%}
\end{comment}

\subsection{A Bipartite Version}\label{sec:universal_groups_bipartite}

In this section, we introduce a bipartite version of the universal groups developed in Section~\ref{sec:universal_groups} which plays a critical role in the proof of Theorem \ref{thm:local_action_qz_sharp}\ref{item:qz_sharp_k_transitive}\ref{item:qz_sharp_quasiprimitive_hyp} below. Retain the notation of Section \ref{sec:universal_groups}. In particular, let $T_{d}=(V,E)$ denote the $d$-regular tree. Fix a regular bipartition $V=V_{1}\sqcup V_{2}$ of $V$.

\subsubsection{Definition and Basic Properties}
The groups to be defined are subgroups of $\tensor[^{+}]{\Aut(T_{d})}{}\le\Aut(T_{d})$, the maximal subgroup of $\Aut(T_{d})$ preserving the bipartition $V=V_{1}\sqcup V_{2}$. Alternatively, it can be described as the subgroup generated by all point stabilizers, or all edge-stabilizers.

\begin{definition}
Let $F\le\Aut(B_{d,2k})$ and $l$ be a labelling of $T_{d}$. Define
\begin{displaymath}
 \mathrm{BU}_{2k}^{(l)}(F):=\{\alpha\in\!\!\tensor[^{+}]{\Aut(T_{d})}{}\mid \forall v\in V_{1}:\ \sigma_{2k}(\alpha,v)\in F\}.
\end{displaymath}
\end{definition}

Note that $\mathrm{BU}_{2k}^{(l)}(F)$ is a subgroup of $\tensor[^{+}]{\Aut(T_{d})}{}$ thanks to Lemma \ref{lem:cocycle} and the assumption that it is a subset of $\tensor[^{+}]{\Aut(T_{d})}{}$. Further, Proposition \ref{prop:ukf_labelling} carries over to the groups $\smash{\mathrm{BU}_{2k}^{(l)}(F)}$. We shall therefore omit the reference to an explicit labelling in the following. Also, we recover the following basic properties.

\begin{proposition}\label{prop:bukf_basic_properties}
Let $F\le\Aut(B_{d,2k})$. The group $\mathrm{BU}_{2k}(F)$ is
\begin{enumerate}[(i)]
 \item\label{item:bukf_closed} closed in $\Aut(T_{d})$
 \item\label{item:bukf_vertex_transitive} transitive on both $V_{1}$ and $V_{2}$, and
 \item\label{item:bukf_comp_gen} compactly generated.
\end{enumerate}
\end{proposition}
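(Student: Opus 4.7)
The plan is to mirror the proof of Proposition~\ref{prop:ukf_basic_properties} while carefully tracking membership in $\tensor[^{+}]{\Aut(T_{d})}{}$.

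For \ref{item:bukf_closed}, I will first observe that $\tensor[^{+}]{\Aut(T_{d})}{}\le\Aut(T_{d})$ is the kernel of the natural continuous homomorphism $\Aut(T_{d})\to\Sym(\{V_{1},V_{2}\})$ induced by the action on bipartition classes, so it is clopen of index~$2$. On top of this, the condition $\sigma_{2k}(g,v)\in F$ for all $v\in V_{1}$ is closed by exactly the argument of Proposition~\ref{prop:ukf_basic_properties}\,(i): if some $g$ fails the condition at $v\in V_{1}$, then the neighbourhood $\{h\in\Aut(T_{d})\mid h|_{B(v,2k)}=g|_{B(v,2k)}\}$ lies entirely in the complement of $\mathrm{BU}_{2k}(F)$.

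For \ref{item:bukf_vertex_transitive}, given $x,x'\in V_{1}$, I will apply Lemma~\ref{lem:ext} to the quadruple $(l,l,x,x')$ to obtain the unique label-respecting automorphism $g\in\Aut(T_{d})$ with $gx=x'$. Because $d(x,x')$ is even, $g$ preserves the bipartition, so $g\in\tensor[^{+}]{\Aut(T_{d})}{}$; and because $g$ is label-respecting, the composition $l_{gv}^{2k}\circ g\circ(l_{v}^{2k})^{-1}$ is a label-respecting self-map of $B_{d,2k}$ fixing the center $b$, hence trivial, so $\sigma_{2k}(g,v)=\id\in F$ for every $v\in V$. Therefore $g\in\mathrm{BU}_{2k}(F)$, and the same construction transports any vertex of $V_{2}$ to any other.

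For \ref{item:bukf_comp_gen}, I will fix $x\in V_{1}$ and exhibit a compact generating set. The stabilizer $\mathrm{BU}_{2k}(F)_{x}$ is closed in the compact group $\Aut(T_{d})_{x}$ and hence compact. Consider the finite subset $S:=\{\iota_{\omega_{1}}^{(x)}\iota_{\omega_{2}}^{(x)}\mid\omega_{1},\omega_{2}\in\Omega\}$ of $\Aut(T_{d})$, using the notation preceding Lemma~\ref{lem:uid_fin_gen}. Each element of $S$ is label-respecting (so $\sigma_{2k}(\cdot,v)=\id\in F$ for all $v$ by the argument above) and preserves the bipartition as a product of two inversions; hence $S\subseteq\mathrm{BU}_{2k}(F)$. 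By Lemma~\ref{lem:uid_fin_gen} together with a parity argument on word length, $\langle S\rangle$ equals $\mathrm{U}_{1}(\{\id\})\cap\tensor[^{+}]{\Aut(T_{d})}{}$ and therefore acts transitively on $V_{1}$. Given $g\in\mathrm{BU}_{2k}(F)$, we have $gx\in V_{1}$; choosing $g'\in\langle S\rangle$ with $g'x=gx$ and writing $g=g'\cdot((g')^{-1}g)$ with $(g')^{-1}g\in\mathrm{BU}_{2k}(F)_{x}$ shows that $\mathrm{BU}_{2k}(F)=\langle S\cup\mathrm{BU}_{2k}(F)_{x}\rangle$.

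The only subtle point is the need in \ref{item:bukf_comp_gen} for bipartition-preserving generators with trivial $2k$-local actions simultaneously; the two-inversion product construction delivers both properties at once, and the parity argument ensures that these products already generate the entire bipartite label-respecting subgroup.
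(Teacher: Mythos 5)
Your proof is correct and follows essentially the same route as the paper: parts (i) and (ii) are handled exactly as in Proposition~\ref{prop:ukf_basic_properties}, and for (iii) your generators $\iota_{\omega_{1}}^{(x)}\iota_{\omega_{2}}^{(x)}$ are precisely the label-respecting translations $t_{w}^{(x)}$ $(w\in\Omega^{(2)})$ that the paper uses in Lemma~\ref{lem:bu_comp_gen}, with your parity argument standing in for the paper's identification of $\mathrm{BU}_{2}(\{\id\})$ as a free group acting transitively on $V_{1}$.
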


Parts \ref{item:bukf_closed} and \ref{item:bukf_vertex_transitive} are proven as their analogues in Proposition \ref{prop:ukf_basic_properties} whereas part \ref{item:bukf_comp_gen} relies on part \ref{item:bukf_vertex_transitive} and the subsequent analogue of Lemma \ref{lem:uid_fin_gen}, for which we introduce the following notation: Given $x\in V$ and $\smash{w\in\Omega^{(2k)}}$, let $\smash{t_{w}^{(x)}\in\mathrm{BU}_{2}(\{\id\})}$ denote the unique label-respecting translation with $\smash{t_{w}^{(x)}(x)=x_{w}}$. Given an element $\smash{w=(\omega_{1},\ldots,\omega_{2k})\in\Omega^{(2k)}}$, we set $\smash{\overline{w}:=(\omega_{2k},\ldots,\omega_{1})\in\Omega^{(2k)}}$. Then $\smash{(t_{w}^{(x)})^{-1}=t_{\overline{w}}^{(x)}}$ and if $\smash{\Omega_{+}^{(2k)}\subseteq\Omega^{(2k)}}$ is such that for every $\smash{w\in\Omega^{(2k)}}$ exactly one of $\{w,\overline{w}\}$ belongs to $\smash{\Omega_{+}^{(2k)}}$, then $\smash{\Omega_{+}^{(2k)}=\Omega_{+}^{(2k)}\sqcup\overline{\Omega}_{+}^{(2k)}}$ where $\smash{\overline{\Omega}_{+}^{(2k)}:=\{\overline{w}\mid w\in\Omega_{+}^{(2k)}\}}$.

\begin{lemma}\label{lem:bu_comp_gen}
Let $x\in V_{1}$. Then $\smash{\mathrm{BU}_{2}(\{\id\})=\langle\{t_{w}^{(x)}\mid w\in\Omega^{(2)}\}\rangle\cong F_{\Omega_{+}^{(2)}}}$, the free group on the set $\smash{\Omega_{+}^{(2)}}$.
\end{lemma}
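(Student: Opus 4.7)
The plan is to verify each of the three claims in Lemma~\ref{lem:bu_comp_gen}: membership of each $t_{w}^{(x)}$ in $\mathrm{BU}_{2}(\{\id\})$, generation of the whole group by these translations, and the free-group structure indexed by $\Omega_{+}^{(2)}$.

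Membership is immediate: by construction each $t_{w}^{(x)}$ is label-respecting on all of $T_{d}$, so $\sigma_{k}(t_{w}^{(x)},v)=\id$ for every $v\in V$ and every $k$, and in particular $\sigma_{2}(t_{w}^{(x)},v)=\id$ for every $v\in V_{1}$; moreover, as a translation of length $2$ (an even number), $t_{w}^{(x)}$ preserves the bipartition, so it lies in $\tensor[^{+}]{\Aut(T_{d})}{}$.

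For generation, the key reduction is that $\mathrm{BU}_{2}(\{\id\})$ coincides with the bipartition-preserving subgroup of $\mathrm{U}_{1}(\{\id\})$. Indeed, if $\sigma_{2}(\alpha,v)=\id$ for every $v\in V_{1}$, then $\alpha$ restricts to the unique label-respecting isomorphism on each $B(v,2)$, which identifies $E(u)\to E(\alpha u)$ label-respectingly for each neighbour $u\in V_{2}$ of $v$; since every $u\in V_{2}$ has some $V_{1}$-neighbour, $\sigma_{1}(\alpha,u)=\id$ everywhere and $\alpha\in\mathrm{U}_{1}(\{\id\})$. Combined with the converse inclusion (label-respecting elements clearly satisfy the stronger $\sigma_{k}=\id$ condition), this gives the identification. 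By Lemma~\ref{lem:uid_fin_gen}, any $\alpha\in\mathrm{U}_{1}(\{\id\})$ is a reduced product $\iota_{\omega_{1}}^{(x)}\cdots\iota_{\omega_{n}}^{(x)}$, and bipartition-preservation forces $n$ to be even. Grouping consecutive pairs and using the identity $\iota_{\omega}^{(x)}\iota_{\omega'}^{(x)}=t_{(\omega,\omega')}^{(x)}$ in $\mathrm{U}_{1}(\{\id\})$—both sides lie in $\mathrm{U}_{1}(\{\id\})$ and send $x$ to $x_{\omega\omega'}$, so they agree by triviality of vertex stabilizers—realises $\alpha$ as a word in the $t_{w}^{(x)}$.

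For the free-group structure, I envisage a ping-pong argument in the spirit of Lemma~\ref{lem:uid_fin_gen}. For each $w=(\omega_{1},\omega_{2})\in\Omega^{(2)}$ set $X_{w}:=T_{x_{w}}$, the half-tree of $T_{d}$ consisting of those vertices whose reduced path from $x$ starts with the letters $\omega_{1}\omega_{2}$; these sets are pairwise disjoint as $w$ ranges over $\Omega^{(2)}$. One then aims to establish the inclusions $t_{w}^{(x)}(X_{w'})\subseteq X_{w}$ for every $w'\neq\bar{w}$, which upon applying the ping-pong lemma identifies $\mathrm{BU}_{2}(\{\id\})$ with $\bigast_{w\in\Omega_{+}^{(2)}}\langle t_{w}^{(x)}\rangle\cong F_{\Omega_{+}^{(2)}}$. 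The main obstacle is the case where $w$ and $w'$ share the first letter $\omega_{1}$, so that the axes of $t_{w}^{(x)}$ and $t_{w'}^{(x)}$ both traverse the edge $(x,x_{\omega_{1}})$: one must track carefully how $t_{w}^{(x)}$ permutes the branches at $x_{\omega_{1}}$, using the label-respecting translation formula and the cocycle identity of Lemma~\ref{lem:cocycle}, to confirm that $X_{w'}$ lands inside $X_{w}$ rather than overlapping with a different branch.
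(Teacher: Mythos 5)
Your first two steps are correct. The membership argument is fine, and your generation argument --- identifying $\mathrm{BU}_{2}(\{\id\})$ with the bipartition-preserving part of $\mathrm{U}_{1}(\{\id\})$, invoking Lemma \ref{lem:uid_fin_gen} to write an element as an even-length reduced word in the $\iota_{\omega}^{(x)}$, and pairing consecutive letters via the identity $\iota_{\omega}^{(x)}\iota_{\omega'}^{(x)}=t_{(\omega,\omega')}^{(x)}$ --- is a sound, slightly more explicit variant of the paper's argument, which instead observes that an element of $\mathrm{BU}_{2}(\{\id\})$ is determined by the image of $x$ and that the set $\{t_{w}^{(x)}\mid w\in\Omega^{(2)}\}$ already acts transitively on $V_{1}$.

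The ping-pong step, however, cannot be completed, and the difficulty is not where you locate it: the case where $w$ and $w'$ share their first letter is harmless, but the inclusion $t_{w}^{(x)}(X_{w'})\subseteq X_{w}$ fails whenever the \emph{second} letter of $w$ equals the \emph{first} letter of $w'$. Using $t_{(\omega_{1},\omega_{2})}^{(x)}=\iota_{\omega_{1}}^{(x)}\iota_{\omega_{2}}^{(x)}$ (both sides are label-respecting and send $x$ to $x_{(\omega_{1},\omega_{2})}$), one computes $t_{(1,2)}^{(x)}(x_{(2,3,u)})=x_{(1,3,u)}$, so $t_{(1,2)}^{(x)}(X_{(2,3)})=X_{(1,3)}\not\subseteq X_{(1,2)}$. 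This is the geometric shadow of the genuine relation $t_{(\omega_{1},\omega_{2})}^{(x)}t_{(\omega_{2},\omega_{3})}^{(x)}=\iota_{\omega_{1}}^{(x)}\iota_{\omega_{2}}^{(x)}\iota_{\omega_{2}}^{(x)}\iota_{\omega_{3}}^{(x)}=t_{(\omega_{1},\omega_{3})}^{(x)}$, which shows that the $t_{w}^{(x)}$ with $w\in\Omega_{+}^{(2)}$ do \emph{not} generate freely; no amount of care with the branches at $x_{\omega_{1}}$ can rescue the argument. In fact $\mathrm{BU}_{2}(\{\id\})$ is the index-two subgroup of even-length words in $\mathrm{U}_{1}(\{\id\})\cong\bigast_{\omega\in\Omega}\bbZ/2\bbZ$, hence free of rank $d-1$, freely generated for instance by $\{t_{(\omega_{0},\omega)}^{(x)}\mid\omega\in\Omega\setminus\{\omega_{0}\}\}$ for a fixed $\omega_{0}$, whereas $|\Omega_{+}^{(2)}|=d(d-1)/2>d-1$ for $d\ge 3$. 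So the isomorphism with $F_{\Omega_{+}^{(2)}}$ asserted in the lemma is itself incorrect; the paper's own proof only establishes generation and then asserts, without justification, that $t_{w}^{(x)}\mapsto w$ is a well-defined isomorphism, which the relation above contradicts. Since only finite generation of $\mathrm{BU}_{2}(\{\id\})$ is used elsewhere (Proposition \ref{prop:bukf_basic_properties} and Section \ref{sec:qz_sharp_proof_iv_b}), the statement you should be proving is that $\langle\{t_{w}^{(x)}\mid w\in\Omega^{(2)}\}\rangle=\mathrm{BU}_{2}(\{\id\})$ is free of rank $d-1$.
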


\begin{proof}
Every element of $\mathrm{BU}_{2k}(\{\id\})$ is uniquely determined by its image on $x$. To see that $\smash{\mathrm{BU}_{2}(\{\id\})=\langle\{t_{w}^{(x)}\mid w\in\Omega^{(2)}\}\rangle}$ it hence suffices to show that $\smash{\{t_{w}^{(x)}\!\mid\! w\!\in\!\Omega^{(2)}\}}$ is transitive on $V_{1}$. Indeed, let $y\in V_{1}$. Then $y=x_{w}$ for some $w\in\Omega^{(2k)}$ where $2k=d(x,y)$. Write $w=(w_{1},\ldots,w_{k})\in(\Omega^{(2)})^{k}$. Then $\smash{t_{w_{1}}^{(x)}\circ\cdots\circ t_{w_{k}}^{(x)}=t_{w}^{(x)}}$ as every $\smash{t_{w_{i}}^{(x)}}$ $(i\in\{1,\ldots,k\})$ is label-respecting. Hence $\smash{t_{w_{1}}^{(x)}\circ\dots\circ t_{w_{k}}^{(x)}(x)=y}$ and that
\begin{displaymath}
 \langle\{t_{w}^{(x)}\!\mid\! w\in\Omega^{(2)}\}\rangle\to F_{\Omega_{+}^{(2)}},\ \begin{cases}t_{w}^{(x)}\mapsto w & w\in\Omega_{+}^{(2)} \\ t_{w}^{(x)}\mapsto \overline{w}^{-1} & w\not\in\Omega_{+}^{(2)}\end{cases}
\end{displaymath}
yields a well-defined isomorphism.
\end{proof}

\subsubsection{Compatibility and Discreteness}

In order to describe the compatibility and the discreteness condition in the bipartite setting, we first introduce a suitable realization of $\Aut(B_{d,2k})$ $(k\in\bbN)$, similar to the one at the beginning of Section~\ref{sec:ukf_examples}. Let $\Aut(B_{d,1})\cong\Sym(\Omega)$ and $\Aut(B_{d,2})$ be as before. For $k\ge 2$, we inductively identify $\Aut(B_{d,2k})$ with its image under
\begin{align*}
 \Aut(B_{d,2k})&\to\Aut(B_{d,2(k-1)})\ltimes\prod_{w\in\Omega^{(2)}}\Aut(B_{d,2(k-1)}) \\
 \alpha&\mapsto(\sigma_{2(k-1)}(\alpha,b),(\sigma_{2(k-1)}(\alpha,b_{w}))_{w}))
\end{align*}
where $\Aut(B_{d,2(k-1)})$ acts on $\Omega^{(2)}$ by permuting the factors according to its action on $\smash{S(b,2)\cong\Omega^{(2)}}$. In addition, consider the map $\pr_{w}:\Aut(B_{d,2k})\to\Aut(B_{d,2(k-1)})$, $\alpha\mapsto\sigma_{2(k-1)}(\alpha,b_{w})$ for every $\smash{w\in\Omega^{(2)}}$, as well as
\begin{displaymath}
 p_{w}=(\pi_{2(k-1)},\pr_{w}):\Aut(B_{d,2k})\to\Aut(B_{d,2(k-1)})\times\Aut(B_{d,2(k-1)})%,\ \alpha\mapsto(\pi_{2(k-1)}(\alpha),\pr_{w}(\alpha))
\end{displaymath}
For $k\ge 2$, conditions (C) and (D) for $F\le\Aut(B_{d,2k})$ now read as follows.
\begin{equation}
  \forall\alpha\in F\ \forall w\in\Omega^{(2)}\ \exists \alpha_{w}\in F:\ \pi_{2(k-1)}(\alpha_{w})=\pr_{w}(\alpha),\ \pr_{\overline{w}}(\alpha_{w})=\pi_{2(k-1)}(\alpha)
  \tag{C}
  \label{eq:C_BU}
\end{equation}
\vspace{-0.6cm}
\begin{equation}
  \forall w\in\Omega^{(2)}:\ p_{w}|_{F}^{-1}(\id,\id)=\{\id\}
  \tag{D}
  \label{eq:D_BU}
\end{equation}
For $k=1$ we have, using the maps $\pr_{\omega}$ ($\omega\in\Omega$) as in Section \ref{sec:ukf_examples},
\begin{equation}
  \forall\alpha\in F\ \forall w=(\omega_{1},\omega_{2})\in\Omega^{(2)}\ \exists \alpha_{w}\in F:\ \pr_{\omega_{2}}(\alpha_{w})=\pr_{\omega_{1}}(\alpha).
  \tag{C}
  \label{eq:C_BU_1}
\end{equation}
\vspace{-0.6cm}
\begin{equation}
  \forall\omega\in\Omega:\ \pr_{\omega}|_{F}^{-1}(\id)=\{\id\}.
  \tag{D}
  \label{eq:D_BU_1}
\end{equation}
Analogues of Proposition \ref{prop:ukf_discrete} are proven using the discreteness conditions (D) above. We do not introduce new notation for any of the above as the context always implies which condition is to be considered. The definition of the compatibility sets $C_{F}(\alpha,S)$ for $F\le\Aut(B_{d,2k})$ and $\smash{S\subseteq\Omega^{(2)}}$ carries over from Section \ref{sec:comp_disc} in a straightforward fashion.

\subsubsection{Examples}\label{sec:bukf_examples}
Let $F\le\Sym(\Omega)$. Then the group $\Gamma(F)\le\Aut(B_{d,2})$ introduced in Section \ref{sec:ukf_examples_k=2} satisfies conditions \eqref{eq:C_BU_1} and \eqref{eq:D_BU_1} for the case $k=1$ above, and we have $\mathrm{BU}_{2}(\Gamma(F))=\mathrm{U}_{2}(\Gamma(F))\cap\!\tensor[^{+}]{\Aut(T_{d})}{}$.

Similarly, the group $\Phi(F)\le\Aut(B_{d,2})$ satisfies condition \eqref{eq:C_BU_1} for the case $k=1$ as $\Gamma(F)\le\Phi(F)$, and we have $\mathrm{BU}_{2}(\Phi(F))=\mathrm{U}_{1}(F)\cap\!\tensor[^{+}]{\Aut(T_{d})}{}$.

\vspace{0.2cm}
The following example gives an analogue of the groups $\Phi(F,N)$. Notice, however, that in this case the second argument is a subgroup of $F$ rather than $F_{\omega_{0}}$ and need not be normal, as the $1$-local action at vertices in $V_{1}$ and $V_{2}$ need not be the same.

\begin{example}
Let $F'\le F\le\Sym(\Omega)$. Then
\begin{displaymath}
 \mathrm{B}\Phi(F,F'):=\{(a,(a_{\omega})_{\omega\in\Omega})\mid a\in F,\ \forall\omega\in\Omega:\ a_{\omega}\in C_{F}(a,\omega)\cap F'\}\le\Aut(B_{d,2})
\end{displaymath}
satisfies condition \eqref{eq:C_BU_1} for the case $k=1$ above given that $\Gamma(F')\le\mathrm{B}\Phi(F,F')$. If $F'\backslash\Omega=F\backslash\Omega$, the $1$-local action of $\mathrm{B}\Phi(F,F')$ at vertices in $V_{1}$ is indeed $F$, whereas it is $F'^{+}$ at vertices in $V_{2}$. This construction is similar to $\calU_{\calL}(M,N)$ in \cite{Smi17}.
\end{example}

The next example constitutes the base case in Section \ref{sec:qz_sharp_proof_iv_b} below.

\begin{example}\label{ex:bukf_orbit_self_compatible}
Let $F\le\Sym(\Omega)$. Suppose $F$ preserves a non-trivial partition $\calP:\Omega=\bigsqcup_{i\in I}\Omega_{i}$ of $\Omega$. Then
\begin{displaymath}
 \Omega^{(2)}_{0}:=\{(\omega_{1},\omega_{2})\mid\exists i\in I:\ \omega_{1},\omega_{2}\in\Omega_{i}\}\subseteq\Omega^{(2)}.
\end{displaymath}
is preserved by the action of $\Phi(F)$ on $S(b,2)\cong\Omega^{(2)}$: Let $\alpha\!=\!(a,(a_{\omega})_{\omega})\!\in\!\Phi(F)$ and $\smash{(\omega_{1},\omega_{2})\in\Omega^{(2)}_{0}}$. Then $\smash{\alpha(\omega_{1},\omega_{2})=(a\omega_{1},a_{\omega_{1}}\omega_{2})=(a_{\omega_{1}}\omega_{1},a_{\omega_{1}}\omega_{1})\in\Omega^{(2)}_{0}}$. Also, note that if $\smash{w=(\omega_{1},\omega_{2})\in\Omega^{(2)}_{0}}$ then so is $\overline{w}=(\omega_{2},\omega_{1})$.

The subgroup of $\Phi(F)$ consisting of those elements which are self-compatible in all directions from $\smash{\Omega^{(2)}_{0}}$ is precisely given by
\begin{displaymath}
 F^{(2)}:=\{(a,(a_{\omega})_{\omega})\mid a\in F, a_{\omega}\in C_{F}(a,\omega) \text{ constant w.r.t. $\calP$}\}.
\end{displaymath}
in view of condition \eqref{eq:C_BU_1} for the case $k=1$ above.
\end{example}

Suppose now that $F\le\Aut(B_{d,2k})$ satisfies \eqref{eq:C_BU}. Analogous to the group $\Phi_{k}(F)$ of Section \ref{sec:ukf_examples_general_case}, we define
\begin{displaymath}
\mathrm{B}\Phi_{2k}(F)\!:=\!\{(\alpha,(\alpha_{w})_{w\in\Omega^{(2)}})\!\mid\! \alpha\!\in\! F,\ \forall w\!\in\!\Omega^{(2)}\!:\ \alpha_{w}\!\in\! C_{F}(\alpha,w)\}\le\Aut(B_{d,2(k+1)}).
\end{displaymath}
Then $\mathrm{B}\Phi_{2k}(F)\le\Aut(B_{d,2(k+1)})$ satisfies \eqref{eq:C_BU} and $\mathrm{BU}_{2(k+1)}(\mathrm{B}\Phi_{2k}(F))=\mathrm{BU}_{2k}(F)$. Given $l>k$, we also set $\mathrm{B}\Phi^{2l}(F):=\mathrm{B}\Phi_{2(l-1)}\circ\cdots\circ\mathrm{B}\Phi_{2k}(F)$, c.f. Section \ref{sec:ukf_examples_general_case}.

\newpage
\section{Applications}\label{sec:applications}

In this section, we give three applications of the framework of universal groups. First, we characterize the automorphism types which the quasi-center of a non-discrete subgroup of $\Aut(T_{d})$ may feature in terms of the group's local action, and see that Burger--Mozes theory does not extend to the transitive case. Second, we give an algebraic characterization of the $(P_{k})$-closures of locally transitive subgroups of $\Aut(T_{d})$ containing an involutive inversion, and thereby partially answer two question by Banks--Elder--Willis. Third, we offer a new view on the Weiss conjecture.

\subsection{Groups Acting on Trees With Non-Trivial Quasi-Center}\label{sec:non_trivial_qz}

By Proposition \ref{prop:bm_1.2.1}\ref{item:bm_1.2.1_2}, a non-discrete, locally semiprimitive subgroup of $\Aut(T_{d})$ does not contain any non-trivial quasi-central edge-fixating elements. We complete this fact to the following local-to-global-type characterization of quasi-central elements.

\begin{theorem}\label{thm:local_action_qz}
Let $H\le\Aut(T_{d})$ be non-discrete. If $H$ is locally
\begin{enumerate}[(i)]
 \item\label{item:qz_transitive} transitive then $\mathrm{QZ}(H)$ contains no inversion.
 \item\label{item:qz_semiprimitive} semiprimitive then $\mathrm{QZ}(H)$ contains no non-trivial edge-fixating element.
 \item\label{item:qz_quasiprimitive} quasiprimitive then $\mathrm{QZ}(H)$ contains no non-trivial elliptic element.
 \item\label{item:qz_k_transitive} $k$-transitive $(k\in\bbN)$ then $\mathrm{QZ}(H)$ contains no hyperbolic element of length~$k$.
\end{enumerate}
\end{theorem}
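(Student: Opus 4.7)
The plan is to handle all four cases by a common three-step strategy: take $h\in\mathrm{QZ}(H)$ of the forbidden type; translate the openness of $Z_H(h)$ into an inclusion $H_{B(x,n)}\subseteq Z_H(h)$ for a suitable vertex $x$ fixed by or on the axis of $h$; and use the commutation relation $hgh^{-1}=g$ to force elements of $H_{B(x,n)}$ to fix much more than $B(x,n)$, whereupon the local action hypothesis propagates this rigidity across $V$ until $H$ is forced to be discrete. Part~(ii) requires no further work, being already covered by Proposition~\ref{prop:bm_1.2.1}(ii).

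For part~(i), suppose $\iota\in\mathrm{QZ}(H)$ inverts an edge $e=(x,y)$ and pick $n$ with $H_{B(x,n)}\subseteq Z_H(\iota)$. The identity $\iota g\iota^{-1}=g$ for $g\in H_{B(x,n)}$ gives $g=\iota g\iota^{-1}\in H_{\iota B(x,n)}=H_{B(y,n)}$, so $H_{B(x,n)}\subseteq H_{B(y,n)}$. Local transitivity together with the mere existence of $\iota$ makes $H$ vertex-transitive (since $\iota(x)=y$ and $H_{x}$ is transitive on $S(x,1)$), hence also edge-transitive; conjugating $\iota$ through $H$ therefore produces a quasi-central inversion of every edge, and after uniformising $n$ we obtain $H_{B(u,n)}\subseteq H_{B(v,n)}$ for every directed edge $(u,v)$. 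Reversing orientation yields equality, connectedness makes $H_{B(v,n)}$ independent of $v\in V$, and since $\bigcap_{v\in V}H_{B(v,n)}=\{\id\}$ this contradicts non-discreteness.

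For part~(iii), let $h\in\mathrm{QZ}(H)\setminus\{\id\}$ be elliptic. By~(ii) it fixes no edge, so its fixed subtree is the single vertex $x$ and $Z_H(h)\le H_x$. Openness in the compact profinite $H_x$ forces $[H_x:Z_H(h)]<\infty$, so the normal closure $N:=\langle h^{H_x}\rangle$ is finite, closed and normal in $H_x$, and contained in $\mathrm{QZ}(H)\cap H_x$. Since $\mathrm{QZ}(H)\cap H_{B(x,1)}=\{\id\}$ by~(ii), the map $N\to H_x^{(1)}$ is injective with non-trivial image $N^{(1)}\unlhd H_x^{(1)}$ (else $h$ would fix $E(x)$ pointwise, hence fix an edge). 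Local quasiprimitivity combined with freeness on $E$ forces $N^{(1)}$ to be regular on $E(x)$ of order $d$. Choosing $m$ with $H_{B(x,m)}\subseteq Z_H(N)$, the commutation of $H_{B(x,m)}$ with $N$ constrains the local action of every $g\in H_{B(x,m)}$ at $y\in S(x,1)$ to be equivariantly linked, via conjugation by $n$, to its local action at $n(y)$ for each $n\in N$. Local quasiprimitivity at the neighbours of $x$ then plays the role that local transitivity played in part~(i) and propagates the rigidity until $H_{B(v,m)}=\{\id\}$ for every $v\in V$, contradicting non-discreteness.

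For part~(iv), let $h\in\mathrm{QZ}(H)$ be hyperbolic of length $k$ with axis $A=(\ldots,x_{-1},x_0,x_1,\ldots)$. Since $Z_H(h)$ preserves $A$ setwise and contains $H_{B(x_0,n)}$ for large $n$, iterating $h^jgh^{-j}=g$ with $h(x_i)=x_{i+k}$ forces every $g\in H_{B(x_0,n)}$ to fix $A$ pointwise and each $B(x_{jk},n)$ for $j\in\bbZ$. Local $k$-transitivity supplies $g_0\in H_{x_0,\ldots,x_{k-1}}$ with $g_0(x_k)\neq x_k$; the conjugate $g_0hg_0^{-1}$ is then another quasi-central hyperbolic of length $k$ whose axis shares the segment $x_0,\ldots,x_{k-1}$ with $A$ but diverges at the $k$-th step. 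Applying the previous argument to $g_0hg_0^{-1}$ forces $H_{B(x_0,n)}$ to fix this new axis as well; iterating over all divergent extensions granted by $k$-transitivity, and using the vertex-transitivity of $H$ implied by the hyperbolic translation combined with local transitivity, inflates the fixed subtree of $H_{B(x_0,n)}$ to all of $T_d$, contradicting non-discreteness. The main obstacle will be the propagation steps in~(iii) and~(iv); in~(iv) the precise matching of ``$k$-transitive'' with ``length $k$'' comes from the fact that only $k$-transitivity supplies elements fixing the first $k$ vertices of $A$ but moving the $(k+1)$st, which is exactly what the conjugation argument requires, consistent with the sharpness statement in Theorem~\ref{thm:local_action_qz_sharp}(iv).
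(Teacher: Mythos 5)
Your arguments for parts (i), (ii) and (iv) are correct and essentially the ones in the paper: part (ii) is quoted from Proposition \ref{prop:bm_1.2.1}, and for (i) and (iv) the paper likewise uses normality of $\mathrm{QZ}(H)$ together with the local hypothesis to produce quasi-central inversions, respectively translations of length $k$, in \emph{every} direction issuing from the base vertex $x$, finds one ball stabilizer $H_{B(x,n)}$ lying in all the relevant centralizers, and transports $B(x,n)$ by conjugation to conclude $H_{B(x,n)}\subseteq H_{B(x,n+1)}$, respectively $H_{B(x,n+k)}$; iterating kills $H_{B(x,n)}$ and contradicts non-discreteness. Your device of conjugating by $g_{0}\in H_{x_{0}}$, which fixes the centre of the ball and therefore preserves the radius $n$, is a clean way to obtain the uniformity that the paper gets by choosing one $n$ for the finitely many directions at once; the appeals to vertex-transitivity are harmless but unnecessary, since the whole argument runs at the single vertex $x$.

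Part (iii) has a genuine gap. The transport mechanism of (i) and (iv) works because the quasi-central element one conjugates by \emph{moves} the centre of the ball; every element of your group $N=\langle h^{H_{x}}\rangle$ fixes $x$, so for $n\in N$ and $g\in H_{B(x,m)}\subseteq Z_{H}(N)$ the identity $g=ngn^{-1}$ only says that $g$ fixes $nB(x,m)=B(x,m)$ --- no new vertices are fixed. What it does give is that the action of $g$ on the half-tree at $y\in S(x,1)$ determines its action on the half-tree at $n(y)$, which is perfectly compatible with $H_{B(x,m)}$ being large; so the claim that local quasiprimitivity at the neighbours of $x$ ``propagates the rigidity until $H_{B(v,m)}=\{\id\}$'' is not an argument, and I do not see how to complete it locally. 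The proof the paper relies on (Burger--Mozes) is global: for a normal subgroup of a locally quasiprimitive group each local action is transitive or trivial, so $\mathrm{QZ}(H)$ either acts freely on $V$ or has finite quotient graph (the quasiprimitive analogue of Lemma \ref{lem:bm_1.4.2}), and the second alternative is excluded by Lemma \ref{lem:bm_1.3.5}, which ultimately rests on centralizers of finitely generated cocompact groups being discrete (Lemmas \ref{lem:bm_1.3.2} and \ref{lem:bm_1.3.3}). Your finite transitive $N\le\mathrm{QZ}(H)\cap H_{x}$ could be salvaged in that spirit --- the subgroup of $\mathrm{QZ}(H)$ generated by enough conjugates of $N$ is cocompact, hence has discrete centralizer by Lemma \ref{lem:bm_1.3.3}, yet that centralizer contains an open subgroup of $H$ --- but this is the global cocompactness argument, not the local propagation you describe.
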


\begin{theorem}\label{thm:local_action_qz_sharp}
There is $d\in\bbN_{\ge 3}$ and a closed, non-discrete, compactly generated subgroup of $\Aut(T_{d})$ which is locally
\begin{enumerate}[(i)]
 \item\label{item:qz_sharp_intransitive} intransitive and contains a quasi-central inversion.
 \item\label{item:qz_sharp_transitive} transitive and contains a non-trivial quasi-central edge-fixating element.
 \item\label{item:qz_sharp_semiprimitive} semiprimitive and contains a non-trivial quasi-central elliptic element.
 \item\label{item:qz_sharp_k_transitive} \begin{enumerate}[(a),itemindent=-0.4cm]
        \item\label{item:qz_sharp_intransitive_hyp} \hspace{-0.15cm} intransitive and contains a quasi-central hyperbolic element of length~$1$.
        \item\label{item:qz_sharp_quasiprimitive_hyp} \hspace{-0.15cm} quasiprimitive and contains a quasi-central hyperbolic element of length~$2$.
       \end{enumerate}
\end{enumerate}
\end{theorem}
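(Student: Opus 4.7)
The plan is to construct explicit examples for the five cases via three techniques: centralizer constructions for parts \ref{item:qz_sharp_intransitive} and \ref{item:qz_sharp_k_transitive}\ref{item:qz_sharp_intransitive_hyp}, iterated partition-constant universal groups for parts \ref{item:qz_sharp_transitive} and \ref{item:qz_sharp_semiprimitive}, and bipartite universal groups for part \ref{item:qz_sharp_k_transitive}\ref{item:qz_sharp_quasiprimitive_hyp}. For \ref{item:qz_sharp_intransitive}, I would pick an involutive inversion $\iota$ of an edge $e=(x,y)$ and set $H:=Z_{\Aut(T_d)}(\iota)$. A direct analysis shows $g\in\Aut(T_d)$ commutes with $\iota$ iff $g$ preserves $\{x,y\}$ setwise and, when fixing both, satisfies $g|_{T_y}=\iota g|_{T_x}\iota^{-1}$; this identifies $H\cong\Aut(T_x,x)\rtimes\langle\iota\rangle$, which is compact (hence closed and compactly generated), non-discrete, locally intransitive (the distinguished direction toward $e$ is fixed by every vertex stabilizer), and has $\iota$ central. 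For \ref{item:qz_sharp_k_transitive}\ref{item:qz_sharp_intransitive_hyp}, the analogous construction $H:=Z_{\Aut(T_d)}(\tau)$ for a length-$1$ hyperbolic $\tau$ with axis $L=(v_n)_{n\in\bbZ}$ gives $H\cong\Aut(T_{v_0}^{\circ})\times\langle\tau\rangle$ (the semidirect becoming direct since $\tau$-conjugation is trivial after the identifications $T_{v_n}^{\circ}\cong T_{v_0}^{\circ}$ induced by $\tau$), where $T_{v_0}^{\circ}$ is the subtree of vertices whose closest point on $L$ is $v_0$; this is compact-by-$\bbZ$, hence compactly generated, with $\tau$ central and locally intransitive.

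For parts \ref{item:qz_sharp_transitive} and \ref{item:qz_sharp_semiprimitive}, the centralizer approach fails because the centralizer of a non-trivial edge-fixator or elliptic preserves its support and so cannot be globally locally transitive. Instead I would build iterated partition-constant universal groups. Pick $F\le\Sym(\Omega)$ transitive but not semiprimitive (resp.\ semiprimitive but not quasiprimitive) preserving a partition $\calP$ reflecting a witnessing intransitive non-semiregular (resp.\ semi-regular) normal subgroup $N\unlhd F$: for \ref{item:qz_sharp_transitive}, take $d=4$, $F=D_4$, $\calP:\{1,3\}\sqcup\{2,4\}$, $N=V_4$; for \ref{item:qz_sharp_semiprimitive}, take $F=V_4$ in its regular action on $4$ points with $\calP$ the orbits of a $C_2\unlhd V_4$. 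Define recursively $F^{(1)}:=F$, $F^{(2)}:=\Phi(F,\calP)$, and $F^{(k+1)}:=\Phi_k(F^{(k)},\calP)$ via the partition-constant construction of Section~\ref{sec:ukf_examples_general_case}; each $F^{(k)}$ satisfies \eqref{eq:C} but not \eqref{eq:D}. Set
\begin{displaymath}
 H:=\bigcap_{k\in\bbN}\mathrm{U}_k(F^{(k)}),
\end{displaymath}
closed as an intersection of closed subgroups. Since $H\supseteq\mathrm{U}_1(\{\id\})$, $H$ is vertex-transitive with compact vertex stabilizers and therefore compactly generated via Lemma~\ref{lem:uid_fin_gen}, non-discrete because \eqref{eq:D} fails at every level, and locally transitive (resp.\ semiprimitive) since $\pi F^{(k)}=F$ for all $k$. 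A diagonal-type element $g_0\in H$ with $\sigma_1(g_0,v)\in N$ at every vertex, chosen via Remark~\ref{rem:ukf_elements} to fix a prescribed edge (resp.\ vertex), serves as the candidate quasi-central element. The core step is to verify $g_0\in\mathrm{QZ}(H)$: by property~$(P_k)$ of each $\mathrm{U}_k(F^{(k)})$ (Proposition~\ref{prop:ukf_pk}), any element of $H$ fixing a sufficiently large ball around the base decomposes into independent half-tree actions, and the iterated $\calP$-constancy of $F^{(k)}$ forces the $1$-local actions of such an element at every vertex to lie in $Z_F(\sigma_1(g_0,\cdot))$, so that the half-tree actions automatically commute with the corresponding half-tree action of $g_0$.

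For \ref{item:qz_sharp_k_transitive}\ref{item:qz_sharp_quasiprimitive_hyp}, a length-$2$ hyperbolic preserves the bipartition, so I would work within the bipartite universal groups of Section~\ref{sec:universal_groups_bipartite}. Pick $F'\le\Sym(\Omega)$ quasiprimitive but not $2$-transitive, preserving a block system $\calP$ (e.g.\ $A_5\curvearrowright A_5/C_5$ with its imprimitivity system of $6$ blocks of size $2$), let $F:=\Phi(F',\calP)\le\Aut(B_{d,2})$ satisfying \eqref{eq:C_BU_1} but not \eqref{eq:D_BU_1}, and set $H:=\mathrm{BU}_2(F)$. By Proposition~\ref{prop:bukf_basic_properties} combined with Lemma~\ref{lem:bu_comp_gen}, $H$ is closed, compactly generated, non-discrete, and locally quasiprimitive. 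A length-$2$ translation $\tau$ along a chosen axis whose $2$-local action at each axis vertex lies in the self-compatible locus corresponding to $\Omega_0^{(2)}$ of Example~\ref{ex:bukf_orbit_self_compatible} lies in $H$, and its quasi-centrality follows by a bipartite analogue of the argument in the previous paragraph. The main technical obstacle, common to parts \ref{item:qz_sharp_transitive}, \ref{item:qz_sharp_semiprimitive} and \ref{item:qz_sharp_k_transitive}\ref{item:qz_sharp_quasiprimitive_hyp}, is precisely this quasi-centrality verification: I must combine property~$(P_k)$ (or its bipartite counterpart) with the iterated partition constraint to show that deep ball stabilizers have half-tree actions pinned down enough by $\calP$-constancy to commute with the distinguished element, which in turn relies on the carefully chosen $N$ lying in the centralizer of the relevant $1$-local actions.
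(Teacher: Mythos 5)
Your constructions for parts \ref{item:qz_sharp_intransitive} and \ref{item:qz_sharp_k_transitive}\ref{item:qz_sharp_intransitive_hyp} (centralizers of the inversion, resp.\ of the length-$1$ translation) are correct and genuinely different from the paper, which instead builds vertex-transitive groups $\bigcap_{k}\mathrm{U}_{k}(F^{(k)})$ with prescribed intransitive local action; your route is shorter, at the cost of producing compact (resp.\ compact-by-$\bbZ$), non-vertex-transitive examples. The remaining three parts, however, contain genuine gaps. For part \ref{item:qz_sharp_semiprimitive} your example fails outright: $F=V_{4}$ acting regularly on $4$ points has trivial point stabilizers, hence trivial pointwise block stabilizers $F_{\Omega_{i}}$, so $C_{F}(a,\omega)=\{a\}$ and $\Phi(F,\calP)=\Gamma(F)$ satisfies \eqref{eq:D}; the resulting $H$ is $\mathrm{U}_{2}(\Gamma(V_{4}))=\mathrm{D}(V_{4})$, which is discrete. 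Non-discreteness of the iterated construction requires $F_{\Omega_{i}}\neq\{\id\}$ for all $i$, which is incompatible with $F$ being regular -- this is why the paper uses $\SL(2,3)\curvearrowright\bbF_{3}^{2}\backslash\{0\}$, where blocks $\{\pm v\}$ have non-trivial pointwise stabilizers and the central element $-\Id$ supplies the quasi-central elliptic. Relatedly, your recipe asks only that $\sigma_{1}(g_{0},\cdot)$ lie in the witnessing normal subgroup $N$; for the commutation argument one needs the $k$-local actions of $g_{0}$ to be \emph{central} in $F^{(k)}$ for every $k$ (the paper's $\gamma^{k}(\tau)\in Z(F^{(k)})$), not merely that $1$-local actions land in $Z_{F}(\sigma_{1}(g_{0},\cdot))$: commutation of $1$-local actions at each vertex does not imply commutation of the tree automorphisms, and the $(P_{k})$/half-tree decomposition you invoke is not available for $H=\bigcap_{k}\mathrm{U}_{k}(F^{(k)})$ and in any case does not interact well with an element $g_{0}$ that permutes the half-trees at the base vertex. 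The clean verification works with $\sigma_{k}(\cdot,x)$ at the fixed base vertex for all $k$ via the cocycle identity, as in Sections \ref{sec:qz_sharp_proof_ii}--\ref{sec:qz_sharp_proof_iii}.

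For part \ref{item:qz_sharp_k_transitive}\ref{item:qz_sharp_quasiprimitive_hyp} the single-level group $H=\mathrm{BU}_{2}(\Phi(F',\calP))$ cannot work: it is closed, non-discrete, and has non-trivial half-tree stabilizers by the bipartite analogue of Proposition \ref{prop:ukf_pk}, so the M\"oller--Vonk argument of Proposition \ref{prop:ukf_qz} shows its quasi-center is trivial. Concretely, by independence there are elements of $H_{B(x,n)}$ supported on a single branch hanging off a vertex $v$ far from the axis of $t$; conjugation by $t$ moves the support to the branch at $tv\neq v$, so no translation centralizes any open subgroup. The iterated intersection $\bigcap_{k}\mathrm{BU}_{2k}(F^{(2k)})$ with the self-compatibility constraint $\alpha_{w}=\alpha$ for $w\in\Omega^{(2)}_{0}$ is precisely what forces elements of a ball stabilizer to repeat their local data along the axis direction and hence commute with $t$; dropping it loses the conclusion. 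Part \ref{item:qz_sharp_transitive} is salvageable -- your $F=D_{4}$ with $\calP:\{1,3\}\sqcup\{2,4\}$ is the paper's example $C_{2}\wr C_{2}$, and $d((13))$ is indeed quasi-central -- but the justification should go through the abelianness of $F^{(k)}\cap\Phi^{k}(F^{+})$ (Proposition \ref{prop:qz_edge_fixating}) rather than the half-tree argument sketched.
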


% Retain the notation of Theorem \ref{thm:local_action_qz_sharp}. The subgroup of $\Aut(T_{d})$ may be chosen to be vertex-transitive in cases (i) up to (iv) (a).

\begin{proof}
(Theorem \ref{thm:local_action_qz}). Fix a labelling of $T_{d}$ and let $H\le\Aut(T_{d})$ be non-discrete.

For \ref{item:qz_transitive}, suppose $\iota\in\mathrm{QZ}(H)$ inverts $(x,x_{\omega})\in E$. Since $H$ is locally transitive and $\mathrm{QZ}(H)\unlhd H$, there is an inversion $\iota_{\omega}\in\mathrm{QZ}(H)$ of $(x,x_{\omega})\in E$ for all $\omega\in\Omega$. By definition, the centralizer of $\iota_{\omega}$ in $H$ is open for all $\omega\in\Omega$. Hence, using non-discreteness of $H$, there is $n\in\bbN$ such that $H_{B(x,n)}$ commutes with $\iota_{\omega}$ for all $\omega\in\Omega$ and $H_{B(x,n+1)}\neq\{\id\}$. However, $H_{B(x,n)}=\iota_{\omega}H_{B(x,n)}\iota_{\omega}^{-1}=H_{B(x_{\omega},n)}$ for all $\omega\in\Omega$, that is $H_{B(x,n+1)}\subseteq H_{B(x,n)}$ in contradiction to the above.

Part \ref{item:qz_semiprimitive} is Proposition \ref{prop:bm_1.2.1}\ref{item:bm_1.2.1_2} and part \ref{item:qz_quasiprimitive} is \cite[Proposition 1.2.1 (ii)]{BM00a}. Here, the closedness assumption is unnecessary.

For part \ref{item:qz_k_transitive}, suppose $\tau\in\mathrm{QZ}(H)$ is a translation of length $k$ which maps $x\in V$ to $x_{w}\in V$ for some $w\in\Omega^{(k)}$. Since $H$ is locally $k$-transitive and $\mathrm{QZ}(H)\unlhd H$, there is a translation $\tau_{w}\in\mathrm{QZ}(H)$ which maps $x$ to $x_{w}$ for all $\smash{w\in\Omega^{(k)}}$. By definition, the centralizer of $\tau_{w}$ in $H$ is open for all $w\in\Omega^{(k)}$. Hence, using non-discreteness of $H$ there is $n\in\bbN$ such that $H_{B(x,n)}$ commutes with $\tau_{w}$ for all $\smash{w\in\Omega^{(k)}}$ and $H_{B(x,n+1)}\neq\{\id\}$. However, $H_{B(x,n)}=\tau_{w}H_{B(x,n)}\tau_{w}^{-1}=H_{B(x_{w},n)}$ for all $\smash{w\in\Omega^{(k)}}$, that is $H_{B(x,n+k)}\subseteq H_{B(x,n)}$ in contradiction to the above.
\end{proof}

We complement part \ref{item:qz_semiprimitive} of Theorem \ref{thm:local_action_qz} with the following result inspired by \cite[Proposition 3.1.2]{BM00a} and \cite[Conjecture 2.63]{Rat04}, 

\begin{proposition}
Let $H\le\Aut(T_{d})$ be non-discrete and locally semiprimitive. If all orbits of $H\curvearrowright\partial T_{d}$ are uncountable then $\mathrm{QZ}(H)$ is trivial.
\end{proposition}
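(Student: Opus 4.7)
The plan is a proof by contradiction: suppose $g\in\mathrm{QZ}(H)$ is non-trivial. Since $H$ is locally semiprimitive and hence locally transitive, Theorem~\ref{thm:local_action_qz}\,(i) and (ii) rule out inversions and non-trivial edge-fixating elements in $\mathrm{QZ}(H)$, so every non-trivial $g\in\mathrm{QZ}(H)$ is either hyperbolic or elliptic with a unique fixed vertex. I would split into these two cases.

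In the hyperbolic case, with $g$ having translation axis endpoints $\xi^{\pm}\in\partial T_{d}$, the centralizer $Z_{H}(g)$ preserves the two-element set $\{\xi^{+},\xi^{-}\}$, so $Z_{H}(g)\cap H_{\xi^{+}}$ has index at most two in $Z_{H}(g)$ and is therefore open in $H$. Hence $H_{\xi^{+}}$ is open; as $H$ is second-countable, $H\cdot\xi^{+}\cong H/H_{\xi^{+}}$ is countable, contradicting the hypothesis that every $H$-orbit on $\partial T_{d}$ is uncountable.

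In the elliptic-only case (every non-trivial element of $\mathrm{QZ}(H)$ is elliptic with a unique fixed vertex), I would first observe that if two non-trivial $g,h\in\mathrm{QZ}(H)$ had distinct fixed vertices $x_{g}\neq x_{h}$, the classical fact that the product of two elliptic tree automorphisms with disjoint fixed-vertex sets is hyperbolic of translation length $2\,d(\mathrm{Fix}(g),\mathrm{Fix}(h))$ (see e.g.~\cite{Ser03}) would exhibit a hyperbolic element $gh\in\mathrm{QZ}(H)$, returning us to the previous case. Hence all non-trivial elements must share a common fixed vertex $v_{0}$, and by uniqueness of each element's fixed vertex the common fixed-vertex set of $\mathrm{QZ}(H)$ equals $\{v_{0}\}$. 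Since $\mathrm{QZ}(H)\unlhd H$, the singleton $\{v_{0}\}$ is $H$-invariant, so $H$ fixes $v_{0}$. This contradicts local transitivity: the local action of $H$ at any neighbour of $v_{0}$ would fix the edge toward $v_{0}$, precluding transitivity on the $d\geq 3$ edges at that vertex.

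The main obstacle will be the invocation of the classical hyperbolicity result for products of elliptic tree automorphisms with disjoint fixed-vertex sets; while standard, it should be cited precisely. Once in hand, the argument reduces to clean case analysis driven by the normality of $\mathrm{QZ}(H)$ in $H$ together with the impossibility of $H$ fixing any vertex under local transitivity.
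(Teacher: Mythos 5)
Your argument is essentially correct and reaches the conclusion by a genuinely different route from the paper. The paper's proof first notes that $\mathrm{QZ}(H)$ acts freely on $E$, hence is discrete, hence countable by second countability; the $H$-invariant set $S$ of endpoints of axes of hyperbolic quasi-central elements is therefore a countable union of $H$-orbits in $\partial T_{d}$ and must be empty, and the remaining elliptic case is dispatched by citing \cite[Lemma 6.4]{GGT18}. You instead eliminate hyperbolic elements by showing that a single end stabilizer would be open, so that already one orbit would be countable, and you treat the elliptic case by hand: freeness on $E$ forces a unique fixed vertex, two distinct fixed vertices would produce a hyperbolic product of even translation length (so not an inversion) via Serre's lemma, and a common fixed vertex would be $H$-invariant by normality of $\mathrm{QZ}(H)$, contradicting local transitivity at a neighbour. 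This part of your argument is complete and self-contained, and arguably more elementary than the paper's appeal to an external lemma.

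The one step that is not justified as written is ``$Z_{H}(g)\cap H_{\xi^{+}}$ has index at most two in $Z_{H}(g)$ and is therefore open.'' A finite-index subgroup of an open subgroup of a topological group need not be open (already $\prod_{\bbN}\bbZ/2\bbZ$ has non-open index-two subgroups), so the inference requires an extra word. Two easy repairs: (a) end stabilizers in $\Aut(T_{d})$ are closed, and a closed finite-index subgroup of an open subgroup is open since its complement is a finite union of closed cosets; or, more directly, (b) for any vertex $x$ on the axis of $g$, the open subgroup $Z_{H}(g)\cap H_{x}$ is contained in $H_{\xi^{+}}$, because such an element preserves the axis, fixes $x$, and cannot swap the two ends of the axis — otherwise conjugating $g$ by it would yield a translation of the same axis in the opposite direction, contradicting commutation with $g$. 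With either fix, $H_{\xi^{+}}$ is open, $H/H_{\xi^{+}}$ is countable by second countability, and your contradiction with the uncountability of orbits goes through.
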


\begin{proof}
By Theorem \ref{thm:local_action_qz}, the group $\mathrm{QZ}(H)$ contains no inversions. Let $S\subseteq\partial T_{d}$ be the set of fixed points of hyperbolic elements in~$\mathrm{QZ}(H)$. Since $\mathrm{QZ}(H)\unlhd H$, the set $S$ is $H$-invariant. Also, $\mathrm{QZ}(H)$ is discrete by Theorem~\ref{thm:local_action_qz} and hence countable as $H$ is second-countable. Thus $S$ is countable and hence empty. We conclude that $\mathrm{QZ}(H)\unlhd H$ does not contain elliptic elements in view of \cite[Lemma 6.4]{GGT18}.
\end{proof}

The following strengthening of Theorem \ref{thm:local_action_qz_sharp}\ref{item:qz_sharp_transitive} proved in Section \ref{sec:qz_sharp_proof_ii} shows that Burger--Mozes theory does not generalize to the locally transitive case.

\begin{theorem}\label{thm:qz_open}
There is $d\in\bbN_{\ge 3}$ and a closed, non-discrete, compactly generated, locally transitive subgroup of $\Aut(T_{d})$ with open, hence non-discrete, quasi-center.
\end{theorem}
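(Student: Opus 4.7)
The plan is to exhibit an explicit subgroup $H \leq \Aut(T_{d})$ for some $d \geq 3$ with the claimed properties, using the intersection of universal groups framework suggested in connection with Theorem~\ref{thm:local_action_qz_sharp}. By Theorem~\ref{thm:local_action_qz}(ii), $H$ cannot be locally semiprimitive if its quasi-center is to contain an open subgroup; and by Proposition~\ref{prop:ukf_qz}, $H$ cannot equal $\mathrm{U}_{k}(F)$ for a single local action $F$, since such groups have either trivial or full quasi-center. So the construction must be a proper intersection of universal groups with transitive but non-semiprimitive local action.

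Concretely, take $d = 4$, $\Omega = \{0,1,2,3\}$, $F := D_{4}$ preserving the partition $\calP: \{0,2\} \sqcup \{1,3\}$, so that the kernel $A := F_{\calP} \cong V_{4}$ is non-trivial abelian and neither transitive nor semiregular on $\Omega$. Start with $F^{(2)} := \Phi(F,\calP)$ from Section~\ref{sec:ukf_examples_k=2}, which satisfies~\eqref{eq:C} but not~\eqref{eq:D} and has $\pi F^{(2)} = F$. For $k \geq 2$, inductively define $F^{(k+1)} \leq \Phi_{k}(F^{(k)})$ as the subgroup of those $\alpha$ whose depth-$(k+1)$ one-local actions at vertices in $S(b,k)$ satisfy a partition-constancy constraint naturally induced from $\calP$, arranging that $\Gamma_{k-1}(F) \leq F^{(k)}$ for each $k$ and that $\pi_{k} F^{(k+1)} = F^{(k)}$. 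Set $H := \bigcap_{k \geq 1} \mathrm{U}_{k}(F^{(k)})$. Closedness is automatic; non-discreteness comes from the persistent failure of~\eqref{eq:D} at each level; local transitivity follows because $\mathrm{D}(F) = \mathrm{U}_{2}(\Gamma(F)) \leq H$ realizes all of $F$ as the one-local action at every vertex; and compact generation results from combining a compact vertex stabilizer with a finite inversion set generating $\mathrm{U}_{1}(\{\id\}) \leq H$ (Lemma~\ref{lem:uid_fin_gen}).

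The crucial step is openness of $\mathrm{QZ}(H)$. By Proposition~\ref{prop:ukf_pk}, each $\mathrm{U}_{k}(F^{(k)})$ has Property~$(P_{k})$, so by Tits independence the compact open subgroup $N := H_{B(x,k_{0})}$ decomposes for a sufficiently large $k_{0}$ as a direct product indexed by $y \in S(x,k_{0})$ of rooted automorphism groups $N_{y}$ on the subtrees hanging off $B(x,k_{0})$ at $y$. The recursive partition constraint in the $F^{(k)}$ forces the one-local actions of each $N_{y}$ to lie in $A$ at every vertex and moreover to be constant modulo a nested block structure; an inductive argument using that $A$ is abelian and has pointwise-constant stabilizers along its orbits on $\Omega$ then shows that each $N_{y}$ is abelian, whence $N$ is abelian. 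Consequently $N \leq \mathrm{QZ}(H)$ and the quasi-center is open. The most delicate part of the proof is establishing this abelianness, which requires careful bookkeeping of the compatibility cocycles through the descending sequence of partitions; should the direct argument require adjustment, one can refine the $F^{(k)}$ further or pass to a smaller open normal subgroup of $N$.
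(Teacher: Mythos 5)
Your choice of construction is the right one and in fact coincides with the paper's: $D_{4}\cong C_{2}\wr C_{2}$ acting on $\{0,1,2,3\}$ with blocks $\{0,2\},\{1,3\}$ is the smallest instance of the example given in Section~\ref{sec:qz_sharp_proof_ii} (regular abelian $F'$ wreathed with regular $P$), and the iterated ``constant with respect to $\calP$'' refinement of $\Phi(F,\calP)$ is exactly the paper's sequence $F^{(k)}$. The preliminary reductions (why the local action must be transitive but not semiprimitive, why a single $\mathrm{U}_{k}(F)$ cannot work) are also correct.

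The gap is in the decisive step, openness of $\mathrm{QZ}(H)$. First, the appeal to Property $(P_{k})$ is unfounded: Proposition~\ref{prop:ukf_pk} gives $(P_{k})$ for each $\mathrm{U}_{k}(F^{(k)})$ separately, but the intersection $H=\bigcap_{k}\mathrm{U}_{k}(F^{(k)})$ inherits no independence property --- indeed the whole point of the partition-constancy constraints is to couple the local actions on the subtrees hanging off a ball, so the asserted direct-product decomposition $N\cong\prod_{y\in S(x,k_{0})}N_{y}$ fails (and if it held, each factor would typically be a large iterated permutational product, hence non-abelian). Second, the abelianness of $N$ is precisely the point that must be proved, and you leave it as a sketch with an explicit escape hatch. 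The mechanism that actually works, and which your example happens to satisfy, is global rather than subtree-by-subtree: one assumes that $F^{+}=\langle F_{\omega}\mid\omega\in\Omega\rangle$ is abelian and preserves each block of $\calP$ setwise (for $D_{4}$ one has $F^{+}\cong V_{4}$, which does both), proves by induction that $F^{(k)}\cap\Phi^{k}(F^{+})$ is abelian (part (iii) of Proposition~\ref{prop:qz_edge_fixating}), and then observes that the open subgroup $H_{B(x,1)}$ is contained in $H(F^{+})_{x}$, whose $k$-local actions all lie in these abelian groups; hence $H_{B(x,1)}$ is abelian and therefore contained in $\mathrm{QZ}(H)$ (Proposition~\ref{prop:open_qz}). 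No decomposition of the ball stabilizer is needed. I would also note that ``non-discreteness comes from the persistent failure of \eqref{eq:D}'' requires the extension argument via \eqref{eq:C}: a non-trivial element of $F^{(n+1)}$ projecting to $\id\in F^{(n)}$ must be propagated through all deeper levels before it defines an element of $H$.
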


We prove Theorem \ref{thm:local_action_qz_sharp} by construction in the consecutive sections. Whereas parts \ref{item:qz_sharp_intransitive} to \ref{item:qz_sharp_k_transitive}\ref{item:qz_sharp_intransitive_hyp} all use groups of the form $\bigcap_{k\in\bbN}\mathrm{U}_{k}(F^{(k)})$ for appropriate local actions $\smash{F^{(k)}\le\Aut(B_{d,k})}$, part \ref{item:qz_sharp_k_transitive}\ref{item:qz_sharp_quasiprimitive_hyp} uses a group of the form $\smash{\bigcap_{k\in\bbN}\mathrm{BU}(F^{(2k)})}$. All sections appear similar at first glance but vary in detail.

\subsubsection{Theorem \ref{thm:local_action_qz_sharp}\ref{item:qz_sharp_intransitive}}\label{sec:qz_sharp_proof_i}

For certain intransitive $F\le\mathrm{Sym}(\Omega)$ we construct a closed, non-discrete, compactly generated, vertex-transitive group $H(F)\le\Aut(T_{d})$ which locally acts like $F$ and contains a quasi-central involutive inversion.

\vspace{0.2cm}
Let $F\le\Sym(\Omega)$. Assume that the partition $F\backslash\Omega=\bigsqcup_{i\in I}\Omega_{i}$ of $\Omega$ into $F$-orbits has at least three elements and that $F_{\Omega_{i}}\neq\{\id\}$ for all $i\in I$.
\vspace{0.2cm}

Fix an orbit $\Omega_{0}$ of size at least $2$ and $\omega_{0}\in\Omega_{0}$. Define groups $F^{(k)}\le\Aut(B_{d,k})$ for $k\in\bbN$ inductively by $F^{(1)}:=F$ and
\begin{displaymath}
 F^{(k+1)}\!:=\!\{(\alpha,(\alpha_{\omega})_{\omega})\!\mid\! \alpha\!\in\! F^{(k)},\ \alpha_{\omega}\!\in\! C_{F^{(k)}}(\alpha,\omega)\text{ constant w.r.t. $F\backslash\Omega$},\ \alpha_{\omega_{0}}\!=\!\alpha\}. 
\end{displaymath}

\begin{proposition}\label{prop:qz_inversion}
The groups $F^{(k)}\le\Aut(B_{d,k})$ ($k\in\bbN$) defined above satisfy:
\begin{enumerate}[(i)]
  \item\label{item:qz_inversion_self_comp} Every $\alpha\in F^{(k)}$ is self-compatible in directions from $\Omega_{0}$.
  \item\label{item:qz_inversion_C} The compatibility set $C_{F^{(k)}}(\alpha,\Omega_{i})$ is non-empty for all $\alpha\in F^{(k)}$ and $i\in I$. \newline In particular, the group $F^{(k)}$ satisfies \eqref{eq:C}.
 \item\label{item:qz_inversion_D} The compatibility set $C_{F^{(k)}}(\id,\Omega_{i})$ is non-trivial for all $\Omega_{i}\neq\Omega_{0}$.
 \newline In particular, the group $F^{(k)}$ does not satisfy \eqref{eq:D}.
\end{enumerate}
\end{proposition}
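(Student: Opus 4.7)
My plan is to prove all three claims simultaneously by induction on $k$. For the base case $k=1$, one has $F^{(1)}=F$ and, unwinding the definition of compatibility at level one, $C_F(\alpha,\omega)=\alpha F_\omega$. Hence every $\alpha\in F$ is self-compatible in every direction (giving (i) and the case $i=0$ of (ii)), $C_F(\alpha,\Omega_i)\supseteq\alpha F_{\Omega_i}$ is non-empty (giving (ii) for $i\ne 0$), and $C_F(\id,\Omega_i)=F_{\Omega_i}$ is non-trivial by the standing hypothesis $F_{\Omega_i}\ne\{\id\}$, giving (iii).

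For the inductive step, I would first verify that $F^{(k+1)}$ is actually a subgroup of $\Aut(B_{d,k+1})$: given $\alpha^{(k+1)}=(\alpha,(\alpha_\omega)_\omega)$ and $\beta^{(k+1)}=(\beta,(\beta_\omega)_\omega)$ in $F^{(k+1)}$, their product has root $\alpha\beta\in F^{(k)}$ and children $\alpha_{\beta\omega}\beta_\omega\in C_{F^{(k)}}(\alpha\beta,\omega)$ by identity \eqref{eq:M}; constancy is preserved because $\beta$ projects to an element of $F$ which permutes the orbit partition $F\backslash\Omega$, and the boundary condition $\alpha_{\beta\omega_0}\beta_{\omega_0}=\alpha\beta$ follows from $\beta\omega_0\in\Omega_0$. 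Inverses are treated analogously via the implication $\beta\in C(a,\omega)\Rightarrow\beta^{-1}\in C(a^{-1},a\omega)$. Claim \ref{item:qz_inversion_self_comp} at level $k+1$ then reduces to verifying $\pi_k\alpha^{(k+1)}=\pr_\omega\alpha^{(k+1)}$ for $\omega\in\Omega_0$, i.e.\ $\alpha=\alpha_\omega$; this holds by constancy on $F$-orbits combined with the imposed condition $\alpha_{\omega_0}=\alpha$.

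For claim \ref{item:qz_inversion_C}, the case $i=0$ is immediate from (i) at level $k+1$. For $i\ne 0$ I would construct an explicit compatible element $\beta^{(k+1)}=(\beta,(\beta_\omega)_\omega)$ by setting $\beta:=\alpha_{\omega_1}$ for any $\omega_1\in\Omega_i$ (well-defined by constancy), $\beta_\omega:=\alpha$ for $\omega\in\Omega_i$, $\beta_\omega:=\beta$ for $\omega\in\Omega_0$, and $\beta_\omega$ equal to a single fixed element of $C_{F^{(k)}}(\beta,\Omega_j)$ for each remaining orbit $\Omega_j$. The requirement $\alpha\in C_{F^{(k)}}(\alpha_{\omega_1},\omega)$ for $\omega\in\Omega_i$ comes from the symmetry of \eqref{eq:C} at level $k$, which is precisely the content of the inductive hypothesis (ii); the remaining conditions defining $F^{(k+1)}$ are supplied by (i) and (ii) at level $k$ together with the definition.

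For claim \ref{item:qz_inversion_D}, compatibility of $\beta^{(k+1)}$ with $\id^{(k+1)}$ forces $\beta=\id$ and $\beta_\omega=\id$ for $\omega\in\Omega_0\cup\Omega_i$, so the only freedom lies in the remaining orbits. The assumption $|I|\ge 3$ guarantees the existence of some $j\ne 0,i$, and by the inductive hypothesis (iii) one may choose a non-trivial $\gamma_j\in C_{F^{(k)}}(\id,\Omega_j)$; placing $\gamma_j$ in every direction from $\Omega_j$ and $\id$ elsewhere yields a non-trivial element of $C_{F^{(k+1)}}(\id,\Omega_i)$. The main obstacle is the compatibility bookkeeping across levels: (ii) at level $k+1$ relies on the symmetry of \eqref{eq:C} at level $k$, and (iii) at level $k+1$ requires both the hypothesis $|I|\ge 3$ and the inductive availability of non-trivial self-compatible elements on orbits other than $\Omega_0$ and $\Omega_i$.
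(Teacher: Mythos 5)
Your proof is correct and follows essentially the same route as the paper: a simultaneous induction on $k$ over all three claims, with the base case supplied by the standing hypotheses on $F$ and the inductive step using constancy on $F$-orbits, the symmetry of the compatibility relation, and $|I|\ge 3$ for the non-triviality in (iii). The paper's own proof is just a terser version of this argument; your explicit constructions of the compatible elements fill in exactly the steps it declares ``evident'' or says ``carry over''.
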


\begin{proof}
We prove all three properties simultaneously by induction: For $k=1$, the assertions \ref{item:qz_inversion_self_comp} and \ref{item:qz_inversion_C} are trivial. The third translates to $F_{\Omega_{i}}$ being non-trivial for all $\Omega_{i}\neq\Omega_{0}$ which is an assumption. Now, assume that all properties hold for $F^{(k)}$. Then the definition of $F^{(k+1)}$ is meaningful because of \ref{item:qz_inversion_self_comp} and it is a subgroup of $\Aut(B_{d,k+1})$ because $F$ preserves each $\Omega_{i}$ $(i\in I)$. Assertion \ref{item:qz_inversion_self_comp} is now evident. Statement \ref{item:qz_inversion_C} carries over from $F^{(k)}$ to $F^{(k+1)}$. So does \ref{item:qz_inversion_D} since $|F\backslash\Omega|\ge 3$.
\end{proof}

\begin{definition}
Retain the above notation. Define $H(F):=\bigcap_{k\in\bbN}\mathrm{U}_{k}(F^{(k)})$.
\end{definition}

Now, $H(F)$ is compactly generated, vertex-transitive and contains an involutive inversion because $\smash{\mathrm{U}_{1}(\{\id\})\le H(F)}$. Also, $H(F)$ is closed as an intersection of closed sets. The $1$-local action of $H$ is given by $F=F^{(1)}$ because $\Gamma^{k}(F)\le F^{(k)}$ for all $k\in\bbN$ and therefore $\mathrm{D}(F)\le H(F)$.

\begin{lemma}
The group $H(F)$ is non-discrete.
\end{lemma}

\begin{proof}
Let $x\in V$ and $n\in\bbN$. We construct a non-trivial element $h\in H(F)$ which fixes $B(x,n)$: Set $\alpha_{n}:=\id\in F^{(n)}$. By parts \ref{item:qz_inversion_self_comp} and \ref{item:qz_inversion_D} of Proposition \ref{prop:qz_inversion} as well as the definition of $F^{(n+1)}$, there is a non-trivial element $\alpha_{n+1}\in F^{(n+1)}$ with $\pi_{n}\alpha_{n+1}=\alpha_{n}$. Applying parts \ref{item:qz_inversion_self_comp} and \ref{item:qz_inversion_C} of Proposition \ref{prop:qz_inversion} repeatedly, we obtain non-trivial elements $\alpha_{k}\!\in\! F^{(k)}$ for all $k\ge n+1$ with $\pi_{k}\alpha_{k+1}=\alpha_{k}$. Set $\alpha_{k}:=\id\!\in\! F^{(k)}$ for all $k\!\le\! n$ and define $h\!\in\!\Aut(T_{d})_{x}$ by fixing $x$ and setting $\sigma_{k}(h,x)\!:=\!\alpha_{k}\in F^{(k)}$. Since $F^{(l)}\!\le\!\Phi^{l}(F^{(k)})$ for all $k\!\le\! l$ we conclude that $h\in\bigcap_{k\in\bbN}\mathrm{U}_{k}(F^{(k)})=H(F)$.
\end{proof}

\begin{proposition}
The quasi-center of $H(F)$ contains an involutive inversion.
\end{proposition}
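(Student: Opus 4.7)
The plan is to exhibit the label-respecting involutive inversion $\iota := \iota_{\omega_{0}}^{(x)}$ (for any fixed $x \in V$) as an element of $\mathrm{QZ}(H(F))$. Since the trivial subgroup sits inside every $F^{(k)}$, we have $\mathrm{U}_{1}(\{\id\}) \le H(F)$ and thus $\iota \in H(F)$, so it suffices to exhibit an open subgroup of $H(F)$ centralising $\iota$.

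The candidate will be $U := H(F)_{x} \cap H(F)_{x_{\omega_{0}}}$, which is open since vertex stabilisers are. Given $g \in U$ and $k \in \bbN$, I plan to show that $\iota g \iota^{-1} = g$ on $B(x,k)$ (and, by the symmetric argument at $x_{\omega_{0}}$, on $B(x_{\omega_{0}}, k)$). The first step will be to apply $\sigma_{k+1}(g,x) \in F^{(k+1)}$ and decompose it as $(\alpha, (\alpha_{\omega})_{\omega})$ in the embedding $\Aut(B_{d,k+1}) \hookrightarrow \Aut(B_{d,k}) \ltimes \prod_{\omega} \Aut(B_{d,k})$ of Section~\ref{sec:ukf_examples}; the defining constraint $\alpha_{\omega_{0}} = \alpha$ built into $F^{(k+1)}$, combined with $g$ fixing both $x$ and $x_{\omega_{0}}$, will unwind---via the uniqueness of label-respecting isomorphisms between the two balls of radius $k$---to the identity $\sigma_{k}(g, x_{\omega_{0}}) = \sigma_{k}(g, x)$ in $\Aut(B_{d,k})$.

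Because $\iota|_{B(x,k)}$ is itself the unique label-respecting isomorphism $B(x,k) \to B(x_{\omega_{0}}, k)$, this identity will translate directly to the statement that $\iota g \iota^{-1}$ and $g$ agree on $B(x,k)$. Running the symmetric argument at $x_{\omega_{0}}$ and letting $k$ range over $\bbN$ will then yield $\iota g \iota^{-1} = g$ on $\bigcup_{k \in \bbN}(B(x,k) \cup B(x_{\omega_{0}},k)) = V$, so $U \le Z_{H(F)}(\iota)$ and hence $\iota \in \mathrm{QZ}(H(F))$. The only genuinely delicate point will be the bookkeeping that passes from the algebraic condition $\alpha_{\omega_{0}} = \alpha$ on $F^{(k+1)}$ to the geometric identity $\sigma_{k}(g, x_{\omega_{0}}) = \sigma_{k}(g, x)$; after establishing the label-respecting identification $l_{b_{\omega_{0}}}^{k} \circ l_{x}^{k+1}|_{B(x_{\omega_{0}},k)} = l_{x_{\omega_{0}}}^{k}$ via uniqueness, this will reduce to a formal manipulation of the definitions.
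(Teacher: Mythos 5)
Your proposal is correct and follows essentially the same route as the paper's proof: both exhibit the label-respecting involutive inversion of $(x,x_{\omega_{0}})$ and centralise it by an open subgroup, using the cocycle identity together with the constraint $\alpha_{\omega_{0}}=\alpha$ built into $F^{(k+1)}$ to obtain $\sigma_{k}(g,x_{\omega_{0}})=\sigma_{k}(g,x)$ for all $k$. The only cosmetic difference is that you centralise with $H(F)_{x}\cap H(F)_{x_{\omega_{0}}}$ where the paper uses the slightly smaller open subgroup $H(F)_{B(x,1)}$.
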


\begin{proof}
Let $x\!\in\! V$. The group $\mathrm{QZ}(H(F))$ contains the label-respecting inversion~$\iota_{\omega}$ of $(x,x_{\omega})\!\in\! E$ for all $\omega\!\in\!\Omega_{0}$: Let $h\!\in\! H(F)_{B(x,1)}$ and $\omega\!\in\!\Omega_{0}$. Then $h\iota_{\omega}(x)\!=\!x_{\omega}\!=\!\iota_{\omega}h(x)$ and $\sigma_{k}(h\iota_{\omega},x)=\sigma_{k}(h,\iota_{\omega}x)\sigma_{k}(\iota_{\omega},x)=\sigma_{k}(h,x_{\omega})=\sigma_{k}(\iota_{\omega},hx)\sigma_{k}(h,x)=\sigma_{k}(\iota_{\omega}h,x)$ for all $k\in\bbN$ since $h\in\mathrm{U}_{k+1}(F^{(k+1)})$. That is, $\iota_{\omega}$ commutes with $H(F)_{B(b,1)}$.
\end{proof}

\subsubsection{Theorem \ref{thm:local_action_qz_sharp}\ref{item:qz_sharp_transitive}}\label{sec:qz_sharp_proof_ii} For certain transitive $F\le\mathrm{Sym}(\Omega)$ we construct a closed, non-discrete, compactly generated, vertex-transitive group $H(F)\le\Aut(T_{d})$ which locally acts like $F$ and has open quasi-center.

\vspace{0.2cm}
Let $F\le\Sym(\Omega)$ be transitive. Assume that $F$ preserves a non-trivial partition $\calP:\Omega=\bigsqcup_{i\in I}\Omega_{i}$ of $\Omega$ and that $F_{\Omega_{i}}\neq\{\id\}$ for all $i\in I$. Further, suppose that $F^{+}$ is abelian and preserves $\calP$ setwise.

\begin{example}
Let $F'\le\Sym(\Omega')$ be regular abelian and $P\le\Sym(\Lambda)$ regular. Then $F:=F'\wr P\le\Sym(\Omega'\times\Lambda)$ satisfies the above properties as $F^{+}=\prod_{\lambda\in\Lambda}F'$.
\end{example}

Define groups $F^{(k)}\le\Aut(B_{d,k})$ for $k\in\bbN$ inductively by $F^{(1)}:=F$ and
\begin{displaymath}
 F^{(k+1)}:=\{(\alpha,(\alpha_{\omega})_{\omega})\mid \alpha\in F^{(k)},\ \alpha_{\omega}\in C_{F^{(k)}}(\alpha,\omega) \text{ constant w.r.t. $\calP$}\}.
\end{displaymath}

\begin{proposition}\label{prop:qz_edge_fixating}
The groups $F^{(k)}\le\Aut(B_{d,k})$ ($k\in\bbN$) defined above satisfy:
\begin{enumerate}[(i)]
 \item\label{item:qz_edge_fixating_C} The compatibility set $C_{F^{(k)}}(\alpha,\Omega_{i})$ is non-empty for all $\alpha\in F^{(k)}$ and $i\in I$.
 \newline In particular, the group $F^{(k)}$ satisfies \eqref{eq:C}.
 \item\label{item:qz_edge_fixating_D} The compatibility set $C_{F^{(k)}}(\id,\Omega_{i})$ is non-trivial for all $i\in I$.
 \newline In particular, the group $F^{(k)}$ does not satisfy \eqref{eq:D}.
 \item\label{item:qz_edge_fixating_abelian} The group $F^{(k)}\cap\Phi^{k}(F^{+})$ is abelian.
\end{enumerate}
\end{proposition}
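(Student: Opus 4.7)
The plan is to prove the three properties simultaneously by induction on $k\in\bbN$. The base case $k=1$ is essentially immediate: (i) holds trivially because at level~$1$ the element $\alpha$ itself belongs to $C_F(\alpha,\Omega_i)$; (ii) is just the hypothesis that each $F_{\Omega_i}$ is non-trivial, since $C_F(\id,\Omega_i)$ coincides with the pointwise stabiliser of $\Omega_i$; and (iii) reduces to $F\cap\Phi^1(F^+)=F^+$, which is abelian by assumption.

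For the inductive step, the first task is to confirm that $F^{(k+1)}$ is a well-defined subgroup of $\Aut(B_{d,k+1})$. Non-emptiness uses property~(i) at level~$k$ to assemble, for each $\alpha\in F^{(k)}$, a common compatibility element $\alpha_{[i]}\in C_{F^{(k)}}(\alpha,\Omega_i)$ in every block; closure under products follows from the compatibility-cocycle identity~\eqref{eq:M}, while the fact that $F$ preserves $\calP$ ensures that constancy of the direction-tuple is preserved; inverses are handled analogously.

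Property~(i) at level~$k+1$ is the crucial construction. Given $\beta=(\alpha,(\alpha_\omega)_\omega)\in F^{(k+1)}$ and a block $\Omega_i$, I would assemble the desired $\beta^*\in C_{F^{(k+1)}}(\beta,\Omega_i)$ coordinate by coordinate: its $\pi_k$-coordinate is the common value $\alpha_{[i]}:=\alpha_\omega$ for $\omega\in\Omega_i$; its $\omega$-coordinates for $\omega\in\Omega_i$ are all set to $\alpha$ (legitimate because compatibility is symmetric and constancy of $\beta$ ensures $\alpha_{[i]}$ is compatible with $\alpha$ in every direction from $\Omega_i$); and its coordinates in any block $\Omega_j\neq\Omega_i$ are filled by invoking the inductive hypothesis~(i) to produce a single element of $C_{F^{(k)}}(\alpha_{[i]},\Omega_j)$. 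Property~(ii) at level~$k+1$ follows by a similar assembly: take a non-trivial element of $C_{F^{(k)}}(\id,\Omega_j)$ for some $\Omega_j\neq\Omega_i$ (which exists because $\calP$ is non-trivial and $|I|\geq 2$), place it in the $\Omega_j$-coordinates, and put the identity everywhere else.

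The main obstacle is property~(iii), where the issue is to reconcile the apparently non-symmetric indexing in the group-law formula with commutativity. For $\beta,\beta'\in F^{(k+1)}\cap\Phi^{k+1}(F^+)$, every entry of the realization lies in $F^{(k)}\cap\Phi^k(F^+)$, which is abelian by the inductive hypothesis~(iii). The decisive observation is that the level-$1$ projection of any element of $\Phi^{k+1}(F^+)$ lies in $F^+$, and by assumption $F^+$ preserves each block of $\calP$ setwise; hence $\alpha'\omega$ and $\omega$ lie in the same block, so by constancy the product formula $\beta\beta'=(\alpha\alpha',(\alpha_{\alpha'\omega}\alpha'_\omega)_\omega)$ collapses to $(\alpha\alpha',(\alpha_{[i]}\alpha'_{[i]})_{i\in I})$, with the analogous collapse for $\beta'\beta$. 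Equality then follows coordinate-wise from pairwise commutativity within $F^{(k)}\cap\Phi^k(F^+)$. The whole argument hinges on this block-preservation assumption on $F^+$: without it, the reindexing by $\alpha'\omega$ versus $\alpha\omega$ would lose its symmetry and the computation would break down.
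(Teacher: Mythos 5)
Your proposal is correct and follows essentially the same route as the paper: a simultaneous induction on $k$, with the base case reducing to the hypotheses on $F$, well-definedness and the subgroup property of $F^{(k+1)}$ coming from (i) and the $F$-invariance of $\calP$, and commutativity in (iii) hinging precisely on the fact that the level-one projection of an element of $\Phi^{k+1}(F^{+})$ lies in $F^{+}$ and hence preserves each block setwise, so that the reindexed entries $\alpha_{\alpha'\omega}$ collapse to the block-constant values. The paper states these steps very tersely; your write-up just supplies the explicit coordinate-wise constructions the author leaves implicit.
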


\begin{proof}
We prove all three properties simultaneously by induction: For $k=1$, the assertion \ref{item:qz_edge_fixating_C} is trivial whereas \ref{item:qz_edge_fixating_abelian} is an assumption. The second translates to $F_{\Omega_{i}}$ being non-trivial for all $i\in I$ which is an assumption. Now, assume all properties hold for $F^{(k)}$. Then the definition of $F^{(k+1)}$ is meaningful because of \ref{item:qz_edge_fixating_C} and it is a subgroup of $\Aut(B_{d,k})$ because $F$ preserves $\calP$. Statement \ref{item:qz_edge_fixating_D} carries over from $F^{(k)}$ to $F^{(k+1)}$. Finally, \ref{item:qz_edge_fixating_abelian} follows inductively because $F^{+}$ preserves $\calP$ setwise.
\end{proof}

\begin{definition}
Retain the above notation. Define $H(F):=\bigcap_{k\in\bbN}\mathrm{U}_{k}(F^{(k)})$.
\end{definition}

Now, $H(F)$ is compactly generated, vertex-transitive and contains an involutive inversion because $\smash{\mathrm{U}_{1}(\{\id\})\le H(F)}$. Also, $H(F)$ is closed as an intersection of closed sets. The $1$-local action of $H$ is given by $F=F^{(1)}$ because $\Gamma_{k}(F)\le F^{(k)}$ for all $k\in\bbN$ and therefore $\mathrm{D}(F)\le H(F)$.

\begin{lemma}
The group $H(F)$ is non-discrete.
\end{lemma}

\begin{proof}
Let $x\in V$ and $n\in\bbN$. We construct a non-trivial element $h\in H(F)$ which fixes $B(x,n)$: Consider $\smash{\alpha_{n}:=\id\in F^{(n)}}$. By part \ref{item:qz_edge_fixating_D} of Proposition \ref{prop:qz_edge_fixating} as well as the definition of $\smash{F^{(n+1)}}$, there is a non-trivial element $\smash{\alpha_{n+1}\in F^{(n+1)}}$ with $\pi_{n}\alpha_{n+1}=\alpha_{n}$. Applying part \ref{item:qz_edge_fixating_C} of Proposition \ref{prop:qz_edge_fixating} repeatedly, we obtain non-trivial elements $\smash{\alpha_{k}\in F^{(k)}}$ for all $k\ge n+1$ with $\pi_{k}\alpha_{k+1}=\alpha_{k}$. Set $\alpha_{k}:=\id\in F^{(k)}$ for all $k\le n$ and define $h\in\Aut(T_{d})_{x}$ by fixing $x$ and setting $\smash{\sigma_{k}(h,x):=\alpha_{k}\in F^{(k)}}$. Since $\smash{F^{(l)}\!\le\!\Phi^{l}(F^{(k)})}$ for all $k\!\le\! l$ we conclude that $h\in\bigcap_{k\in\bbN}\mathrm{U}_{k}(F^{(k)})=H(F)$.
\end{proof}

\begin{proposition}\label{prop:open_qz}
The group $H(F)$ has open quasi-center.
\end{proposition}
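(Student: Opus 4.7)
The plan is to exhibit an explicit open subgroup of $H(F)$ contained in $\mathrm{QZ}(H(F))$; the natural candidate is the pointwise stabilizer $K := H(F)_{B(x,1)}$ of the unit ball around an arbitrary $x \in V$. Since $K$ is open in $H(F)$ by the definition of the permutation topology, it suffices to prove that $K$ is abelian: then every $g \in K$ satisfies $Z_{H(F)}(g) \supseteq K$, so $K \subseteq \mathrm{QZ}(H(F))$ and the quasi-center is open.

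I would first show that every $g \in K$ has $1$-local action in $F^+$ at every vertex of $T_{d}$, by induction on $d(y, x)$. For $y = x$ we have $\sigma_{1}(g, x) = \id \in F^+$ since $g$ fixes the neighbors of $x$, and for $y \ne x$ with parent $p$ on the $x$-side and edge-label $\omega_{y}$, the inclusion $g \in \mathrm{U}_{2}(F^{(2)})$ forces $\sigma_{1}(g, y) \in C_{F}(\sigma_{1}(g, p), \omega_{y})$. A direct unpacking of the compatibility condition gives $C_{F}(a, \omega) = aF_{\omega}$ for every $a \in F$ and $\omega \in \Omega$, and since $F_{\omega_{y}} \le F^+$, the induction hypothesis $\sigma_{1}(g, p) \in F^+$ propagates to $\sigma_{1}(g, y) \in F^+$.

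For the main step I would invoke Proposition~\ref{prop:qz_edge_fixating}\ref{item:qz_edge_fixating_abelian}, which states that $F^{(k)} \cap \Phi^{k}(F^+)$ is abelian for every $k \in \bbN$. For $g \in K$, the $k$-local action $\sigma_{k}(g, x)$ lies in $F^{(k)}$ by the definition of $H(F)$, and a routine computation identifies its $1$-local action at any interior vertex $l_{x}^{k}(y)$ of $B_{d,k}$ with $\sigma_{1}(g, y) \in F^+$; hence $\sigma_{k}(g, x) \in F^{(k)} \cap \Phi^{k}(F^+)$, and likewise for $h \in K$. These two elements commute in $\Aut(B_{d,k})$, and by Lemma~\ref{lem:cocycle} together with $gx = hx = x$,
\[
\sigma_{k}(gh, x) = \sigma_{k}(g, x)\sigma_{k}(h, x) = \sigma_{k}(h, x)\sigma_{k}(g, x) = \sigma_{k}(hg, x)
\]
for every $k \in \bbN$. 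Consequently $gh$ and $hg$ agree on every ball $B(x, k)$, hence $gh = hg$ on all of $V$.

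The only substantive ingredient is Proposition~\ref{prop:qz_edge_fixating}\ref{item:qz_edge_fixating_abelian} (already established), and the hypothesis that $F^+$ be abelian enters exclusively through it; everything else is a direct manipulation of the cocycle identity and the explicit description of the compatibility sets $C_{F}(a,\omega)$. This is consistent with Theorem~\ref{thm:local_action_qz}\ref{item:qz_semiprimitive}, which rules out any analogous statement for locally semiprimitive subgroups of $\Aut(T_{d})$.
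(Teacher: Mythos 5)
Your proposal is correct and follows essentially the same route as the paper: the paper's proof observes that $H(F)_{B(x,1)}$ is an open subgroup contained in $H(F^{+})_{x}$, which is abelian by Proposition \ref{prop:qz_edge_fixating}\ref{item:qz_edge_fixating_abelian}, and your argument simply fills in the details of why the $1$-local actions of elements of $H(F)_{B(x,1)}$ land in $F^{+}$ and why commutativity of the $k$-local actions forces commutativity in $H(F)$.
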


\begin{proof}
The group $H(F)_{B(x,1)}$ is a subgroup of the group $H(F^{+})_{x}$ which is abelian by part \ref{item:qz_edge_fixating_abelian} of Proposition \ref{prop:qz_edge_fixating}. Hence $H(F)_{B(x,1)}\le \mathrm{QZ}(H(F))$.
\end{proof}

\begin{remark}
Without assuming local transitivity one can achieve abelian point stabilizers, following the construction of the previous section. This cannot happen for non-discrete locally transitive groups $H\le\Aut(T_{d})$ which are vertex-transitive as the following argument shows: By Proposition \ref{prop:uf_universal}, the group $H$ is contained in $\mathrm{U}(F)$ where $F\le\Sym(\Omega)$ is the local action of $H$. If $H_{x}$ is abelian, then so is $F$. Since any transitive abelian permutation group is regular we conclude that $\mathrm{U}(F)$ and hence $H$ are discrete. In this sense, the construction of this section is efficient.
\end{remark}

\subsubsection{Theorem \ref{thm:local_action_qz_sharp}\ref{item:qz_sharp_semiprimitive}}\label{sec:qz_sharp_proof_iii} For certain semiprimitive $F\le\mathrm{Sym}(\Omega)$ we construct a closed, non-discrete, compactly generated, vertex-transitive group $H(F)\!\le\!\Aut(T_{d})$ which locally acts like $F$ and contains a non-trivial quasi-central elliptic element.

\vspace{0.2cm}
Let $F\!\le\!\Sym(\Omega)$ be semiprimitive. Suppose $F$ preserves a non-trivial partition $\calP:\Omega=\bigsqcup_{i\in I}\Omega_{i}$ of $\Omega$ and that $F_{\Omega_{i}}\neq\{\id\}$ for all $i\in I$. Further, suppose that $F$ contains a non-trivial central element $\tau$ which preserves $\calP$ setwise.

\begin{example}
Consider $\SL(2,3)\curvearrowright\bbF_{3}^{2}\backslash\{0\}=\{\pm e_{1},\pm e_{2},\pm(e_{1}+e_{2}),\pm(e_{1}-e_{2})\}$ where $e_{1},e_{2}$ are the standard basis vectors. We have $Z(\SL(2,3))=\{\pm\Id\}$. The blocks of size $2$ are as listed above given that $\SL(2,3)_{e_{1}}\le_{2}\pm\SL(2,3)_{e_{1}}$.
\end{example}

\begin{comment}
\begin{example}
Consider $\GL(2,3)\curvearrowright\bbF_{3}^{2}\backslash\{0\}=\{\pm e_{1},\pm e_{2},\pm(e_{1}+e_{2}),\pm(e_{1}-e_{2})\}$ with center $Z(\GL(2,3))=\{\pm\Id\}$. We obtain blocks of size $2$ as
\begin{displaymath}
 \GL(2,3)_{e_{1}}=\left\{\left.\begin{pmatrix}1 & \alpha \\ 0 & \beta\end{pmatrix}\right|\alpha\in\bbF_{3},\ \beta\in\bbF_{3}\backslash\{0\}\right\}\overset{2}{\le}\pm\GL(2,3)_{e_{1}}
\end{displaymath}
with non-trivial block-stabilizers because $-\Id\in Z(\GL(2,3))$.
\end{example}
\end{comment}

Define groups $F^{(k)}\le\Aut(B_{d,k})$ for $k\in\bbN$ inductively by $F^{(1)}:=F$ and
\begin{displaymath}
 F^{(k+1)}:=\{(\alpha,(\alpha_{\omega})_{\omega})\mid \alpha\in F^{(k)},\ \alpha_{\omega}\in C_{F^{(k)}}(\alpha,\omega) \text{ constant w.r.t $\calP$}\}.
\end{displaymath}

\begin{proposition}\label{prop:qz_elliptic}
The groups $F^{(k)}\le\Aut(B_{d,k})$ ($k\in\bbN$) defined above satisfy:
\begin{enumerate}[(i)]
 \item\label{item:qz_elliptic_C} The compatibility set $C_{F^{(k)}}(\alpha,\Omega_{i})$ is non-empty for all $\alpha\in F^{(k)}$ and $i\in I$. \newline In particular, the group $F^{(k)}$ satisfies \eqref{eq:C}.
 \item\label{item:qz_elliptic_D} The compatibility set $C_{F^{(k)}}(\id,\Omega_{i})$ is non-trivial for all $i\in I$.
 \newline In particular, the group $F^{(k)}$ does not satisfy \eqref{eq:D}.
 \item\label{item:qz_elliptic_central} The element $\gamma_{k}(\tau)\in\Aut(B_{d,k})$ is central in $F^{(k)}$.
\end{enumerate}
\end{proposition}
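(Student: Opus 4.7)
The plan is to prove all three statements by simultaneous induction on $k$, following the pattern used for Propositions~\ref{prop:qz_inversion} and~\ref{prop:qz_edge_fixating}. The base case $k=1$ is immediate: $F^{(1)}=F$, so compatibility at level $1$ is automatic, giving (i); part (ii) is exactly the standing hypothesis $F_{\Omega_{i}}\neq\{\id\}$; and $\gamma_{1}(\tau)=\tau$ is central in $F$ by assumption. At each inductive step I would first observe that part~(i) at stage $k$ makes the defining formula for $F^{(k+1)}$ meaningful, and that $F^{(k+1)}\le\Aut(B_{d,k+1})$ is a subgroup because $F$ permutes the blocks of $\calP$, so multiplication preserves the ``constant with respect to $\calP$'' clause.

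For (i) at stage $k+1$, given $\beta=(\alpha,(\alpha_{\omega})_{\omega})\in F^{(k+1)}$ and a block $\Omega_{i}$, write $\alpha^{(i)}$ for the common value of $\alpha_{\omega}$ on $\Omega_{i}$. Compatibility of a candidate $\beta'\in F^{(k+1)}$ with $\beta$ in every direction $\omega\in\Omega_{i}$ forces $\pi_{k}(\beta')=\alpha^{(i)}$ and $\pr_{\omega}(\beta')=\alpha$ for all $\omega\in\Omega_{i}$, both self-consistent. Symmetry of compatibility gives $\alpha\in C_{F^{(k)}}(\alpha^{(i)},\omega)$ for $\omega\in\Omega_{i}$, and the inductive hypothesis~(i) applied to each pair $(\alpha^{(i)},\Omega_{j})$ with $j\neq i$ allows one to fill in the remaining projections and obtain $\beta'$. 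For (ii) at stage $k+1$, I would fix some $j\neq i$ (available since $\calP$ is non-trivial) and pick a non-trivial $\alpha^{(j)}\in C_{F^{(k)}}(\id,\Omega_{j})$ from the inductive hypothesis; the element $\beta'\in F^{(k+1)}$ determined by $\pi_{k}(\beta')=\id$, $\pr_{\omega}(\beta')=\alpha^{(j)}$ for $\omega\in\Omega_{j}$ and $\pr_{\omega}(\beta')=\id$ otherwise, is non-trivial and belongs to $C_{F^{(k+1)}}(\id,\Omega_{i})$.

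The critical step is (iii). Writing $\widetilde{\tau}:=\gamma_{k+1}(\tau)=(\gamma_{k}(\tau),(\gamma_{k}(\tau))_{\omega})$ and taking $\beta=(\alpha,(\alpha_{\omega})_{\omega})\in F^{(k+1)}$, the first coordinates of $\beta\widetilde{\tau}$ and $\widetilde{\tau}\beta$ coincide by the inductive hypothesis applied to $\alpha$. On the second coordinate, the two products read
\[
(\alpha_{\tau\omega}\,\gamma_{k}(\tau))_{\omega}\quad\text{versus}\quad(\gamma_{k}(\tau)\,\alpha_{\omega})_{\omega},
\]
and a second use of the inductive hypothesis moves $\gamma_{k}(\tau)$ past $\alpha_{\omega}$. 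The two sides therefore agree precisely when $\alpha_{\tau\omega}=\alpha_{\omega}$ for every $\omega\in\Omega$, which is exactly where the hypothesis that $\tau$ preserves $\calP$ setwise enters: $\omega$ and $\tau\omega$ lie in a common block of $\calP$, so constancy of $(\alpha_{\omega})_{\omega}$ with respect to $\calP$ supplies the required equality. I expect this last point to be the main obstacle, since without reading ``$\tau$ preserves $\calP$ setwise'' as preserving each block (consistent with the $\SL(2,3)$ example) the inductive step for (iii) would fail. Once this is in place, the inclusion $\gamma_{k}(\tau)\in F^{(k)}$ follows from the same diagonal check along the way and centrality propagates cleanly to close the induction.
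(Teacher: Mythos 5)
Your proof is correct and takes essentially the same route as the paper: a simultaneous induction on $k$ whose base case is exactly the standing hypotheses, and whose step for (iii) reduces, via the product formula in $\Aut(B_{d,k+1})$, to the identity $\alpha_{\tau\omega}=\alpha_{\omega}$ supplied by the fact that $\tau$ preserves each block of $\calP$ together with constancy of $(\alpha_{\omega})_{\omega}$ with respect to $\calP$. The only difference is that you spell out the inductive steps for (i) and (ii) (using the symmetry of compatibility and a block $j\neq i$ to produce a non-trivial element of $C_{F^{(k+1)}}(\id,\Omega_{i})$), which the paper compresses to ``carries over''.
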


\begin{proof}
We prove all three properties simultaneously by induction: For $k=1$, the assertion \ref{item:qz_elliptic_C} is trivial whereas \ref{item:qz_elliptic_central} is an assumption. The second translates to $F_{\Omega_{i}}$ being non-trivial for all $i\in I$ which is an assumption. Now, assume all properties hold for $F^{(k)}$. Then the definition of $F^{(k+1)}$ is meaningful because of \ref{item:qz_elliptic_C} and it is a subgroup of $\Aut(B_{d,k+1})$ because $F$ preserves $\calP$. Statement \ref{item:qz_elliptic_D} carries over from $F^{(k)}$ to $F^{(k+1)}$. Finally, \ref{item:qz_elliptic_central} follows inductively because $\tau$ and hence $\tau^{-1}$ preserves $\calP$ setwise: For $\smash{\widetilde{\alpha}=(\alpha,(\alpha_{\omega})_{\omega})\in F^{(k+1)}}$ we have
\begin{displaymath}
\gamma^{k+1}(\tau)\widetilde{\alpha}\gamma^{k+1}(\tau)^{-1}=(\gamma^{k}(\tau)\alpha\gamma^{k}(\tau)^{-1},(\gamma^{k}(\tau)\alpha_{\tau^{-1}(\omega)}\gamma^{k}(\tau)^{-1})_{\omega}). \qedhere
\end{displaymath}
\end{proof}

\begin{definition}
Retain the above notation. Define $H(F):=\bigcap_{k\in\bbN}\mathrm{U}_{k}(F^{(k)})$.
\end{definition}

Now, $H(F)$ is compactly generated, vertex-transitive and contains an involutive inversion because $\smash{\mathrm{U}_{1}(\{\id\})\le H(F)}$. Also, $H(F)$ is closed as an intersection of closed sets. The $1$-local action of $H$ is given by $F=F^{(1)}$ because $\Gamma^{k}(F)\le F^{(k)}$ for all $k\in\bbN$ and therefore $\mathrm{D}(F)\le H(F)$.

\begin{lemma}
The group $H(F)$ is non-discrete.
\end{lemma}

\begin{proof}
Let $x\in V$ and $n\in\bbN$. We construct a non-trivial element $h\in H(F)$ which fixes $B(x,n)$: Consider $\alpha_{n}:=\id\in F^{(n)}$. By part \ref{item:qz_elliptic_D} of Proposition \ref{prop:qz_elliptic} and the definition of $F^{(n+1)}$, there is a non-trivial $\alpha_{n+1}\in F^{(n+1)}$ with $\pi_{n}\alpha_{n+1}=\alpha_{n}$. Applying part \ref{item:qz_elliptic_C} of Proposition \ref{prop:qz_elliptic} repeatedly, we obtain non-trivial elements $\alpha_{k}\in F^{(k)}$ for all $k\ge n+1$ with $\pi_{k}\alpha_{k+1}=\alpha_{k}$. Set $\alpha_{k}:=\id\in F^{(k)}$ for all $k\le n$ and define $h\in\Aut(T_{d})_{x}$ by fixing $x$ and setting $\sigma_{k}(h,x):=\alpha_{k}\in F^{(k)}$. Since $F^{(l)}\le\Phi^{l}(F^{(k)})$ for all $k\le l$ we conclude that $h\in\bigcap_{k\in\bbN}\mathrm{U}_{k}(F^{(k)})=H(F)$.
\end{proof}

\begin{proposition}
The quasi-center of $H\!(F)$ contains a non-trivial elliptic element.
\end{proposition}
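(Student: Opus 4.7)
The plan is to exhibit a non-trivial elliptic element $t\in H(F)$ coming from the central element $\tau\in F$ and to verify its quasi-centrality by exploiting Proposition~\ref{prop:qz_elliptic}\ref{item:qz_elliptic_central}. Fix $x\in V$ and let $t\in\Aut(T_{d})$ be the unique automorphism fixing $x$ and satisfying $t(y_{\omega})=t(y)_{\tau\omega}$ for every $y\in V$ and $\omega\in\Omega$; equivalently, $t$ is the element of the diagonal group $\mathrm{D}(F)=\mathrm{U}_{2}(\Gamma(F))$ associated to $\tau$. By a straightforward induction on $k$ one has $\sigma_{k}(t,y)=\gamma^{k}(\tau)\in\Gamma^{k}(F)$ for every $y\in V$ and $k\in\bbN$, using the recursive realization of $\Aut(B_{d,k})$ from Section~\ref{sec:ukf_examples}.

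Next I would observe that $\Gamma^{k}(F)\le F^{(k)}$ for every $k$, which is immediate from the definition of $F^{(k)}$ (since $\gamma^{k}(\tau)$ is self-compatible in every direction). Consequently $t\in\mathrm{U}_{k}(F^{(k)})$ for all $k$, so $t\in H(F)$. The element $t$ is non-trivial because $\sigma_{1}(t,x)=\tau\neq\id$, and elliptic because it fixes $x$.

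To show $t\in\mathrm{QZ}(H(F))$, I would verify that the open subgroup $H(F)_{x}$ of $H(F)$ is contained in the centralizer $Z_{H(F)}(t)$. Given $h\in H(F)_{x}$ and $k\in\bbN$, the cocycle identity (Lemma~\ref{lem:cocycle}) yields
\begin{displaymath}
\sigma_{k}(ht,x)=\sigma_{k}(h,x)\,\gamma^{k}(\tau)\quad\text{and}\quad\sigma_{k}(th,x)=\gamma^{k}(\tau)\,\sigma_{k}(h,x).
\end{displaymath}
Since $\gamma^{k}(\tau)$ is central in $F^{(k)}$ by Proposition~\ref{prop:qz_elliptic}\ref{item:qz_elliptic_central} and $\sigma_{k}(h,x)\in F^{(k)}$, the two expressions coincide. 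Because both $ht$ and $th$ fix $x$ and agree on $B(x,k)$ for every $k$, they are equal; hence $H(F)_{x}\subseteq Z_{H(F)}(t)$ is open in $H(F)$, proving $t\in\mathrm{QZ}(H(F))$.

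The only mildly delicate point is confirming that the ``diagonal'' element $t$ truly lies in $\mathrm{U}_{k}(F^{(k)})$ for every $k$, which reduces to checking that the iterated image $\gamma^{k}(\tau)$ of $\tau$ under $\Gamma_{k-1}\circ\cdots\circ\Gamma_{1}$ is well-defined and sits inside $F^{(k)}$; this is automatic from the definitions but worth recording explicitly. Everything else follows from centrality of $\gamma^{k}(\tau)$ in $F^{(k)}$, which has already been established.
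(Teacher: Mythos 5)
Your proof is correct and takes essentially the same route as the paper: the paper's argument exhibits the diagonal element $d(\tau)\in\mathrm{D}(F)$ fixing $x$ with $1$-local action $\tau$ everywhere and notes that it commutes with $H(F)_{x}$ by the centrality of $\gamma^{k}(\tau)$ in $F^{(k)}$ from Proposition~\ref{prop:qz_elliptic}. You have merely written out the cocycle computation and the verification that $\Gamma^{k}(F)\le F^{(k)}$ in full, which the paper leaves implicit.
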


\begin{proof}
By Proposition \ref{prop:qz_elliptic}, the element $d(\tau)$ which fixes $x$ and whose $1$-local action is $\tau$ everywhere commutes with $H(F)_{x}$. Hence $d(\tau)\in\mathrm{QZ}(H(F))$.
\end{proof}

\begin{remark}
The argument of this section does not work in the quasiprimitive case because a quasiprimitive group $F\le\Sym(\Omega)$ with non-trivial center is abelian and regular: If $Z(F)\unlhd F$ is non-trivial then it is transitive, and it suffices to show that $F^{+}$ is trivial. Suppose $a\in F_{\omega}$ moves $\omega'\in\Omega$. Pick $z\in Z(F)$ with $z(\omega)=\omega'$. Then $za(\omega)=\omega'\neq az(\omega)$, contradicting the assumption that $z\in Z(F)$.
\end{remark}

\subsubsection{Theorem \ref{thm:local_action_qz_sharp}\ref{item:qz_sharp_k_transitive}\ref{item:qz_sharp_intransitive_hyp}}\label{sec:qz_sharp_proof_iv_a} For certain intransitive $F\le\mathrm{Sym}(\Omega)$ we construct a closed, non-discrete, compactly generated, vertex-transitive group $H(F)\le\Aut(T_{d})$ which locally acts like $F$ and contains a quasi-central hyperbolic element of length~$1$.

\vspace{0.2cm}
Let $F\le\mathrm{Sym}(\Omega)$. Assume that the partition $F\backslash\Omega=\bigsqcup_{i\in I}\Omega_{i}$ of $\Omega$ has at least three elements and that $Z(F)\neq\{\id\}$. Choose a non-trivial element $\tau\in Z(F)$ and $\omega_{0}\in\Omega_{0}\in F\backslash\Omega$ with $\tau(\omega_{0})\neq\omega_{0}$. Further, suppose that $F_{\Omega_{i}}\neq\{\id\}$ for all $\Omega_{i}\neq\Omega_{0}$.

\vspace{0.2cm}
Define groups $F^{(k)}\le\Aut(B_{d,k})$ for $k\in\bbN$ inductively by $F^{(1)}:=F$ and
\begin{displaymath}
 F^{(k+1)}\!:=\!\{(\alpha,(\alpha_{\omega})_{\omega})\!\mid\! \alpha\!\in\! F^{(k)},\ \alpha_{\omega}\!\in\! C_{F^{(k)}}(\alpha,\omega)\text{ constant w.r.t. $F\backslash\Omega$},\ \alpha_{\omega_{0}}\!=\!\alpha\}.
\end{displaymath}

\begin{proposition}\label{prop:qz_intransitive_hyp}
The groups $F^{(k)}\le\Aut(B_{d,k})$ ($k\in\bbN$) defined above satisfy:
\begin{enumerate}[(i)]
  \item\label{item:qz_intransitive_hyp_self_comp} Every $\alpha\in F^{(k)}$ is self-compatible in directions from $\Omega_{0}$.
  \item\label{item:qz_intransitive_hyp_C} The compatibility set $C_{F^{(k)}}(\alpha,\Omega_{i})$ is non-empty for all $\alpha\in F^{(k)}$ and $i\in I$. \newline In particular, the group $F^{(k)}$ satisfies \eqref{eq:C}.
 \item\label{item:qz_intransitive_hyp_D} The compatibility set $C_{F^{(k)}}(\id,\Omega_{i})$ is non-trivial for all $i\in I\backslash\{0\}$.
 \newline In particular, the group $F^{(k)}$ does not satisfy \eqref{eq:D}.
 \item\label{item:qz_intransitive_hyp_central} The element $\gamma_{k}(\tau)\in\Aut(B_{d,k})$ is central in $F^{(k)}$.
\end{enumerate}
\end{proposition}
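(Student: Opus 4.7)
The proof proceeds by simultaneous induction on $k$, combining the templates of Propositions \ref{prop:qz_inversion} and \ref{prop:qz_elliptic}. For the base case $k=1$, assertions \ref{item:qz_intransitive_hyp_self_comp} and \ref{item:qz_intransitive_hyp_C} are trivial since compatibility at level one reduces to the coset condition $C_{F}(\alpha,\omega)=\alpha F_{\omega}$, which always contains $\alpha$ itself; assertion \ref{item:qz_intransitive_hyp_D} is exactly the standing hypothesis that $F_{\Omega_{i}}\neq\{\id\}$ for $i\in I\backslash\{0\}$; and assertion \ref{item:qz_intransitive_hyp_central} is the hypothesis $\tau\in Z(F)$.

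For the inductive step, assume all four properties hold at level $k$. The definition of $F^{(k+1)}$ is meaningful: the constraint $\alpha_{\omega_{0}}=\alpha$ requires $\alpha$ to be self-compatible in direction $\omega_{0}\in\Omega_{0}$, which is furnished by \ref{item:qz_intransitive_hyp_self_comp} at level $k$; constancy with respect to $F\backslash\Omega$ then forces $\alpha_{\omega}=\alpha$ for every $\omega\in\Omega_{0}$, and on each other orbit $\Omega_{j}$ one may pick a common value $\alpha_{\omega}\in C_{F^{(k)}}(\alpha,\Omega_{j})$, non-empty by \ref{item:qz_intransitive_hyp_C}. Since $F$, and hence $F^{(k)}$, preserves each orbit $\Omega_{i}$, the resulting subset $F^{(k+1)}$ is a subgroup of $\Aut(B_{d,k+1})$. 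Assertion \ref{item:qz_intransitive_hyp_self_comp} at level $k+1$ is then immediate from the definition, while assertions \ref{item:qz_intransitive_hyp_C} and \ref{item:qz_intransitive_hyp_D} at level $k+1$ carry over from the induction hypothesis by exactly the arguments used in Proposition \ref{prop:qz_inversion}; the latter employs $|F\backslash\Omega|\ge 3$ to place a non-trivial lift in an auxiliary orbit $\Omega_{i'}\notin\{\Omega_{0},\Omega_{i}\}$.

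The only genuinely new content is assertion \ref{item:qz_intransitive_hyp_central} at level $k+1$, proved by the computation of Proposition \ref{prop:qz_elliptic}. For $\widetilde{\alpha}=(\alpha,(\alpha_{\omega})_{\omega})\in F^{(k+1)}$ one has
\begin{displaymath}
\gamma_{k+1}(\tau)\,\widetilde{\alpha}\,\gamma_{k+1}(\tau)^{-1}=\bigl(\gamma_{k}(\tau)\alpha\gamma_{k}(\tau)^{-1},(\gamma_{k}(\tau)\alpha_{\tau^{-1}\omega}\gamma_{k}(\tau)^{-1})_{\omega}\bigr).
\end{displaymath}
The first component equals $\alpha$ by the inductive hypothesis. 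Since $\tau\in F$ preserves each orbit $\Omega_{i}$ setwise and $(\alpha_{\omega})_{\omega}$ is constant with respect to the orbit partition, we have $\alpha_{\tau^{-1}\omega}=\alpha_{\omega}$; applying the inductive hypothesis once more gives $\gamma_{k}(\tau)\alpha_{\tau^{-1}\omega}\gamma_{k}(\tau)^{-1}=\alpha_{\tau^{-1}\omega}=\alpha_{\omega}$, so $\gamma_{k+1}(\tau)$ is central in $F^{(k+1)}$.

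The main thing to verify is that the new self-compatibility constraint $\alpha_{\omega_{0}}=\alpha$ is compatible with the centrality claim for $\gamma_{k+1}(\tau)$. This works precisely because $\omega_{0}\in\Omega_{0}$ and $\tau\in F$ give $\tau^{-1}\omega_{0}\in\Omega_{0}$, so constancy on $\Omega_{0}$ yields $\alpha_{\tau^{-1}\omega_{0}}=\alpha=\alpha_{\omega_{0}}$, pinning the value consistently. The hypothesis $\tau(\omega_{0})\neq\omega_{0}$ plays no role in the proposition itself; it is held in reserve for the construction in Section \ref{sec:qz_sharp_proof_iv_a} that uses the element $\tau$ to produce a quasi-central hyperbolic automorphism of length one in $H(F)=\bigcap_{k\in\bbN}\mathrm{U}_{k}(F^{(k)})$.
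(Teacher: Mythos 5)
Your proof is correct and follows essentially the same route as the paper: a simultaneous induction on all four assertions, with the base case reducing to the standing hypotheses, the inductive step for (i)--(iii) carried over exactly as in Proposition \ref{prop:qz_inversion} (including the use of $|F\backslash\Omega|\ge 3$ for (iii)), and centrality of $\gamma_{k}(\tau)$ verified by the same conjugation computation as in Proposition \ref{prop:qz_elliptic}, using that $\tau$ preserves the orbit partition so that $\alpha_{\tau^{-1}\omega}=\alpha_{\omega}$. Your added remarks — that the constraint $\alpha_{\omega_{0}}=\alpha$ is consistent with centrality and that $\tau(\omega_{0})\neq\omega_{0}$ is only needed later for the hyperbolic element — are accurate elaborations, not deviations.
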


\begin{proof}
We prove all four properties simultaneously by induction: For $k=1$, the assertions \ref{item:qz_intransitive_hyp_self_comp} and \ref{item:qz_intransitive_hyp_C} are trivial. The third translates to $F_{\Omega_{i}}$ being non-trivial for all $i\in I\backslash\{0\}$ which is an assumption, as is \ref{item:qz_intransitive_hyp_central}. Now, assume that all properties hold for $F^{(k)}$. Then the definition of $F^{(k+1)}$ is meaningful because of \ref{item:qz_intransitive_hyp_self_comp} and it is a subgroup of $\Aut(B_{d,k})$ because $F$ preserves $F\backslash\Omega$. Assertion \ref{item:qz_intransitive_hyp_self_comp} is now evident. Statements \ref{item:qz_intransitive_hyp_C} and \ref{item:qz_intransitive_hyp_D} carry over from $F^{(k)}$ to $F^{(k+1)}$. Finally, \ref{item:qz_intransitive_hyp_D} follows inductively because $\tau$ and hence $\tau^{-1}$ preserves $F\backslash\Omega$ setwise: For $\smash{\widetilde{\alpha}=(\alpha,(\alpha_{\omega})_{\omega})\in F^{(k+1)}}$ we have
\begin{displaymath}
\gamma^{k+1}(\tau)\widetilde{\alpha}\gamma^{k+1}(\tau)^{-1}=(\gamma^{k}(\tau)\alpha\gamma^{k}(\tau)^{-1},(\gamma^{k}(\tau)\alpha_{\tau^{-1}(\omega)}\gamma^{k}(\tau)^{-1})_{\omega}). \qedhere
\end{displaymath}
\end{proof}

\begin{definition}
Retain the above notation. Define $H(F):=\bigcap_{k\in\bbN}\mathrm{U}_{k}(F^{(k)})$.
\end{definition}

Now, $H(F)$ is compactly generated, vertex-transitive and contains an involutive inversion because $\smash{\mathrm{U}_{1}(\{\id\})\le H(F)}$. Also, $H(F)$ is closed as the intersection of all its $(P_{k})$-closures. The $1$-local action of $H$ is given by $F=F^{(1)}$ as $\Gamma^{k}(F)\le F^{(k)}$ for all $k\in\bbN$ and therefore $\mathrm{D}(F)\le H$.

\begin{lemma}
The group $H(F)$ is non-discrete.
\end{lemma}

\begin{proof}
Let $x\in V$ and $n\in\bbN$. We construct a non-trivial element $h\in H(F)$ which fixes $B(x,n)$: Consider $\alpha_{n}:=\id\in F^{(n)}$. By parts \ref{item:qz_intransitive_hyp_self_comp} and \ref{item:qz_intransitive_hyp_D} of Proposition \ref{prop:qz_intransitive_hyp} as well as the definition of $F^{(n+1)}$, there is a non-trivial element $\alpha_{n+1}\in F^{(n+1)}$ with $\pi_{n}\alpha_{n+1}=\alpha_{n}$. Applying parts \ref{item:qz_intransitive_hyp_self_comp} and \ref{item:qz_intransitive_hyp_C} of Proposition \ref{prop:qz_intransitive_hyp} repeatedly, we obtain non-trivial elements $\alpha_{k}\in F^{(k)}$ for all $k\ge n+1$ with $\pi_{k}\alpha_{k+1}=\alpha_{k}$. Set $\alpha_{k}:=\id\in F^{(k)}$ for all $k\le n$ and define $h\in\Aut(T_{d})_{x}$ by fixing $x$ and setting $\sigma_{k}(h,x):=\alpha_{k}\in F^{(k)}$. Since $F^{(l)}\le\Phi^{l}(F^{(k)})$ for all $k\le l$ we conclude that $h\in\bigcap_{k\in\bbN}\mathrm{U}_{k}(F^{(k)})=H(F)$.
\end{proof}

\begin{proposition}
The quasi-center of $H(F)$ contains a translation of length $1$.
\end{proposition}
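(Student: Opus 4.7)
The plan is to realise the sought translation as a product of two elements of $H(F)$ that are already known to be quasi-central. Fix $x\in V$. Let $d(\tau)\in H(F)$ be the element fixing $x$ with $\sigma_{1}(d(\tau),\cdot)\equiv\tau$; this belongs to $H(F)$ because $\sigma_{k}(d(\tau),v)=\gamma_{k}(\tau)\in F^{(k)}$ by Proposition~\ref{prop:qz_intransitive_hyp}\ref{item:qz_intransitive_hyp_central}. Let $\iota_{\omega_{0}}\in H(F)$ denote the label-respecting involutive inversion of the edge $(x,x_{\omega_{0}})$. I set $\tau':=d(\tau)\iota_{\omega_{0}}\in H(F)$ and claim that $\tau'$ is the required quasi-central translation of length $1$.

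First I would identify the axis of $\tau'$ as the bi-infinite reduced path $L=(y_{i})_{i\in\bbZ}$ with $y_{0}=x$ and successive edge labels $l(y_{i},y_{i+1})=\tau^{-i}(\omega_{0})$. The hypothesis $\tau(\omega_{0})\neq\omega_{0}$ forces consecutive labels to differ, so $L$ is genuinely reduced. Combining $\sigma_{1}(\iota_{\omega_{0}},\cdot)=\id$ with $\sigma_{1}(d(\tau),\cdot)=\tau$ via the cocycle identity yields $\sigma_{1}(\tau',y_{i})=\tau$ for every $i$, and a direct computation gives $\tau'(y_{0})=d(\tau)(x_{\omega_{0}})=x_{\tau(\omega_{0})}=y_{-1}$; an induction using the local action $\tau$ along $L$ then extends this to $\tau'(y_{i})=y_{i-1}$ for all $i\in\bbZ$. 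Since $d(y_{0},\tau'(y_{0}))=1$, no vertex can be equidistant from every pair $(y_{i},y_{i-1})$, ruling out fixed points, and $\tau'^{2}(y_{0})=y_{-2}\neq y_{0}$ rules out an inversion, so $\tau'$ is a translation of length~$1$ with axis $L$.

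Finally, any element of $H(F)$ centralising both factors of $\tau'$ centralises $\tau'$, hence $Z_{H(F)}(\tau')\supseteq Z_{H(F)}(d(\tau))\cap Z_{H(F)}(\iota_{\omega_{0}})$. The argument of Section~\ref{sec:qz_sharp_proof_i}, which uses only that elements of $F^{(k)}$ are self-compatible in directions from $\Omega_{0}$ (Proposition~\ref{prop:qz_intransitive_hyp}\ref{item:qz_intransitive_hyp_self_comp}), transfers verbatim to give $Z_{H(F)}(\iota_{\omega_{0}})\supseteq H(F)_{B(x,1)}$, and the argument of Section~\ref{sec:qz_sharp_proof_iii}, which uses only centrality of $\gamma_{k}(\tau)$ in $F^{(k)}$, transfers to give $Z_{H(F)}(d(\tau))\supseteq H(F)_{x}$. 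Their intersection is the open subgroup $H(F)_{B(x,1)}$, so $\tau'\in\mathrm{QZ}(H(F))$. The main obstacle I anticipate is spotting the right decomposition: building $\tau'$ directly by prescribing $\sigma_{1}(\tau',\cdot)\equiv\tau$ along an alternating axis does produce the same element, but it makes the openness of its centraliser entirely opaque. Factoring $\tau'$ into two quasi-central pieces whose centraliser lower bounds are already established in the earlier sections reduces the whole question to an intersection of open subgroups, with the axis-label computation serving only to confirm that the product is genuinely hyperbolic of length~$1$.
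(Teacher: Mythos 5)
Your argument is correct, and it establishes the statement by a mild but genuine reorganisation of the paper's proof. The paper defines the translation directly as the element $t\in\mathrm{D}(F)$ with $t(x)=x_{\omega_{0}}$ and $\sigma_{1}(t,y)=\tau$ for all $y$, and verifies $[t,H(F)_{B(x,1)}]=\{\id\}$ in a single cocycle computation that invokes both key facts at once: self-compatibility of elements of $F^{(k+1)}$ in direction $\omega_{0}$ (to get $\sigma_{k}(g,x_{\omega_{0}})=\sigma_{k}(g,x)$) and centrality of $\gamma^{k}(\tau)$ in $F^{(k)}$. Your factorisation $\tau'=d(\tau)\iota_{\omega_{0}}$ separates exactly these two ingredients into the two factors -- indeed the paper's $t$ is literally $\iota_{\omega_{0}}\circ d(\tau)$, the same product in the opposite order, so the two elements differ only in which neighbour of $x$ receives $x$ ($x_{\omega_{0}}$ versus $x_{\tau\omega_{0}}$), which is immaterial. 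Your axis computation and the verification that $\tau'$ is hyperbolic of length $1$ are correct (reducedness of $L$ uses precisely $\tau\omega_{0}\neq\omega_{0}$). Two small remarks: since $\mathrm{QZ}(H(F))$ is a subgroup, quasi-centrality of the product follows immediately once each factor is known to be quasi-central, so the explicit intersection $Z_{H(F)}(d(\tau))\cap Z_{H(F)}(\iota_{\omega_{0}})\supseteq H(F)_{B(x,1)}$ is a reproof of that closure property; and the centraliser bound for $\iota_{\omega_{0}}$ needs only self-compatibility in the single direction $\omega_{0}$ (equivalently, in directions from the orbit $\Omega_{0}$), which is exactly what Proposition \ref{prop:qz_intransitive_hyp} provides, so the transfer from Section \ref{sec:qz_sharp_proof_i} is legitimate. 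What your version buys is transparency -- openness of the centraliser is reduced to two already-established bounds -- at the cost of being slightly longer than the paper's one-line cocycle identity.
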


\begin{proof}
Fix $x\in V$ and let $\tau$ be as above. Consider the line $L$ through $x$ with labels
\begin{displaymath}
  \ldots,\tau^{-2}\omega_{0},\tau^{-1}\omega_{0},\omega_{0},\tau\omega_{0},\tau^{2}\omega_{0},\ldots
\end{displaymath}
Define $t\in \mathrm{D}(F)$ by $t(x)=x_{\omega_{0}}$ and $\sigma_{1}(t,y)=\tau$ for all $y\in V$. Then $t$ is a translation of length $1$ along $L$. Furthermore, $t$ commutes with $H(F)_{B(x,1)}$: Indeed, let $g\in H(F)_{B(x,1)}$. Then $(gt)(x)=t(x)=(tg)(x)$ and
\begin{displaymath}
 \sigma_{k}(gt,x)=\sigma_{k}(g,tx)\sigma_{k}(t,x)=\sigma_{k}(t,x)\sigma_{k}(g,x)=\sigma_{k}(t,gx)\sigma_{k}(g,x)=\sigma_{k}(tg,x)
\end{displaymath}
for all $k\in\bbN$ because $\sigma_{k}(t,x)=\gamma^{k}(\tau)\in Z(F^{(k)})$ and $g\in\mathrm{U}_{k+1}(F^{(k+1)})_{B(x,1)}$.
\end{proof}

\subsubsection{Theorem \ref{thm:local_action_qz_sharp}\ref{item:qz_sharp_k_transitive}\ref{item:qz_sharp_quasiprimitive_hyp}}\label{sec:qz_sharp_proof_iv_b} For certain quasiprimitive $F\le\mathrm{Sym}(\Omega)$ we construct a closed, non-discrete, compactly generated group $H(F)\le\Aut(T_{d})$ which locally acts like $F$ and contains a quasi-central hyperbolic element of length $2$.

\vspace{0.2cm}
Let $F\!\le\!\Sym(\Omega)$ be quasiprimitive. Suppose $F$ preserves a non-trivial~partition $\calP:\Omega=\bigsqcup_{i\in I}\Omega_{i}$. Further, suppose that $F_{\Omega_{i}}\!\neq\!\{\id\}$ and that $F_{\omega_{i}}\curvearrowright\Omega_{i}\backslash\{\omega_{i}\}$ is transitive for all $i\in I$ and $\omega_{i}\in\Omega_{i}$.

\begin{example}
Consider $A_{5}\curvearrowright A_{5}/C_{5}$ which has blocks of size $[D_{5}:C_{5}]=2$ and non-trivial block stabilizers as $C_{5}\cap\tau C_{5}\tau^{-1}=C_{5}$ for all $\tau\in D_{5}$ given that $C_{5}\unlhd D_{5}$.
\end{example}

Retain the notation of Example \ref{ex:bukf_orbit_self_compatible}. Define groups $\smash{F^{(2k)}\le\Aut(B_{d,2k})}$ for $k\in\bbN$ inductively by $\smash{F^{(2)}=\{(a,(a_{\omega})_{\omega})\mid a\in F, a_{\omega}\in C_{F}(a,\omega) \text{ constant w.r.t. $\calP$}\}}$ and
\begin{displaymath}
 F^{(2(k+1))}:=\{(\alpha,(\alpha_{w})_{w})\mid \alpha\in F^{(2k)}, \alpha_{w}\in C_{F^{(2k)}}(\alpha,w),\ \forall w\in\Omega^{(2)}_{0}:\ \alpha_{w}=\alpha\}.
\end{displaymath}

\begin{comment}
An orbit for the action of $\Phi(F)$ on $S(b,2)\cong\Omega^{(2)}$ is given by 
\begin{displaymath}
  \Omega^{(2)}_{0}:=\{(\omega_{1},\omega_{2})\mid\exists i\in I:\ \omega_{1},\omega_{2}\in\Omega_{i}\}\subseteq\Omega^{(2)}.
\end{displaymath}
Indeed, let $\alpha\!=\!(a,(a_{\omega})_{\omega})\in\Phi(F)$ and $(\omega_{1},\omega_{2})\in\Omega^{(2)}_{0}$. Then $\alpha(\omega_{1},\omega_{2})\!=\!(a\omega_{1},a_{\omega_{1}}\omega_{2})$ is in $\smash{\Omega^{(2)}_{0}}$ because $a$ and $a_{\omega_{1}}$ agree on $\omega_{1}$. Note that if $\smash{w=(\omega_{1},\omega_{2})\in\Omega^{(2)}_{0}}$ then so is $\overline{w}:=(\omega_{2},\omega_{1})$. The subgroup of $\Phi(F)$ consisting of those elements which are self-compatible in all directions from $\smash{\Omega^{(2)}_{0}}$ is precisely given by
\begin{displaymath}
 F^{(2)}:=\{(a,(a_{\omega})_{\omega})\mid a\in F, a_{\omega}\in C_{F}(a,\omega) \text{ constant w.r.t. $\calP$}\}.
\end{displaymath}
in view of condition \ref{eq:C_BU_1} in Section \ref{sec:universal_groups_bipartite}. Then define inductively for $k\in\bbN$:
\begin{displaymath}
 F^{(2(k+1))}:=\{(\alpha,(\alpha_{w})_{w})\mid \alpha\in F^{(2k)}, \alpha_{w}\in C_{F^{(2k)}}(\alpha,w),\ \forall w\in\Omega^{(2)}_{0}:\ \alpha_{w}=\alpha\}.
\end{displaymath}
\end{comment}

\begin{proposition}\label{prop:qz_quasiprimitive_hyp}
The groups $F^{(2k)}\le\Aut(B_{d,2k})$ ($k\in\bbN$) defined above satisfy:
\begin{enumerate}[(i)]
  \item\label{item:qz_quasiprimitive_hyp_self_comp} Every $\alpha\in F^{(2k)}$ is self-compatible in all directions from $\Omega^{(2)}_{0}$.
  \item\label{item:qz_quasiprimitive_hyp_C} The compatibility set $C_{F^{(2k)}}(\alpha,w)$ is non-empty for all $\alpha\!\in\! F^{(2k)}$ and $w\!\in\!\Omega^{(2)}$. \newline In particular, the group $F^{(2k)}$ satisfies (C).
  \item\label{item:qz_quasiprimitive_hyp_D} The compatibility set $C_{F^{(2k)}}(\id,w)$ is non-trivial for all $w\in\Omega^{(2)}$.
  \newline In particular, the group $F^{(2k)}$ does not satisfy (D).
\end{enumerate}
\end{proposition}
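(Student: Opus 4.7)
The plan is a simultaneous induction on $k$, closely paralleling the template used in Propositions~\ref{prop:qz_inversion}, \ref{prop:qz_edge_fixating}, \ref{prop:qz_elliptic}, and especially \ref{prop:qz_intransitive_hyp}, but now in the bipartite setting of Section~\ref{sec:universal_groups_bipartite} where directions are elements of $\Omega^{(2)}$ and the ``diagonal'' orbit plays the role of the self-compatibility locus $\Omega^{(2)}_0$. For the base case $k=1$, the group $F^{(2)}$ is precisely the one introduced in Example~\ref{ex:bukf_orbit_self_compatible}: (i) is the content of the defining constancy of $a_\omega$ on blocks of $\calP$, read through condition~\eqref{eq:C_BU_1}, since for $w=(\omega_1,\omega_2)\in\Omega^{(2)}_0$ the two points $\omega_1,\omega_2$ lie in the same block. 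For (ii), given $\tilde\alpha=(a,(a_\omega)_\omega)\in F^{(2)}$ and $w=(\omega_1,\omega_2)$, the diagonal element $\gamma(a_{\omega_1})=(a_{\omega_1},(a_{\omega_1},\ldots,a_{\omega_1}))\in F^{(2)}$ witnesses compatibility. For (iii), the assumptions $F_{\Omega_i}\neq\{\id\}$ for all $i\in I$ and $|I|\ge 2$ allow one to take $b=\id$ and place a non-trivial $c_j\in F_{\Omega_j}$ on a block $\Omega_j$ distinct from the block containing $\omega_2$, yielding a non-trivial element of $C_{F^{(2)}}(\id,w)$.

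For the inductive step, assume the three properties hold for $F^{(2k)}$. The defining conditions of $F^{(2(k+1))}$ are jointly consistent: the clause $\alpha_w=\alpha$ for $w\in\Omega^{(2)}_0$ is admissible by inductive (i), and the compatibility clauses are populated using inductive (ii). Subgroup closure in $\Aut(B_{d,2(k+1)})$ follows from the multiplication formula $(\alpha,(\alpha_w)_w)(\beta,(\beta_w)_w)=(\alpha\beta,(\alpha_{\beta w}\beta_w)_w)$ together with the invariance of $\Omega^{(2)}_0$ under $\pi_2(F^{(2k)})\le F^{(2)}$ noted in Example~\ref{ex:bukf_orbit_self_compatible}, which ensures that the constancy clause is preserved under composition. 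Property (i) at level $k+1$ is then immediate from the definition, since $\overline w\in\Omega^{(2)}_0$ whenever $w\in\Omega^{(2)}_0$. For (ii), given $\tilde\alpha=(\alpha,(\alpha_w)_w)\in F^{(2(k+1))}$ and $v\in\Omega^{(2)}$, assemble $\tilde\alpha_v$ by setting its first coordinate to $\alpha_v$, its $\overline v$-coordinate to $\alpha$ (which lies in $C_{F^{(2k)}}(\alpha_v,\overline v)$ by the built-in symmetry of the bipartite compatibility relation applied to $\alpha_v\in C_{F^{(2k)}}(\alpha,v)$), its $w$-coordinate equal to $\alpha_v$ for every $w\in\Omega^{(2)}_0$ (admissible by inductive (i)), and arbitrary compatible entries at the remaining coordinates furnished by inductive (ii); when $v\in\Omega^{(2)}_0$ the two prescriptions at $\overline v$ coincide because $\alpha_v=\alpha$. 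For (iii), given $w\in\Omega^{(2)}$ choose $v\in\Omega^{(2)}\setminus(\Omega^{(2)}_0\cup\{\overline w\})$, which is non-empty because $d\ge 3$ and $|I|\ge 2$ force $|\Omega^{(2)}\setminus\Omega^{(2)}_0|\ge 2$; then by inductive (iii) pick a non-trivial $\beta_v\in C_{F^{(2k)}}(\id,v)$ and take the element with first coordinate $\id$, value $\beta_v$ at $v$, and $\id$ elsewhere, which lies in $C_{F^{(2(k+1))}}(\id,w)$.

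The main obstacle is the tight coupling between properties (i) and (ii) in the inductive step: when constructing the element witnessing (ii) at level $k+1$, its entries indexed by $w\in\Omega^{(2)}_0$ are forced to equal the first coordinate $\alpha_v$, and this choice must itself be self-compatible with $\alpha_v$ at level $F^{(2k)}$. This is precisely inductive (i), so the two properties must be advanced in lockstep rather than successively; one cannot prove (ii) alone and then (i) as a corollary, because the very well-definedness of the construction for (ii) at level $k+1$ is mediated by (i) at level $k$. Once this interlocking is accounted for, the remaining verifications are routine bookkeeping analogous to the arguments in Propositions~\ref{prop:qz_inversion}--\ref{prop:qz_intransitive_hyp}.
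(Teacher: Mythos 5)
Your proof is correct and follows essentially the same route as the paper: a simultaneous induction on $k$ in which (i) at level $k$ makes the defining clause $\alpha_w=\alpha$ for $w\in\Omega^{(2)}_0$ admissible at level $k+1$, the invariance and inversion-closedness of $\Omega^{(2)}_0$ give closure under composition and (i) at the next level, and (ii), (iii) carry over coordinatewise. The paper's own proof is just a terser version of this; your additional verifications (the symmetry $\beta\in C(\alpha,w)\Leftrightarrow\alpha\in C(\beta,\overline w)$, the coincidence of the two prescriptions at $\overline v$ when $v\in\Omega^{(2)}_0$, and the counting of $\Omega^{(2)}\setminus\Omega^{(2)}_0$) are all accurate.
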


% Warning: The meaning of ``$C$'' in $C_{F^{(2k)}}(\ldots)$ differs for $k=1$ and $k\ge 2$.

\begin{proof}
We prove all three properties simultaneously by induction: For $k=1$, the assertion \ref{item:qz_quasiprimitive_hyp_self_comp} holds by construction of $F^{(2)}$, as do \ref{item:qz_quasiprimitive_hyp_C} and \ref{item:qz_quasiprimitive_hyp_D}. Now assume that all properties hold for $F^{(2k)}$. Then the definition of $F^{(2(k+1))}$ is meaningful because of \ref{item:qz_quasiprimitive_hyp_self_comp} and it is a subgroup because $F^{(2)}$ preserves $\smash{\Omega^{(2)}_{0}}$. Also, $F^{(2(k+1))}$ satisfies \ref{item:qz_quasiprimitive_hyp_self_comp} because $\smash{\Omega^{(2)}_{0}}$ is inversion-closed. Statements \ref{item:qz_quasiprimitive_hyp_C} and \ref{item:qz_quasiprimitive_hyp_D} carry over from $\smash{F^{(2k)}}$.
\end{proof}

\begin{definition}
Retain the above notation. Define $H(F):=\bigcap_{k\in\bbN}\mathrm{BU}_{2k}(F^{(2k)})$.
\end{definition}

Now, $H(F)$ is closed as an intersection of closed sets and compactly generated by $H(F)_{x}$ for some $x\in V_{1}$ and a finite generating set of $\mathrm{BU}_{2}(\{\id\})^{+}$, see Lemma~\ref{lem:bu_comp_gen}. For vertices in $V_{1}$, the $1$-local action is $F$ because $\Gamma^{2k}(F)\le F^{(2k)}$. For vertices in $V_{2}$ the $1$-local action is $F^{+}=F$ as $\Gamma^{2}(F)\le F^{(2)}$.

\begin{lemma}
The group $H(F)$ is non-discrete.
\end{lemma}

\begin{proof}
Let $x\in V_{1}$ and $n\in\bbN$. We construct a non-trivial element $h\in H(F)$ which fixes $B(x,2n)$: Consider $\alpha_{2n}:=\id\in F^{(2n)}$: By parts \ref{item:qz_quasiprimitive_hyp_self_comp} and \ref{item:qz_quasiprimitive_hyp_D} of Proposition \ref{prop:qz_inversion} and the definition of $F^{(2(n+1))}$, there is a non-trivial element $\smash{\alpha_{2(n+1)}\in F^{(2(n+1))}}$ with $\pi_{2n}\alpha_{2(n+1)}=\alpha_{2n}$. Applying parts \ref{item:qz_quasiprimitive_hyp_self_comp} and \ref{item:qz_quasiprimitive_hyp_C} of Proposition \ref{prop:qz_quasiprimitive_hyp} repeatedly, we obtain non-trivial elements $\smash{\alpha_{2k}\in F^{(2k)}}$ for all $k\ge n+1$ with $\smash{\pi_{2k}\alpha^{2(k+1)}=\alpha_{2k}}$. Set $\smash{\alpha_{2k}:=\id\in F^{(2k)}}$ for all $k\le n$ and define $h\in\Aut(T_{d})_{x}$ by fixing $x$ and setting $\smash{\sigma_{2k}(h,x):=\alpha_{2k}\in F^{(2k)}}$. Since $\smash{F^{(2l)}\!\le\!\mathrm{B}\Phi^{2l}(F^{(2k)})}$ for all $k\!\le\! l$ we conclude that $\smash{h\in\bigcap_{k\in\bbN}\mathrm{BU}_{2k}(F^{(2k)})=H(F)}$.
\end{proof}

\begin{proposition}
The quasi-center of $H(F)$ contains a translation of length $2$.
\end{proposition}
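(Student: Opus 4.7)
The plan is to mirror the argument of Section~\ref{sec:qz_sharp_proof_iv_a}, but using a length-$2$ label-respecting translation in the bipartite setting and exploiting self-compatibility at $\Omega^{(2)}_0$-directions instead of centrality of $\tau$. Fix $x\in V_{1}$. Since $F$ is transitive on $\Omega$ and preserves the non-trivial partition $\calP$, each block has size at least $2$, and we may pick $w=(\omega_{1},\omega_{2})\in\Omega^{(2)}_{0}$. Let $t:=t_{w}^{(x)}\in\mathrm{BU}_{2}(\{\id\})$ be the unique label-respecting translation with $t(x)=x_{w}$, which has length~$2$ by construction. Since $\sigma_{2k}(t,v)=\id\in F^{(2k)}$ for all $v\in V_{1}$ and $k\in\bbN$, we have $t\in\bigcap_{k\in\bbN}\mathrm{BU}_{2k}(F^{(2k)})=H(F)$.

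The heart of the argument is to show that $t$ commutes with the open subgroup $H(F)_{B(x,2)}\le H(F)$, for then $t\in\mathrm{QZ}(H(F))$. Let $g\in H(F)_{B(x,2)}$. As $x_{w}\in B(x,2)$ is fixed by $g$, we have $(gt)(x)=x_{w}=(tg)(x)$, and so it suffices to show $\sigma_{2k}(gt,x)=\sigma_{2k}(tg,x)$ for every $k\in\bbN$. The cocycle identity (Lemma~\ref{lem:cocycle}) reduces this to the equality $\sigma_{2k}(g,x_{w})=\sigma_{2k}(g,x)$. This is precisely where the construction pays off: writing $\sigma_{2(k+1)}(g,x)=(\alpha,(\alpha_{w'})_{w'})\in F^{(2(k+1))}$, the defining condition ``$\alpha_{w'}=\alpha$ for all $w'\in\Omega^{(2)}_{0}$'' in Section~\ref{sec:qz_sharp_proof_iv_b} forces $\sigma_{2k}(g,x_{w})=\alpha_{w}=\alpha=\sigma_{2k}(g,x)$, since $w\in\Omega^{(2)}_{0}$.

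The only subtlety is bookkeeping: verifying that $\Omega^{(2)}_{0}$-directions are preserved by $F^{(2)}$ (done already in Example~\ref{ex:bukf_orbit_self_compatible}) and that the self-compatibility hypothesis propagates through every level of the tower $F^{(2k)}$ (already recorded as Proposition~\ref{prop:qz_quasiprimitive_hyp}\ref{item:qz_quasiprimitive_hyp_self_comp}). No new obstacle arises: the construction of $F^{(2k)}$ was tailored exactly so that this commutation works, in analogy with how the centrality condition on $\tau$ in Section~\ref{sec:qz_sharp_proof_iv_a} produced a length-$1$ quasi-central translation. Thus $t$ is a hyperbolic element of length~$2$ lying in $\mathrm{QZ}(H(F))$.
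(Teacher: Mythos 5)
Your proposal is correct and follows essentially the same route as the paper: take the label-respecting translation $t$ with $t(x)=x_{w}$ for $w\in\Omega^{(2)}_{0}$, note $\sigma_{2k}(t,\cdot)=\id$ places $t$ in $H(F)$, and use the cocycle identity together with the built-in self-compatibility of $F^{(2(k+1))}$ in $\Omega^{(2)}_{0}$-directions to get $\sigma_{2k}(g,x_{w})=\sigma_{2k}(g,x)$, hence commutation with $H(F)_{B(x,2)}$. You even spell out the key equality slightly more explicitly than the paper does.
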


\begin{proof}
Fix $x\!\in\! V_{1}$ and $w\!=\!(\omega_{1},\omega_{2})\!\in\!\Omega^{(2)}_{0}$. Consider the line $L$ through $b$ with labels
\begin{displaymath}
  \ldots,\omega_{1},\omega_{2},\omega_{1},\omega_{2},\ldots
\end{displaymath}
Define $t\in\mathrm{D}(F)$ by $t(x)= x_{w}$ and $\sigma_{1}(t,y)=\id$ for all $y\in V$. Then $t$ is a translation of length $2$ along $L$. Furthermore, $t$ commutes with $H(F)_{B(x,2)}$: Indeed, let $g\in H(F)_{B(x,2)}$. Then $gt(x)=t(x)=tg(x)$ and for all $k\in\bbN$:
\begin{align*}
 \sigma_{2k}(gt,x)=\sigma_{2k}(g,tx)\sigma_{2k}(t,x)&=\sigma_{2k}(g,x_{w}) \\
 &=\sigma_{2k}(g,x)=\sigma_{2k}(t,gx)\sigma_{2k}(g,x)=\sigma_{2k}(tg,x)
\end{align*}
as $\sigma_{l}(t,y)=\id$ for all $l\in\bbN$ and $y\in V(T_{d})$, and $g\in \mathrm{BU}_{2(k+1)}(F^{(2(k+1))})_{B(b,2)}$.
\end{proof}

\begin{comment}
\begin{remark}
The construction of this section does not carry over to any primitive $F\le\Sym(\Omega)$ with $F^{(2)}:=\Phi(F)$ due to the following non-discreteness obstruction.

Let $F\le\Sym(\Omega)$ be primitive and let $\smash{\Omega^{(2)}_{0}\in\Phi(F)\backslash\Omega^{(2)}}$. Then the subgroup of elements in $\Phi(F)$ which are self-compatible in all directions from $\smash{\Omega^{(2)}_{0}}$ equals $\Gamma(F)$:

Every element of $\Gamma(F)$ is self-compatible in all directions from $\Omega^{(2)}\!\supseteq\!\Omega_{0}^{2}$. For the converse, let $(a,(a_{\omega})_{\omega})\in\Phi(F)$ be self-compatible in all directions from $\smash{\Omega^{(2)}_{0}}$. Consider the equivalence relation on $\Omega$ defined by $\omega_{1}\sim\omega_{2}$ if and only if $a_{\omega_{1}}=a_{\omega_{2}}$. Since $a_{\omega_{1}}=a_{\omega_{2}}$ whenever $\smash{w:=(\omega_{1},\omega_{2})\in\smash{\Omega^{(2)}_{0}}}$, this relation is $F$-invariant: Indeed, given that $\Gamma(F)\le\Phi(F)$ we have $\smash{\gamma(a)(\omega_{1},\omega_{2})=(a\omega_{1},a\omega_{2})\in\Omega^{(2)}_{0}}$ for all $a\in F$ whenever $\smash{(\omega_{1},\omega_{2})\in\Omega^{(2)}_{0}}$. Since $F$ is primitive, it is the universal relation, i.e. all $a_{\omega}$ ($\omega\in\Omega$) coincide. Hence $(a,(a_{\omega})_{\omega})\in\Gamma(F)$.
\end{remark}
\end{comment}

\begin{remark}
We argue that the construction of this section does not carry over to any primitive $F\le\Sym(\Omega)$ and $\Gamma(F)\le F^{(2)}\le\Phi(F)$.

First, note that $\Phi(F)\backslash\Omega^{(2)}=\Gamma(F)\backslash\Omega^{(2)}$: For $\alpha:=(a,(a_{\omega})_{\omega\in\Omega})\in\Phi(F)$ and $\smash{(\omega_{1},\omega_{2})\!\in\!\Omega^{(2)}}$ we have $\alpha(\omega_{1},\omega_{2})=(a\omega_{1},a_{\omega_{1}}\omega_{2})\in\{(a\omega_{1},aF_{\omega_{1}}\omega_{2})\}\subseteq\Gamma(F)(\omega_{1},\omega_{2})$. We now observe the following obstruction to non-discreteness: Given any orbit $\smash{\Omega^{(2)}_{0}\in\Phi(F)\backslash\Omega^{(2)}=F^{(2)}\backslash\Omega^{(2)}}$, the subgroup of $\Phi(F)$ consisting of elements which are self-compatible in all directions from $\smash{\Omega^{(2)}_{0}}$ is precisely $\Gamma(F)$.

Indeed, every element of $\Gamma(F)$ is self-compatible in all directions from $\Omega^{(2)}\!\supseteq\!\Omega_{0}^{2}$. Conversely, let $(a,(a_{\omega})_{\omega})\in\Phi(F)$ be self-compatible in all directions from $\smash{\Omega^{(2)}_{0}}$. Consider the equivalence relation on $\Omega$ defined by $\omega_{1}\sim\omega_{2}$ if and only if $a_{\omega_{1}}=a_{\omega_{2}}$. Since $a_{\omega_{1}}=a_{\omega_{2}}$ whenever $w:=(\omega_{1},\omega_{2})\in\smash{\Omega^{(2)}_{0}}$, this relation is $F$-invariant: Since $\Gamma(F)\le\Phi(F)$ we have $\smash{\gamma(a)(\omega_{1},\omega_{2})=(a\omega_{1},a\omega_{2})\in\Omega^{(2)}_{0}}$ for all $a\in F$ whenever $\smash{(\omega_{1},\omega_{2})\in\Omega^{(2)}_{0}}$. Since $F$ is primitive, it is the universal relation, so $(a,(a_{\omega})_{\omega})\!\in\!\Gamma(F)$. %i.e. all $a_{\omega}$ ($\omega\in\Omega$) coincide. Hence $(a,(a_{\omega})_{\omega})\in\Gamma(F)$.
\end{remark}

\subsection{Banks--Elder--Willis $(P_{k})$-closures}\label{sec:bew}

Theorem \ref{thm:ukf_universal} yields a description of the $(P_{k})$-closures of locally transitive subgroups of $\Aut(T_{d})$ which contain an involutive inversion, and thereby a characterization of the locally transitive universal groups. Recall that the $(P_{k})$-closure of a subgroup $H\le\Aut(T_{d})$ is
\begin{displaymath}
 H^{(P_{k})}=\{g\in\Aut(T_{d})\mid\forall x\in V\ \exists h\in H:\ g|_{B(x,k)}=h|_{B(x,k)}\}.
\end{displaymath}

Combined with Corollary \ref{cor:ukf_c_iso} the following partially answers the question for an algebraic description of a group's $(P_{k})$-closure in the last paragraph of \cite{BEW15}.

\begin{theorem}\label{thm:k_closure_char}
Let $H\le\Aut(T_{d})$ be locally transitive and contain an involutive inversion. Then $\smash{H^{(P_{k})}=\mathrm{U}_{k}^{(l)}(F^{(k)})}$ for some labelling $l$ of $T_{d}$ and $F^{(k)}\le\Aut(B_{d,k})$.
\end{theorem}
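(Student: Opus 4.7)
The plan is to apply Theorem~\ref{thm:ukf_universal} directly to $H$ and then verify the two inclusions $H^{(P_k)} \le \mathrm{U}_k^{(l)}(F^{(k)})$ and $\mathrm{U}_k^{(l)}(F^{(k)}) \le H^{(P_k)}$ separately. That theorem produces a labelling $l$ of $T_d$ and a subgroup $F^{(k)}\le\Aut(B_{d,k})$, action isomorphic to the $k$-local action of $H$, with
\begin{displaymath}
 \mathrm{U}_1^{(l)}(\{\id\}) \le H \le \mathrm{U}_k^{(l)}(F^{(k)}),
\end{displaymath}
and the candidate for $H^{(P_k)}$ is the outer group.

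The forward inclusion is routine: given $g \in H^{(P_k)}$ and $x \in V$, there is $h \in H$ with $g|_{B(x,k)} = h|_{B(x,k)}$, whence $\sigma_k(g,x) = \sigma_k(h,x) \in F^{(k)}$ since $H \le \mathrm{U}_k^{(l)}(F^{(k)})$, so $g \in \mathrm{U}_k^{(l)}(F^{(k)})$.

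The reverse inclusion requires more care, and is where I anticipate the main obstacle. The step I would focus on is upgrading ``action isomorphic'' to genuine equality: namely, $\sigma_k(H_x, x) = F^{(k)}$ as subgroups of $\Aut(B_{d,k})$ for \emph{every} $x \in V$, not merely for the basepoint used in Theorem~\ref{thm:ukf_universal}. The key leverage is the special feature of the labelling produced there: since $\mathrm{U}_1^{(l)}(\{\id\}) = \mathrm{U}_k^{(l)}(\{\id\}) \le H$ is vertex-transitive and each of its elements has trivial $k$-local action at every vertex, conjugation by such a $t$ with $tx = y$, combined with the cocycle identity of Lemma~\ref{lem:cocycle}, yields $\sigma_k(tH_xt^{-1}, y) = \sigma_k(H_x, x)$ and hence $\sigma_k(H_y, y) = \sigma_k(H_x, x)$. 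This is also where the involutive-inversion hypothesis is essential, since without it $\mathrm{U}_1^{(l)}(\{\id\})$ need not lie in $H$ (compare Remark~\ref{rem:inv_necessary}).

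With this vertex-independence in hand the reverse inclusion reduces to a one-line construction: given $g \in \mathrm{U}_k^{(l)}(F^{(k)})$ and $x \in V$, pick $t \in \mathrm{U}_1^{(l)}(\{\id\}) \le H$ with $tx = gx$, noting $\sigma_k(t,\cdot) \equiv \id$, and pick $h' \in H_x$ realising $\sigma_k(g,x) \in F^{(k)} = \sigma_k(H_x, x)$. Then $h := t h' \in H$ satisfies $hx = gx$ and $\sigma_k(h,x) = \sigma_k(t,h'x)\sigma_k(h',x) = \sigma_k(g,x)$ by Lemma~\ref{lem:cocycle}, so $h|_{B(x,k)} = g|_{B(x,k)}$, showing $g \in H^{(P_k)}$.
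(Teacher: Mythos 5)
Your proposal is correct and follows essentially the same route as the paper: both apply Theorem~\ref{thm:ukf_universal} to obtain $l$ and $F^{(k)}$, prove the forward inclusion by noting that every $k$-local action of $g\in H^{(P_{k})}$ comes from an element of $H\le\mathrm{U}_{k}^{(l)}(F^{(k)})$, and prove the reverse inclusion by composing a label-respecting element $t\in\mathrm{U}_{1}^{(l)}(\{\id\})\le H$ moving $x$ to $gx$ with an element of $H_{x}$ realising $\sigma_{k}(g,x)$. The only difference is that you spell out the conjugation argument showing $\sigma_{k}(H_{x},x)=F^{(k)}$ at every vertex, which the paper leaves implicit in the phrase ``$k$-locally action isomorphic''; this is a worthwhile clarification but not a different proof.
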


\begin{proof}
Let $l$ and $F^{(k)}\le\Aut(B_{d,k})$ be as in Theorem \ref{thm:ukf_universal}. Then $\smash{H^{(P_{k})}\!=\!\mathrm{U}_{k}^{(l)}(F^{(k)})}$:

Let $\smash{g\in\mathrm{U}_{k}(F^{(k)})}$ and $x\in V$. Since $\smash{\mathrm{U}_{1}^{(l)}(\{\id\})\le H}$ there is $\smash{h'\in\mathrm{U}_{1}^{(l)}(\{\id\})}\le H$ with $h'(x)=g(x)$, and since $H$ is $k$-locally action isomorphic to $\smash{F^{(k)}}$ there is $h''\!\in\! H_{x}$ such that $\sigma_{k}(h'',x)=\sigma_{k}(g,x)$. Then $h:=h'h''\in H$ satisfies $g|_{B(x,k)}=h|_{B(x,k)}$.

Conversely, let $\smash{g\in H^{(P_{k})}}$. Then all $k$-local actions of $g$ stem from elements of $H$. Given that $H\le\mathrm{U}_{k}(F^{(k)})$ by Theorem \ref{thm:ukf_universal}, we conclude that $g\in\mathrm{U}_{k}(F^{(k)})$.
\end{proof}

\begin{corollary}\label{cor:ukf_char}
Let $H\le\Aut(T_{d})$ be closed, locally transitive and contain an involutive inversion. Then $\smash{H=\mathrm{U}_{k}^{(l)}(F^{(k)})}$ for some labelling $l$ of $T_{d}$ and an action $\smash{F^{(k)}\le\Aut(B_{d,k})}$ if and only if $H$ satisfies Property $(P_{k})$.
%Then there is a labelling $l$ of $T_{d}$ and $F^{(k)}\le\Aut(B_{d,k})$ such that $H=\mathrm{U}_{k}(F^{(k)}$ if and only if $H$ satisfies Property $(P_{k})$.
\end{corollary}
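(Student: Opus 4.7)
The ``only if'' direction is immediate: if $H=\mathrm{U}_{k}^{(l)}(F^{(k)})$ then $H$ satisfies Property $(P_{k})$ by Proposition \ref{prop:ukf_pk}.

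For the ``if'' direction, assume $H$ is closed, locally transitive, contains an involutive inversion, and satisfies $(P_{k})$. Theorem \ref{thm:k_closure_char} already yields a labelling $l$ and $F^{(k)}\le\Aut(B_{d,k})$ with $H^{(P_{k})}=\mathrm{U}_{k}^{(l)}(F^{(k)})$, and the inclusion $H\subseteq H^{(P_{k})}$ is trivial. Thus the entire task reduces to showing $H^{(P_{k})}\subseteq H$. Since $H$ is closed, this will follow once every $g\in H^{(P_{k})}$ can be approximated in the permutation topology by elements of $H$. Fixing $x_{0}\in V$, I plan to construct inductively a sequence $h_{n}\in H$ $(n\ge k)$ with $h_{n}|_{B(x_{0},n)}=g|_{B(x_{0},n)}$; such a sequence converges to $g$ in $\Aut(T_{d})$.

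The base case $n=k$ is the very definition of the $(P_{k})$-closure. For the inductive step, replacing $g$ by $h_{n}^{-1}g\in H^{(P_{k})}$ I may assume $g|_{B(x_{0},n)}=\id$, and it suffices to find $h'\in H$ with $h'|_{B(x_{0},n+1)}=g|_{B(x_{0},n+1)}$. Here is where Property $(P_{k})$ for both $H$ and $H^{(P_{k})}$ is used. For each vertex $y\in S(x_{0},n-k+1)$, let $e_{y}$ be the edge from $y$ to its parent; then $e_{y}^{k-1}\subseteq B(x_{0},n)$, so $g\in H^{(P_{k})}_{e_{y}^{k-1}}$. Applying $(P_{k})$ to $H^{(P_{k})}$ edge by edge, $g$ decomposes as a commuting product $g=\prod_{y}g_{y}$, where $g_{y}\in H^{(P_{k})}$ is supported on the half-tree on $y$'s side of $e_{y}$. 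On the other hand, by the definition of the $(P_{k})$-closure applied at $y$, there exists $h_{y}\in H$ with $h_{y}|_{B(y,k)}=g_{y}|_{B(y,k)}$. Applying $(P_{k})$ to $H$ at the edge $e_{y}$ (after first enforcing that $h_{y}$ fixes $e_{y}^{k-1}$, which I obtain by multiplying by an element of $H_{\{y\}}$ realizing $h_{y}|_{e_{y}^{k-1}}^{-1}$ on the $k$-ball, possible because the $k$-local actions of $H$ and $H^{(P_{k})}$ coincide), I factor $h_{y}=h_{y}'h_{y}''$ with $h_{y}'$ supported on $y$'s half-tree. Then $h':=\prod_{y}h_{y}'\in H$ and $h'|_{B(x_{0},n+1)}=g|_{B(x_{0},n+1)}$ by construction, completing the induction.

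The main obstacle is the bookkeeping in the inductive step, namely verifying carefully that $(P_{k})$ for $H^{(P_{k})}$ really produces the local decomposition of $g$ stated, and that $(P_{k})$ for $H$ allows the corresponding localization of each $h_{y}$ without destroying the agreement on $B(y,k)$. Both rely only on the half-tree independence encoded in $(P_{k})$ together with the coincidence of $k$-local actions of $H$ and $H^{(P_{k})}$, but the argument has to juggle several indices simultaneously.
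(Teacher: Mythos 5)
Your proof is correct and follows the same skeleton as the paper's: the forward direction via Proposition~\ref{prop:ukf_pk}, and the converse by combining $H\le H^{(P_{k})}$ with Theorem~\ref{thm:k_closure_char}. The one genuine difference is how you establish $H^{(P_{k})}\subseteq H$: the paper simply invokes \cite[Theorem 5.4]{BEW15}, which says a closed subgroup satisfying $(P_{k})$ equals its own $(P_{k})$-closure, whereas you reprove this from scratch by an approximation argument -- inductively building $h_{n}\in H$ agreeing with $g\in H^{(P_{k})}$ on $B(x_{0},n)$, using $(P_{k})$ for $H^{(P_{k})}$ to split $g$ into pieces $g_{y}$ supported on the disjoint half-trees $T_{y}$ hanging off $S(x_{0},n-k+1)$, and $(P_{k})$ for $H$ to localize the approximating elements $h_{y}$. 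This works; the one simplification available is that your step of ``enforcing that $h_{y}$ fixes $e_{y}^{k-1}$'' is automatic, since $e_{y}^{k-1}\subseteq B(y,k)\cap B(x_{0},n)$, so $h_{y}$ already agrees there with $g_{y}$, which is the identity on $B(x_{0},n)$. The trade-off is the usual one: the paper's route is shorter but outsources the key closure fact to \cite{BEW15}, while yours is self-contained at the cost of the half-tree bookkeeping, which you have set up correctly.
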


\begin{proof}
If $H=\mathrm{U}_{k}^{(l)}(F^{(k)})$ then $H$ satisfies Property $(P_{k})$ by Proposition \ref{prop:ukf_pk}. Conversely, if $H$ satisfies Property $\smash{(P_{k})}$ then $\smash{H=\overline{H}\!=\!H^{(P_{k})}}$ by \cite[Theorem 5.4]{BEW15} and the assertion follows from Theorem \ref{thm:k_closure_char}.
\end{proof}

Banks--Elder--Willis use certain subgroups of $\Aut(T_{d})$ with pairwise distinct $(P_{k})$-closures to construct infinitely many, pairwise non-conjugate, non-discrete simple subgroups of $\Aut(T_{d})$ via Theorem \ref{thm:bew_simplicity} and \cite[Theorem 8.2]{BEW15}. For example, the group $\PGL(2,\bbQ_{p})\!\le\!\Aut(T_{p+1})$ qualifies by the argument in \cite[Section~4.1]{BEW15}. Whereas $\PGL(2,\bbQ_{p})$ has trivial quasi-center given that it is simple, certain groups with non-trivial quasi-center, always have infinitely many distinct $(P_{k})$-closures.
% non-conjugacy: follows from \cite[Theorem 8.2]{BEW15}

\begin{proposition}\label{prop:qz_k_closure}
Let $H\le\Aut(T_{d})$ be closed, non-discrete, locally transitive and contain an involutive inversion. If, in addition, $H$ has non-trivial quasi-center then $H$ has infinitely many distinct $(P_{k})$-closures.
\end{proposition}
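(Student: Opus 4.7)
The plan is to argue by contradiction. Assume that $H$ has only finitely many distinct $(P_{k})$-closures. Since the chain $H^{(P_{1})}\ge H^{(P_{2})}\ge\cdots\ge H$ is decreasing by definition, having only finitely many distinct terms means that it stabilizes: there exists $k\in\bbN$ with $H^{(P_{k})}=H^{(P_{k+j})}$ for all $j\in\bbN_{0}$. I would first verify the (standard) identity $H=\bigcap_{j\in\bbN}H^{(P_{j})}$: the inclusion $\supseteq$ is clear, and for $\subseteq$ one fixes a vertex $x_{0}\in V$ and, given $g\in\bigcap_{j}H^{(P_{j})}$, picks $h_{j}\in H$ with $g|_{B(x_{0},j)}=h_{j}|_{B(x_{0},j)}$, so that $h_{j}\to g$ in the permutation topology and $g\in H$ because $H$ is closed. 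Combining this with stabilization of the chain, one obtains $H=H^{(P_{k})}$.

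Next I would apply Theorem~\ref{thm:k_closure_char}, which is available because $H$ is locally transitive and contains an involutive inversion. It yields a labelling~$l$ and a subgroup $F^{(k)}\le\Aut(B_{d,k})$ action isomorphic to the $k$-local action of $H$ such that $H=H^{(P_{k})}=\mathrm{U}_{k}^{(l)}(F^{(k)})$. Since the $k$-local action of the right hand side is $F^{(k)}$, Proposition~\ref{prop:ukf_local_f} implies that $F^{(k)}$ satisfies condition~\eqref{eq:C}.

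The heart of the argument is now an immediate appeal to Proposition~\ref{prop:ukf_qz}, which provides a clean dichotomy for groups of the form $\mathrm{U}_{k}(F)$ with $F$ satisfying~\eqref{eq:C}: either $F^{(k)}$ also satisfies~\eqref{eq:D}, in which case $\mathrm{QZ}(\mathrm{U}_{k}(F^{(k)}))=\mathrm{U}_{k}(F^{(k)})$ and hence $H$ is discrete by Proposition~\ref{prop:ukf_discrete}, contradicting the non-discreteness of $H$; or $F^{(k)}$ fails~\eqref{eq:D}, in which case $\mathrm{QZ}(H)=\mathrm{QZ}(\mathrm{U}_{k}(F^{(k)}))=\{\id\}$, contradicting the assumption that $H$ has non-trivial quasi-center. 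Either alternative yields a contradiction, so the chain of $(P_{k})$-closures cannot stabilize and $H$ has infinitely many distinct $(P_{k})$-closures.

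The only subtle step I expect is verifying the intersection identity $H=\bigcap_{j}H^{(P_{j})}$; once that is in place, the argument reduces to bookkeeping with Theorem~\ref{thm:k_closure_char} and the dichotomy of Proposition~\ref{prop:ukf_qz}, both of which are already developed in the preceding sections. No new constructions are needed.
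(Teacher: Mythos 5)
Your proposal is correct and follows essentially the same route as the paper: identify $H$ with $\bigcap_{k}H^{(P_{k})}$, use stabilization of the decreasing chain to get $H=H^{(P_{k})}=\mathrm{U}_{k}(F^{(k)})$ via Theorem~\ref{thm:k_closure_char}, and derive a contradiction from Proposition~\ref{prop:ukf_qz}. The only cosmetic difference is that you verify the intersection identity directly from closedness of $H$ where the paper cites \cite[Proposition 3.4 (iii)]{BEW15}, and you spell out the (C)/(D) dichotomy that the paper leaves implicit.
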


\begin{proof}
We have $H^{(P_{k})}=\mathrm{U}_{k}(F^{(k)})$ by Theorem \ref{thm:k_closure_char}. Therefore, $H=\bigcap_{k\in\bbN}\mathrm{U}_{k}(F^{(k)})$ by \cite[Proposition 3.4 (iii)]{BEW15}. If $H$ had only finitely many distinct $(P_{k})$-closures, the sequence $(H^{(P_{k})})_{k\in\bbN}$ of subgroups of $\Aut(T_{d})$ would be eventually constant equal to, say, $H^{(n)}=\mathrm{U}_{n}(F^{(n)})\ge H$. However, since $H$ is non-discrete, so is $\mathrm{U}_{n}(F^{(n)})$ which thus has trivial quasi-center by Proposition \ref{prop:ukf_qz}.
\end{proof}

Banks--Elder--Willis ask whether the infinitely many, pairwise non-conjugate, non-discrete simple subgroups of $\Aut(T_{d})$ they construct are also pairwise non-isomorphic as topological groups. By Proposition \ref{prop:isomorphism_conjugate}, this is the case if said simple groups are locally transitive with distinct point stabilizers, which can be determined from the original group's $k$-local actions thanks to Theorem \ref{thm:k_closure_char}.

\begin{theorem}\label{thm:bew_non_isomorphism}
Let $H\le\Aut(T_{d})$ be non-discrete, locally permutation isomorphic to $F\le\Sym(\Omega)$ and contain an involutive inversion. Suppose that $F$ is transitive and that every non-trivial subnormal subgroup of $F_{\omega}$ $(\omega\!\in\!\Omega)$ is transitive on $\Omega\backslash\{\omega\}$. If $H^{(P_{k})}\neq H^{(P_{l})}$ for some $k,l\in\bbN$ then $(H^{(P_{k})})^{+_{k}}$ and $(H^{(P_{l})})^{+_{l}}$ are non-isomorphic.
\end{theorem}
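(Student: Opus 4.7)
My plan is to reduce everything to universal groups via Theorem~\ref{thm:k_closure_char}, apply the rigidity result of Proposition~\ref{prop:isomorphism_conjugate} to an assumed topological isomorphism $G_k\cong G_l$ of the simple quotients $G_k:=(H^{(P_k)})^{+_k}$ and $G_l:=(H^{(P_l)})^{+_l}$, and then extract a contradiction from $H^{(P_k)}\neq H^{(P_l)}$. First, by Theorem~\ref{thm:k_closure_char} we may fix a labelling of $T_d$ so that $H^{(P_k)}=\mathrm{U}_k(F^{(k)})$ and $H^{(P_l)}=\mathrm{U}_l(F^{(l)})$. The subnormal transitivity hypothesis forces $F$ to be either regular or $2$-transitive; regularity would make $\mathrm{U}_1(F)$, and hence $H$, discrete, so $F$ is $2$-transitive, thus primitive and semiprimitive. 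Proposition~\ref{prop:ukf_transitive} then shows $\mathrm{U}_k(F^{(k)})$ is locally $k$-transitive and $\mathrm{U}_l(F^{(l)})$ is locally $l$-transitive; non-discreteness of $H$ forces $F^{(k)}$ and $F^{(l)}$ to fail~\eqref{eq:D} by Proposition~\ref{prop:ukf_discrete}, so Proposition~\ref{prop:ukf_bm_subquotient} identifies $G_k$ with $(H^{(P_k)})^{(\infty)}$, which is topologically simple, compactly generated, non-discrete, and open, normal and cocompact in $H^{(P_k)}$; the analogous statement holds for $G_l$.

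Next I would verify that $G_k$ (and $G_l$) satisfies the hypotheses of Proposition~\ref{prop:isomorphism_conjugate}, i.e.\ is closed, locally transitive, with pairwise distinct point stabilizers. Local transitivity holds because the $1$-local action of $G_k$ at any vertex is a normal subgroup of the primitive group $F$, and triviality of this image would force $G_k$ to lie in the intersection over all $x$ of the pointwise stabilizers of $B(x,1)$, which is trivial, contradicting non-discreteness. Distinct point stabilizers follow from local primitivity via the index argument embedded in the proof of Proposition~\ref{prop:isomorphism_conjugate}. Supposing for contradiction that $\Phi:G_k\to G_l$ is a topological isomorphism, Proposition~\ref{prop:isomorphism_conjugate} produces $\varphi\in\Aut(T_d)$ with $\varphi G_k\varphi^{-1}=G_l$.

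To conclude, assume without loss of generality $k>l$, so that $H^{(P_k)}\subsetneq H^{(P_l)}$ with strict inclusion. Since $(P_l)\Rightarrow(P_k)$ for $k>l$, both $H^{(P_l)}$ and $\varphi H^{(P_k)}\varphi^{-1}$ are $(P_k)$-closed subgroups of $\Aut(T_d)$ containing $G_l$ as an open, normal, cocompact subgroup of index at most $2$ by Lemma~\ref{lem:bm_1.3.0}. The decisive step is to identify $\varphi H^{(P_k)}\varphi^{-1}$ with $H^{(P_l)}$, which amounts to showing that $G_l$ admits essentially a unique closed overgroup of index at most $2$ inside $\Aut(T_d)$ of the prescribed local structure; this can be extracted from the explicit description of $H^{(P_l)}$ as $\mathrm{U}_l(F^{(l)})$ together with the normalizer analysis from Proposition~\ref{prop:isomorphism_conjugate}. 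A covolume computation then produces the contradiction: $\Aut(T_d)$-conjugation preserves covolumes, while the strict inclusion $H^{(P_k)}\subsetneq H^{(P_l)}$ combined with the bounded indices forces the covolume of $H^{(P_k)}$ in $\Aut(T_d)$ strictly to exceed that of $H^{(P_l)}$. The main obstacle is the identification step, namely controlling the finite-index overgroups of $G_l$ in $\Aut(T_d)$ tightly enough to pin down $H^{(P_l)}$ from the intrinsic data of $G_l$; everything else reduces to direct application of results already developed in the paper.
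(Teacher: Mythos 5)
Your first half follows the paper: reduce to $H^{(P_{k})}=\mathrm{U}_{k}(F^{(k)})$ via Theorem \ref{thm:k_closure_char} and use Proposition \ref{prop:isomorphism_conjugate} to upgrade a topological isomorphism of the simple subgroups to conjugacy in $\Aut(T_{d})$. But there are two genuine gaps. First, to apply Proposition \ref{prop:isomorphism_conjugate} you need $G_{k}=(H^{(P_{k})})^{+_{k}}$ to be locally transitive \emph{with distinct point stabilizers}. You only argue that its $1$-local action is a non-trivial normal subgroup of the ($2$-transitive) $F$, hence transitive; but a transitive normal subgroup of a $2$-transitive group can be regular (e.g. $V_{4}\unlhd S_{4}$), in which case all point stabilizers coincide and Proposition \ref{prop:isomorphism_conjugate} does not apply. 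This is exactly where the subnormal-transitivity hypothesis must be used: the paper combines Propositions \ref{prop:ukf_subnormal} and \ref{prop:ukf_transitive} to show that the groups $\pi_{w}F^{(k)}_{T_{\omega}}$ are non-trivial subnormal subgroups of point stabilizers of $F$, hence transitive on the complement, which yields local $2$-transitivity of $G_{k}$ and in particular distinct point stabilizers.

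Second, and more seriously, your endgame is not a proof. After obtaining $\varphi G_{k}\varphi^{-1}=G_{l}$ you need non-conjugacy of $G_{k}$ and $G_{l}$ to reach a contradiction. The paper gets this for free from \cite[Theorem 8.2]{BEW15}, whereas you attempt to rederive it by identifying $\varphi H^{(P_{k})}\varphi^{-1}$ with $H^{(P_{l})}$ and then comparing covolumes. That identification --- recovering $H^{(P_{l})}$ intrinsically from $G_{l}$, e.g. via a normalizer computation --- is precisely the content of the BEW non-conjugacy theorem and is left unproved in your sketch (you call it ``the main obstacle''). Moreover, your supporting claim that $G_{l}$ has index at most $2$ in $H^{(P_{l})}$ ``by Lemma \ref{lem:bm_1.3.0}'' is incorrect: that lemma concerns ${}^{+}H$, the subgroup generated by vertex stabilizers, whereas $H^{+_{l}}$ is generated by pointwise stabilizers of $(l-1)$-neighbourhoods of edges and can have much larger index in $H$; the bounded-index input to your covolume argument therefore fails. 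As written the argument does not close; either invoke \cite[Theorem 8.2]{BEW15} directly for non-conjugacy, as the paper does, or actually supply the normalizer analysis.
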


\begin{proof}
In view of \cite[Theorem 8.2]{BEW15}, the groups $(H^{(P_{k})})^{+_{k}}$ and $(H^{(P_{l})})^{+_{l}}$ are non-conjugate. We show that they satisfy the assumptions of Proposition \ref{prop:isomorphism_conjugate} which then implies the assertion. It suffices to consider $H^{(P_{k})}$. By Theorem \ref{thm:k_closure_char}, we have $H^{(P_{k})}=\mathrm{U}_{k}(F^{(k)})$ for some $F^{(k)}\le\Aut(B_{d,k})$. By virtue of Proposition \ref{prop:ukf_max_c}, we may assume that $\smash{F^{(k)}}$ satisfies \eqref{eq:C}. Since $H$ is non-discrete, so is $\smash{H^{(P_{k})}=\mathrm{U}_{k}(F^{(k)})}$. Therefore, $F^{(k)}$ does not satisfy \eqref{eq:D}, see Proposition \ref{prop:ukf_discrete}. Hence, in view of the local action of $H$ and Proposition \ref{prop:ukf_transitive}, the group $\smash{\pi_{w}F^{(k)}_{T_{\omega}}}$ is non-trivial and thus transitive by Proposition \ref{prop:ukf_subnormal} for all $\smash{w=(\omega_{1},\ldots,\omega_{k-1})\in\Omega^{(k-1)}}$ and $\omega\in\Omega\backslash\{\omega_{1}\}$. Now, let $x\in V(T_{d})$. For every $\omega\in\Omega$ pick $w=(\omega_{1},\ldots,\omega_{k-2},\omega)\in\Omega^{(k-1)}$. Let $y\in V(T_{d})$ be such that $x=y_{w}$. Since $\smash{\pi_{w}F^{(k)}_{T_{\omega'}}}$ is transitive for every $\omega'\in\Omega\backslash\{\omega_{1}\}$ we conclude that $(H^{(P_{k})})^{+_{k}}$ is locally $2$-transitive at $x$. So Proposition \ref{prop:isomorphism_conjugate} applies.
\end{proof}

\begin{example}
Theorem \ref{thm:bew_non_isomorphism} applies to $\PGL(2,\bbQ_{p})\!\le\!\Aut(T_{p+1})$ for any prime $p$ by Lemma \ref{lem:subnormal_transitive} below. In fact, the local action is given by $\PGL(2,\bbF_{p})\!\curvearrowright\!\mathrm{P}^{1}(\bbF_{p})$, point stabilizers of which act like $\AGL(1,p)\!=\!\bbF_{p}^{\ast}\ltimes\bbF_{p}\curvearrowright\bbF_{p}$. Retaining the notation of \cite[Section 4.1]{BEW15}, an involutive inversion in $\PGL(2,\bbQ_{p})$ is given by
\begin{displaymath}
 \sigma:=\begin{bmatrix}0 & 1 \\ p & 0\end{bmatrix} \quad\text{with}\quad \sigma^{2}=\begin{bmatrix}p & 0 \\ 0 & p\end{bmatrix}=\begin{bmatrix}1 & 0 \\ 0 & 1\end{bmatrix}.
\end{displaymath}
Indeed, $\sigma$ swaps the vertices $v$ and $\mathbf{L}_{p}$.
% Baumslag-Solitar groups are not relevant to our discussion as they act on their Bass-Serre tree without inversion.
\end{example}

\begin{lemma}\label{lem:subnormal_transitive}
Let $F\le\Sym(\Omega)$ be $2$-transitive. If $|\Omega|-1$ is prime then every non-trivial subnormal subgroup of $F_{\omega}$ ($\omega\in\Omega$) acts transitively on $\Omega\backslash\{\omega\}$.
\end{lemma}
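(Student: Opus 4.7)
\textbf{Proof plan for Lemma \ref{lem:subnormal_transitive}.} Set $p:=|\Omega|-1$, which is prime by assumption, and let $X:=\Omega\setminus\{\omega\}$ so that $|X|=p$. By $2$-transitivity of $F$ the subgroup $F_\omega$ acts transitively on $X$. The lemma will follow from the auxiliary statement: if $H$ is a transitive permutation group on a set $X$ of prime cardinality $p$, then every non-trivial subnormal subgroup $N\le H$ acts transitively on $X$.

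The plan is to prove this auxiliary statement by induction on the subnormal depth $k$ of $N$ in $H$, i.e.\ the length of a shortest chain $N=N_0\unlhd N_1\unlhd\cdots\unlhd N_k=H$. The base case $k=0$ is trivial, since then $N=H$. For the inductive step, the key observation I would invoke is the standard fact that a normal subgroup of a transitive permutation group has orbits of equal length: indeed, if $M\unlhd H$ and $x,y\in X$ with $y=hx$, then $My=Mhx=hMx$, so $|My|=|Mx|$. Applying this to $N_{k-1}\unlhd H$ acting on the set $X$ of prime size~$p$, all $N_{k-1}$-orbits have common length dividing $p$, hence length $1$ or $p$.

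Now, if this common length is $1$, then $N_{k-1}=\{\id\}$, forcing $N=\{\id\}$ against the hypothesis $N\neq\{\id\}$. Therefore $N_{k-1}$ is transitive on $X$. Since $N$ is subnormal in $N_{k-1}$ of depth $k-1$, the induction hypothesis applied with $H$ replaced by $N_{k-1}$ yields that $N$ is trivial or transitive on $X$; as $N$ is non-trivial, it is transitive. Applying the auxiliary statement to $H=F_\omega$ acting on $X=\Omega\setminus\{\omega\}$ completes the proof.

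There is no real obstacle here: the argument is purely group-theoretic, the only substantive ingredient being the equality of orbit lengths of a normal subgroup of a transitive group, combined with the fact that the only divisors of the prime $p$ are $1$ and $p$. The $2$-transitivity hypothesis enters solely to ensure that $F_\omega$ is itself transitive on $\Omega\setminus\{\omega\}$, providing the base of the induction.
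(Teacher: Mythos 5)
Your proof is correct and follows essentially the same route as the paper's: the paper observes that a transitive group of prime degree is primitive, so its non-trivial normal subgroups are transitive, and then says ``iterate'', while you make that iteration explicit as an induction on subnormal depth and justify the key step directly by the equal-orbit-length argument rather than via primitivity. These are the same idea in slightly different clothing, and your write-up is if anything a little more self-contained.
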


\begin{proof}
Since $F_{\omega}$ acts transitively on $\Omega\backslash\{\omega\}$, which has prime order, $F_{\omega}$ is primitive. So every non-trivial normal subgroup of $F_{\omega}$ acts transitively on $\Omega\backslash\{\omega\}$. Iterate.
\end{proof}

%Theorem \ref{thm:bew_non_isomorphism} also applies when $F\le\Sym(\Omega)$ is $2$-transitive with simple point stabilizers. However, due to Theorem \ref{thm:ukf_rigid}, only the case of simple abelian point stabilizers is relevant.

\begin{example}\label{ex:bew_example}
%Theorem \ref{thm:bew_non_isomorphism} provides an easy-to-check criterion at the expense of strong assumptions. However, its proof shows that the assumptions on $F$ can be replaced with asking that $(H^{(P_{k})})^{+_{k}}$ be locally transitive with distinct point stabilizers%, which may be feasible in a given example.
The proof of Theorem \ref{thm:bew_non_isomorphism} shows that the assumptions on $F$ can be replaced with asking that $(H^{(P_{k})})^{+_{k}}$ be locally transitive with distinct point stabilizers, which may be feasible to check in a given example.

For instance, let $F\!\le\!\mathrm{Sym}(\Omega)$ be transitive with distinct point stabilizers. Assume that $F$ preserves a non-trivial partition $\calP:\Omega=\bigsqcup_{i\in I}\Omega_{i}$ of $\Omega$ and that it is generated by its block stabilizers, i.e. $F=\langle\{F_{\Omega_{i}}\mid i\in I\}\rangle$. 

Let $p:\Omega\to I$ be such that $\omega\in\Omega_{p\omega}$ for all $\omega\in\Omega$. Inductively define groups $\smash{F^{(k)}\le\Aut(B_{d,k})}$ by $\smash{F^{(1)}:=F}$ and $\smash{F^{(k+1)}\!:=\!\Phi_{k}(F^{(k)},\calP)}$, and check that
\begin{enumerate}[(i)]
 \item $C_{F^{(k)}}(\alpha,\Omega_{i})$ is non-empty for all $\smash{\alpha\in F^{(k)}}$ and $i\in I$,
 \item $C_{F^{(k)}}(\id,\Omega_{i})$ is non-trivial for all $i\in I$,
 \item\label{item:bew_example_proper} $F^{(k+1)}\lneq\Phi(F^{(k)})$, and
 \item\label{item:bew_example_action} $\smash{\pi_{w}F^{(k)}_{T_{\omega}}\!=\!F_{\Omega_{p\omega_{k-1}}}}$ for all $\omega\!\in\!\Omega$ and $w\!=\!(\omega_{1},\ldots,\omega_{k-1})\!\in\!\Omega^{(k-1)}$ with~$\omega_{1}\!\notin\!\Omega_{p\omega}$.
\end{enumerate}
In particular $F^{(k)}$ satisfies \eqref{eq:C} but not \eqref{eq:D} for all $k\in\bbN$. Set $H:=\bigcap_{k\in\bbN}\mathrm{U}_{k}(F^{(k)})$. By the above, $H$ is non-discrete and contains both $D(F)$ and $\mathrm{U}_{1}(\{\id\})$. Hence Theorem \ref{thm:k_closure_char} applies and we have $H^{(P_{k})}=\mathrm{U}_{k}(F^{(k)})$. From Item \ref{item:bew_example_proper}, we conclude that the $H^{(P_{k})}$ ($k\in\bbN$) are pairwise distinct. Given that $(H^{(P_{k})})^{+_{k}}$ locally acts like~$F$ due to Item \ref{item:bew_example_action}, the $\smash{(H^{(P_{k})})^{+_{k}}}$ $(k\in\bbN)$ are hence pairwise non-isomorphic.
\end{example}

\begin{comment}
\begin{proposition}
Let $H\le\Aut(T_{d})$ be closed, non-discrete, locally transitive, contain an involutive inversion, and have $k$-local action $F^{(k)}\le\Aut(B_{d,k})$ $(k\in\bbN)$. If $\smash{\pi_{w}F^{(k)}_{T_{\omega}}=F^{(1)}_{\omega_{k-1}}}$ for all $\omega\in\Omega$ and $w=(\omega_{1},\ldots,\omega_{k-1})\in\Omega^{(k-1)}$ with $\omega_{1}\neq\omega$, then every $1$-local action of $(H^{(P_{k})})^{+_{k}}$ contains $F^{+}$.
\end{proposition}
\end{comment}

\begin{comment}
Let $\Omega$ be a set of cardinality $d\in\bbN_{\ge 3}$. Suppose there is a sequence $\smash{F^{(k)}\le\Aut(B_{d,k})}$ $(k\in\bbN)$ such that $\smash{F^{(k)}}$ satisfies (C) but not (D) and $\smash{\pi_{k}F^{(k+1)}=F^{(k)}}$ for all $k\in\bbN$. Then $\smash{H:=\bigcap_{k\in\bbN}\mathrm{U}_{k}(F^{(k)})}$ is closed, non-discrete, compactly generated, vertex-transitive, acts $k$-locally like $\smash{F^{(k)}}$ and contains an involutive inversion. Furthermore, $\smash{H^{(P_{k})}=\mathrm{U}_{k}(F^{(k)})}$. Hence, if in addition $\smash{F^{(k+1)}\lneq\Phi(F^{(k)})}$ for all $k\in\bbN$, then all $(P_{k})$-closures of $H$ are distinct. In particular, theory of Banks--Elder--Willis applies to obtain simple groups from these examples.
\end{comment}

\newpage
\subsection{A View on the Weiss Conjecture}\label{sec:view_weiss}

The Weiss conjecture states that there are only finitely many conjugacy classes of discrete, vertex-transitive, locally primitive subgroups of $\Aut(T_{d})$ for a given $d\in\bbN_{\ge 3}$. We now study the universal group construction in the discrete case and thereby offer a new view on this conjecture: Under the additional assumption that each group contains an involutive inversion, it suffices to show that for every primitive $F\le\Sym(\Omega)$ there are only finitely many $\smash{\widetilde{F}\le\Aut(B_{d,k})}$ $(k\in\bbN)$ with $\smash{\pi\widetilde{F}=F}$ and which satisfy \eqref{eq:CD} in a minimal fashion; see Definition~\ref{def:cd_dimension} and the discussion thereafter.

%In this section we study the universal group construction in the discrete case and thereby offer a new view on the Weiss conjecture, which in particular states that there are only finitely many conjugacy classes of discrete, vertex-transitive, locally primitive subgroups of $\Aut(T_{d})$.

\vspace{0.2cm}
The following consequence of Theorem \ref{thm:k_closure_char} identifies certain groups relevant to the Weiss conjecture as universal groups for local actions satisfying condition \eqref{eq:CD}.

\begin{corollary}\label{cor:universal_discrete}
Let $H\le\Aut(T_{d})$ be discrete, locally transitive and contain an involutive inversion. Then $\smash{H=\mathrm{U}_{k}^{(l)}(F^{(k)})}$ for some $k\in\bbN$, a labelling $l$ of $T_{d}$ and $\smash{F^{(k)}\le\Aut(B_{d,k})}$ satisfying \eqref{eq:CD} which is isomorphic to the $k$-local action of $H$.
\end{corollary}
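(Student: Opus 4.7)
The plan is to combine Theorem \ref{thm:ukf_universal} with a dimensional argument coming from the discreteness of $H$. First, Theorem \ref{thm:ukf_universal} supplies a labelling $l$ of $T_{d}$ such that
\begin{displaymath}
 \mathrm{U}_{k}^{(l)}(F^{(k)})\ge H\ge\mathrm{U}_{1}^{(l)}(\{\id\})
\end{displaymath}
for every $k\in\bbN$, where $F^{(k)}\le\Aut(B_{d,k})$ is action isomorphic to the $k$-local action of $H$. In particular $H$ is vertex-transitive, and combined with discreteness this yields some $m\in\bbN$ with $H_{B(x,m)}=\{\id\}$ for every $x\in V$. I would fix $k:=m+1$ from here on.

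Next I would verify that this $F^{(k)}$ satisfies \eqref{eq:CD}. Condition \eqref{eq:C} is essentially automatic from the construction in Theorem \ref{thm:ukf_universal}: given $a\in F^{(k)}$ realized at $x$ by some $h\in H_{x}$, the element $\sigma_{k}(h,x_{\omega})$ is compatible with $a$ in direction $\omega$ by the cocycle identity (Lemma \ref{lem:cocycle}), and lies in $F^{(k)}$ because the very argument in the proof of Theorem \ref{thm:ukf_universal}, based on the inclusion $\mathrm{U}_{1}^{(l)}(\{\id\})\le H$, shows that every $k$-local action of an element of $H$ at any vertex is realized by some element of $H_{x}$ at $x$. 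For \eqref{eq:D}, observe that $B(x,k-1)\subseteq B(x,k)\cap B(x_{\omega},k)$: a vertex $y\in B(x,k-1)$ lying in the half-tree $T_{x}$ satisfies $d(y,x_{\omega})=d(y,x)+1\le k$, while one in $T_{x_{\omega}}$ satisfies $d(y,x_{\omega})=d(y,x)-1\le k-2$. Hence any non-trivial element of $F^{(k)}_{T_{\omega}}$ would correspond via $l_{x}^{k}$ to an element of $H_{x}$ pointwise fixing $B(x,k-1)\supseteq B(x,m)$, contradicting the choice of $m$.

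Finally, the discussion following Proposition \ref{prop:ukf_structure} shows that \eqref{eq:CD} forces $|\mathrm{U}_{k}^{(l)}(F^{(k)})_{x}|=|F^{(k)}|$, while the choice $k=m+1$ ensures the restriction map $H_{x}\to F^{(k)}$ is a bijection, so $|H_{x)}|=|F^{(k)}|$. Combined with $H_{x}\le\mathrm{U}_{k}^{(l)}(F^{(k)})_{x}$ and the vertex-transitivity of both groups, this forces $H=\mathrm{U}_{k}^{(l)}(F^{(k)})$, and the isomorphism of $F^{(k)}$ with the $k$-local action of $H$ is built into the construction. The main obstacle I anticipate is the bookkeeping in verifying that $F^{(k)}$ inherits \eqref{eq:C} from the global structure of $H$ via the labelling supplied by Theorem \ref{thm:ukf_universal}; every other step reduces to the discreteness depth $m$ and a simple finite size count.
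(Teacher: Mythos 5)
Your proof is correct, but it takes a genuinely different route from the paper. The paper's own argument is two lines: discreteness of $H$ gives trivial stabilizers of balls of some radius $k$, hence Property $(P_{k})$, and then Theorem \ref{thm:k_closure_char} together with $H=\overline{H}=H^{(P_{k})}$ yields $H=\mathrm{U}_{k}^{(l)}(F^{(k)})$; the condition \eqref{eq:CD} is left implicit (it follows a posteriori from Propositions \ref{prop:ukf_local_f} and \ref{prop:ukf_discrete} once one knows $H=\mathrm{U}_{k}(F^{(k)})$ is discrete and locally action isomorphic to $F^{(k)}$). You instead bypass the $(P_{k})$-closure machinery entirely: starting from the same input (Theorem \ref{thm:ukf_universal} and the sandwich $\mathrm{U}_{k}^{(l)}(F^{(k)})\ge H\ge\mathrm{U}_{1}^{(l)}(\{\id\})$), you fix $k=m+1$ where $m$ is the discreteness depth, verify \eqref{eq:C} directly from the cocycle identity exactly as in the derivation of the compatibility condition, verify \eqref{eq:D} from the inclusion $B(x,k-1)\subseteq S_{\omega}$, and close the gap $H\le\mathrm{U}_{k}^{(l)}(F^{(k)})$ by the finite count $|H_{x}|=|F^{(k)}|=|\mathrm{U}_{k}^{(l)}(F^{(k)})_{x}|$ combined with vertex-transitivity. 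All steps check out: in particular $\sigma_{k}(h,x_{\omega})$ does lie in $F^{(k)}$ by the displayed computation at the end of the proof of Theorem \ref{thm:ukf_universal}, and a non-trivial element of $F^{(k)}_{T_{\omega}}$ would indeed be realized by a non-trivial element of $H_{B(x,m)}$. What the paper's route buys is brevity by reusing Theorem \ref{thm:k_closure_char} and the external closure identity from Banks--Elder--Willis; what your route buys is self-containedness and an explicit proof of the \eqref{eq:CD} assertion in the statement, at the cost of redoing in this special case part of the work already packaged into Theorem \ref{thm:k_closure_char}.
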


\begin{proof}
Discreteness of $H$ implies Property $(P_{k})$ for every $k\in\bbN$ such that stabilizers in $H$ of balls of radius $k$ in $T_{d}$ are trivial. Then apply Theorem \ref{thm:k_closure_char}.
\end{proof}

Therefore, studying the class of groups given in Corollary \ref{cor:universal_discrete} reduces to studying subgroups $F\le\Aut(B_{d,k})$ $(k\in\bbN)$ which satisfy \eqref{eq:CD} and for which $\pi F$ is transitive. By Corollary \ref{cor:ukf_cd_iso}, any two conjugate such groups yield isomorphic universal groups. In this sense, it suffices to examine conjugacy classes of subgroups of $\Aut(B_{d,k})$. This can be done computationally using the description of conditions \eqref{eq:C} and \eqref{eq:D} developed in Section \ref{sec:comp_disc}, using e.g. \cite{GAP4}.

\begin{example}\label{ex:s3}
Consider the case $d\!=\!3$. By \cite{Tut47}, \cite{Tut59} and \cite{DM80}, there are, up to conjugacy, seven discrete, vertex-transitive and locally transitive subgroups of $\Aut(T_{3})$. We denote them by $G_{1}$, $G_{2}$, $G_{2}^{1}$, $G_{3}$, $G_{4}$, $G_{4}^{1}$ and $G_{5}$. The subscript $n$ determines the isomorphism class of the vertex stabilizer, whose order is $3\cdot 2^{n-1}$. A group contains an involutive inversion if and only if it has no superscript. The minimal order of an inversion in $G_{2}^{1}$ and $G_{4}^{1}$ is $4$. See also \cite{CL89}. By Corollary~\ref{cor:universal_discrete}, the groups $G_{n}$ $(n\!\in\!\{1,\ldots,5\})$ are of the form $\mathrm{U}_{k}(F)$. We recover their local actions in the following table of conjugacy class representatives of subgroups $F$ of $\Aut(B_{3,2})$ and $\Aut(B_{3,3})$ which satisfy \eqref{eq:C} and project onto a transitive subgroup of $S_{3}$. The list is complete for $k=2$, and for $k=3$ in the case of \eqref{eq:CD}.

\vspace{0.1cm}
\bgroup
\centerline{
\begin{tabular}{c|c|c|c|c|c|c}
Description of $F$ & $k$ & $\pi F$ & $|F|$ & \eqref{eq:C} & \eqref{eq:D} & i.c.c. \\ \cline{1-7}\cline{1-7}
$\Phi(A_{3})$ & 2 & $A_{3}$ & 3 & yes & yes & yes \\ \cdashline{1-7}
$\Gamma(S_{3})$ & 2 & $S_{3}$ & 6 & yes & yes & yes \\
$\Delta(S_{3})$ & 2 & $S_{3}$ & 12 & yes & yes & yes \\
$\Pi(S_{3},\mathrm{sgn},\{0,1\})$ & 2 & $S_{3}$ & 24 & yes & no & \textbf{no} \\
$\Pi(S_{3},\mathrm{sgn},\{1\})$ & 2 & $S_{3}$ & 24 & yes & no & \textbf{yes} \\
$\Phi(S_{3})$ & 2 & $S_{3}$ & 48 & yes & no & \textbf{no} \\ \cline{1-7}\cline{1-7}
Description of $F$ & $k$ & $\pi_{2}F$ & $|F|$ & \eqref{eq:C} & \eqref{eq:D} & i.c.c. \\ \cline{1-7}\cline{1-7}
$\Gamma_{2}(\Pi(S_{3},\mathrm{sgn},\{1\}))$ & 3 & $\Pi(S_{3},\mathrm{sgn},\{1\})$ & 24 & yes & yes & yes \\
$\Sigma_{2}(\Pi(S_{3},\mathrm{sgn},\{1\}),K_{2})$ & 3 & $\Pi(S_{3},\mathrm{sgn},\{1\})$ & 48 & yes & yes & yes \\
\end{tabular}
}
\egroup
\vspace{0.1cm}

\noindent
The column labelled ``i.c.c.'' records whether $F$ admits an involutive compatibility cocycle. This can be determined in \cite{GAP4} and is automatic in the case of \eqref{eq:CD}. The group $\Pi(S_{3},\mathrm{sgn},\{1\})$ of Proposition \ref{prop:pif} admits an involutive compatibility cocycle $z$ which we describe as follows: Say $\Omega\!:=\!\{1,2,3\}$. Let $t_{i}\!\in\!\Sym(\Omega)$ be the transposition which fixes $i$, and let $\tau_{i}\!\in\! \Pi(S_{3},\mathrm{sgn},\{1\})$ be the element whose $1$-local action is $t_{i}$ everywhere except at $b_{i}$. Then $\Pi(S_{3},\mathrm{sgn},\{1\})=\langle\tau_{1},\tau_{2},\tau_{3}\rangle$. Further, let $\kappa_{i}\in \Pi(S_{3},\mathrm{sgn},\{1\})\cap\ker\pi$ be the non-trivial element with $\sigma_{1}(\kappa_{i},b_{i})=e$. We then have $z(\tau_{i},i)=\kappa_{i-1}$ and $z(\tau_{i},j)=\tau_{i}\kappa_{j}$ for all distinct $i,j\in\Omega$, with cyclic notation.

The kernel $K_{2}$ is the diagonal subgroup of $\smash{\bbZ\!/2\bbZ^{3\cdot(3-1)}\cong\ker\pi_{2}\le\Aut(B_{3,3})}$. Using the above, we conclude $G_{1}=\mathrm{U}_{1}(A_{3})$, $G_{2}=\mathrm{U}_{2}(\Gamma(S_{3}))$, $G_{3}=\mathrm{U}_{2}(\Delta(S_{3}))$, $G_{4}=\mathrm{U}_{3}(\Gamma_{2}(\Pi(S_{3},\mathrm{sgn},\{1\})))$ and $G_{5}=\mathrm{U}_{3}(\Sigma_{2}(\Pi(S_{3},\mathrm{sgn},\{1\}),K_{2}))$.
\end{example}

\begin{question}
Can the groups $G_{2}^{1}$ and $G_{4}^{1}$ be described as universal groups with prescribed local actions on edge neighbourhoods that prevent involutive inversions?
\end{question}

The long standing Weiss conjecture \cite{Wei78} states that there are only finitely many conjugacy classes of discrete, vertex-transitive, locally primitive subgroups of $\Aut(T_{d})$ for a given $d\in\bbN_{\ge 3}$. Poto{\v{c}}nic--Spiga--Verret \cite{PSV12} show that a permutation group $F\le\Sym(\Omega)$, for which there are only finitely many conjugacy classes of discrete, vertex-transitive subgroups of $\Aut(T_{d})$ that locally act like $F$, is necessarily semiprimitive, and conjecture the converse. Promising partial results were obtained in the same article as well as by Giudici--Morgan in \cite{GM14}.

Corollary \ref{cor:universal_discrete} suggests to restrict to discrete, locally semiprimitive subgroups of $\Aut(T_{d})$ containing an involutive inversion.

\begin{conjecture}\label{conj:weak_weiss_psv}
Let $F\le\Sym(\Omega)$ be semiprimitive. Then there are only finitely many conjugacy classes of discrete subgroups of $\Aut(T_{d})$ which locally act like $F$ and contain an involutive inversion. 
\end{conjecture}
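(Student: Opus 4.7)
The plan is to translate Conjecture~\ref{conj:weak_weiss_psv} into a finiteness assertion about subgroups of $\Aut(B_{d,k})$ satisfying both \eqref{eq:C} and \eqref{eq:D}. By Corollary~\ref{cor:universal_discrete}, every discrete, locally transitive $H\le\Aut(T_d)$ containing an involutive inversion has the form $H=\mathrm{U}_k^{(l)}(F^{(k)})$ for some labelling $l$, some $k\in\bbN$, and some $F^{(k)}\le\Aut(B_{d,k})$ satisfying \eqref{eq:CD}. Proposition~\ref{prop:ukf_labelling} shows that different labellings yield conjugate universal groups in $\Aut(T_d)$, while Corollary~\ref{cor:ukf_cd_iso} shows that the isomorphism type of $\mathrm{U}_k(F^{(k)})$ is determined by the conjugacy class of the pair $(F^{(k)},F^{(k)}_{(b,b_{\omega})})$ inside the finite group $\Aut(B_{d,k})$. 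Consequently there are only finitely many conjugacy classes of $H$ realizable at any fixed depth $k$, and the conjecture reduces to bounding the minimal such depth across all $H$ with $1$-local action $F$.

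For a semiprimitive $F\le\Sym(\Omega)$ let $\mathcal{H}_F$ be the collection of discrete subgroups of $\Aut(T_d)$ which locally act like $F$ and contain an involutive inversion, and set
\[
  \dim_{\mathrm{CD}}(F):=\max_{H\in\mathcal{H}_F}\min\{k\mid H=\mathrm{U}_k(F^{(k)})\text{ for some }F^{(k)}\text{ satisfying \eqref{eq:CD}}\}.
\]
The conjecture is equivalent to asserting $\dim_{\mathrm{CD}}(F)<\infty$. I would first verify this equivalence by the argument of the previous paragraph and then attempt to bound $\dim_{\mathrm{CD}}(F)$ by analyzing the tower $F^{(k)}\twoheadrightarrow F^{(k-1)}\twoheadrightarrow\cdots\twoheadrightarrow F^{(1)}=F$ associated to an $H\in\mathcal{H}_F$ realized at minimal depth. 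Proposition~\ref{prop:ukf_subnormal} embeds $\pi_w(F^{(k)}_{b_w}\cap\ker\pi)$ as a subnormal subgroup of $F_{\omega_{k-1}}$ of depth at most $k-1$, so semiprimitivity of $F$ together with the normal-subgroup analysis of Section~\ref{sec:bm_theory_semiprimitive} should control how vertex stabilizers grow along the tower; the Bass--Serre decomposition of Proposition~\ref{prop:ukf_structure} additionally restricts the admissible extensions at each step.

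The hard part will be turning these subnormality and extension constraints into an absolute numerical bound on $k$. The endpoints are clear: $\dim_{\mathrm{CD}}(F)=1$ characterizes regular $F$ and $\dim_{\mathrm{CD}}(F)=2$ holds whenever $F_{\omega}$ has trivial nilpotent radical, while $\dim_{\mathrm{CD}}(F\wr P)\ge 3$ shows that any bound must genuinely depend on $F$. The strategy I would pursue is to combine the amalgam structure from Proposition~\ref{prop:ukf_structure} with the vertex-stabilizer order bounds of Poto{\v{c}}nik--Spiga--Verret~\cite{PSV12} and Giudici--Morgan~\cite{GM14}, aiming to show that once $k$ exceeds a suitable invariant of $F_{\omega}$ (for instance its Fitting length), any $F^{(k)}$ with \eqref{eq:CD} and $\pi F^{(k)}=F$ must factor through some $F^{(k-1)}$ with \eqref{eq:CD}, contradicting the minimality of $k$. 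This stabilization-at-bounded-depth step is the crux: it is essentially equivalent to the semiprimitive Weiss conjecture restricted to groups containing an involutive inversion, and no current technique circumvents it.
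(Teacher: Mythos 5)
This statement is labelled as a \emph{conjecture} in the paper, and the paper offers no proof of it: it only reformulates it (via Corollary~\ref{cor:universal_discrete} and Definition~\ref{def:cd_dimension}) as the assertion that $\dim_{\mathrm{CD}}(F)<\infty$ for every semiprimitive $F$, and then establishes partial results (Propositions~\ref{prop:cd_dim_1}, \ref{prop:cd_dim_2} and \ref{prop:ukf_discrete_dim_wreath}). Your proposal reproduces exactly this reduction: the observation that at each fixed depth $k$ only finitely many conjugacy classes arise because $\Aut(B_{d,k})$ is finite and labellings only change the universal group by conjugation (Proposition~\ref{prop:ukf_labelling}), and that the whole content of the conjecture is therefore an absolute bound on the minimal depth $k$. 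That matches the paper's own point of view, and your citations of the endpoint results (regular $\Leftrightarrow$ dimension $1$, trivial nilpotent radical $\Rightarrow$ dimension $2$, wreath products $\ge 3$) are accurate.

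However, what you have written is not a proof, and you say so yourself: the ``stabilization-at-bounded-depth'' step --- showing that any $F^{(k)}$ with \eqref{eq:CD} and $\pi F^{(k)}=F$ must, for $k$ large relative to some invariant of $F_{\omega}$, factor through a smaller-depth local action --- is precisely the open content of the conjecture, and neither Proposition~\ref{prop:ukf_subnormal}, the Bass--Serre decomposition of Proposition~\ref{prop:ukf_structure}, nor the Thompson--Wielandt-type arguments used in Proposition~\ref{prop:cd_dim_2} yield it for a general semiprimitive $F$ (the nilpotent-radical hypothesis there is essential, since the Thompson--Wielandt theorem only obstructs non-trivial $p$-subgroups in the relevant subnormal series). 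So the concrete gap is: no argument is given, here or in the paper, that bounds $\dim_{\mathrm{CD}}(F)$ for arbitrary semiprimitive $F$. Your write-up should be presented as a reduction and a programme, consistent with the paper's treatment, rather than as a proof of the statement.
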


For a transitive permutation group $F\le\Sym(\Omega)$, let $\calH_{F}$ denote the collection of subgroups of $\Aut(T_{d})$ which are discrete, locally act like $F$ and contain an involutive inversion. Then the following definition is meaningful by Corollary \ref{cor:universal_discrete}.

\begin{definition}\label{def:cd_dimension}
Let $F\le\Sym(\Omega)$ be transitive. Define
\begin{displaymath}
 \dim_{\mathrm{CD}}(F)\!:=\!\max_{H\in\calH_{F}}\!\min\left\{k\!\in\!\bbN\left|\exists F^{(k)}\!\in\!\Aut(B_{d,k}) \text{ with \eqref{eq:CD}}:\ H\!=\!\mathrm{U}_{k}(F^{(k)})\right.\right\}
\end{displaymath}
if the maximum exists and $\dim_{\mathrm{CD}}(F)=\infty$ otherwise.
\end{definition}

Given Definition \ref{def:cd_dimension}, Conjecture \ref{conj:weak_weiss_psv} is equivalent to asserting that $\dim_{\mathrm{CD}}(F)$ is finite whenever $F\le\Sym(\Omega)$ is semiprimitive. The remainder of this section is devoted to determining $\dim_{\mathrm{CD}}$ for certain classes of transitive permutation groups.

\begin{proposition}\label{prop:cd_dim_1}
Let $F\le \Sym(\Omega)$ be transitive. Then $\dim_{\mathrm{CD}}(F)=1$ if and only if $F$ is regular.
\end{proposition}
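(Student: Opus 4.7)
The plan is to read off both implications from the simplification that conditions (C) and (D) undergo at level $k=1$. The space $\Aut(B_{d,1})$ is naturally identified with $\Sym(\Omega)$, and the subtree $T_\omega \subseteq B_{d,1}$ degenerates to a single edge; thus (C) is vacuous, while (D) reduces to the requirement $F_\omega = \{\id\}$ for every $\omega \in \Omega$. So a transitive $F \le \Sym(\Omega)$ satisfies (CD) at $k=1$ if and only if it is regular. With this in hand, I will split into the two directions.

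For the forward direction, suppose $\dim_{\mathrm{CD}}(F)=1$. Then $\calH_F$ is non-empty and, picking any $H \in \calH_F$, the minimum in Definition~\ref{def:cd_dimension} equals $1$, so $H = \mathrm{U}_1(F^{(1)})$ for some $F^{(1)} \le \Sym(\Omega)$ satisfying (CD). By Proposition~\ref{prop:uf_basic_properties}\ref{item:uf_local_f}, the local action of $\mathrm{U}_1(F^{(1)})$ is (action isomorphic to) $F^{(1)}$; since $H \in \calH_F$, that local action is also (action isomorphic to) $F$. Hence $F$ satisfies (CD) at level $1$, which by the opening observation forces $F$ to be regular.

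For the reverse direction, suppose $F$ is regular. First, $\mathrm{U}_1(F)$ is discrete by Proposition~\ref{prop:uf_basic_properties}\ref{item:uf_discrete}, locally acts like $F$, and contains the involutive inversions $\iota_\omega^{(x)}$ from Lemma~\ref{lem:uid_fin_gen}, so $\mathrm{U}_1(F) \in \calH_F$ and the maximum in Definition~\ref{def:cd_dimension} is taken over a non-empty set. Now let $H \in \calH_F$ be arbitrary. By Theorem~\ref{thm:ukf_universal} applied at $k=1$, there is a labelling $l$ such that $\mathrm{U}_1^{(l)}(\{\id\}) \le H \le \mathrm{U}_1^{(l)}(F)$. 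The key point will be that $\mathrm{U}_1^{(l)}(F)_{B(x,1)} = \{\id\}$ for every $x \in V$: if an element $\alpha$ of $\mathrm{U}_1^{(l)}(F)$ fixes $B(x,1)$, then for each $y \in S(x,1)$ one has $\sigma_1(\alpha,y) \in F$ fixing the neighbour $x$, hence $\sigma_1(\alpha,y) \in F_{l(y,x)} = \{\id\}$, so $\alpha$ fixes $B(y,1)$, and induction on the radius gives $\alpha = \id$.

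Given this, equality $H = \mathrm{U}_1^{(l)}(F)$ follows from a one-ball comparison: for $g \in \mathrm{U}_1^{(l)}(F)$, first use $\mathrm{U}_1^{(l)}(\{\id\}) \le H$ to produce $h' \in H$ with $h'(x) = g(x)$, then use that $H$ locally acts like $F$ at $x$ to produce $h'' \in H_x$ realising $\sigma_1(h'^{-1}g,x) \in F$; the product $h'h'' \in H$ agrees with $g$ on $B(x,1)$, so the stabiliser computation forces $g = h'h'' \in H$. Since $F$ itself satisfies (CD) at $k=1$, the minimum attached to $H$ is $1$, and taking the supremum over $H \in \calH_F$ yields $\dim_{\mathrm{CD}}(F)=1$. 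The only non-routine step is the stabiliser calculation, and even that is a two-line induction; everything else is just reading Proposition~\ref{prop:uf_basic_properties} and Theorem~\ref{thm:ukf_universal} against the level-$1$ degeneracy of (CD).
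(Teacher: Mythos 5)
Your argument is correct, but both directions run along slightly different tracks than the paper's own two\nobreakdash-line proof. For the forward implication the paper places the two groups $\mathrm{U}_{2}(\Gamma(F))$ and $\mathrm{U}_{2}(\Delta(F))$ in $\calH_{F}$, notes that $\dim_{\mathrm{CD}}(F)=1$ forces both to coincide with $\mathrm{U}_{1}(F)$, and then compares vertex stabilizers: $\Gamma(F)\cong F$ versus $\Delta(F)\cong F\times F_{\omega_{0}}$, so $F_{\omega_{0}}$ must be trivial. You instead read off directly that at $k=1$ condition \eqref{eq:C} is vacuous and \eqref{eq:D} degenerates to semiregularity, so any $F^{(1)}$ with \eqref{eq:CD} realizing the local action forces $F$ to be semiregular, hence regular; this is more direct and arguably cleaner, since it avoids the detour through $\Delta(F)$. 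For the reverse implication the paper simply invokes Proposition~\ref{prop:ukf_structure_reg} (implicitly together with Corollary~\ref{cor:universal_discrete}) to conclude that every $H\in\calH_{F}$ equals $\mathrm{U}_{1}(F)$, whereas you re-derive the two needed facts by hand: the sandwich $\mathrm{U}_{1}^{(l)}(\{\id\})\le H\le\mathrm{U}_{1}^{(l)}(F)$ from Theorem~\ref{thm:ukf_universal} and the triviality of $\mathrm{U}_{1}^{(l)}(F)_{B(x,1)}$ by the standard semiregularity induction. That duplicates machinery the paper already has (your one-ball comparison is essentially the $k=1$ case of the proof of Theorem~\ref{thm:k_closure_char}), but it makes the proposition self-contained and makes explicit exactly where regularity enters. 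The only stylistic quibble is that Theorem~\ref{thm:ukf_universal} gives $H\le\mathrm{U}_{1}^{(l)}(F^{(1)})$ for a copy $F^{(1)}$ of $F$ determined by the chosen labelling rather than for $F$ itself; since everything is stated up to permutation isomorphism this is harmless, but worth a word.
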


\begin{proof}
If $F$ is regular, then $\dim_{\mathrm{CD}}(F)=1$ by Proposition \ref{prop:ukf_structure_reg}. Conversely, if $\dim_{\mathrm{CD}}(F)=1$ then $\mathrm{U}_{2}(\Delta(F))=\mathrm{U}_{1}(F)=\mathrm{U}_{2}(\Gamma(F))$. Hence $\Gamma(F)\cong\Delta(F)$ which implies that $F_{\omega}$ is trivial for all $\omega\in\Omega$. That is, $F$ is regular.
\end{proof}

The next proposition provides a large class of primitive groups of dimension~$2$. It relies on the following relations between various characteristic subgroups of a finite group. Recall that the socle of a finite group is the subgroup generated by its minimal normal subgroups, which form a direct product.

\begin{lemma}\label{lem:radicals}
Let $G$ be a finite group. Then the following are equivalent.
\begin{enumerate}[(i)]
 \item\label{item:radicals_socle} The socle $\mathrm{soc}(G)$ has no abelian factor.
 \item\label{item:radicals_solvable} The solvable radical $\calO_{\infty}(G)$ is trivial.
 \item\label{item:radicals_nilpotent} The nilpotent radical $\mathrm{Fit}(G)$ is trivial.
\end{enumerate}
\end{lemma}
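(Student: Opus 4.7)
The plan is to prove the three implications $(ii)\Rightarrow(iii)$, $(iii)\Leftrightarrow(i)$, and $(iii)\Rightarrow(ii)$, which together give the equivalence. The first is essentially a triviality, while the other two rest on two well-known facts about finite groups that I would simply invoke: every minimal normal subgroup $N$ of a finite group $G$ is characteristically simple, hence a direct product of isomorphic simple groups; and a characteristically simple group that is solvable is necessarily elementary abelian.

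First I would observe $(ii)\Rightarrow(iii)$: since every nilpotent group is solvable, $\mathrm{Fit}(G)\le\calO_\infty(G)$, and triviality of the larger group forces triviality of the smaller.

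Next I would handle $(iii)\Leftrightarrow(i)$ by arguing each direction via its contrapositive. If $\mathrm{soc}(G)$ has an abelian factor, pick the corresponding abelian minimal normal subgroup $N\unlhd G$; as $N$ is abelian, it is nilpotent, so $N\le\mathrm{Fit}(G)$ and therefore $\mathrm{Fit}(G)\ne 1$. Conversely, suppose $\mathrm{Fit}(G)\ne 1$. Since $\mathrm{Fit}(G)$ is nilpotent, its center $Z(\mathrm{Fit}(G))$ is non-trivial; being characteristic in $\mathrm{Fit}(G)\unlhd G$, it is normal in $G$. Any minimal normal subgroup of $G$ contained in $Z(\mathrm{Fit}(G))$ is then abelian and lies in $\mathrm{soc}(G)$, providing the desired abelian factor.

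Finally I would show $(iii)\Rightarrow(ii)$: if $\calO_\infty(G)\neq 1$, pick a minimal normal subgroup $N$ of $G$ inside $\calO_\infty(G)$. Then $N$ is characteristically simple and solvable (the latter as a subgroup of $\calO_\infty(G)$), hence elementary abelian, in particular nilpotent. Thus $N\le\mathrm{Fit}(G)$, contradicting $(iii)$.

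The only step that requires any thought at all is the passage from $\mathrm{Fit}(G)\ne 1$ to the existence of an abelian minimal normal subgroup of $G$; the trick is to descend to $Z(\mathrm{Fit}(G))$ rather than work with $\mathrm{Fit}(G)$ itself, which need not contain a minimal normal subgroup of $G$. Everything else is a direct application of the definitions together with the two structural facts recalled above.
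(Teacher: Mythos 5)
Your proof is correct and rests on the same two facts the paper uses (a minimal normal subgroup of a finite group is characteristically simple, and a characteristically simple solvable group is elementary abelian); the paper merely arranges them into the single cycle (i)$\Rightarrow$(ii)$\Rightarrow$(iii)$\Rightarrow$(i) rather than your four implications. One small correction to your closing remark: $\mathrm{Fit}(G)$, being a non-trivial normal subgroup of the finite group $G$, \emph{does} contain a minimal normal subgroup of $G$ (which is then nilpotent and characteristically simple, hence elementary abelian), so the detour through $Z(\mathrm{Fit}(G))$ is a convenience rather than a necessity.
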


\begin{proof}
If $\mathrm{soc}(G)$ has no abelian factor then $\calO_{\infty}(G)$ is trivial: A non-trivial solvable normal subgroup of $G$ would contain a minimal solvable normal subgroup of $G$ which is necessarily abelian. Next, \ref{item:radicals_solvable} implies \ref{item:radicals_nilpotent} as every nilpotent group is solvable. Finally, if $\mathrm{soc}(G)$ has an abelian factor then $G$ contains a (minimal) normal abelian, hence nilpotent subgroup. Thus \ref{item:radicals_nilpotent} implies \ref{item:radicals_socle}.
\end{proof}

\begin{proposition}\label{prop:cd_dim_2}
Let $F\le \Sym(\Omega)$ be primitive, non-regular and assume that $F_{\omega}$ has trivial nilpotent radical for all $\omega\in\Omega$. Then $\dim_{\mathrm{CD}}(F)=2$.
\end{proposition}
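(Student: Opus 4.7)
The plan is to establish the lower bound $\dim_{\mathrm{CD}}(F) \geq 2$ directly, and then to reduce any $H \in \calH_F$ to a $2$-local universal group, with the reduction driven by a Thompson--Wielandt type argument that exploits the nilpotent radical hypothesis.

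For the lower bound, since $F$ is non-regular, Proposition~\ref{prop:cd_dim_1} gives $\dim_{\mathrm{CD}}(F) \neq 1$. Moreover $\calH_F$ is non-empty---it contains $\mathrm{U}_2(\Gamma(F))$, which is discrete because $\Gamma(F)$ satisfies~\eqref{eq:CD} and locally acts like $F$---so $\dim_{\mathrm{CD}}(F) \geq 2$.

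For the upper bound, I would fix $H \in \calH_F$ and invoke Corollary~\ref{cor:universal_discrete} to write $H = \mathrm{U}_k^{(l)}(F^{(k)})$ for some $k \in \bbN$, labelling $l$, and $F^{(k)} \leq \Aut(B_{d,k})$ satisfying~\eqref{eq:CD}. Setting $F^{(2)}$ to be the iterated projection of $F^{(k)}$ to $\Aut(B_{d,2})$, it inherits~\eqref{eq:C}, and both remaining assertions---that $F^{(2)}$ satisfies~\eqref{eq:D} and that $H = \mathrm{U}_2^{(l)}(F^{(2)})$---reduce to the single claim
\begin{displaymath}
 H_{e^1} = \{\id\} \quad\text{for every edge $e = (x,y)$ of $T_d$,}
\end{displaymath}
where $e^1 := B(x,1) \cup B(y,1)$. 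Granting this claim, one has $F^{(2)}_{T_\omega} = \sigma_2(H_{e^1}, x) = \{\id\}$, yielding~\eqref{eq:D}; and $H_x$ acts faithfully on $B(x,2)$ with image $F^{(2)}$, matching $\mathrm{U}_2^{(l)}(F^{(2)})_x$ in size by~\eqref{eq:CD}, so the two vertex stabilizers coincide. Combined with $\mathrm{U}_1^{(l)}(\{\id\}) \leq H$ (Theorem~\ref{thm:ukf_universal}) and the generation of $\mathrm{U}_2^{(l)}(F^{(2)})$ by its vertex stabilizer and the label-respecting subgroup (Proposition~\ref{prop:ukf_basic_properties}), this forces $H = \mathrm{U}_2^{(l)}(F^{(2)})$.

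The hard part will be establishing $H_{e^1} = \{\id\}$, which is where the hypothesis on nilpotent radicals enters. Since $H$ is discrete, vertex-transitive and locally primitive, the Thompson--Wielandt theorem gives that $H_{e^1}$ is a $p$-group for some prime $p$ depending on $e$. For a neighbour $z$ of $x$ with $z \neq y$, the image $N \leq F_{\omega'}$ of $H_{e^1}$ acting on $E(z) \setminus \{(z,x)\}$ (with $\omega' = l(z,x)$) is subnormal in $F_{\omega'}$ of depth at most~$2$: the kernel $H_{B(x,1)}$ of $H_x \to F$ is normal in the edge stabilizer $H_{(x,z)} = H_x \cap H_z$, so its image $N_0$ in $F_{\omega'}$ is normal; and $H_{e^1} = H_{B(x,1)} \cap H_{B(y,1)}$ is normal in $H_{B(x,1)}$ because $H_{B(x,1)}$ fixes $y$ and hence preserves $B(y,1)$ setwise. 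By Lemma~\ref{lem:radicals} and the hypothesis, $\mathrm{Fit}(F_{\omega'}) = \{\id\}$; since every subnormal $p$-subgroup of a finite group is contained in its Fitting subgroup (by induction on the subnormal depth, using that the unique maximal normal $p$-subgroup of any finite group is characteristic), $N = \{\id\}$. Varying $z$ over the neighbours of $x$ and $y$ shows $H_{e^1}$ fixes $B(x,2) \cup B(y,2)$ pointwise. A propagation along adjacent edges $e'$ contained in the already-fixed region---using $H_{e^1} \leq H_{(e')^1}$ and re-applying the same argument to $e'$---enlarges the fixed region of $H_{e^1}$ to all of $T_d$, forcing $H_{e^1} = \{\id\}$ and completing the proof.
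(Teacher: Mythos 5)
Your proposal is correct and follows essentially the same route as the paper: the upper bound comes from showing that the obstruction to $(\mathrm{D})$ at level $2$ (your $H_{e^{1}}$, the paper's $F^{(2)}_{T_{\omega}}$) induces a subnormal subgroup of depth at most $2$ in a point stabilizer of $F$, which the Thompson--Wielandt theorem forces to be a $p$-group and the trivial-nilpotent-radical hypothesis then forces to be trivial. You merely spell out on the tree several steps the paper compresses (the $p$-group-into-Fitting argument, the propagation to $H_{e^{1}}=\{\id\}$, and the final identification $H=\mathrm{U}_{2}^{(l)}(F^{(2)})$), all of which are sound.
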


\begin{proof}
Suppose that $F^{(2)}\le\Aut(B_{d,2})$ satisfies \eqref{eq:C} and that the sequence
\begin{displaymath}
\xymatrix{
  1 \ar[r] & \ker\pi \ar[r] & F^{(2)} \ar[r]^-{\pi} & F \ar[r] & 1
}
\end{displaymath}
is exact. Fix $\omega_{0}\in\Omega$. Then $\ker\pi\le\prod_{\omega\in\Omega}F_{\omega}\cong F_{\omega_{0}}^{d}$. Since $F^{(2)}$ satisfies \eqref{eq:C}, we have $\pr_{\omega}(\ker\pi)\unlhd F_{\omega_{0}}$ for all $\omega\in\Omega$, and since $F$ is transitive these projections all coincide with the same $N\unlhd F_{\omega_{0}}$. Now consider $\smash{F^{(2)}_{T_{\omega}}=\ker\pr_{\omega}|_{\ker\pi}\unlhd\ker\pi}$ for some $\omega\in\Omega$. Either $\smash{F^{(2)}_{T_{\omega}}}$ is trivial, in which case $\smash{F^{(2)}}$ has \eqref{eq:CD}, or $\smash{F^{(2)}_{T_{\omega}}}$ is non-trivial. In the latter case, say $\smash{N_{\omega,\omega'}:=\pr_{\omega'}F^{(2)}_{T_{\omega}}}$ is non-trivial for some $\omega'\in\Omega$. Then $N_{\omega,\omega'}$ is subnormal in $F_{\omega_{0}}$ as $N_{\omega,\omega'}\unlhd N\unlhd F_{\omega_{0}}$ and therefore has trivial nilpotent radical. The Thompson-Wielandt Theorem \cite{Tho70}, \cite{Wie71} (cf. \cite[Theorem 2.1.1]{BM00a}) now implies that there is no $F^{(k)}\le\Aut(B_{d,k})$ $(k\ge 3)$ which satisfies $\pi_{2}F^{(k)}=F^{(2)}$ and \eqref{eq:CD}. Thus $\dim_{\mathrm{CD}}(F)\le 2$. Equality holds by Proposition \ref{prop:cd_dim_1}.
\end{proof}

Proposition~\ref{prop:cd_dim_2} applies to $\Alt(d)$ and $\Sym(d)$ ($d\ge 6$) whose point stabilizers have non-abelian simple socle $\Alt(d-1)$. It also applies to primitive groups of O'Nan-Scott type (TW) and (HS), whose point stabilizers have trivial solvable radical \cite[Theorem 4.7B]{DM96} and simple non-abelian socle \cite{LPS88} respectively.

\begin{comment}
We give several classes of permutation groups to which Proposition \ref{prop:cd_dim_2} applies. See \cite{LPS88} for a statement of the O'Nan-Scott classification theorem of finite primitive permutation groups to which the following types refer.
\begin{enumerate}[(i)]
 \item $\Alt(\Omega)$, $\Sym(\Omega)\curvearrowright\Omega$ $(|\Omega|\ge 6)$ %(which are of almost simple type (AS))
 \item Primitive permutation groups of type (TW)
 \item Primitive permutation groups of type (HS)
 %\item[(iii)] Primitive Groups of type (HC)
\end{enumerate}
To see this, combine Lemma \ref{lem:radicals} with the following: For $F\!\in\!\{\Alt(d),\Sym(d)\!\mid\! d\!\ge\! 6\}$, point stabilizers have socle isomorphic to the simple non-abelian group $\Alt(d-1)$. Point stabilizers in primitive groups of type (TW) have trivial solvable radical by \cite[Theorem 4.7B]{DM96}, and point stabilizers in primitive groups of type (HS) have simple non-abelian socle, see \cite{LPS88}.
\end{comment}

\begin{example}
By Example \ref{ex:s3}, we have $\dim_{\mathrm{CD}}(S_{3})\ge 3$. The article \cite{DM80} shows that in fact $\dim_{\mathrm{CD}}(S_{3})=3$.
\end{example}

To contrast the primitive case, we show that imprimitive wreath products have dimension at least $3$, illustrating the use of involutive compatibility cocycles. Recall that for $F\le\mathrm{Sym}(\Omega)$ and $P\le\mathrm{Sym}(\Lambda)$ the wreath product $\smash{F\wr P:=F^{\vert\Lambda\vert}\rtimes P}$ admits a natural imprimitive action on $\Omega\times\Lambda$ with the partition $\bigsqcup_{\lambda\in\Lambda}\Omega\times\{\lambda\}$, namely $((a_{\lambda})_{\lambda},\sigma)\cdot(\omega,\lambda'):=(a_{\sigma(\lambda')}\omega,\sigma\lambda')$.

\begin{proposition}\label{prop:ukf_discrete_dim_wreath}
Let $\Omega$ and $\Lambda$ be finite sets of size at least $2$. %$|\Omega|,|\Lambda|\ge 2$.
Furthermore, let $F\le\mathrm{Sym}(\Omega)$ and $P\le\mathrm{Sym}(\Lambda)$ be transitive. Then $\dim_{\mathrm{CD}}(F\wr P)\ge 3$.
\end{proposition}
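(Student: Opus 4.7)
To establish $\dim_{\mathrm{CD}}(F \wr P) \geq 3$, I will exhibit a discrete $H \in \mathcal{H}_{F \wr P}$ that cannot be written as $\mathrm{U}_k^{(l)}(F^{(k)})$ for any labelling $l$ and any $F^{(k)} \leq \Aut(B_{d,k})$ satisfying \eqref{eq:CD} with $k \leq 2$. The case $k = 1$ is immediate: since $F$ is transitive with $|\Omega| \geq 2$, the group $F$ is non-trivial, so $F \wr P$ is non-semiregular on $\Omega \times \Lambda$, and Proposition~\ref{prop:uf_basic_properties}\ref{item:uf_discrete} implies that $\mathrm{U}_1(F \wr P)$ is non-discrete. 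The heart of the argument is thus to rule out $k = 2$.

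The strategy exploits the partition $\mathcal{P}\colon \Omega \times \Lambda = \bigsqcup_{\lambda \in \Lambda}(\Omega \times \{\lambda\})$ preserved by the imprimitive action of $F \wr P$. First I would construct a $2$-local action $F^{(2)} \leq \Aut(B_{d,2})$ with $\pi F^{(2)} = F \wr P$ that satisfies \eqref{eq:C} but not \eqref{eq:D} and admits an involutive compatibility cocycle $z$. A natural candidate is the block-respecting group $F^{(2)} := \Phi(F \wr P, \mathcal{P})$ from Section~\ref{sec:ukf_examples_k=2}, where \eqref{eq:D} fails because the pointwise block stabiliser $(F \wr P)_{\Omega \times \{\lambda\}} \supseteq F^{\Lambda \setminus \{\lambda\}} \rtimes P_\lambda$ is non-trivial. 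The cocycle $z$ would be defined using the semidirect decomposition $F \wr P = F^{\Lambda} \rtimes P$ by essentially swapping the roles of the centre $b$ and the chosen neighbour $b_{\omega_0}$ in the block coordinates.

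With $F^{(2)}$ and $z$ in hand, Proposition~\ref{prop:ukf_gamma_k} produces $F^{(3)} := \Gamma_2(F^{(2)}) \leq \Aut(B_{d,3})$ satisfying \eqref{eq:CD} with $\pi_2 F^{(3)} = F^{(2)}$. Setting $H := \mathrm{U}_3(F^{(3)})$, Propositions~\ref{prop:ukf_basic_properties}, \ref{prop:ukf_discrete}, and \ref{prop:ukf_local_f} yield that $H$ is closed, compactly generated, vertex-transitive, discrete, and locally action-isomorphic to $\pi F^{(3)} = F \wr P$; the inclusion $\mathrm{U}_1(\{\id\}) = \mathrm{U}_3(\{\id\}) \leq H$ supplies the required involutive inversion, so $H \in \mathcal{H}_{F \wr P}$. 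To rule out $k = 2$, suppose for contradiction that $H = \mathrm{U}_2^{(l')}(\tilde{F}^{(2)})$ for some labelling $l'$ and $\tilde{F}^{(2)}$ satisfying \eqref{eq:CD}. Since $F^{(3)}$ satisfies \eqref{eq:CD}, the $2$-local action of $H = \mathrm{U}_3^{(l)}(F^{(3)})$ at a vertex $x$ coincides with $\pi_2 F^{(3)} = F^{(2)}$. The change of labelling from $l$ to $l'$ induces a conjugation in $\Aut(B_{d,2})$, and \eqref{eq:D} is invariant under such conjugation: any $c \in \Aut(B_{d,2})$ permutes the subtrees $T_\omega$ among themselves, so $(cF^{(2)}c^{-1})_{T_\omega} = cF^{(2)}_{T_{c^{-1}\omega}}c^{-1}$ is trivial for all $\omega$ if and only if $F^{(2)}_{T_\omega}$ is. Consequently $\tilde{F}^{(2)}$, being conjugate to $F^{(2)}$, also fails \eqref{eq:D}, contradicting \eqref{eq:CD} for $\tilde{F}^{(2)}$.

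The principal obstacle is the explicit construction and verification of the involutive compatibility cocycle $z$ on $F^{(2)}$: compatibility is forced by the restriction to $T_{\omega_0}$, but the cocycle identity $z(\alpha\beta, \omega_0) = z(\alpha, \beta\omega_0)\, z(\beta, \omega_0)$ requires careful bookkeeping of the interleaved outer $P$-action on blocks and inner $F$-actions within blocks. Should the natural formula fail to yield a cocycle uniformly on all of $\Phi(F \wr P, \mathcal{P})$, I would replace $F^{(2)}$ by a suitable $F\wr P$-invariant subgroup (still projecting onto $F \wr P$ and still failing \eqref{eq:D}) on which the cocycle identity holds manifestly.
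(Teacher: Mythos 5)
Your overall architecture is sound and matches the paper's: everything reduces to exhibiting a single $2$-local action projecting onto $F\wr P$ that satisfies \eqref{eq:C}, fails \eqref{eq:D}, and admits an involutive compatibility cocycle, after which Proposition~\ref{prop:ukf_gamma_k} supplies a \eqref{eq:CD}-group at radius $3$ and hence a discrete $H\in\calH_{F\wr P}$ whose $2$-local action fails \eqref{eq:D}. Your justification of why this rules out $k\le 2$ (the local action of $\mathrm{U}_k$ of a \eqref{eq:C}-group is that group up to relabelling, i.e.\ conjugacy, and \eqref{eq:D} is conjugation-invariant) is in fact spelled out more carefully than in the paper, which compresses all of this into ``this suffices by Proposition~\ref{prop:ukf_gamma_k}''.

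The gap is that you never produce the cocycle, and your candidate $\Phi(F\wr P,\calP)$ is the wrong group for it once $|\Lambda|\ge 3$ and $F$ is non-abelian. Write an element of $\Phi(F\wr P,\calP)$ as $(g,(h_\mu)_{\mu\in\Lambda})$ with $h_\mu\in g\,(F\wr P)_{\Omega\times\{\mu\}}$ the common coordinate at the block-$\mu$ neighbours. Condition \eqref{eq:C} only forces the new centre $h_\lambda$ and the new $\lambda$-coordinate $g$ of a compatible element in direction $(\omega,\lambda)$; the natural completion $h'_\mu:=h_\lambda g^{-1}h_\mu$ for $\mu\neq\lambda$ is involutive, but for regular $P$ the cocycle identity applied to a product $(g,(h_\mu))(k,(l_\mu))$ reduces to the commutation $[\,g^{-1}h_{\bar k\mu},\,l_\lambda k^{-1}]=1$ (where $\bar k$ is the image of $k$ in $P$), i.e.\ a commutation between elements of $F^{\Lambda\setminus\{\bar k\mu\}}$ and $F^{\Lambda\setminus\{\bar k\lambda\}}$, which fails as soon as there is a third block on which both can act by non-commuting elements of $F$. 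This is the same phenomenon behind $\Phi(S_{3})$ admitting no involutive compatibility cocycle in Example~\ref{ex:s3}, and it is why your hedge about ``replacing $F^{(2)}$ by a suitable subgroup'' is carrying the entire weight of the proof. That replacement is precisely the paper's content: it takes $W(F,P)\cong(F^{|\Lambda|}\times F^{|\Lambda|})\rtimes P$, generated by elements $\gamma_{\lambda}(a)$ (acting by $a$ on block $\lambda$ at the centre and at the block-$\lambda$ neighbours, trivially elsewhere), $\gamma_{\lambda}^{(2)}(a)$ (acting by $a$ on block $\lambda$ only at neighbours outside block $\lambda$) and $\Gamma(\iota(P))$, so that the off-block data is shared across all neighbours; the involutive compatibility cocycle is then simply the swap $\gamma_{\lambda}\leftrightarrow\gamma_{\lambda}^{(2)}$ in directions outside block $\lambda$ and the identity otherwise. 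For $|\Lambda|=2$ your candidate happens to coincide with $W(F,P)$ and the naive formula works, but not in general; until the correct subgroup and cocycle are written down and verified, the proof is incomplete.
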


\begin{proof}
We define a subgroup $W(F,P)\le\Aut(B_{|\Omega\times\Lambda|,2})$ which projects onto $F\wr P$, satisfies \eqref{eq:C}, does not satisfy \eqref{eq:D} but admits an involutive compatibility cocycle. This suffices by Lemma \ref{prop:ukf_gamma_k}. For $\lambda\in\Lambda$, let $\iota_{\lambda}$ denote the $\lambda$-th embedding of $F$ into $\smash{F\wr P=\big(\prod_{\lambda\in\Lambda}F\big)\rtimes P}$. Recall the map $\gamma$ from Section \ref{sec:ukf_examples_k=2} and consider
\begin{displaymath}
 \gamma_{\lambda}:F\to\Aut(B_{|\Omega\times\Lambda|,2}),\ a\mapsto (\iota_{\lambda}(a),((\iota_{\lambda}(a))_{(\omega,\lambda)},(\id)_{(\omega,\lambda'\neq\lambda)})),
\end{displaymath}
\vspace{-0.5cm}
\begin{displaymath}
 \gamma_{\lambda}^{(2)}:F\to\Aut(B_{|\Omega\times\Lambda|,2}),\ a\mapsto (\id,((\id)_{(\omega,\lambda)},(\iota_{\lambda}(a))_{(\omega,\lambda'\neq\lambda)})).
\end{displaymath}

\noindent
Furthermore, let $\iota$ denote the embedding of $P$ into $F\wr P$. We define
\begin{displaymath}
 W(F,P):=\langle \gamma_{\lambda}(a),\gamma_{\lambda}^{(2)}(a),\gamma(\iota(\varrho))\mid \lambda\in\Lambda,\ a\in F,\ \varrho\in P\rangle.
\end{displaymath}
By construction, $W(F,P)$ does not satisfy \eqref{eq:D}. To see that $W(F,P)$ admits an involutive compatibility cocycle, we first determine its group structure. Consider the subgroups $V:=\langle\gamma_{\lambda}(a)\mid \lambda\in\Lambda,\ a\in F\rangle$ and $\smash{\overline{V}:=\langle \gamma_{\lambda}^{(2)}(a)\mid \lambda\in\Lambda,\ a\in F\rangle}$. Then $W(F,P)=\langle V,\overline{V},\Gamma(\iota(P))\rangle$. Observe that $V\cong F^{|\Lambda|}$ and $\overline{V}\cong F^{|\Lambda|}$ commute, intersect trivially and that $\Gamma(\iota(P))$ permutes the factors of each product. Hence
\begin{displaymath}
 W(F,P)\cong(V\times\overline{V})\rtimes P\cong(F^{|\Lambda|}\times F^{|\Lambda|})\rtimes P.
\end{displaymath}
An involutive compatibility cocycle $z$ of $W(F,P)$ may now be defined by setting
\begin{displaymath}
 z(\gamma_{\lambda}(a),(\omega,\lambda')):=\begin{cases} \gamma_{\lambda}(a) & \lambda=\lambda' \\ \gamma_{\lambda}^{(2)}(a) & \lambda\neq \lambda'\end{cases}, \text{ } z(\gamma_{\lambda}^{(2)}(a),(\omega,\lambda')):=\begin{cases} \gamma_{\lambda}^{(2)}(a) & \lambda=\lambda' \\ \gamma_{\lambda}(a) & \lambda\neq\lambda'\end{cases} 
\end{displaymath}
for all $\lambda\in\Lambda$, $a\in F$, and $z(\gamma(\iota(\varrho)),(\omega,\lambda)):=\gamma(\iota(\varrho))$ for all $\varrho\in P$. In fact, the map $z$ extends to an involutive compatibility cocycle of $V\times\overline{V}\le W(F,P)$ which in turn extends to an involutive compatibility cocycle of $W(F,P)$.
\end{proof}

\vspace{0.2cm}
\bibliographystyle{amsalpha}
\bibliography{bm_semi}

\end{document}